\documentclass{amsart}

\usepackage[utf8]{inputenc}
\usepackage[T1]{fontenc}
\usepackage[hidelinks, unicode]{hyperref}
\usepackage{cleveref}
\usepackage{amsthm}
\usepackage{amssymb}
\usepackage{bm}
\usepackage{quiver}

\usepackage{import}

\usepackage{graphicx}

\usepackage{tikzit}

\tikzstyle{green}=[text={black!30!green}]
\tikzstyle{blue}=[text=blue]
\tikzstyle{red}=[text=red]
\tikzstyle{puncture}=[fill=white, draw=red, shape=circle, minimum size=1pt]
\tikzstyle{blackpuncture}=[fill=white, draw=black, shape=circle, minimum size=1pt]
\tikzstyle{cyan}=[text=cyan]

\tikzstyle{markings}=[-, draw=red, line width=1pt, line cap=round]
\tikzstyle{overbraid}=[-, draw=white, fill=none, line width=6pt]
\tikzstyle{thick}=[-, line width=2pt, draw=blue]
\tikzstyle{dashedline}=[-, dashed]
\tikzstyle{dottedline}=[-, dash pattern=on 0.75pt off 0.75pt, line width=0.75pt]
\tikzstyle{thin red}=[-, line width=0.25pt, draw=black]
\tikzstyle{tangle}=[-, draw=blue, line width=1pt, fill={blue!20}]
\tikzstyle{scc}=[-, draw={black!30!green}, fill={blue!20}, line width=1pt]
\tikzstyle{inner boundary}=[-, fill=white]
\tikzstyle{outer boundary}=[-, fill={red!20}]
\tikzstyle{lowerboundery}=[-, line width=1.5pt, line cap=round, draw=red]
\tikzstyle{upperboundery}=[-, line width=1.5pt, line cap=round, draw=blue]
\tikzstyle{dottedcycle}=[-, draw=blue, dash pattern=on 0.5pt off 1pt on 4pt off 1pt, decoration={markings, mark=at position 0.5 with {\arrow{>}}}, postaction=decorate]
\tikzstyle{cycle}=[-, draw=blue, decoration={markings, mark=at position 0.5 with {\arrow{>}}}, postaction=decorate]
\tikzstyle{path}=[-, draw=cyan, line width=0.25pt]
\tikzstyle{arrowpath}=[-, draw=cyan, line width=0.25pt, decoration={markings, mark=at position 0.5 with {\arrow{>}}}, postaction=decorate]
\tikzstyle{orientedpath}=[-, line width=0.25pt, decoration={markings, mark=at position 0.5 with {\arrow{<}}}, postaction=decorate]
\tikzstyle{inner square}=[-, fill={blue!20}]
\tikzstyle{outer square}=[-, fill={red!20}]
\tikzstyle{blueline}=[-, draw=blue]
\tikzstyle{greenline}=[-, draw=green]
\tikzstyle{bluesquare}=[-, draw=blue, fill={blue!20}]

\usetikzlibrary{decorations.markings}

\usepackage{caption}
\usepackage{xparse}

\numberwithin{figure}{section}

\newtheorem{theorem}{Theorem}[section]
\newtheorem*{theorem*}{Theorem}

\newtheorem{lemma}[theorem]{Lemma}
\newtheorem{proposition}[theorem]{Proposition}
\newtheorem*{proposition*}{Proposition}

\newtheorem{conjecture}[theorem]{Conjecture}

\newtheorem{corollary}[theorem]{Corollary}
\newtheorem*{corollary*}{Corollary}

\theoremstyle{remark}
\newtheorem{remark}[theorem]{Remark}
\newtheorem{notation}[theorem]{Notation}
\newtheorem*{question*}{Question}
\theoremstyle{definition}
\newtheorem{definition}[theorem]{Definition}
\newtheorem*{definition*}{Definition}

\newcommand{\Autb}{\mathrm{Aut}_\beta}

\newcommand{\Ass}{\mathrm{Ass}}

\newcommand{\Shift}{\mathrm{Shifts}(\Lambda)}
\newcommand{\Shiftz}{\mathrm{Shifts}^0(\Lambda)}

\newcommand{\CNt}{\mathrm{C}_{\Nu_t}}
\newcommand{\CNeps}{\mathrm{C}_{\Nu,\mathbb{C}[\epsilon]}}

\newcommand{\Cat}{\mathcal{C}}

\newcommand{\dol}{\overline{\partial}}

\newcommand{\Ceps}{{\mathbb{C}[\epsilon]}}

\newcommand{\DMC}{\mathrm{DM}^\mathrm{sm}}
\newcommand{\DMCp}{\mathrm{DM}^\mathrm{sm,proper}}

\newcommand{\Cinf}{C^\infty}

\newcommand{\cE}{\mathcal{E}}
\newcommand{\cF}{\mathcal{F}}
\newcommand{\cG}{\mathcal{G}}

\newcommand{\uE}{\underline{E}}
\newcommand{\uF}{\underline{F}}
\newcommand{\uG}{\underline{G}}

\newcommand{\rmd}{\mathrm{d}}

\newcommand{\Op}[2]{\mathrm{Op}_{#1}({#2})}

\newcommand{\VLR}{\mathrm{Vect}_\Lambda^R}
\newcommand{\VL}{\mathrm{Vect}_\Lambda}
\newcommand{\VLone}{\mathrm{Vect}_{\Lambda_1}}
\newcommand{\VLtwo}{\mathrm{Vect}_{\Lambda_2}}
\newcommand{\CN}{\mathrm{C}_\Nu}
\newcommand{\CNone}{\mathrm{C}_{\Nu_1}}
\newcommand{\CNtwo}{\mathrm{C}_{\Nu_2}}

\newcommand{\ch}{\mathrm{ch}}

\newcommand{\Zuv}{\Z[u,v]_{\mathrm{rat}}}
\newcommand{\Quv}{\Q[u,v]_{\mathrm{rat}}}

\newcommand{\Mod}{\mathfrak{mod}}
\newcommand{\Ribfun}{\mathfrak{ribfun}}
\newcommand{\Braidfun}{\mathfrak{braidfun}}
\newcommand{\Rib}{\mathfrak{Rib}}
\newcommand{\Braid}{\mathfrak{Braid}}
\newcommand{\Fus}{\mathfrak{Fus}}
\newcommand{\Modcat}{\mathfrak{Mod}}

\newcommand{\tMod}{\mathfrak{m\tilde{o}d}}

\newcommand{\SO}{\mathrm{SO}(3)}
\newcommand{\SU}{\mathrm{SU}(2)}

\newcommand{\scc}{simple closed curve}

\newcommand{\lra}{\longrightarrow}

\newcommand{\ra}{\rightarrow}

\newcommand{\Nu}{\mathcal{V}}
\newcommand{\Line}{\mathcal{L}}

\newcommand{\PMod}[1]{\mathrm{PMod}(#1)}

\newcommand{\tPMod}[1]{\mathrm{PM\widetilde{o}d}(#1)}

\newcommand{\Modl}[2]{\mathrm{Mod}^{#1}(#2)}

\protected\def\myphantom#1{\vphantom{#1}}
\newcommand{\myleftidx}[3]{{\myphantom{#2}}#1\!#2#3}
\newcommand{\mysecondleftidx}[3]{{\myphantom{#2}}#1#2#3}

\newcommand{\Mrpo}[2]{\mysecondleftidx{^1}{\overline{\mathcal{M}}}{_{0,{#1}}}(#2)}
\newcommand{\sMrpo}[2]{\myleftidx{^1}{\mathcal{M}}{_{0,{#1}}}(#2)}

\newcommand{\Mg}[2]{\mathcal{M}_{{#1},{#2}}}
\newcommand{\Mgb}[2]{\mathcal{M}_{#1}^{#2}}

\newcommand{\oMg}[2]{\overline{\mathcal{M}}_{{#1},{#2}}}

\newcommand{\Mgrp}[3]{\overline{\mathcal{M}}_{{#1},{#2}}(#3)}

\newcommand{\Cgrp}[3]{\overline{\mathcal{C}}_{{#1},{#2}}(#3)}

\newcommand{\Mgrb}[3]{\overline{\mathcal{M}}_{#1}^{#2}(#3)}

\newcommand{\Mgrbt}[4]{\overline{\mathcal{M}}_{#1}^{#2}(#3,#4)}

\newcommand{\Surf}{\mathrm{Surf}_\Lambda}
\newcommand{\tSurf}{\mathrm{S\tilde{u}rf}_\Lambda}

\newcommand{\Nuab}{\Nu^{\mathrm{ab}}_{\zeta_r}}
\newcommand{\Nusl}{\Nu_{\zeta_r}}

\newcommand{\justM}{\mathcal{M}}
\newcommand{\tjustM}{\widetilde{\mathcal{M}}}
\newcommand{\justC}{\mathcal{C}}

\newcommand{\Br}[1]{\mathrm{B}\mu_{#1}}

\newcommand{\End}[1]{\mathrm{End}(#1)}
\newcommand{\GL}[1]{\mathrm{GL}(#1)}

\newcommand{\GLn}[2]{\mathrm{GL}_{#1}(#2)}

\newcommand{\tr}[1]{\mathrm{tr}(#1)}

\newcommand{\Aut}[1]{\mathrm{Aut}(#1)}

\newcommand{\Z}{\mathbb{Z}}
\newcommand{\N}{\mathbb{N}}

\newcommand{\C}{\mathbb{C}}
\newcommand{\Q}{\mathbb{Q}}
\newcommand{\R}{\mathbb{R}}

\newcommand{\id}{\mathrm{id}}

\newcommand{\Dc}{\mathcal{D}}

\newcommand{\ul}{\underline{\lambda}}
\newcommand{\umu}{\underline{\mu}}

\newcommand{\Disk}{\mathrm{Disk}_\Lambda}

\title{Hodge structures on conformal blocks}
\author{Pierre Godfard}

\begin{document}

\begin{abstract}
  We prove existence and uniqueness of complex Hodge structures on modular functors.
  The proof is based on the non-Abelian Hodge correspondence and Ocneanu rigidity.
  Given a modular functor, we explain how its Hodge numbers fit into a Frobenius algebra and the Chern characters of its Hodge decompositions into a new
  cohomological field theory (CohFT).
  In the case of $\SU$ modular functors of level $2$ times an odd number,
  we give explicit formulas for all Hodge numbers, in any genus $g$.
\end{abstract}

\maketitle

\section{Introduction}

This article is the second in a series of two which aims at constructing and studying Hodge structures on the flat vector bundles (spaces of conformal blocks) associated
to modular functors. In this article, which can be read independently from the first one, we prove
existence and uniqueness of these Hodge structures, and compute Hodge numbers of $\SU$ modular functors.

This computation uses an explicit construction of the Hodge structures in genus $0$, which was the subject of the first paper
\cite{godfardConstructionHodgeStructures2024},
and comes from a geometric interpretation of the fibers of
bundles of conformal blocks as cohomology of a configuration space in $\mathbb{P}^1$ minus a finite number of points with coefficients in a local system.
The existence and uniqueness result of this second paper is used
to extend our computation of Hodge numbers from genus $0$ to any genus $g$ without the need to supply
geometric constructions for $g>0$.


\subsection{Modular functors}

Modular functors are families of representations of mapping class groups with compatibility conditions.
More precisely, a modular functor $\Nu$ with finite set of colors $\Lambda$ gives for every choice of $g,n\geq 0$ and $\lambda_1,\dotsc,\lambda_n\in\Lambda$,
a representation:
\begin{equation*}
  \rho_g(\ul):\tPMod{S_g^n}\lra \GLn{d}{\C}
\end{equation*}
where $\tPMod{S_g^n}$ is a central extension by $\Z$ of the mapping class group $\PMod{S_g^n}$ of the compact surface of genus $g$ with $n$ boundary components.
The dimension $d$ depends on $g$, $n$, and $\ul$.

For any modular functor there exists a pair of integers $r,s\geq 1$ such that
the $s$-th power of the central element and all $r$-th powers of Dehn twists act trivially in every $\rho_g(\ul)$.
The quotient of $\tPMod{S_g^n}$ by these elements is the fundamental group of a $\mu_{s}$-gerbe $\Mgrbt{g}{n}{r}{s}$ over
the compact moduli space $\Mgrb{g}{n}{r}$ of genus $g$ $r$-twisted nodal curves with $n$ marked orbifold points of order $r$
(see \Cref{remarkmgnr,remarkmgnrs}). The coarse space of $\Mgrbt{g}{n}{r}{s}$ is just the Knudsen-Deligne-Mumford moduli space of nodal curves $\overline{M}_{g,n}$.
By the correspondence between representations of the fundamental group and bundles with flat connections,
we can alternatively see each $\rho_g(\ul)$ as a bundle with flat connection called \textit{conformal block}:
\begin{equation*}
  (\Nu_g(\ul),\nabla)\lra \Mgrbt{g}{n}{r}{s}.
\end{equation*}
These bundles satisfy strong compatibility conditions, the most important of which are the gluing axioms, that we now describe.
As for the moduli spaces of nodal curves $\oMg{g}{n}$,
the orbifolds $\Mgrbt{g}{n}{r}{s}$ have gluing maps:
\begin{align*}
  q:\Mgrbt{g_1}{n_1+1}{r}{s}\times\Mgrbt{g_2}{n_2+1}{r}{s}&\lra \Mgrbt{g_1+g_2}{n_1+n_2}{r}{s}, \\
  p:\Mgrbt{g-1}{n+2}{r}{s}&\lra \Mgrbt{g}{n}{r}{s}
\end{align*}
whose images are boundary divisors.
The map $q$ is induced by the gluing of the $(n_1+1)$-th section of a curve in $\Mgrbt{g_1}{n_1+1}{r}{s}$ to the $(n_2+1)$-th
section of a curve in $\Mgrbt{g_2}{n_2+1}{r}{s}$ (see \Cref{separatinggluing}).
Similarly, $p$ is induced by the gluing of the $(n+1)$-th and $(n+2)$-th sections of a curve in $\Mgrbt{g-1}{n+2}{r}{s}$ (see \Cref{nonseparatinggluing}).

Then the gluing axioms say that for every such maps $q$ and $p$, and choice of colors $\ul\in\Lambda^n$, isomorphisms as follows are given:
\begin{align*}
  q^*\Nu_{g_1+g_2}(\lambda_1,\dotsc,\lambda_n)&\simeq \bigoplus_\mu\Nu_{g_1}(\lambda_1,\dotsc,\lambda_{n_1},\mu)\otimes
  \Nu_0(\mu,\mu^\dagger)\otimes
  \Nu_{g_2}(\lambda_{n_1+1},\dotsc,\lambda_n,\mu^\dagger), \\
  p^*\Nu_g(\lambda_1,\dotsc,\lambda_n)&\simeq \bigoplus_\mu\Nu_{g-1}(\lambda_1,\dotsc,\lambda_n,\mu,\mu^\dagger)
  \otimes \Nu_0(\mu,\mu^\dagger).
\end{align*}
The map $\Lambda\ra\Lambda$, $\lambda\mapsto\lambda^\dagger$ used above is an involution which is part of the data of the modular functor.

\subsection{Hodge structures}

A complex variation of Hodge structures (CVHS) on a bundle with connection $(E,\nabla)$ over a compact complex manifold
is the data of a rational number $m$ called weight together with a $\Cinf$ decomposition of $E$:
\begin{equation*}
  E = \bigoplus_{\substack{p+q=m\\p,q\in\Q}}E^{p,q}
\end{equation*}
satisfying Griffiths transversality and polarizability (see \Cref{definitionCVHS}). For why we allow $m$, $p$ and $q$ to be rational, see \Cref{remarkrationalweight}.

Note that we can naturally take duals and tensor products of such structures, using the formulas
$(E^\vee)^{p,q}=(E^{-p,-q})^\vee$ and $(E\otimes E')^{p,q}=\bigoplus_{\substack{p_1+p_2=p\\q_1+q_2=q}}E^{p_1,q_1}\otimes E'^{p_2,q_2}$.

\begin{definition*}[{Hodge structure, (\ref{definitionCVHSmodular})}]
  Let $\Nu$ be a complex modular functor. A complex Hodge structure on $\Nu$ is the data, for each $g$, $n$ and $\ul\in\Lambda^n$,
  of a CVHS on $\Nu_g(\lambda_1,\dotsc,\lambda_n)$,
  such that all gluing, vacuum and permutation isomorphisms in \Cref{definitiongeometricnonanomalousmodularfunctor} are isomorphisms of CVHS.
\end{definition*}

The existence of Hodge structures on each $\Nu_g(\lambda_1,\dotsc,\lambda_n)$ is expected for several reasons.
First, in the special case of genus $0$ and when $\Nu$ comes from a simple Lie algebra, 
the local systems $\Nu_0(\lambda_1,\dotsc,\lambda_n)$ are known to be motivic and thus support CVHS (see \Cref{sectionsu} below).

For $g\geq 3$, it is expected that the mapping class group $\PMod{S_g}$ satisfies Kazhdan’s property $(T)$.
In the case where the local system $\Nu_g$ has a unitary Galois conjugate, this would imply its rigidity and thus, by Simpson's result
\cite[4.5]{simpsonHiggsBundlesLocal1992} reproduced below, existence of a CVHS on $\Nu_g$. Note that if $\Nu$ comes from a simple Lie algebra,
$\Nu_g$ has a unitary Galois conjugate.

When $\Nu$ is the $\SO$ modular functor of level $5$, it was proved by the author in \cite{godfardRigidityFibonacciRepresentations2023} that
each $\Nu_g(\lambda_1,\dotsc,\lambda_n)$ is rigid for any $g$ and $\lambda_1,\dotsc,\lambda_n$. Hence, again by Simpson's result,
this implies existence of a CVHS on each $\Nu_g(\lambda_1,\dotsc,\lambda_n)$. Note that this does not a priori imply
existence of a CVHS on $\Nu$ in the sense of the above definition, as compatibility with gluing, vacuum and permutations isomorphisms is not immediate.

In \cite[6.]{deroinToledoInvariantsTopological2022}, Deroin and Marché use the $\SO$ modular functor of level $5$
to endow the orbifolds $\overline{\mathcal{M}}^\mathcal{E}_{g,n}$ for $(g,n)\in \{(0,4), (0,5), (1,2), (1,3), (2,1)\}$
with a complex hyperbolic structure whose holonomy is the monodromy of $\Nu_{g,n}(2,\dotsc,2)$. Here $\mathcal{E}$ means that elliptic tales have been contracted.
To do so, they use the invariant Hermitian form on $\Nu_{g,n}(2,\dotsc,2)$, of signature $(1,q)$ in these cases, and a result of Siu to prove existence
of a local biholomorphism from the universal cover of $\overline{\mathcal{M}}^\mathcal{E}_{g,n}$ to $\mathbb{H}^{1,q}$.
In light of the result of \cite{godfardRigidityFibonacciRepresentations2023},
this map can actually be chosen to be the period map of the Hodge structure on $\Nu_{g,n}(2,\dotsc,2)$.
It would be interesting to further study period maps of Hodge structures on modular functors. This article provides a proof of their existence.

A potential application of Hodge structures on $\Nu_g(\lambda_1,\dotsc,\lambda_n)$ is detecting irreducibility.
Indeed, any gap in the Hodge decomposition yields a direct sum decomposition. Such gaps could be found by computing Hodge numbers.
However, in \Cref{subsectiongaps}, we prove that for all $\SO$ and $\SU$ modular functors, the Hodge structures on
the $\Nu_g(\lambda_1,\dotsc,\lambda_n)$ have no gaps. It would be interesting to study gaps in Hodge numbers
more generally for modular functors coming from simple Lie algebras (see \Cref{subsectionlie} below).


\subsection{Existence and uniqueness}

A large class of flat bundles that support complex variations of Hodge structures
is supplied by bundles of geometric origin, where a flat bundle $E$ on a variety $B$ is called of geometric origin if on
some dense Zariski open subset $U\subseteq B$,
there exists a proper fibration $p : X \ra U$ and $m \geq 0$ an integer such that $E$ is a flat subquotient of $R^mp_*\underline{\C}$.

According to the standard conjectures, rigid flat bundles on projective smooth varieties are expected to be of geometric origin,
and thus in particular to support CVHS.
In fact, Simpson proved the latter using non-Abelian Hodge theory:

\begin{theorem*}[{\cite[4.5]{simpsonHiggsBundlesLocal1992}}]
  Any rigid flat bundle $(E,\nabla)$ on a compact Kähler manifold supports a CVHS. If moreover $(E,\nabla)$ is simple,
  then the CVHS is unique up to shifts.
\end{theorem*}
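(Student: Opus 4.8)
The plan is to prove both halves through the non-abelian Hodge correspondence, following Simpson. First I would reduce the existence statement to the case where $(E,\nabla)$ is simple. If $(E,\nabla)$ is rigid in the infinitesimal sense, i.e.\ $\coh{X}{\mathrm{End}(E)}=0$ (equivalently, $[\rho]$ is an isolated point of the character variety), then any extension $0\to E'\to E\to E''\to 0$ has class in $\mathrm{Ext}^1(E'',E')\subseteq\coh{X}{\mathrm{End}(E)}=0$ and hence splits; so $(E,\nabla)$ is a direct sum of simple flat bundles. The decomposition $\coh{X}{\mathrm{End}(E)}=\bigoplus_{i,j}\coh{X}{\mathrm{Hom}(E_i,E_j)}$ then forces each simple summand to be rigid as well, and a direct sum of CVHS is again a CVHS, so it suffices to treat a rigid simple $(E,\nabla)$.

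The three inputs I would invoke are: (i) the non-abelian Hodge correspondence on the compact Kähler manifold $X$, a homeomorphism between the Betti moduli space $M_{\mathrm{B}}$ of semisimple flat bundles and the Dolbeault moduli space $M_{\mathrm{Dol}}$ of polystable Higgs bundles with vanishing Chern classes, realized by the harmonic metric writing the flat connection as $\nabla=d_A+\theta+\theta^{*}$; (ii) the $\C^*$-action $t\cdot(\mathcal{E},\theta)=(\mathcal{E},t\theta)$ on $M_{\mathrm{Dol}}$, for which every orbit admits a limit as $t\to 0$ lying in the fixed locus; and (iii) Simpson's identification of the $\C^*$-fixed points with the polystable Higgs bundles underlying a CVHS (systems of Hodge bundles), Griffiths transversality and polarizability being built into this identification. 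For existence, let $P\in M_{\mathrm{Dol}}$ be the image of $[\rho]$. The map $t\mapsto t\cdot P$ extends continuously to $t=0$, giving a continuous path $\C\to M_{\mathrm{Dol}}$, which the homeomorphism transports to a continuous path $\C\to M_{\mathrm{B}}$ through $[\rho]$. Since $(E,\nabla)$ is \emph{rigid}, $\{[\rho]\}$ is a connected component of $M_{\mathrm{B}}$, so any connected subset meeting it is reduced to it; the image of the path, being connected and containing $[\rho]$, is therefore $\{[\rho]\}$. Hence $P=\lim_{t\to 0}t\cdot P$ is $\C^*$-fixed, and by (iii) it underlies a CVHS, which transports back to the desired CVHS on $(E,\nabla)$.

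For uniqueness, a simple $(E,\nabla)$ corresponds to a stable Higgs bundle $(\mathcal{E},\theta)$, so by stability $\mathrm{Aut}(\mathcal{E},\theta)=\C^*$ (Schur's lemma). A CVHS amounts to a grading $\mathcal{E}=\bigoplus_{p}\mathcal{E}^{p}$ with $\theta(\mathcal{E}^{p})\subseteq\mathcal{E}^{p-1}\otimes\Omega^1_X$, equivalently a cocharacter $\lambda:\C^*\to\mathrm{GL}(\mathcal{E})$ realizing the fixed-point structure, i.e.\ with $\lambda(t)\,\theta\,\lambda(t)^{-1}=t\theta$. Given two such gradings $\lambda,\lambda'$, the holomorphic automorphism $\lambda'(t)\lambda(t)^{-1}$ commutes with $\theta$, so by Schur it is a scalar $c(t)\in\C^*$; thus $\lambda'=c\cdot\lambda$, which means the two gradings differ only by an overall shift of weights. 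This is exactly uniqueness up to shifts.

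The main obstacle — and the deep input I would cite rather than reprove — is the non-abelian Hodge correspondence itself: the existence of the harmonic metric (Corlette) and the Hitchin–Simpson and Donaldson–Uhlenbeck–Yau theory underlying the homeomorphism $M_{\mathrm{B}}\cong M_{\mathrm{Dol}}$, together with the properness needed to guarantee that the $\C^*$-limit exists and lands in the fixed locus, and the characterization (iii) of fixed points as variations of Hodge structure. Once these are granted, the rigidity argument for existence is purely topological (connectedness plus isolatedness), and uniqueness is a short application of stability and Schur's lemma.
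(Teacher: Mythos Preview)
Your core argument is correct and is precisely Simpson's proof, which the paper sketches: pass to the Higgs side, observe that rigidity forces the $\C^*$-orbit $t\mapsto(\mathcal{E},t\theta)$ to be constant (its image in $M_{\mathrm{B}}$ is connected and meets an isolated point), so $(\mathcal{E},\theta)$ is $\C^*$-fixed and hence a system of Hodge bundles; uniqueness then follows from Schur's lemma on the stable Higgs bundle.

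There is one gap, in your preliminary reduction to the simple case. The asserted containment $\mathrm{Ext}^1(E'',E')\subseteq\coh{X}{\mathrm{End}(E)}$ fails: the inclusion of local systems $\mathrm{Hom}(E'',E')\hookrightarrow\mathrm{End}(E)$ does induce a map on $H^1$, but it is not injective in general. In fact the extension class $\xi$ itself lies in its kernel, since the infinitesimal deformation $E_\xi\rightsquigarrow E_{(1+\epsilon)\xi}$ is trivialized by conjugating by $\mathrm{diag}\bigl((1+\epsilon)\id_{E'},\id_{E''}\bigr)$. So $\coh{X}{\mathrm{End}(E)}=0$ does not immediately force $\xi=0$. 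This reduction is in any case unnecessary: Simpson's theorem concerns points of the Betti moduli space $M_{\mathrm{B}}$, which already parametrizes only semisimple representations, so one may take semisimplicity as part of the hypothesis and run your $\C^*$-orbit argument on the associated polystable Higgs bundle directly.
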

Here, by a shift of $E = \bigoplus_{p+q=m}E^{p,q}$, we mean another decomposition
$E = \bigoplus_{p+q=m}E'^{p,q}$, where $E'^{p,q}=E^{p+s,q+t}$ for a pair $(s,t)\in\Q^2$ independent of $(p,q)$.

Our main result is an analog of Simpson's in the case of modular functors. Although the flat bundles $\Nu_g(\ul)$ of a modular functor $\Nu$
may not in general individually be rigid, they do not admit compatible families of deformations (Ocneanu rigidity).
Using this fact and non-Abelian Hodge theory we are able to prove that
Hodge structures on modular functors exist and are unique up to shifts.

\begin{theorem*}[{Existence and uniqueness, (\ref{maintheoremcomplex})}]
  Let $\Nu$ be a complex modular, genus $0$ modular, ribbon or braided functor.
  Then $\Nu$ admits a complex Hodge structure.
  Moreover, $\Shift=\{\delta:\Lambda\ra\Q^2\mid \delta(0)=0\}$ acts transitively on the set of complex Hodge structures on $\Nu$,
  where $\Lambda$ denotes the set of colors of $\Nu$.
\end{theorem*}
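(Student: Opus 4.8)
The plan is to leverage Simpson's theorem (quoted above) fiber-by-fiber together with Ocneanu rigidity to promote individual CVHS to a \emph{compatible} family. The proof splits naturally into an existence part and a uniqueness (transitivity) part.

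\textbf{Existence.} First I would observe that each flat bundle $\Nu_g(\ul)$ lives over the smooth proper orbifold $\Mgrbt{g}{n}{r}{s}$, whose underlying coarse space is compact Kähler, so Simpson's theorem applies once rigidity is established. The semisimplicity hypothesis on the quantum representations guarantees that each $\Nu_g(\ul)$ decomposes as a direct sum of simple flat bundles, and for a \emph{simple} rigid flat bundle Simpson gives a CVHS, unique up to shift. The key input is that, although no single $\Nu_g(\ul)$ need be rigid, Ocneanu rigidity forbids nontrivial compatible deformations of the whole modular functor; I would use this to show that the space of infinitesimal deformations of the collection $\{\Nu_g(\ul)\}$ respecting all the gluing, vacuum and permutation isomorphisms vanishes. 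The natural mechanism is to reinterpret a deformation of the modular functor as a deformation of the associated ribbon category $\CN$ (this is where the hypothesis that the weakly ribbon category is genuinely ribbon is used, so that Ocneanu rigidity applies to $\CN$), and to show the deformation functor of $\Nu$ is pro-represented by the deformation functor of $\CN$, which is trivial. Having killed all compatible deformations, I would then run the non-abelian Hodge machine not on each bundle separately but on the whole tower: the $\C^*$-action on the moduli of Higgs bundles has a fixed point (a complex VHS) in each piece, and the compatibility isomorphisms, being isomorphisms of \emph{rigid} flat bundles, are rigid enough to be promoted to isomorphisms of Higgs/Hodge data. Concretely, I would show that the $\C^*$-action preserves the full structure of a modular functor on the Betti/Dolbeault moduli space (the gluing axioms are algebraic conditions stable under the action), so a fixed point of the action on the rigid point corresponding to $\Nu$ is exactly a Hodge structure on $\Nu$ in the sense of the definition.

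\textbf{Uniqueness up to shifts.} For the transitivity statement I would first verify that $\Shift$ acts on the set of Hodge structures: given $\delta:\Lambda\ra\Q^2$ with $\delta(0)=0$, shifting the CVHS on $\Nu_g(\ul)$ by $\sum_i \delta(\lambda_i)$ (plus a genus-dependent correction forced by the gluing axioms) preserves all compatibility isomorphisms, precisely because the gluing formulas are multilinear in the colors and the vacuum color $0$ is fixed with $\delta(0)=0$ so the $\Nu_0(\mu,\mu^\dagger)$ factors contribute a shift by $\delta(\mu)+\delta(\mu^\dagger)$ that must be absorbed consistently. Then for transitivity I would take two Hodge structures on $\Nu$, restrict to each simple summand of each $\Nu_g(\ul)$, and invoke the uniqueness-up-to-shift clause of Simpson's theorem to conclude the two differ fiberwise by an integer shift $(s_{g,\ul},t_{g,\ul})$. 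The work is to show these shifts are \emph{coherent}, i.e. governed by a single function $\delta$ on colors: I would feed the shifts through the gluing and permutation isomorphisms to derive additivity relations forcing $(s_{g,\ul},t_{g,\ul})=\sum_i\delta(\lambda_i)$ for a well-defined $\delta$, and the $\delta(0)=0$ normalization comes from the vacuum axiom $\Nu_0(0,\dots)\cong\C$ in weight $0$.

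\textbf{Main obstacle.} I expect the hard part to be the passage from fiberwise VHS (which Simpson supplies, but only up to shift and only on simple pieces) to a \emph{globally compatible} VHS on the whole modular functor. The difficulty is that Simpson's fixed-point construction is canonical on each simple rigid bundle but the gluing isomorphisms relate bundles over \emph{different} moduli spaces, so one must check that the $\C^*$-fixed-point (Hodge) structure is functorial for the gluing pullbacks $q^*,p^*$ and compatible with the direct-sum decompositions appearing in the gluing axioms. This is exactly where Ocneanu rigidity must be used in a structured way: not merely to assert vanishing of deformations of each bundle, but to rigidify the \emph{system} of isomorphisms, so that the semisimple decomposition used by Simpson is forced to be compatible across the tower. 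Making this compatibility precise — likely by encoding the entire modular functor as a single point in a moduli of representations of a suitable groupoid or as a tensor functor out of $\CN$, and showing the $\C^*$-action descends to that moduli — is the technical crux.
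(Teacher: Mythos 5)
Your existence argument follows the paper's strategy in outline --- deform $\theta$ to $t\theta$, use Ocneanu rigidity of $\CN$ to kill the deformation, and read the Hodge decomposition off an isomorphism $(\Nu,\theta)\simeq(\Nu,t\theta)$ of the whole structure --- but two steps you treat as "natural mechanisms" are the bulk of the work. First, the claim that deformations of $\Nu$ are controlled by deformations of $\CN$ is the full faithfulness of $\TMod_c\ra\Wrib$ (\Cref{theoremfullfaithfulness}); its proof occupies all of \Cref{sectioncn} and requires the Andersen--Ueno theorem expressing Dehn twists through curve operators to handle the $S$-move. Second, vanishing of infinitesimal deformations does not by itself force $(\Nu,t\theta)\simeq(\Nu,\theta)$ for finite $t$: the paper needs the continuity of the Simpson correspondence in $t$ (\Cref{propositioncontinuity}), so that $t\mapsto c_t$ is a continuous path of associators, plus the algebro-geometric argument that Ocneanu rigidity makes gauge orbits on the scheme of associators open, hence closed (\Cref{continuityassociators}). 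With these one gets a continuous family $f_t:(\Nu,\theta)\ra(\Nu,t\theta)$, and the CVHS is the generalized-eigenspace grading of $f_t$ (\Cref{actiontoCVHS}); note this only produces \emph{real} bidegrees, which is why the definition allows $p,q\in\Q$ and a $\Q$-linear retraction $\pi:\R\ra\Q$ is applied at the end --- your expectation of integer shifts is not available. Also, the action of $\Shift$ is just the shift by $\sum_i\delta(\lambda_i)$; no genus-dependent correction is needed.

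Your uniqueness argument takes a genuinely different, fiberwise route, and it has a real gap. Simpson's uniqueness-up-to-translation holds for \emph{irreducible} local systems; the hypothesis here is only semisimplicity. When an irreducible factor occurs in $\Nu_g(\ul)$ with multiplicity greater than one, the set of CVHS on that isotypic component is strictly larger than one translation orbit (one may split the multiplicity space in different ways and translate the pieces independently), so there is in general no single pair $(s_{g,\ul},t_{g,\ul})$ by which two Hodge structures differ fiberwise, and the coherence bookkeeping you propose never gets started. The paper avoids this by again arguing globally: the two Hodge structures give, via the map $i$ of \Cref{actiontoCVHSmodular}, two continuous families of isomorphisms $f^1_t,f^2_t:(\Nu,\theta)\ra(\Nu,t\theta)$; the ratio $(f^2_t)^{-1}\circ f^1_t$ is a continuous family of automorphisms of the whole modular functor, hence of $\CN$, and Ocneanu rigidity for endofunctors (\Cref{theoremocneanuribbonfunctor}, \Cref{continuityautomorphisms}) forces it to act on $\Nu_g(\ul)$ by the scalar $\prod_i\delta_t(\lambda_i)$; the identity $r\circ i=\id$ then converts $\log_t|\delta_t|$ into the desired element of $\Shift$. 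Salvaging your fiberwise approach would amount to proving by hand that the two CVHS induce identical isotypic splittings compatibly across all gluings, i.e.\ to re-deriving the endofunctor rigidity statement.
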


\begin{corollary*}[{RVHS on conformal blocks, (\ref{rationalqreps})}]
  Let $\Nu$ be as in the Theorem.
  Then for any $g,n\geq 0$ tangent-stable and $\ul\in\Lambda^n$,
  the local system $\Nu_g(\ul)$ over $\Mgrbt{g}{n}{r}{s}$ can be defined over a CM number field $L$
  and supports a rational variation of Hodge structures over $L$,
  which can be assumed to have weight $m$ in $\Z$, Hodge bidegrees in $\Z^2$, and be strongly polarized over $L$ (see \Cref{definitionRVHS}).
\end{corollary*}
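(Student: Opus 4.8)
The plan is to deduce this corollary from the complex existence-and-uniqueness Theorem by descending the complex variation of Hodge structures (CVHS) it provides to the CM field over which the conformal blocks are naturally defined. Three things must be arranged: that each $\Nu_g(\ul)$ is defined over a CM field $L$; that the CVHS can be normalized to have integral weight and bidegrees; and that, so normalized, it is Galois-equivariant and therefore descends to an $L$-rational, strongly polarized variation. Throughout, the engine is the uniqueness clause of the Theorem, namely that $\Shift$ acts transitively on the complex Hodge structures on $\Nu$; it is this rigidity that will force the Hodge data to be compatible with Galois conjugation.

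First I would record rationality and fix the normalization. The quantum representations $\rho_g(\ul)$ are built from the ribbon category $\CN$, whose structure constants lie in a cyclotomic field $\Q(\zeta_N)$; combined with the assumed semisimplicity of the $\rho_g(\ul)$, this shows that each local system $\Nu_g(\ul)$ over $\Mgrbt{g}{n}{r}{s}$ is defined over such an $L=\Q(\zeta_N)$, which is a CM field, and I fix an embedding $\iota\colon L\hookrightarrow\C$ identifying the abstract $L$-local system with the analytic $\Nu_g(\ul)$. The Theorem yields a CVHS of weight $m\in\Q$ with bidegrees in $\Q^2$, and a shift $\delta\in\Shift$ translates the bidegrees of $\Nu_g(\ul)$ by $\sum_i\delta(\lambda_i)$. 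Since the fractional parts of the weights are controlled by per-color data through the gluing axioms, with the vacuum normalization $\delta(0)=0$ fixing the base point, a single $\delta$ should clear all denominators at once and compatibly with gluing, producing a CVHS with $m\in\Z$ and bidegrees in $\Z^2$; verifying that one $\delta$ works uniformly across all tangent-stable $(g,n)$ and all $\ul$ is a bookkeeping step resting on the additivity of these fractional parts under the factorization isomorphisms.

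The heart of the argument is the descent to $L$. I would pass to the $\Q$-local system $W:=\mathrm{Res}_{L/\Q}\Nu_g(\ul)_L$, so that $W\otimes_\Q\C\cong\bigoplus_{\sigma\colon L\hookrightarrow\C}\sigma^*\Nu_g(\ul)$, where each summand $\sigma^*\Nu_g(\ul)$ is a conformal block of the Galois-conjugate functor $\Nu^\sigma$. Since $\Nu^\sigma$ again satisfies the hypotheses of the Theorem, each summand carries a canonical normalized CVHS, and together they define a Hodge filtration on $W\otimes\C$. To know that this filtration defines a $\Q$-structure---equivalently, once the $L$-action is restored, an $L$-VHS---I must check Galois equivariance: for $\tau\in\mathrm{Gal}(\overline{\Q}/\Q)$ the conjugate of the canonical CVHS on $\sigma^*\Nu_g(\ul)$ is a CVHS on $(\tau\sigma)^*\Nu_g(\ul)$ and hence, by uniqueness, agrees with the canonical one up to an element of $\Shift$. \emph{The main obstacle is to show that this family of shifts becomes trivial after the integral normalization above}, in effect a vanishing statement for a $1$-cocycle valued in the abelian group $\Shift$. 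I expect to prove it by pinning the shifts down on a generating set of small $(g,n)$ together with the vacuum and propagating the conclusion through the gluing isomorphisms; Galois descent then produces the rational variation over $L$ with weight $m\in\Z$ and bidegrees in $\Z^2$.

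It remains to supply the strong polarization. The modular-functor data include invariant nondegenerate pairings refining the duality $\lambda\mapsto\lambda^\dagger$, and these are defined over $L$; in the unitary Galois conjugate of $\Nu$ they become positive-definite Hermitian forms with respect to the complex conjugation of the CM field $L$. Positivity there translates, through the Hodge--Riemann bilinear relations, into a strong polarization of the $L$-VHS in the sense of \Cref{definitionRVHS}, which completes the argument.
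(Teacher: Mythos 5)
Your proposal aims at a stronger statement than the one being made, and in doing so both manufactures a difficulty that is not there and misses the steps that actually carry the proof. In \Cref{definitionRVHS}, an RVHS does \emph{not} require the Hodge decomposition to descend to $L$ or to be equivariant under all of $\mathrm{Gal}(\overline{\Q}/\Q)$: it only asks for a CVHS on $E_\iota$ for each embedding $\iota:L\hookrightarrow\C$, compatible under \emph{complex conjugation} of embeddings ($\overline{E_\iota^{p,q}}=E_{\overline{\iota}}^{q,p}$), plus an $L$-rational polarization. Your ``main obstacle'' --- the vanishing of a $\Shift$-valued $1$-cocycle indexed by $\mathrm{Gal}(\overline{\Q}/\Q)$ --- is therefore not needed, and in fact is not well-posed: an abstract Galois element does not act on a $C^\infty$ decomposition of a flat bundle, only complex conjugation does. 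The conjugate-compatibility is arranged simply by \emph{choosing} the CVHS on $E_{\overline{\iota}}$ to be the conjugate of the one chosen on $E_\iota$. Likewise, integrality of the weight and bidegrees requires no global bookkeeping across gluings: the corollary concerns a single $\Nu_g(\ul)$, and since the CVHS on each irreducible factor is unique up to translation (\cite[4.1]{simpsonHiggsBundlesLocal1992}), one translates each factor independently to a fixed weight $m\in 2\Z$ with bidegrees in $\Z^2$.

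What the proof actually requires, following \cite[Theorem 5]{simpsonHiggsBundlesLocal1992}, is absent from your argument. First, the field of definition: \Cref{functornumberfield} only gives \emph{some} number field $K$; your assertion that the structure constants lie in a cyclotomic field is unjustified and is not used in the paper. One must instead show that the trace field $L$ of each irreducible factor $\rho$ is CM or totally real --- this follows because $\sigma(\rho)$ supports a CVHS, hence preserves a hermitian form, hence $\overline{\sigma}(\rho)\simeq\sigma(\rho^*)$, so $\sigma^{-1}\overline{\sigma}$ is an involution of $L$ independent of $\sigma$ --- and then invoke Larsen's lemma (\cite[4.8]{simpsonHiggsBundlesLocal1992}) to descend $\rho$ itself to a CM field $L'$. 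Second, the strong polarization: it is not enough that some Galois conjugate is unitary; one needs, for every embedding $\sigma$, the $L'$-rational form $h_\rho$ to have sign exactly $(-1)^p$ on $E_{\sigma(\rho)}^{p,q}$, which is achieved by rescaling $h_\rho$ by an element of the totally real field $F'$ with prescribed signs at all real places (weak approximation). Both of these are genuine steps with content, and neither appears in your proposal.
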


The shift $\delta$ acts on the CVHS on $\Nu_g(\ul)$ by shifting its bidegrees by $\sum_i\delta(\lambda_i)$.
Note that the notions of modular functors and modular fusion categories are equivalent, thanks to work of Bakalov-Kirillov \cite{bakalovLecturesTensorCategories2000}
and a recent paper of Etingof-Penneys \cite{etingofRigidityNonnegligibleObjects2024}.
Similarly, the notions of genus $0$ modular functor, ribbon functor and ribbon fusion category are equivalent,
and those of braided functor and braided fusion category are equivalent.
Hence the results above apply to flat bundles associated to modular, ribbon or braided fusion categories.

\begin{remark}\label{remarksemisimplicity}
  A flat bundle underlying a CVHS is always semisimple. Hence the statement of \Cref{maintheoremcomplex}
  \emph{contains the semisimplicity of conformal blocks}.
  However, as the proof of semisimplicity relies on results in non-Abelian Hodge theory (formality) which are different from
  those used in this paper, we have chosen to write it separately in the companion paper \cite{godfardSemisimplicityConformalBlocks2025}.
  Here are the explicit dependencies between the papers:
  \cite{godfardSemisimplicityConformalBlocks2025} uses results from \Cref{sectioncn,sectionocneanu};
  the proof of \Cref{maintheoremcomplex} given in
  \Cref{sectionproof} relies on \cite{godfardSemisimplicityConformalBlocks2025} and \Cref{sectioncn,sectionnonabelian,sectionocneanu}.
\end{remark}

The main motivation for proving the above Theorem is the following.

\begin{corollary*}[{\ref{uniqueextension}}]
  Let $\Nu$ be a complex modular functor.
  Then any complex Hodge structure on the braided functor
  \footnote{which is a genus $0$ structure!}
  associated to $\Nu$
  extends uniquely to the whole modular functor. In particular, knowing genus $0$ Hodge numbers is knowing all Hodge numbers.
\end{corollary*}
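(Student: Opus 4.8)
The plan is to realize the extension as a bijection of $\Shift$-sets. Let $\mathrm{HS}(\Nu)$ and $\mathrm{HS}^0$ denote the sets of complex Hodge structures on $\Nu$ and on its rooted functor, respectively, and let $\mathrm{res}\colon\mathrm{HS}(\Nu)\to\mathrm{HS}^0$ be the map that forgets all data in genus $>0$. Existence and uniqueness of the extension are exactly surjectivity and injectivity of $\mathrm{res}$, so it suffices to prove that $\mathrm{res}$ is a bijection.

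First I would check that the hypotheses propagate. The rooted functor associated to $\Nu$ has the same color set $\Lambda$ and involution $\dagger$, shares the same associated weakly ribbon category $\CN$ (ribbon by assumption), and its quantum representations form a subfamily of those of $\Nu$, hence are semisimple. Thus the Existence and uniqueness Theorem (\ref{maintheoremcomplex}) applies to both $\Nu$ and its rooted functor: both $\mathrm{HS}(\Nu)$ and $\mathrm{HS}^0$ are nonempty, and $\Shift=\{\delta:\Lambda\to\Q^2\mid\delta(0)=0\}$ acts transitively on each. Moreover $\mathrm{res}$ is well defined (a CVHS on $\Nu$ restricts in genus $0$ to a CVHS compatible with the genus-$0$ gluing, vacuum and permutation isomorphisms) and $\Shift$-equivariant, since the shift action on any block $\Nu_g(\ul)$ depends only on $\sum_i\delta(\lambda_i)$.

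Now a $\Shift$-equivariant map between nonempty transitive $\Shift$-sets is automatically surjective (its image is a nonempty $\Shift$-stable subset of the transitive target), giving existence; and it is injective if and only if, for some (equivalently any) $H\in\mathrm{HS}(\Nu)$, one has $\mathrm{Stab}(H)=\mathrm{Stab}(\mathrm{res}(H))$. Since $\mathrm{Stab}(H)\subseteq\mathrm{Stab}(\mathrm{res}(H))$ always, the entire content is the reverse inclusion. I would first make the stabilizers explicit: because translating a nonzero, finitely supported Hodge decomposition by a nonzero vector strictly changes it, $\delta$ fixes $H$ exactly when $\sum_i\delta(\lambda_i)=0$ for every $(g,\ul)$ with $\Nu_g(\ul)\neq 0$, and $\delta$ fixes $\mathrm{res}(H)$ exactly when this holds for all genus-$0$ tuples.

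The crux is an induction on $g$ showing that the genus-$0$ constraints force those in all genera. Assume $\delta\in\mathrm{Stab}(\mathrm{res}(H))$ and that $\sum_i\delta(\lambda_i)=0$ for all nonzero blocks of genus $<g$; let $\ul$ satisfy $\Nu_g(\ul)\neq 0$. The non-separating gluing isomorphism $p^*\Nu_g(\ul)\simeq\bigoplus_\mu\Nu_{g-1}(\ul,\mu,\mu^\dagger)\otimes\Nu_0(\mu,\mu^\dagger)$ forces some summand on the right to be nonzero, so there is $\mu$ with $\Nu_{g-1}(\ul,\mu,\mu^\dagger)\neq 0$ and $\Nu_0(\mu,\mu^\dagger)\neq 0$. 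The genus-$0$ block $\Nu_0(\mu,\mu^\dagger)$ (which is in fact of rank one) gives $\delta(\mu)+\delta(\mu^\dagger)=0$, while the genus-$(g-1)$ block gives $\sum_i\delta(\lambda_i)+\delta(\mu)+\delta(\mu^\dagger)=0$; subtracting yields $\sum_i\delta(\lambda_i)=0$. Hence $\mathrm{Stab}(\mathrm{res}(H))\subseteq\mathrm{Stab}(H)$, so $\mathrm{res}$ is a bijection and the extension exists and is unique; the final sentence follows since the Hodge numbers in every genus are then read off the unique extension. The main obstacle is precisely this propagation from genus $0$ to all genera: it relies on the non-separating gluing axiom and, essentially, on the nonvanishing of the dual-pair blocks $\Nu_0(\mu,\mu^\dagger)$, which is what couples the shift values $\delta(\mu)$ and $\delta(\mu^\dagger)$ and lets genus-$0$ data control every genus.
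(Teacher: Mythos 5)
Your proof is correct and follows the route the paper intends: the corollary is deduced directly from Theorem \ref{maintheoremcomplex} via the transitive action of $\Shift$ (equivalently of $\Shiftz$ after matching exact weight functions) on the two sets of Hodge structures, the paper leaving the details implicit. Your genus induction using the non-separating gluing axiom and the non-vanishing of the blocks $\Nu_0(\mu,\mu^\dagger)$ correctly supplies the one non-formal step, namely the inclusion $\mathrm{Stab}(\mathrm{res}(H))\subseteq\mathrm{Stab}(H)$.
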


In \Cref{subfrobenius}, we explain how all the Hodge numbers of a Hodge structure on a modular functor $\Nu$ with set of colors $\Lambda$
can be packaged into one commutative associative product $\star$ on the free $\Zuv$-module $V$ with basis $\Lambda$,
where $\Zuv$ is the ring of polynomials in rational powers of the $2$ variables $u$ and $v$, and with coefficients in $\Z$.
Below we give a description of $\star$ and how it relates to Hodge numbers in the case of $\SU$ modular functors.

The idea that Hodge numbers of braided functors could be organized in an algebra appeared independently in the recent paper \cite{belkaleMotivicFactorisationKZ2023}
of Belkale, Fakhruddin and Mukhopadhyay. See \Cref{subintroBFM,subsectionlie} for more comments on their paper.

Moreover, given a Hodge structure on a modular functor $\Nu$, the characteristic classes of the CVHS on the $\Nu_g(\ul)$
can be packaged into a cohomological field theory (CohFT) $\omega$, such that the Frobenius algebra associated to the topological part of $\omega$
is $V$. Hence to any modular functor $\Nu$ we associate a CohFT over the ring $\Quv$, unique up to the action of $\Shift$.
If $\Nu$ is Hermitian, under some assumptions on the Hermitian form and the Hodge structure,
this CohFT is related to the one constructed by Deroin and Marché
in \cite[4.3]{deroinToledoInvariantsTopological2022}, see \Cref{DeroinMarche}.
We do not expect this CohFT to be related to the one constructed differently from $\Nu$
by Andersen, Borot and Orantin in \cite{andersenModularFunctorsCohomological2016}
(see also the related papers by Marian, Oprea, Pandharipande and Zvonkine 
\cite{marianChernCharacterVerlinde2017} and by Damiolini, Gibney and Tarasca \cite{damioliniVertexAlgebrasCohFTtype2022}).


\subsection{Outline of the proof}

We sketch the proof of \Cref{maintheoremcomplex} (existence and uniqueness of CVHS) in the case of modular functors.
The proofs for genus $0$ modular functors, ribbon functors and braided functors are similar.

Let $X$ be a smooth projective manifold, or more generally a proper smooth DM stack over $\C$. Choose a $\Cinf$ bundle $E$ on $X$.
Then the Simpson correspondence provides a bijection between semisimple flat connections $\nabla$ on $X$
and Higgs bundles of harmonic type on $E$, where a Higgs bundle on $E$ is a pair $(\dol_E,\theta)$,
with $\dol_E:E\ra E\otimes \Omega^{0,1}_X$ a holomorphic structure on $E$
and $\theta:E\ra E\otimes \Omega^{1,0}_X$ a holomorphic, $\mathcal{O}_X$-linear map satisfying $\theta\wedge\theta=0$.
This bijection is compatible with morphisms, tensor products, duals, and pullbacks along algebraic maps $Y\ra X$.

Given a semisimple flat bundle $(E,\nabla)$ on $X$ to which the correspondence associates the Higgs bundle $(E,\dol_E,\theta)$,
for each real number $t\in \R_+^*\setminus\{1\}$ and $m\in\Z$, we have maps:
\[\begin{tikzcd}
  {\{\text{isomorphisms }f:(E,\dol_E,\theta)\ra(E,\dol_E,t\theta)\}} \\
  {\{\text{structures of CVHS on }(E,\nabla)\text{ with weight }m\}}
  \arrow["r"', curve={height=6pt}, two heads, from=1-1, to=2-1]
  \arrow["i"', curve={height=6pt}, hook, from=2-1, to=1-1]
\end{tikzcd}\]
such that $r\circ i=\id$ (see \Cref{actiontoCVHS}).
Hence if $(E,\nabla)$ is rigid, so is $(E,\dol_E,\theta)$, and thus the continuous deformation $(E,\dol_E,t\theta)$ is isomorphic to $(E,\dol_E,\theta)$.
Applying $r$ to an isomorphism we get a CVHS structure on $(E,\nabla)$. This is how Simpson proves that rigid local systems admit a CVHS structure.

We want to apply the Simpson correspondence to a modular functor $\Nu$.
To do so we need to make sure that each flat bundle $\Nu_g(\ul)$ is defined on a proper smooth DM stack and that gluing maps are algebraic.
This is not the case with most definitions of modular functors found in the literature: usually either $\Nu_g(\ul)$ is defined on a toric fibration over $\Mg{g}{n}$,
which is not proper and provides non-algebraic gluing maps,
or on $\oMg{g}{n}$ with algebraic gluing maps, but then $\Nu_g(\ul)$ is not flat.
That is why we choose to define $\Nu_g(\ul)$ on the DM stack $\Mgrbt{g}{n}{r}{s}$, which is proper, smooth and has algebraic gluing maps, as explained above.
See \Cref{subgeometricdefinition} for more details.

Applying the Simpson correspondence to modular functors, we get an equivalence of categories between the category of modular functors
with semisimple conformal blocks $(\Nu,\nabla)$ and that of modular functors of Higgs bundles with conformal blocks of harmonic type $(\Nu,\theta)$
(see \Cref{correspondencemodular}).
Here by $(\Nu,\nabla)$ and $(\Nu,\theta)$ we mean families $(\Nu_g(\ul),\nabla^{g,\ul})$ and $(\Nu_g(\ul),\theta^{g,\ul})$ with gluing, vacuum and permutation isomorphisms.

Now, as above, given a modular functor $(\Nu,\nabla)$ to which the correspondence associates the
modular functor of Higgs bundle $(\Nu,\theta)$,
for each real number $t\in \R_+^*\setminus\{1\}$ and weight function $w$ (see \Cref{definitionweightfunction}), we have maps:
\[\begin{tikzcd}
  {\{\text{isomorphisms }f:(\Nu,\theta)\ra(\Nu,t\theta)\}} \\
  {\{\text{structures of CVHS on }(\Nu,\nabla)\text{ with weight function }w\}}
  \arrow["r"', curve={height=6pt}, two heads, from=1-1, to=2-1]
  \arrow["i"', curve={height=6pt}, hook, from=2-1, to=1-1]
\end{tikzcd}\]
such that $r\circ i=\id$ (see \Cref{actiontoCVHSmodular}).

To prove existence of a CVHS structure on $(\Nu,\nabla)$, we apply a rigidity result on modular functors that will imply that $(\Nu,\theta)$ and $(\Nu,t\theta)$ must be isomorphic.
This result is Ocneanu rigidity for ribbon fusion categories, that we explain now.

To a modular functor $\Nu$, one can associate a ribbon fusion category $\CN$ in a functorial and continuous way. In \Cref{sectioncn},
we prove that this association is a fully faithful functor $\Mod\ra\Rib$, using results of Bakalov-Kirillov and Andersen-Ueno.

Ocneanu rigidity implies that a ribbon fusion category has no nontrivial continuous deformation (\Cref{theoremocneanuribbon}).
Let $(\Nu,\nabla)$ be a modular functor with semisimple conformal blocks such that $\CN$ is ribbon.
Using the continuity of both the map $\Nu\mapsto \CN$ and the Simpson correspondence, we have that for any $t\in \R_+^*\setminus\{1\}$,
the category $\mathrm{C}_{\Nu_t}$ associated to $(\Nu,t\theta)$ is a continuous deformation of $\CN$. Hence $\mathrm{C}_{\Nu_t}$ is isomorphic to $\CN$.
By full faithfulness of $\Mod\ra\Rib$ and the Simpson correspondence, $(\Nu,\theta)$ and $(\Nu,t\theta)$ are isomorphic.
Then any choice of isomorphism $f:(\Nu,\theta)\ra(\Nu,t\theta)$ provides a CVHS structure $r(f)$ on $(\Nu,\nabla)$.
This proves existence.

Uniqueness is a consequence of Ocneanu rigidity for endofunctors,
which says that any continuous deformation of the identity functor of a ribbon fusion category is given by a very simple formula
from some map $\delta:\Lambda\ra \C^*$ (see \Cref{theoremocneanufunctor}). Here $\Lambda$ is the set of simple objects in the category, and equals the set of colors of $\Nu$ when
the category is $\CN$.

Let $\Nu^1$ and $\Nu^2$ be $2$ Hodge structures on $\Nu$. 
Using the map $i$ above, for each $t\in \R_+^*$ the Hodge structures induce isomorphisms $f^1_t$ and $f^2_t$
from $(\Nu,\theta)$ to $(\Nu,t\theta)$.
Then the maps $(f^2_t)^{-1}\circ (f^1_t)$ form a continuous family of automorphisms of $(\Nu,\theta)$, which induce a
continuous family of automorphisms $k_t$ of $(\Nu,\nabla)$ and also of the ribbon fusion category $\CN$. From Ocneanu rigidity for endofunctors we then deduce
that there exists $\delta_t:\Lambda\ra \C^*$ such that $k_t$ is on $\Nu_g(\ul)$ multiplication by $\prod_i\delta_t(\lambda_i)$.
Using $r\circ i=\id$ and the definition of $r$ (see \Cref{actiontoCVHS}), we have  $\Nu^1=(\pi\circ\log_t|\delta_t|)\cdot \Nu^2$ where 
$(\pi\circ\log_t|\delta_t|)$ is in $\Shift$ and "$\cdot$" is the action of $\Shift$ on the set of complex Hodge structures.
This proves uniqueness, i.e. that $\Shift$ acts transitively on the set of complex Hodge structures on $\Nu$.


\subsection{\texorpdfstring{Hodge numbers of $\SU$ modular functors}{Hodge numbers of SU(2) modular functors}}\label{subintronumbers}

In the first article \cite{godfardConstructionHodgeStructures2024},
we described a geometric construction of Hodge structures on the genus $0$ part of $\SU$ modular functors of level $2$ times an odd number
\footnote{In genus $0$, these coincide with $\SO$ modular functors.}.
Here we explain how to deduce all Hodge numbers of these $\SU$ modular functors in all genera from the construction and \Cref{uniqueextension}
(reproduced above).

For each $r\geq 3$ odd and $0<s<r$ prime to $r$, the $\SU$ modular functor of level $2r$ with root $\zeta_r=-e^{i\pi s/r}$
has set of colors $\Lambda_r=\{0,1,\dotsc,r-2\}$
and trivial involution $(\cdot)^\dagger$.
It has a geometric construction, based on models of Felder-Wieczerkowski and Martel,
coming from an interpretation of the fibers of the bundles of conformal blocks
as cohomology of a configuration space in $\mathbb{P}^1$ minus a finite number of points with coefficients in a local system.
The results of \cite{godfardConstructionHodgeStructures2024} and \Cref{uniqueextension} in particular imply the following.

\begin{corollary*}[{\ref{corollaryhodgesu}}]
  Let $r\geq 3$ be odd and $0<s<r$ prime to $r$, then the $\SU$ modular functor $\Nu$ of level $2r$ with root $\zeta_r=-e^{i\pi s/r}$
  supports a complex Hodge structure with integer 
  Hodge bidegrees. Moreover the CVHS on $\Nu_g(\ul)$ has weight\footnote{When $\sum_i\lambda_i$ is odd, $\Nu_g(\ul)$ is $0$.} $m=\sum_i\lambda_i/2$.
\end{corollary*}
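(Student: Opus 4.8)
The plan is to deduce everything from the genus-$0$ computation of the first paper \cite{godfardConstructionHodgeStructures2024} by means of the unique-extension result \Cref{uniqueextension}, so the real work is reduced to (a) checking hypotheses, (b) importing the genus-$0$ structure, and (c) propagating its numerical invariants to all genera. First I would verify that $\Nu$ satisfies the standing assumptions of \Cref{uniqueextension} (equivalently of \Cref{maintheoremcomplex}): being the $\SU$ modular functor it comes from $\mathfrak{sl}_2$, and the introduction records that modular functors of Lie-algebraic origin have semisimple quantum representations and an associated weakly ribbon category $\CN$ that is genuinely ribbon. Thus the general existence-and-uniqueness machinery applies to $\Nu$.

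Second, I would invoke \cite{godfardConstructionHodgeStructures2024}, where the rooted (genus-$0$) functor attached to $\Nu$ is equipped with an explicit complex Hodge structure coming from the realization of each fiber of $\Nu_0(\ul)$ as the cohomology of a rank-one local system on a configuration space in $\mathbb{P}^1$ minus finitely many points. This geometric origin forces integer Hodge bidegrees, and the explicit computation there gives the weight of $\Nu_0(\ul)$ as $\sum_i\lambda_i/2$ (with the shift freedom in $\Shift$ used to normalize the contribution of the color $0$ to be trivial). By \Cref{uniqueextension} this genus-$0$ Hodge structure extends uniquely to a complex Hodge structure on the entire modular functor $\Nu$; in particular each $\Nu_g(\ul)$ now carries a CVHS, which already proves the existence assertion.

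It then remains to establish integrality of the bidegrees and the weight formula in arbitrary genus, and this I would do by propagating the genus-$0$ data through the gluing axioms. Since every gluing isomorphism is an isomorphism of CVHS, it is additive on weights and on bidegrees; iterating the separating and non-separating gluing maps exhibits $\Nu_g(\ul)$ as a direct summand of tensor products of three-point and two-point genus-$0$ blocks, all of which have integer bidegrees, whence so does $\Nu_g(\ul)$ (consistently with the footnote, when $\sum_i\lambda_i$ is odd the block vanishes, so $\sum_i\lambda_i/2\in\Z$ whenever it is nonzero). For the weight, additivity under the non-separating gluing gives $m_g(\ul)=m_{g-1}(\ul,\mu,\mu^\dagger)+m_0(\mu,\mu^\dagger)$, and an induction on $g$ (with the separating gluing handling the base decompositions) yields $m_g(\ul)=\sum_i\lambda_i/2$.

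The step I expect to be the main obstacle is precisely this last weight bookkeeping. The naive guess that every block, including the auxiliary two-point spaces, carries weight $\sum_i\lambda_i/2$ is \emph{inconsistent} with gluing: in the separating gluing the internal color $\mu$ would contribute a spurious excess (of the form $2\mu$ for the trivial involution), so one is forced to identify the correct, opposite-sign weight $-(\mu+\mu^\dagger)/2$ of the two-point vacuum blocks $\Nu_0(\mu,\mu^\dagger)$ and to check that the internal-color contributions cancel against it in \emph{every} gluing. Pinning down this normalization—so that the unique extension genuinely has weight $\sum_i\lambda_i/2$ and integer bidegrees throughout—is the delicate point; once it is settled, the corollary follows formally.
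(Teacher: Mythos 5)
Your route is the paper's route: the body of the paper first records (from the geometric construction of the first paper, \Cref{theoremgeometricconstruction} and the compatibility of gluing maps proved there) that the rooted functor of $\Nusl$ carries a Hodge structure with weight $m=\frac{a_1+\dotsb+a_n-b}{2}$ and non-negative integer bidegrees, and then deduces \Cref{corollaryhodgesu} in a single line from \Cref{uniqueextension}. Your steps (a)--(c) are exactly this, so the skeleton of your argument is correct and matches the paper.

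The one place you go astray is precisely the point you flag as delicate. You resolve the apparent excess of $2\mu$ in the separating gluing by assigning the two-point block $\Nu_0(\mu,\mu^\dagger)$ the weight $-(\mu+\mu^\dagger)/2$. That cannot be right: $\Nu_0(\mu,\mu^\dagger)$ is itself an instance of the formula you are proving (the case $g=0$, $n=2$), so its weight must be $+(\mu+\mu^\dagger)/2=\mu$. The cancellation you are looking for is already built into the axioms rather than into a re-normalization of the two-point blocks: the gluing isomorphism carries the \emph{dual} factor $\Nu(S_0^2,\mu,\mu^\dagger)^\vee$ (axiom \textbf{(G)} of \Cref{definitionnonanomalousmodularfunctor}), which is reflected in the term $-w_0(\nu,\nu^\dagger)$ in axiom \textbf{(sep)} of \Cref{definitionweightfunction}. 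With $w_g(\ul)=\sum_i\lambda_i/2$ one checks
$w_{g_1}(\ul,\nu)+w_{g_2}(\umu,\nu^\dagger)-w_0(\nu,\nu^\dagger)=\sum_i\lambda_i/2+\sum_j\mu_j/2$,
so this is a genuine weight function in the sense of \Cref{definitionweightfunction} (necessarily exact, cf. \Cref{exactweightfunctions}), and the extension provided by \Cref{uniqueextension} has exactly these weights. Integrality of the bidegrees in higher genus then follows as you say, with the small correction that it is $q^*\Nu_g(\ul)$ (not $\Nu_g(\ul)$ itself) that is a sum of tensor products of genus-$0$ blocks and their duals; since pullback preserves bidegrees and bidegrees are locally constant, this suffices.
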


The $\SU$ modular functors $\Nu$ of level $2r$ also have Hermitian structures, i.e. families of Hermitian forms $h_g(\ul)$ on $\Nu_g(\ul)$
that are compatible with gluing isomorphisms and other axioms.
In \cite{godfardConstructionHodgeStructures2024}, we proved that each form $h_0(\ul)$ polarizes the CVHS $\Nu_0(\ul)=\bigoplus_{p+q=m}E^{p,q}$,
and computed explicitly its signs on each $E^{p,q}$. In \Cref{sectionsu}, we explain how from this sign computation,
formulas for the signatures of the forms $h_0(\ul)$ and associativity of the product $\star$ on $V$, one can obtain the following
formulas for Hodge numbers.

\begin{theorem*}[{\ref{theoremhodgenumberssu}}]
  Let $r\geq 3$ be odd and $0<s<r$ be odd and prime to $r$.
  Consider the $\SU$ modular functor $\Nu$ of level $2r$ with root $\zeta_r=-e^{i\pi s/r}$.
  Define, for $1\leq k\leq r-2$:
  \begin{equation*}
    w_k = \begin{cases} u &\text{if }\lfloor\frac{2s}{r}\rfloor+\lfloor\frac{ks}{r}\rfloor+\lfloor\frac{(k+1)s}{r}\rfloor\text{ is even,}\\
       v&\text{otherwise.}\end{cases}
  \end{equation*}
  Let $M$ be the following square matrix of size $r-1$ with coefficients in $\Z[u,v]$:
  \begin{equation*}
    M = \begin{pmatrix}
      0 & w_1     &           &           &     \\
      1 & 0       & w_2       &           &     \\
        & 1       & 0         & \ddots    &      \\
        &         & \ddots    & \ddots    & w_{r-2}   \\
        &         &           & 1         & 0
    \end{pmatrix}
  \end{equation*}
  and for $0\leq k\leq r-2$, let $M_k$ be its square top left submatrix of size $k$.
  Define a ring $V$ and elements $[k]\in V$ for $0\leq k\leq r-2$ by:
  \begin{equation*}
    V = \Z[u^{\pm 1},v^{\pm 1}][X]/\chi_M(X),\text{ and }[k]=\chi_{M_k}(X)
  \end{equation*}
  where $\chi_M(X)=\mathrm{det}(XI_{r-1}-M)$ denotes the characteristic polynomial of $M$.
  Let $\epsilon:V\ra \Z[u^{\pm 1},v^{\pm 1}]$ be the linear map sending $[0]$ to $1$ and $[k]$ to $0$ for $k>0$.
  Define $\Omega=\sum_k\epsilon([k]^2)^{-1}[k]^2\in V$.

  Then for $g\geq 0$ and $\lambda_1,\dotsc,\lambda_n\in \{0,1,\dotsc,r-2\}$,
  the Hodge numbers of the decomposition $\Nu_g(\ul)=\bigoplus_{p+q=m}E^{p,q}$
  are computed by:
  \begin{equation*}
    \sum_{p+q=m}\dim (E^{p,q})u^pv^q = \epsilon([\lambda_1]\dotsb[\lambda_n]\cdot\Omega^g).
  \end{equation*}
  Moreover, specializing at $(u,v)=(-1,1)$ if $\lfloor\frac{2s}{r}\rfloor$ is even or $(u,v)=(1,-1)$ if $\lfloor\frac{2s}{r}\rfloor$ is odd
  gives the Frobenius algebra computing signatures (see \cite[3]{marcheSignaturesTQFTsTrace2023}).
\end{theorem*}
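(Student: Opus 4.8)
The plan is to reduce the statement to genus $0$ via \Cref{uniqueextension} and a $2$-dimensional TQFT reconstruction, and then to pin down the genus-$0$ input using the polarization-sign computation of \cite{godfardConstructionHodgeStructures2024} together with Marché's signature formulas.

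First I would encode the Hodge numbers in the Hodge--Poincaré polynomial $P_g(\ul)=\sum_{p+q=m}\dim(E^{p,q})u^pv^q$, which is well defined with integer bidegrees and weight $m=\sum_i\lambda_i/2$ by \Cref{corollaryhodgesu}. Since this polynomial is additive under $\oplus$ and multiplicative under $\otimes$ (using $(E\otimes E')^{p,q}=\bigoplus_{p_1+p_2=p,\,q_1+q_2=q}E^{p_1,q_1}\otimes E'^{p_2,q_2}$), the gluing, vacuum and permutation isomorphisms of CVHS translate verbatim into the axioms of a commutative Frobenius algebra on the free $\Z[u^{\pm1},v^{\pm1}]$-module $V$ with basis $\Lambda_r$: the pair of pants gives the product $\star$, the vacuum color gives the unit $[0]$, and capping a boundary gives the counit $\epsilon$, with $\epsilon([\lambda_1]\cdots[\lambda_n])=P_0(\ul)$; this is the product of \Cref{subfrobenius}. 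The trivial involution makes the Frobenius pairing $\eta([\lambda],[\mu])=\epsilon([\lambda]\star[\mu])=P_0(\lambda,\mu)$ diagonal, so $\{[k]\}$ is $\eta$-orthogonal with $\eta([k],[k])=\epsilon([k]^2)$ a unit, and its dual basis is $[k]/\epsilon([k]^2)$. The nonseparating gluing $p$ then inserts the copairing, i.e.\ the handle element $\sum_k\epsilon([k]^2)^{-1}[k]^2=\Omega$, and the standard TQFT bookkeeping yields $P_g(\ul)=\epsilon([\lambda_1]\cdots[\lambda_n]\cdot\Omega^g)$. By \Cref{uniqueextension} it therefore suffices to identify the algebra $(V,\star,\epsilon)$, a purely genus-$0$ task.

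Next I would identify $V$. The $\SU$ fusion rules make tensoring with the fundamental color $1$ multiplicity-free with one-dimensional channels $k\mapsto k\pm1$, truncated at $k=r-2$; hence $[1]\star[k]=[k+1]+w_k[k-1]$ for scalars $w_k$, with boundary relation $[r-1]=0$. Assigning $[\lambda]$ the weight $\lambda/2$ and $u,v$ weight $1$ grades $V$ compatibly with $\star$ and $\epsilon$, so the channel $[k-1]$, being one-dimensional of the correct weight, forces $w_k$ to be a single monomial of total Hodge degree $1$; the explicit genus-$0$ Hodge structure of the fundamental channels in \cite{godfardConstructionHodgeStructures2024} shows it is one of the two variables, $w_k\in\{u,v\}$. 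Writing $[k]=\chi_{M_k}(X)$ with $[1]=X$, the three-term recursion $X[k]=[k+1]+w_k[k-1]$ is exactly the statement that multiplication by $[1]$ is the tridiagonal matrix $M$, the truncation reads $\chi_M(X)=[r-1]=0$, and thus $V\cong\Z[u^{\pm1},v^{\pm1}][X]/\chi_M(X)$ as claimed. Only the single bit ``$w_k=u$ or $w_k=v$'' now remains.

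To settle it I would pass to the signature specialization. By the Hodge--Riemann relations the polarization $h_0(\ul)$ is definite on each $E^{p,q}$ with sign depending only on the parity of $p$, so the ring map $\Z[u^{\pm1},v^{\pm1}]\to\Z$ sending $(u,v)\mapsto(-1,1)$ (resp.\ $(1,-1)$ according to the parity of $\lfloor\frac{2s}{r}\rfloor$, which fixes the global reference sign) sends $P_0(\ul)$ to $\pm\,\mathrm{sign}(h_0(\ul))$. Since this specialization preserves the Frobenius structure, the specialized algebra computes signatures and is precisely Marché's algebra \cite[3]{marcheSignaturesTQFTsTrace2023}, which already proves the last assertion of the theorem. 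Under the specialization each $w_k$ becomes an off-diagonal entry $\pm1$ of Marché's signature matrix; combined with purity ($w_k\in\{u,v\}$) this determines $w_k$, and reading off the sign from the computation of $h_0$ on the one-dimensional fundamental channels in \cite{godfardConstructionHodgeStructures2024} gives $w_k=u$ exactly when $\lfloor\frac{2s}{r}\rfloor+\lfloor\frac{ks}{r}\rfloor+\lfloor\frac{(k+1)s}{r}\rfloor$ is even. The genus-$g$ formula then follows from the reconstruction of the first paragraph.

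I expect the main obstacle to be this last step: establishing purity and one-dimensionality of the fundamental fusion channels inside the genus-$0$ geometric model, and carefully reconciling the Hodge--Riemann sign of $h_0$ on each individual piece $E^{p,q}$ (from \cite{godfardConstructionHodgeStructures2024}) with Marché's formula for the \emph{total} signature, so that the floor-sum parity is attached to $w_k$ rather than to its complement; an off-by-one in this bookkeeping would swap $u$ and $v$ throughout.
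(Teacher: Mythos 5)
Your proposal is correct and follows essentially the same route as the paper: reduce to genus $0$ via the Frobenius algebra/handle-element formalism (\Cref{theoremhodgenumbers}) and \Cref{uniqueextension}, extract the three-term recursion $[1]\star[k]=a_k[k+1]+b_k[k-1]$ from the fusion rules, get $a_k=1$ and $b_k\in\{u,v\}$ from the one-dimensionality and weight of the channels $\Nu_0(k\pm1;k,1)$ in the geometric model, and pin down the choice of $u$ versus $v$ by comparing the polarization sign of \Cref{thoeremgeometricsign} with Marché's signature formula (\Cref{signaturessu}). The sign bookkeeping you flag as the delicate point is exactly how the paper resolves it: $i(\zeta_r^2-\zeta_r^{-2})$ has the same sign as $-(-1)^{\lfloor 2s/r\rfloor}$, which attaches the floor-sum parity to $w_k$ as stated.
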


This description is an extension of that given by Marché in \cite[3]{marcheSignaturesTQFTsTrace2023} for signatures of $\SU$ modular functors.
In their recent paper \cite{belkaleMotivicFactorisationKZ2023}, Belkale, Fakhruddin and Mukhopadhyay independently computed the same rings $V$
by computing coefficients $a_k$ and $b_k$ in the proof of \Cref{theoremhodgenumberssu} with a different method (see \cite[Th. 1.11]{belkaleMotivicFactorisationKZ2023}).
See \Cref{subintroBFM} below for more comments on their work.

Let us finish by giving an example of Hodge numbers for $r=5$. Let $\Nu$ be the $\SU$ modular functor of level $5$ for the root $\zeta_5=-e^{i\pi 3/5}$.
Using the theorem above, one can check that in this case the subalgebra generated by $Y=[2]$ is
$\Z[u^{\pm 1},v^{\pm 1}][Y]/(Y^2-vY-uv)$.
Now $\Nu_0(2,\dotsc,2)$ ($n$ times the color $2$) has dimension $F_{n-1}$ the $(n-1)$-th Fibonacci number ($F_0=0$, $F_1=1$).
The weight of the Hodge decomposition is $m=\frac{2n}{2}=n$ and if we denote it by $\Nu_0(2,\dotsc,2)=\bigoplus_{p+q=n}E^{p,q}$,
we have:
\begin{equation*}
  \dim E^{p,q}=\binom{q-1}{p-1}.
\end{equation*}
These numbers can be arranged into a Pascal triangle as in \Cref{pascal}.
The antidiagonals contain the Hodge decompositions of the $\Nu_0(2,\dotsc,2)$ for different values of $n$.
For fixed $n$, the sum of the antidiagonal $\sum_{p+q=n}\binom{q-1}{p-1}=F_{n-1}$ represents the dimension of $\Nu_0(2,\dotsc,2)$,
while the alternate sum $\sum_{p+q=n}(-1)^q\binom{q-1}{p-1}$ is the signature of the Hermitian form $h_0(2,\dotsc,2)$.

\begin{figure}
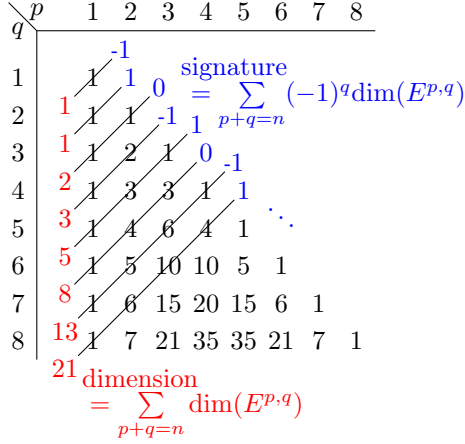

  \ctikzfig{pascal}
  \caption{Hodge numbers of $\Nu_0(2,\dotsc,2)$ for $r=5$ and $\zeta_5=-e^{i\pi 3/5}$, arranged inside a Pascal triangle.}
  \label[figure]{pascal}
\end{figure}


\subsection{\texorpdfstring{The case of modular functors associated to $\mathfrak{sl}_n$}{The case of modular functors associated to sln}}\label{subintroBFM}

In their work \cite{belkaleMotivicFactorisationKZ2023}, Belkale, Fakhruddin and Mukhopadhyay study Hodge structures
on some geometrically constructed braided functors $\widetilde{\Nu}_{\mathfrak{g},\zeta}$ associated to a complex finite dimensional simple Lie algebra $\mathfrak{g}$.
The conformal blocks of the functor $\widetilde{\Nu}_{\mathfrak{g},\zeta}$ coincide, where they are defined (i.e. in genus $0$), with those
of the usual modular functor $\Nu_{\mathfrak{g},\zeta}$ associated to $\mathfrak{g}$ (see \cite[chp. 7]{bakalovLecturesTensorCategories2000} for a definition).
Moreover, in \cite[11.1]{belkaleMotivicFactorisationKZ2023}, they explain how to compute the algebra organizing Hodge numbers of $\widetilde{\Nu}_{\mathfrak{sl}_n,\zeta}$
for every $n\geq 2$.

If one is able to prove that actually the braided functors of $\Nu_{\mathfrak{g},\zeta}$ coincides with $\widetilde{\Nu}_{\mathfrak{g},\zeta}$ (\Cref{conjectureBFM}),
then using the main result of this paper,
the computations of Belkale, Fakhruddin and Mukhopadhyay will extend painlessly to higher genus, without the need for a geometric model (\Cref{propositionBFM}).
See \Cref{subsectionlie} for more details.


\subsection{Organization of the paper}

In \Cref{sectionmodular}, we introduce the equivalent notions of topological and geometric modular functors. We follow Bakalov and Kirillov
\cite{bakalovLecturesTensorCategories2000}, with the main difference that we use twisted compactifications of moduli spaces of curves.

\Cref{sectionhodge} is devoted to the definition of the notion of Hodge structure on a complex modular functor, to the statement
of the main result (existence and uniqueness, \ref{maintheoremcomplex}) and to a discussion of the Frobenius algebra governing Hodge numbers and
of the CohFT associated to characteristic classes of Hodge structures.

In \Cref{sectionsu}, we use the results of the first paper \cite{godfardConstructionHodgeStructures2024} in this series
and \Cref{maintheoremcomplex} to compute Hodge numbers of all $\SU$ modular functors of level $2$ times an odd number.
We also explain how one could use \cite[1.11]{belkaleMotivicFactorisationKZ2023} to compute Hodge numbers of modular functors associated to $\mathfrak{sl}_n$,
and study gaps in Hodge decompositions (\Cref{subsectiongaps}).

The relationship between a modular functor $\Nu$ and its associated ribbon fusion category $\CN$ is the subject of \Cref{sectioncn}.
There, we prove that the functor $\Nu\mapsto \CN$ is fully-faithful.

\Cref{sectionnonabelian} explains what the non-Abelian Hodge correspondence means for modular functors, and \Cref{sectionocneanu}
discusses Ocneanu rigidity and applies it to prove the main result.


\subsection{Acknowledgements}

This paper forms part of the PhD thesis of the author.
The author thanks Julien Marché for his help in writing this paper.
The author also thanks Prakash Belkale, Adrien Brochier, Bertrand Deroin, Pavel Etingof, Philippe Eyssidieux, Matthieu Faitg, Javier Fresán,
Gregor Masbaum, Ramanujan Santharoubane and Aleksander Zakharov
for helpful discussions and correspondences.


\section{Modular functors}\label{sectionmodular}


\subsection{Topological definition of a modular functor}

Our definition of modular functor is engineered to be equivalent to that of Bakalov and Kirillov.
The main reference for this section is their book \cite[chap. 5]{bakalovLecturesTensorCategories2000}.

\begin{definition}\label{definitioncolourset}
    A set of colors is a finite set $\Lambda$ with a preferred element $0\in \Lambda$ and an involution $\lambda\mapsto \lambda^\dagger$
    such that $0^\dagger=0$.
\end{definition}

\begin{definition}\label{definitionsurfaces}
    Let $g,n\geq 0$. We define $S_g^n$ to be the compact surface of genus $g$ with $n$ boundary components,
    and $S_{g,n}$ to be the surface of genus $g$ with $n$ punctures and no boundary.
\end{definition}

We now define the source category of (non-anomalous) modular functors.

\begin{definition}\label{definitioncolouredcategory}
    Let $\Lambda$ be a set of colors. The category $\Surf$ of surfaces colored with $\Lambda$ is such that:
    \begin{description}
        \item[(1)] its objects are compact oriented surfaces $S$ together with
        an orientation preserving identification $\varphi_B: B\simeq S^1$ and a color $\lambda_B\in \Lambda$ for every component $B$ of $\partial S$ ;
        \item[(2)] its morphisms from $\Sigma_1=(S_1,\varphi^1,\underline{\lambda}^1)$
        to $\Sigma_2=(S_2,\varphi^2,\underline{\lambda}^2)$
        are isotopy classes of homeomorphisms $f:S_1\ra S_2$ preserving orientation such that for each component $B_1\subset\partial S_1$
        and its image $f(B_1)=B_2\subset \partial S_2$, we have $\lambda_{B_1}=\lambda_{B_2}$ and $\varphi^2_{B_2}\circ f=\varphi^1_{B_1}$.
        \item[(3)] the composition is the composition of homeomorphisms.
    \end{description}
    This category has a natural monoidal structure induced by the disjoint union $\sqcup$.
\end{definition}

We now describe the additional gluing structure on $\Surf$.
Let $S$ be a compact surface and $\partial_+S\sqcup\partial_-S\subset\partial S$ be two components of its boundary.
Let $\varphi_{\partial_{\pm}S}:\partial_{\pm}S\simeq S^1$ be identifications of these components with $S^1$.

Let $S_{\pm}$ be the surface obtained from $S$ by gluing $\partial_+S$ to $\partial_-S$
along $\varphi_{\partial_-S}^{-1}\circ(z\mapsto z^{-1})\circ\varphi_{\partial_+S}$.
Then $S_{\pm}$ is called the gluing of $S$ along $\partial_{\pm}S$. Notice that if we exchange the roles of $\partial_+S$ and $\partial_-S$
in the gluing process we get a surface $S_{\mp}$ that is canonically isomorphic to $S_{\pm}$. See \Cref{figuregluing}.

\begin{figure}
  \def\svgwidth{0.6\linewidth}
\begingroup%
  \makeatletter%
  \providecommand\color[2][]{%
    \errmessage{(Inkscape) Color is used for the text in Inkscape, but the package 'color.sty' is not loaded}%
    \renewcommand\color[2][]{}%
  }%
  \providecommand\transparent[1]{%
    \errmessage{(Inkscape) Transparency is used (non-zero) for the text in Inkscape, but the package 'transparent.sty' is not loaded}%
    \renewcommand\transparent[1]{}%
  }%
  \providecommand\rotatebox[2]{#2}%
  \newcommand*\fsize{\dimexpr\f@size pt\relax}%
  \newcommand*\lineheight[1]{\fontsize{\fsize}{#1\fsize}\selectfont}%
  \ifx\svgwidth\undefined%
    \setlength{\unitlength}{253.71787244bp}%
    \ifx\svgscale\undefined%
      \relax%
    \else%
      \setlength{\unitlength}{\unitlength * \real{\svgscale}}%
    \fi%
  \else%
    \setlength{\unitlength}{\svgwidth}%
  \fi%
  \global\let\svgwidth\undefined%
  \global\let\svgscale\undefined%
  \makeatother%
  \begin{picture}(1,0.78063141)%
    \lineheight{1}%
    \setlength\tabcolsep{0pt}%
    \put(0,0){\includegraphics[width=\unitlength,page=1]{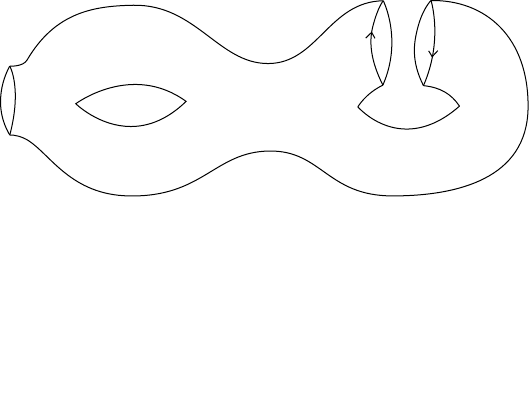}}%
    \put(0.83310804,0.68330276){\color[rgb]{0,0,0}\makebox(0,0)[lt]{\lineheight{1.25}\smash{\begin{tabular}[t]{l}$\partial_-S$\end{tabular}}}}%
    \put(0.59633921,0.67482277){\color[rgb]{0,0,0}\makebox(0,0)[lt]{\lineheight{1.25}\smash{\begin{tabular}[t]{l}$\partial_+S$\end{tabular}}}}%
    \put(0.23714832,0.66825613){\color[rgb]{0,0,0}\makebox(0,0)[lt]{\lineheight{1.25}\smash{\begin{tabular}[t]{l}$S$\end{tabular}}}}%
    \put(0.23714832,0.2549931){\color[rgb]{0,0,0}\makebox(0,0)[lt]{\lineheight{1.25}\smash{\begin{tabular}[t]{l}$S_\pm$\end{tabular}}}}%
    \put(0,0){\includegraphics[width=\unitlength,page=2]{gluing.pdf}}%
  \end{picture}%
\endgroup%

  \caption{Gluing $S_{\pm}$ of $S$ along $\partial_{\pm}S$. The $\bullet$ represents the image of $1$ in the boundary parametrization.}
  \label[figure]{figuregluing}
\end{figure}

\begin{notation}
  The sphere $S_0^2$ with $2$ holes and parametrized boundary components has a unique order $2$ isotopy class of homeomorphism exchanging the boundary components
  in a way that is compatible with their parametrization. In the model $S^1\times [-1,+1]$ it is given by $(z,t)\mapsto (\overline{z},-t)$.
  We will denote this class by $\iota$.
  For any $\lambda$ in a set of colors $\Lambda$, we will also denote by $\iota$ the corresponding morphism in $\Surf$
  from $(S_0^2,\lambda,\lambda^\dagger)$ to $(S_0^2,\lambda^\dagger,\lambda)$.
\end{notation}

We now introduce the notion of a non-anomalous modular functor.
We will work with the more general notion of anomalous modular functors in the sequel so as to include more interesting examples,
like the $\SU$ modular functors and modular functors associated to simple Lie algebras.
However, the anomaly will be of little importance in the Hodge theory of modular functors and can be safely ignored by the reader.
See \Cref{subsectionanomaly} below for a discussion of the anomaly.

\begin{definition}[Non-anomalous Modular Functor]\label{definitionnonanomalousmodularfunctor}
    Let $\Lambda$ be a set of colors and $R$ a ring.
    Then a modular functor is the data of a monoidal functor:
    $$\Nu:\Surf\lra \text{finite projective }R-\text{modules}$$
    where the monoidal structure on the target is understood to be the tensor product.
    This data is augmented by the following isomorphisms.
    \begin{description}
        \item[(G)] For any surface $S$ and pair of boundary components $\partial_{\pm}S$ lying on distinct connected components of $S$,
        let $S_{\pm}$ be the gluing of $S$ along $\partial_{\pm}S$. For any coloring $\underline{\lambda}$
        of the components of $\partial S_{\pm}$, an isomorphism as below is given:
        \begin{equation}\label{gluingunrooted}
            \Nu(S_{\pm},\underline{\lambda})\simeq \bigoplus_{\mu\in\Lambda}\Nu(S,\mu,\mu^\dagger,\underline{\lambda})
            \otimes \Nu(S_0^2,\mu,\mu^\dagger)^\vee.
        \end{equation}
    \end{description}
    The isomorphisms of \textbf{(G)} are assumed to be functorial and compatible with disjoint unions.
    Gluing isomorphisms along distinct pairs of boundary components must commute with each other. Moreover, we ask for the gluing to be symmetric in the sense
    that the following diagram commutes:
    \[\begin{tikzcd}
      {\Nu(S_{\pm},\underline{\lambda})} & {\bigoplus_{\mu\in\Lambda}\Nu(S,\mu,\mu^\dagger,\underline{\lambda})             \otimes \Nu(S_0^2,\mu,\mu^\dagger)^\vee} \\
      \\
      {\Nu(S_{\mp},\underline{\lambda})} & {\bigoplus_{\mu\in\Lambda}\Nu(S,\mu^\dagger,\mu,\underline{\lambda})\otimes \Nu(S_0^2,\mu,\mu^\dagger)^\vee}
      \arrow["{\mathrm{S}}", from=1-2, to=3-2]
      \arrow["{\textbf{(G)}}", from=1-1, to=1-2]
      \arrow["{\textbf{(G)}}", from=3-1, to=3-2]
      \arrow["{\mathrm{can}}"', from=1-1, to=3-1]
    \end{tikzcd}\]
    where $\mathrm{S}$ permutes the factors $\Nu(S,\mu,\mu^\dagger,\underline{\lambda})$
    and acts by $\Nu(\iota)^\vee$ on $\Nu(S_0^2,\mu,\mu^\dagger)^\vee$.

    This rule, also sometimes called fusion or factorization rule, is the most important property of modular functors.
    The functor is also assumed to verify $2$ more axioms, called normalization and non-degeneracy:
    \begin{description}
        \item[(N)] $\Nu(S_0^1,\lambda)$ is canonically isomorphic to $R$ if $\lambda=0$ and is $0$ otherwise.
        $\Nu(S_0^2,0,0)$ is also canonically isomorphic to $R$;
        \item[(nonD)] For each $\lambda$, $\Nu(S_0^2,\lambda,\lambda^\dagger)\neq 0$.
    \end{description}
\end{definition}

\begin{remark}\label{remarkequivalencewithBK}
  Over a field, our definition of modular functor is equivalent with that of Bakalov and Kirillov (\cite[5.1.13]{bakalovLecturesTensorCategories2000}).
  To see this, set $\mathcal{C}:=\VL$ (see \Cref{definitionsemisimplecategory}) and $R:=\bigoplus_\lambda\Nu(S_0^2,\lambda,\lambda^\dagger)[\lambda]\otimes[\lambda^\dagger]$.
\end{remark}

\begin{remark}
  One can think of \Cref{gluingunrooted} as the gluing of $\partial S_{\pm}$ to the boundaries of a cylinder $S_0^2$.
  See \Cref{subcobordism} below for more on this.
  By applying the axiom to the gluing of one boundary component of $S_0^2$ to a boundary component of another copy of $S_0^2$, one gets the following:
  \begin{equation*}
    \mathrm{dim}\: \Nu(S_0^2,\lambda,\mu) = \begin{cases}
      $1$ & \text{if }\lambda=\mu^\dagger\text{,} \\
      $0$ & \text{otherwise.}
    \end{cases}
  \end{equation*}
  Hence if $\lambda=\lambda^\dagger$ for some $\lambda\in\Lambda$, as $\Nu(\iota)$ has order $2$, it acts on $\Nu(S_0^2,\lambda,\lambda)$
  by multiplication by $1$ or $-1$. In the former case we will call $\lambda$ a symmetric object and in the latter an antisymmetric object.
  Note that by normalization, $0$ is symmetric.
\end{remark}

\begin{remark}\label{remarkvacuum}
  By the normalization axiom, for $S$ obtained from $S'$ by removing the interior of a disk we have a canonical isomorphism
  $\Nu(S,0)\simeq \Nu(S')$ called a vacuum isomorphism.
\end{remark}

\begin{remark}\label{remarklevel}
  Let $S_{\pm}$ be a colored surface constructed as a gluing of a surface $S$ along $\partial_{\pm}S$.
  Let $\gamma$ denote the simple closed curve that is the image of $\partial_{\pm}S$ in $S_{\pm}$.
  Then the Dehn twist $T_\gamma$ acts block-diagonally on the decomposition \textbf{(G)}.

  Moreover, one can easily see that its action on the block $\Nu(S,\mu,\mu^\dagger,\underline{\lambda})\otimes \Nu(S_0^2,\mu,\mu^\dagger)$
  is by a scalar $t_\mu\in R^\times$ that depends only on $\mu$ and not on the surface $S$. Also, $t_\mu=t_{\mu^\dagger}$.
\end{remark}

We now define isomorphisms of modular functors.

\begin{definition}\label{definitionmorphismstopological}
  Let $\Nu^1$, $\Nu^1$ be modular functors with respective sets of colors $\Lambda_1$ and $\Lambda_2$.
  An isomorphism of modular functors is a pair $(f,\phi)$, where $f:\Lambda_1\ra\Lambda_2$
  is a bijection preserving $0$ and commuting to $(\cdot)^\dagger$, and $\phi$ is a monoidal natural isomorphism:
  \begin{equation*}
    \phi:\Nu^1\simeq f^*\Nu^2
  \end{equation*}
  commuting to isomorphisms of \textbf{(G)} and \textbf{(N)} in \Cref{definitionnonanomalousmodularfunctor}.
  Here $f^*$ is the pullback by the isomorphism $\mathrm{Surf}_{\Lambda_1}\ra\mathrm{Surf}_{\Lambda_2}$ induced by $f$.

  We will denote by $\Mod$ the groupoid of modular functors.
\end{definition}

We end this subsection by noticing that we can restrict the definition
to genus $0$.

\begin{definition}[Genus $0$ Modular Functor]
  Let $\Lambda$ be a set of colors and $\Surf^0\subset\Surf$ be the full subgroupoid of genus $0$ surfaces.
  The definition of a genus $0$ modular functor is then obtained from that of a modular functor by replacing $\Surf$
  by $\Surf^0$ and restricting the gluing axiom to gluing of boundary components that lie on distinct connected components.
\end{definition}

\begin{remark}
  We have a forgetful functor:
  \begin{equation*}
    \Mod\lra \Mod^0
  \end{equation*}
  from the category of (non-anomalous) modular functors to the category of genus $0$ modular functors.
\end{remark}


\subsection{\texorpdfstring{Cobordism notation, genus $0$, ribbon and braided functors}{Cobordism notation, genus 0, ribbon and braided functors}}\label{subcobordism}

\begin{definition}[Cobordism Notation]
  Let $S$ be a compact surface with parametrized boundary and $\partial\Sigma=I\sqcup O$ a partition of its boundary components into inputs and outputs.
  Let $\Lambda$ be a set of colors and $\ul:I\ra\Lambda$ and $\umu:O\ra\Lambda$ be colorings of the inputs and outputs.
  Then for $\Nu$ a modular functor on $\Surf$, we define:
  \begin{equation*}
    \Nu(S,\umu;\ul) := \Nu(S,\umu^\dagger,\ul)\bigotimes_{o\in O}\Nu(S_0^2,\mu_o^\dagger,\mu_o)^\vee
  \end{equation*}
  i.e. for every output $o$, we replace its color $\mu_o$ by $\mu_o^\dagger$ and tensor with $\Nu(S_0^2,\mu_o^\dagger,\mu_o)^\vee$.
\end{definition}

The raison d'être of the cobordism notation is to explain the term $\Nu(S_0^2,\mu_o^\dagger,\mu_o)^\vee$ in the gluing axiom.
Remark that this term disappears if we write the gluing axiom with this notation. Indeed, say we want to glue the input $i\in I$
to the output $o\in O$ of $S$ to obtain the surface $S_\pm$. Fix some coloring of the boundary components of $S_\pm$.
Then, omitting this coloring in the notation, the gluing axiom becomes:
\begin{equation}\label{gluingcobordism}
  \Nu(S_{\pm})\simeq \bigoplus_{\mu\in\Lambda}\Nu(S,\mu;\mu).
\end{equation}
So both the involution $(\cdot)^\dagger$ and the extra term do not appear.

Also remark that $\Nu(S_0^2,\mu^\dagger,\mu;)=\Nu(S_0^2,\mu^\dagger,\mu)^\vee$.
Hence if we take cobordism notation, the gluing isomorphism of \Cref{gluingunrooted} is the formula for the gluing of a cylinder
$S_0^2$ with two outputs on two inputs of $S$, as in \Cref{figuregluingcobordism}.

\begin{figure}
  \def\svgwidth{0.6\linewidth}
\begingroup%
  \makeatletter%
  \providecommand\color[2][]{%
    \errmessage{(Inkscape) Color is used for the text in Inkscape, but the package 'color.sty' is not loaded}%
    \renewcommand\color[2][]{}%
  }%
  \providecommand\transparent[1]{%
    \errmessage{(Inkscape) Transparency is used (non-zero) for the text in Inkscape, but the package 'transparent.sty' is not loaded}%
    \renewcommand\transparent[1]{}%
  }%
  \providecommand\rotatebox[2]{#2}%
  \newcommand*\fsize{\dimexpr\f@size pt\relax}%
  \newcommand*\lineheight[1]{\fontsize{\fsize}{#1\fsize}\selectfont}%
  \ifx\svgwidth\undefined%
    \setlength{\unitlength}{253.7342438bp}%
    \ifx\svgscale\undefined%
      \relax%
    \else%
      \setlength{\unitlength}{\unitlength * \real{\svgscale}}%
    \fi%
  \else%
    \setlength{\unitlength}{\svgwidth}%
  \fi%
  \global\let\svgwidth\undefined%
  \global\let\svgscale\undefined%
  \makeatother%
  \begin{picture}(1,0.89277844)%
    \lineheight{1}%
    \setlength\tabcolsep{0pt}%
    \put(0,0){\includegraphics[width=\unitlength,page=1]{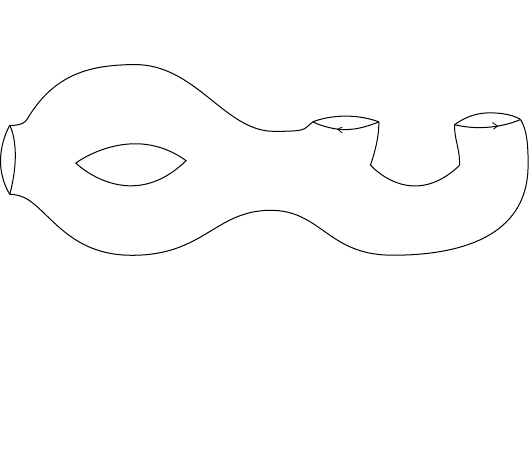}}%
    \put(0.89070155,0.60166395){\color[rgb]{0,0,0}\makebox(0,0)[lt]{\lineheight{1.25}\smash{\begin{tabular}[t]{l}$\partial_-S$\end{tabular}}}}%
    \put(0.61325604,0.60209933){\color[rgb]{0,0,0}\makebox(0,0)[lt]{\lineheight{1.25}\smash{\begin{tabular}[t]{l}$\partial_+S$\end{tabular}}}}%
    \put(0.23713302,0.66821308){\color[rgb]{0,0,0}\makebox(0,0)[lt]{\lineheight{1.25}\smash{\begin{tabular}[t]{l}$S$\end{tabular}}}}%
    \put(0.23713302,0.25497671){\color[rgb]{0,0,0}\makebox(0,0)[lt]{\lineheight{1.25}\smash{\begin{tabular}[t]{l}$S_\pm$\end{tabular}}}}%
    \put(0,0){\includegraphics[width=\unitlength,page=2]{gluingcobordism.pdf}}%
  \end{picture}%
\endgroup%

  \caption{Gluing $S_{\pm}$ of $S$ with $2$ inputs to $S_0^2$ with $2$ outputs along $\partial_{\pm}S$.}
  \label[figure]{figuregluingcobordism}
\end{figure}

We know define ribbon functors by allowing only connected genus $0$ surfaces with exactly $1$ output, and gluings of an input of one surface
to the output of another.

\begin{definition}\label{definitionrootedfunctor}
  For $\Lambda$ a finite set with preferred element $0$,
  let $\Disk$ be the category whose objects are connected genus $0$ surfaces with parametrized boundary components
  colored with $\Lambda$ together with the choice of a boundary component that we call the output. The others will be called inputs.
  The morphisms are those in $\Surf$ that preserve the decomposition in inputs and output.
  We will call such surfaces disks and denote by $D^n$ the disk with $n$ inputs.

  A ribbon functor is the data of functor:
  \begin{equation*}
    \Nu:\Disk\lra\text{finite projective }R-\text{modules}
  \end{equation*}
  together with the following gluing isomorphism:
  \begin{description}
    \item[(G)] For any disks $D_{\mathrm{int}}$ and $D_{\mathrm{ext}}$ such that the input $i\subset\partial D_{\mathrm{ext}}$ and the output
    $o\subset D_{\mathrm{int}}$ are unlabeled, denote by $D$ the disk obtained by gluing of $i$ onto $o$.
    Then for each such data, an isomorphism as below is given (cobordism notations are used; colors of $\partial D$ are implicit):
    \begin{equation}\label{gluingrooted}
        \Nu(D)\simeq \bigoplus_{\mu\in\Lambda}\Nu(D_{\mathrm{ext}},;\mu)\otimes \Nu(D_{\mathrm{int}},\mu;).
    \end{equation}
  \end{description}
  Such isomorphisms are assumed to be functorial and compatible with each other.
  We also require normalization and non-degeneracy, that we combine into one axiom, and a duality axiom:
  \begin{description}
    \item[(N)] $\Nu(D^0,\lambda;)$ is canonically isomorphic to $R$ if $\lambda=0$ and is $0$ otherwise.
    For each $\mu$, $\Nu(D^1,\mu;\mu)$ is also canonically isomorphic to $R$;
    \item[(Dual)] For each $\lambda$ there exists a unique $\mu$ such that $\Nu(0;\lambda,\mu)\neq 0$.
    This $\mu$ will be denoted $\lambda^\dagger$.
  \end{description}
\end{definition}

\begin{definition}\label{definitionbraidedfunctor}
  For $\Lambda$ a finite set with preferred element $0$,
  let $\Disk^b$ be the fundamental groupoid of the space\footnote{The connected components of this space are parametrized
  by the number of disks embedded and the colorings. Each component is parametrized by the possible centers $x_i$ and radii $r_i$.}
  of finite configurations of disjoint $\Lambda$-colored open disks $\{(\mathbb{D}(x_i,r_i),\lambda_i)\mid i\}$ in a $\Lambda$-colored unit disk $(\overline{\mathbb{D}}(0,1),\mu)$.
  We may see an object in $\Disk^b$ as an object of $\Disk$ by considering
  $\overline{\mathbb{D}}(0,1)\setminus \bigsqcup_i \mathbb{D}(x_i,r_i)$.
  By replacing $\Disk$ by $\Disk^b$ in the \Cref{definitionrootedfunctor}, one gets the definition of \emph{braided functor}.
\end{definition}

The essential difference between $\Disk^b$ and $\Disk$ is that Dehn twists around input boundary components
are morphisms in $\Disk$ but not in $\Disk^b$. Hence, for $\Nu$ a braided functor, the group $\Aut{(D^n,\mu;\ul)}$
acting on $\Nu(D^n,\mu;\ul)$ is a braid group while, for $\Nu$ a ribbon functor, it is a (larger) framed braid group.

\begin{remark}
  For a fixed set of colors, cobordism notation induces a functor:
  \begin{equation*}
    \Mod^0\lra \Ribfun
  \end{equation*}
  from the category of genus $0$ modular functors to the category of ribbon functors.
  Moreover, the functor $\Disk^b\ra\Disk$ induces a functor:
  \begin{equation*}
    \Ribfun \lra \Braidfun
  \end{equation*}
  from the category of ribbon functors to the category of braided functors.
\end{remark}


\subsection{Levels for modular functors}

Over integral domains, all modular functors factor through a specific quotient $\Surf(r)$ of $\Surf$.
Note that we will have a slightly augmented definition of level for anomalous modular functors, see \Cref{definitionlevel}.

For modular functors, the following proposition is a direct consequence of the fact that, over a field,
any finite-dimensional representation of the mapping class group
of a surface of genus $g\geq 3$ sends Dehn twists to quasi-unipotent matrices \cite[2.5]{aramayonaRigidityPhenomenaMapping2016}.
For genus $0$ modular functors and ribbon functors $\Nu$
this is the Anderson-Moore-Vafa theorem (\cite[8.18.2]{etingofTensorCategories2015}).
The case of braided functors requires an extra argument, that was communicated to the author by Pavel Etingof, and that we now sketch.
A braided fusion category $\Cat$ admits a pivotalization $\widetilde{\Cat}\ra\Cat$. It is a spherical category
(see \cite[7.21.10]{etingofTensorCategories2015}) whose simple objects map $2$ to $1$ onto those of $\Cat$.
Moreover, the braiding on $\Cat$ lifts to $\widetilde{\Cat}$. As $\widetilde{\Cat}$ is spherical, braided and fusion,
it is ribbon (see \cite[A.4]{henriquesCategorifiedTraceModule2016}). In particular the braiding on $\widetilde{\Cat}$ has finite order
on each pair of objects.
Hence the braiding on $\Cat$ has the same property.
See \Cref{subweaklyribbon,subocneanu} for the definitions of braided and ribbon fusion categories, and their relation to braided and ribbon functors.

\begin{proposition}\label{propositionlnonanomalousevel}
  Let $\Nu$ be a modular functor over an integral domain $R$ with set of colors $\Lambda$.
  Then the twists $t_\mu\in R^\times$ as in \Cref{remarklevel} are roots of unity.
  The same conclusion holds for genus $0$ modular functors and ribbon functors over integral domains.

  Let $\Nu$ a braided functor over an integral domain $R$ with set of colors $\Lambda$.
  Then with the notation $\beta_{\lambda;\mu,\nu}:\Nu(\lambda;\mu,\nu)\ra \Nu(\lambda;\nu,\mu)$ for the braidings,
  the eigenvalues of the $\beta_{\lambda;\mu,\nu}\circ\beta_{\lambda;\mu,\nu}$ are roots of unity.
\end{proposition}

\begin{definition}[Level]\label{defintionnonanomalouslevel}
  Let $\Nu$ be a modular, genus $0$ modular or ribbon functor over a ring $R$ with set of colors $\Lambda$.
  A level for $\Nu$ is an integer $r$ such that $t_\mu^r=1$ for all $\mu\in\Lambda$.
  For $\Nu$ a braided functor over a ring $R$ with set of colors $\Lambda$,
  a level is an integer $r$ such that each
  $(\beta_{\lambda;\mu,\nu}\circ\beta_{\lambda;\mu,\nu})^{r}$ is the identity.
\end{definition}

By \Cref{propositionlnonanomalousevel}, all modular, genus $0$ modular, ribbon or braided functors over integral domains admit levels.

\begin{definition}
  For $\Lambda$ a set of colors and $r\geq 1$, denote by $\Surf(r)$ the quotient of the groupoid $\Surf$ by the $r$-th powers of Dehn twists.
\end{definition}

\begin{proposition}\label{propr}
  Let $\Nu$ be a modular functor and $r$ a level for it. Then $\Nu$ factors through $\Surf(r)$ as a functor.
\end{proposition}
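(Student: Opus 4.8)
The plan is to show that $\Nu$ sends every $r$-th power of a Dehn twist to the identity. Since the morphisms of $\Surf(r)$ are obtained from those of $\Surf$ by imposing exactly the relations $T_\gamma^r=\id$, and since $\Nu$ restricts on each automorphism group to a homomorphism into some $\mathrm{GL}$ (so that the normal closure of these relations is automatically killed once each generator is), establishing $\Nu(T_\gamma^r)=\id$ for all $\gamma$ is precisely what is needed for $\Nu$ to factor as $\Nu=\overline{\Nu}\circ Q$ through the quotient functor $Q:\Surf\ra\Surf(r)$. Thus it suffices to prove $\Nu(T_\gamma)^r=\Nu(T_\gamma^r)=\id$ for every simple closed curve $\gamma$ on every object $S$ of $\Surf$.

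Fix such a $\gamma$. If $\gamma$ is null-homotopic then $T_\gamma=\id$ and there is nothing to prove. Otherwise $\gamma$ is essential, and I would realize it as the gluing curve of a gluing so as to apply \Cref{remarklevel}. Concretely, cut $S$ along $\gamma$ to obtain a surface $S'$ with two new boundary components $\partial_+S'$ and $\partial_-S'$; after choosing parametrizations, $S$ is the gluing $S'_\pm$ and $\gamma$ is the image of $\partial_\pm S'$. By \Cref{remarklevel}, under the gluing isomorphism \textbf{(G)} the automorphism $\Nu(T_\gamma)$ is block-diagonal, acting on the summand indexed by $\mu\in\Lambda$ by the scalar $t_\mu\in R^\times$. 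Since $r$ is a level, $t_\mu^r=1$ for all $\mu$, so $\Nu(T_\gamma)^r$ acts by $t_\mu^r=1$ on each summand and equals $\id$.

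It remains to organize the realization of $\gamma$ as a gluing curve according to its topological type, which is where the only real subtlety lies. When $\gamma$ is separating, cutting produces two connected components, so $S'_\pm$ is a gluing of boundary components lying on distinct connected components and the gluing isomorphism \textbf{(G)} of \Cref{definitionnonanomalousmodularfunctor} applies verbatim; the boundary-parallel case is the special instance where one of the two pieces is an annulus $S_0^2$, forcing a single block with eigenvalue $t_{\lambda_B}$. When $\gamma$ is non-separating, $S'$ is connected and the relevant gluing identifies two boundary components of the \emph{same} connected component. This non-separating gluing is part of the full modular-functor structure through the equivalence with Bakalov--Kirillov recorded in \Cref{remarkequivalencewithBK}, and \Cref{remarklevel} applies to it as well, with the same universal scalars $t_\mu$; alternatively, by the change-of-coordinates principle every non-separating curve is carried to a fixed standard one by a homeomorphism $f$, whence $\Nu(T_\gamma)=\Nu(f)\,\Nu(T_{\gamma_0})\,\Nu(f)^{-1}$ has the same eigenvalues as a single standard case, so one realization suffices.

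I expect the main obstacle to be precisely this non-separating situation: confirming that the block-diagonalization of \Cref{remarklevel}, stated for a general gluing, is genuinely available when the glued boundary components lie on one connected component, with twists $t_\mu$ still satisfying $t_\mu^r=1$. Once that is secured, combining the three cases yields $\Nu(T_\gamma)^r=\id$ for every $\gamma$ and completes the proof.
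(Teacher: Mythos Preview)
Your proof is correct and follows the intended (implicit) argument: the paper states this proposition without proof, and the natural route is precisely the one you take, namely deducing $\Nu(T_\gamma)^r=\id$ for every $\gamma$ from the block-diagonal action of Dehn twists with eigenvalues $t_\mu$ (\Cref{remarklevel}) together with $t_\mu^r=1$. Your handling of the one genuine subtlety---that the gluing axiom \textbf{(G)} in \Cref{definitionnonanomalousmodularfunctor} is stated only for boundary components on distinct connected components, so the non-separating decomposition is not literally an axiom---is appropriate: invoking the equivalence with Bakalov--Kirillov (\Cref{remarkequivalencewithBK}), where arbitrary gluings are axiomatized, is exactly how the paper implicitly relies on the non-separating case elsewhere (compare the geometric definition's axiom \textbf{(G-nonsep)} and the equivalence theorem).
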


One can provide similar definitions of $\Surf^0(r)$, $\Disk(r)$ and $\Disk^b(r)$, for which analogs of \Cref{propr} hold.


\subsection{Geometric definition of a modular functor}\label{subgeometricdefinition}

We define geometric modular functors, which are equivalent to topological ones.
The main reference for this section is \cite[chap. 6]{bakalovLecturesTensorCategories2000},
but beware that we have made different choices of moduli spaces.

Here is a quick guide to the approach of Bakalov and Kirillov. The idea is to replacing surfaces by Riemann surfaces and boundary components
by marked points with non-zero tangent vectors on them. The groupoid of surfaces $\Surf$ is then replaced
by some moduli spaces $\Mgb{g}{n}$ of such Riemann surfaces with structures.
In this setting, representations of the groupoid of surfaces (i.e. topological modular functors)
are replaced by flat bundles on the $\Mgb{g}{n}$. The gluing axiom and the normalization axiom are reformulated in this context,
using nodal degenerations of Riemann surfaces.

However moduli spaces $\Mgb{g}{n}$ are not compact/proper and the gluing maps are not algebraic (see \cite[6.2.11]{bakalovLecturesTensorCategories2000}).
This makes the application of non-Abelian Hodge theory on the $\Mgb{g}{n}$ harder.
Hence we replace them by proper DM stacks $\Mgrb{g}{n}{r}$, where $r$ is a level for the modular functor.
With this approach, we get algebraic gluing maps, see \Cref{separatinggluing,nonseparatinggluing}.

Let us first mention some non-standard definition we will use.
\begin{definition}
  A pair $(g,n)$ of non-negative integers will be called tangent-stable if
  $(g,n)\neq (0,0),(0,1),(1,0)$.
\end{definition}
Below, when talking about the moduli spaces $\Mgrp{g}{n}{r}$ or $\Mgrb{g}{n}{r}$,
\textbf{we will always implicitly assume that $(g,n)$ is tangent-stable.}

\begin{notation}
  Let $\Mgrp{g}{n}{r}$ be the moduli space of connected stable nodal $r$-twisted curves of genus $g$ with $n$ distinct smooth marked points
  (see \cite[1.3]{chiodoStableTwistedCurves2008}).
\end{notation}

\begin{remark}\label{remarkmgnr}
  Alternatively, $\Mgrp{g}{n}{r}$ can be defined as follows. Let $D_i\subset \oMg{g}{n}$ for $i=1,\dotsc,k$ be the components
  of the boundary divisor. Then $\Mgrp{g}{n}{r}$ is obtained from $\oMg{g}{n}$ by taking $r$-th root stack independently locally
  on each $D_i$:
  \begin{equation*}
    \Mgrp{g}{n}{r}=\oMg{g}{n}\left[\sum_i\frac{D_i}{r}\right].
  \end{equation*}
  See \cite[2.3, 4.5]{chiodoStableTwistedCurves2008} for more details on what is meant by independent root stack on a normal crossing divisor
  with self-crossings in this context.
  With this description, it is clear that the fundamental groupoid of $\Mgrp{g}{n}{r}$ is equivalent
  to the quotient $\PMod{S_{g,n}}/\langle T_\delta^r\mid \delta\rangle$ of the pure mapping class group of the $n$ times punctured
  genus $g$ surface by all $r$-th powers of Dehn twists.
\end{remark}

Let us now introduce a construction that transforms a principal $\C^\times$-bundle into a $\mu_r$-gerbe.

\begin{definition}\label{cstartomur}
  Let $P\ra B$ be a principal $\C^\times$-bundle over a stack $B$ and $r\geq 1$ an integer. Let $L\ra B$ be the corresponding line bundle.
  Denoting by $D\subset L$ the zero section, $P_r$ is defined as $D/r$ the corresponding divisor of the root stack $L[D/r]$.
  It is a $\mu_r$-gerbe over $B$.
\end{definition}

\begin{notation}
  Let $\Mgrb{g}{n}{r}$ be the moduli space of connected stable nodal $r$-twisted curves of genus $g$
  with $n$ distinct smooth marked orbifold points of order $r$ and a section at each such point.
\end{notation}

\begin{remark}\label{conventionzerotwo}
  The moduli space $\Mgb{0}{2}$ of smooth genus $0$ curves with $2$ marked and non-zero tangent vectors
  at the marked points
  is a variety (isomorphic to $\C^\times$) and hence a DM stack, but its $r$-twisted cousin $\Mgrb{0}{2}{r}$
  is not a DM stack. However, it's orbifold fundamental group is $\mu_r$. \textbf{Hence, we will use the convention that
  $\bm{\Mgrb{0}{2}{r}=\Br{r}}$}. We will think of $\Mgrb{0}{2}{r}=\Br{r}$ as the following family of curves with sections: it is the curve $\mathbb{P}^1/\mu_r$
  together with a section at each of the 2 orbifold points $0$ and $\infty$: we make $\xi\in\mu_r$ act
  trivially on the curve, act by $\xi$ on the section at $0$ and act by $\xi^{-1}$
  on the section at $\infty$. We mark $0$ by $1$ and $\infty$ by $2$.
\end{remark}

\begin{remark}
  Let $\justC\ra \Mgrp{g}{n}{r}$ be the universal curve, where marked points are replaced by order $r$ orbifold points
  (this does not change the moduli space). Let $\Sigma_i\subset\justC$ be the $i$-th marked gerbe.
  Then we have:
  \begin{equation*}
    \Mgrb{g}{n}{r} = \Sigma_1\times_{\Mgrp{g}{n}{r}}\dotsb\times_{\Mgrp{g}{n}{r}}\Sigma_n.
  \end{equation*}
  Hence $\Mgrb{g}{n}{r}$ is a $\mu_r^n$-gerbe over $\Mgrp{g}{n}{r}$.

  This description makes clear the following fact. Denote by $\overline{\justM}$ the moduli space of curves in
  $\Mgrp{g}{n}{r}$ together with non-zero tangent vectors at each marked point. Then $\overline{\justM}$
  is a principal $(\C^\times)^n$-bundle over $\Mgrp{g}{n}{r}$ and applying to it the construction \Cref{cstartomur}
  yields $\Mgrb{g}{n}{r}$. Hence the fundamental groupoid of $\Mgrb{g}{n}{r}$ is equivalent
  to the quotient $\PMod{S_g^n}/\langle T_\delta^r\mid \delta\rangle$.
\end{remark}

Let us now describe gluing maps between these moduli spaces.

\begin{proposition}[Separating nodes]\label{separatinggluing}
  Let $(g_1,n_1)$ and $(g_2,n_2)$ be tangent-stable. Then the gluing map:
  \begin{equation*}
      \Mgrb{g_1}{n_1+1}{r}\times\Mgrb{g_2}{n_2+1}{r}\lra \Mgrb{g_1+g_2}{n_1+n_2}{r}
  \end{equation*}
  is the composition of a $\mu_r$-gerbe, possibly a double cover\footnote{The double cover appears for $g_1=g_2$ and $n_1=n_2=0$.},
  and the embedding of a divisor.
  The induced map of orbifold fundamental groups is the morphism:
  \begin{equation*}
      \Modl{r}{S_{g_1}^{n_1+1}}\times\Modl{r}{S_{g_2}^{n_2+1}}\lra \Modl{r}{S_{g}^{n_1+n_2}}
  \end{equation*}
  obtained by gluing the $(n_1+1)$-th boundary component of $S_{g_1}^{n_1+1}$ to the $(n_2+1)$-th boundary component of $S_{g_2}^{n_2+1}$.
\end{proposition}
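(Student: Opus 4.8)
The plan is to factor the gluing morphism through the underlying Knudsen--Deligne--Mumford clutching map and then to add the $r$-twisting and the sections one layer at a time, reading off each of the three asserted factors in turn. First I would record the classical clutching morphism $\xi:\oMg{g_1}{n_1+1}\times\oMg{g_2}{n_2+1}\to\oMg{g_1+g_2}{n_1+n_2}$, which is finite onto the boundary divisor $\Delta$ parametrizing curves with a separating node of type $(g_1,n_1\mid g_2,n_2)$, with marked points $1,\dots,n_1$ on the first branch and $n_1+1,\dots,n_1+n_2$ on the second. Because the marked points are labelled, the two branches are distinguishable unless $g_1=g_2$ and $n_1=n_2=0$: in the distinguishable case $\xi$ is the normalization of $\Delta$, and in the exceptional case $\xi$ factors as the quotient by the $\Z/2$ swapping the branches followed by the normalization, i.e. a double cover of $\widetilde{\Delta}$. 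This standard fact accounts, at the coarse level, for the ``possibly a double cover'' and for the embedding of a divisor.

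Next I would pass to the twisted moduli spaces using the root-stack description of \Cref{remarkmgnr}. The smoothing parameter of the separating node is a local equation for $\Delta$, so the $r$-twisting that \cite{chiodoStableTwistedCurves2008} places at the node is exactly the $\mu_r$ introduced by the root stack $\oMg{g_1+g_2}{n_1+n_2}[\Delta/r]$. The lemma I would invoke is that the reduced preimage of $\Delta$ in this root stack is the $\mu_r$-gerbe $\Delta_r\to\Delta$ produced by \Cref{cstartomur} from the complement of the zero section in the normal bundle of $\Delta$; this identifies the image of the twisted clutching map with $\Delta_r$, sitting inside $\Mgrb{g_1+g_2}{n_1+n_2}{r}$ as the reduced boundary divisor.

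Then I would account for the sections that make $\Mgrb{g}{n}{r}$ a $\mu_r^{n}$-gerbe over $\Mgrp{g}{n}{r}$. The source carries sections at all $n_1+n_2+2$ marked points, including the two that are glued, whereas the target carries sections only at the $n_1+n_2$ surviving points, the node being unmarked. Working on the level of gerbe bands --- equivalently on the central boundary-twist subgroups of the orbifold fundamental groups --- I would show that the two glued section-$\mu_r$'s combine through the map $\mu_r^2\to\mu_r$, $(a,b)\mapsto a+b$: the diagonal surjects onto the node-twisting $\mu_r$ of $\Delta_r$, while the antidiagonal $\{(a,-a)\}$ is its kernel and is exactly the $\mu_r$-gerbe direction from the source to $\Delta_r$. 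Combined with the previous step this yields the factorization $\Mgrb{g_1}{n_1+1}{r}\times\Mgrb{g_2}{n_2+1}{r}\xrightarrow{\,\mu_r\text{-gerbe}\,}\mathcal{N}\to\Delta_r\hookrightarrow\Mgrb{g_1+g_2}{n_1+n_2}{r}$, in which $\mathcal{N}\to\Delta_r$ is an isomorphism in general and the double cover of the first paragraph precisely when $g_1=g_2$ and $n_1=n_2=0$.

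Finally, for the fundamental-group statement I would use the identifications of orbifold fundamental groups with mapping class group quotients from \Cref{remarkmgnr} and the remarks following it, and check that pinching the separating curve $\gamma$ (degeneration along $\Delta$) induces the topological gluing of $S_{g_1}^{n_1+1}$ and $S_{g_2}^{n_2+1}$ along their $(n_1+1)$-th and $(n_2+1)$-th boundary circles; under this identification the two glued boundary Dehn twists $T_{\partial_{n_1+1}}$ and $T_{\partial_{n_2+1}}$ both map to $T_\gamma$, matching the band computation and giving the claimed homomorphism after passing to $r$-th powers. I expect the main obstacle to be the bookkeeping of the third step: one must reconcile the balancing convention at the twisted node (the two branches carry inverse $\mu_r$-actions) with the two section-trivializations, so that the gerbe band comes out as the antidiagonal --- the difference of the glued twists --- and the node-twisting as the diagonal, rather than some other $\mu_r\subset\mu_r^2$. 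This is the one place where the orientation and inverse conventions must genuinely be pinned down.
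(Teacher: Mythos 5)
The paper does not actually prove this proposition: it is stated as a property of the twisted moduli spaces and left to the references (Chiodo's construction of $\Mgrp{g}{n}{r}$ and the root-stack description in \Cref{remarkmgnr}), so there is no argument in the text to compare yours against. Your proposal is the natural proof and its structure is correct: factor through the classical clutching map (which accounts for the divisor embedding and for the double cover in the symmetric case $g_1=g_2$, $n_1=n_2=0$), identify the reduced preimage of $\Delta$ in the root stack with the gerbe of $r$-th roots of $N_\Delta$ via \Cref{cstartomur}, and then track the gerbe bands coming from the sections, where the two $\mu_r$'s at the glued markings surject onto the ghost-automorphism $\mu_r$ of the node with kernel the antidiagonal --- that kernel being the $\mu_r$-gerbe factor in the statement. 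The fundamental-group claim then follows because both boundary Dehn twists are sent to $T_\gamma$, matching the band computation, and the kernel $(T_\partial,T_\partial^{-1})$ of the topological gluing homomorphism matches the antidiagonal band.

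Two points deserve more care than your sketch gives them. First, your band analysis is carried out only over the open dense locus of $\Delta$ where the glued curve has a single node; to get the stated global factorization you should either check the deeper strata (where the source factors are themselves twisted) or invoke normality/smoothness of all the stacks involved to propagate the generic identification. Second, the diagonal-versus-antidiagonal issue you flag at the end is indeed the only genuinely convention-dependent step: with the balanced action $(x,y)\mapsto(\xi x,\xi^{-1}y)$ at the node and the identification of the two marked gerbes furnished by the sections, an automorphism acting by $(a,b)$ on the two sections descends to the ghost automorphism $ab$ of the node, so the kernel is $\{(a,a^{-1})\}$; this is consistent with the mapping class group side, and you should make that consistency check explicit since it is what ties the two halves of the proposition together.
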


\begin{proposition}[Nonseparating nodes]\label{nonseparatinggluing}
  Let $g,n\geq 0$ such that $(g,n)$ and $(g-1,n+2)$ are tangent-stable. Then the gluing map:
  \begin{equation*}
      \Mgrb{g-1}{n+2}{r}\lra \Mgrb{g}{n}{r}
  \end{equation*}
  is the composition of a $\mu_r$-gerbe, a double cover and the embedding of a divisor.
  The induced map of orbifold fundamental groups is the morphism:
  \begin{equation*}
      \Modl{r}{S_{g-1}^{n+2}}\lra \Modl{r}{S_{g}^{n}}
  \end{equation*}
  obtained by gluing together the $(n+1)$-th and $(n+2)$-th boundary components of $S_{g-1}^{n+2}$.
\end{proposition}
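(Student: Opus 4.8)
The plan is to prove \Cref{nonseparatinggluing} by mirroring the structure of \Cref{separatinggluing}, since the two propositions have essentially the same shape: both assert that a gluing map factors as a $\mu_r$-gerbe, a (possible) double cover, and a divisor embedding, and both identify the induced map on orbifold fundamental groups with a topological boundary-gluing operation. The geometric decomposition should follow directly from the root-stack and gerbe descriptions set up in \Cref{remarkmgnr} and the remark identifying $\Mgrb{g}{n}{r}$ as the iterated fiber product of marked gerbes $\Sigma_i$ over $\Mgrp{g}{n}{r}$. First I would work at the level of the underlying moduli of twisted curves $\Mgrp{g-1}{n+2}{r}\to\Mgrp{g}{n}{r}$: gluing the $(n+1)$-th and $(n+2)$-th orbifold points of a genus $g-1$ curve produces a nonseparating node, so the image is precisely the corresponding boundary divisor $D\subset\oMg{g}{n}$ (pulled back to the root stack). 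The twisting data of a stable $r$-twisted curve at a node matches exactly the $r$-th root structure along $D$, which is why the map lands in $\Mgrp{g}{n}{r}$ rather than merely $\oMg{g}{n}$; this is the content of Chiodo's theory of twisted curves cited in the notation above.

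Once the divisor-embedding part is in place, I would account for the gerbe and cover factors as follows. The orbifold points of order $r$ carry sections, so $\Mgrb{}{}{r}$ is a $\mu_r^n$-gerbe over $\Mgrp{}{}{r}$; on the source we have $n+2$ sections and on the target $n$, so gluing the last two sections into a single node introduces one relation among the gerbe bands and contributes a single $\mu_r$-gerbe factor (the band of the node). The double cover arises because, unlike the separating case where the two half-nodes lie on distinct components and are canonically distinguished, here both half-nodes lie on the \emph{same} connected curve and the node admits an involution exchanging its two branches. This exchange symmetry is exactly the automorphism $\iota$ of the two-holed sphere recorded in the \textbf{Notation} block, and it accounts for the degree-$2$ cover; note this is why the double cover is unconditional here, whereas in \Cref{separatinggluing} it appeared only in the symmetric case $g_1=g_2,\ n_1=n_2=0$.

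For the statement on orbifold fundamental groups, I would invoke the identification of $\pi_1^{\mathrm{orb}}(\Mgrb{g}{n}{r})$ with $\Modl{r}{S_g^n}=\PMod{S_g^n}/\langle T_\delta^r\rangle$ established in the remark preceding the propositions. Under the standard dictionary between nodal degenerations of curves and gluing of surfaces along boundary circles (Bakalov–Kirillov, \cite[chap. 6]{bakalovLecturesTensorCategories2000}), the divisor embedding corresponds on $\pi_1$ to the map induced by gluing the $(n+1)$-th and $(n+2)$-th boundary components of $S_{g-1}^{n+2}$, with the $r$-th power of the Dehn twist around the new curve $\gamma$ becoming trivial in the quotient. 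I would check that the gerbe and cover factors induce, respectively, the expected central and index-$2$ contributions so that the composite on $\pi_1^{\mathrm{orb}}$ is exactly the claimed gluing homomorphism $\Modl{r}{S_{g-1}^{n+2}}\to\Modl{r}{S_g^n}$.

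The main obstacle I expect is bookkeeping the double cover correctly and verifying that the branch-exchanging involution interacts consistently with both the twisting/gerbe structure and the topological gluing; concretely, one must confirm that the involution $\iota$ acts compatibly on the sections and $\mu_r$-bands so that the factorization (gerbe, then cover, then embedding) is well-defined and its $\pi_1$-effect matches the symmetric-gluing convention used in the modular-functor axioms. A secondary technical point is ensuring the tangent-stability hypotheses on both $(g,n)$ and $(g-1,n+2)$ rule out the degenerate low-complexity cases so that all the moduli spaces involved are genuine DM stacks and the gerbe/divisor descriptions apply without modification.
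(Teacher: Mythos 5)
The paper states this proposition without proof, treating it as a standard fact about moduli of twisted curves (via the root-stack description in \Cref{remarkmgnr} and the Bakalov--Kirillov dictionary between nodal degenerations and boundary gluings), and your sketch reconstructs exactly that standard argument: the divisor image from the nonseparating degeneration, a net single $\mu_r$-gerbe from trading the two section-gerbes at the glued points for the root-stack gerbe along the new boundary divisor, and the unconditional double cover from the branch-exchanging symmetry. The one imprecision is cosmetic: the double cover comes from the $S_2$-action on $\Mgrb{g-1}{n+2}{r}$ swapping the last two marked points (an involution of the source covering the identity of the target, whose quotient is the normalization of the boundary divisor), rather than literally from the cylinder automorphism $\iota$; with that substitution your outline is correct and matches what the paper implicitly relies on.
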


\begin{definition}[Non-anomalous Modular Functor]\label{definitiongeometricnonanomalousmodularfunctor}
  Let $\Lambda$ be a set of colors, $r\geq 1$ an integer and $R$ a ring.
  Then a non-anomalous geometric modular functor with level $r$ is the data,
  for each $(g,n)$ stable and $\ul\in\Lambda^n$,
  of a local system $\Nu_g(\ul)$ of finite projective $R$-modules over $\Mgrb{g}{n}{r}$, together with some isomorphisms
  described below:
  \begin{description}
      \item[(G-sep)] For each gluing map
      \begin{equation*}
        q:\Mgrb{g_1}{n_1+1}{r}\times\Mgrb{g_2}{n_2+1}{r}\lra \Mgrb{g_1+g_2}{n_1+n_2}{r}
      \end{equation*}
      and each $\ul$, an isomorphism:
      \begin{equation*}
        q^*\Nu_{g_1+g_2}(\lambda_1,\dotsc,\lambda_n)\simeq \bigoplus_\mu\Nu_{g_1}(\lambda_1,\dotsc,\lambda_{n_1},\mu)\otimes
        \Nu_0(\mu,\mu^\dagger)\otimes
        \Nu_{g_2}(\lambda_{n_1+1},\dotsc,\lambda_n,\mu^\dagger);
      \end{equation*}
      \item[(G-nonsep)] For each gluing map
      \begin{equation*}
        p:\Mgrb{g-1}{n+2}{r}\lra \Mgrb{g}{n}{r}
      \end{equation*}
      and each $\ul$, an isomorphism:
      \begin{equation*}
        p^*\Nu_g(\lambda_1,\dotsc,\lambda_n)\simeq \bigoplus_\mu\Nu_{g-1}(\lambda_1,\dotsc,\lambda_n,\mu,\mu^\dagger)
        \otimes \Nu_0(\mu,\mu^\dagger);
      \end{equation*}
      \item[(N)] For each forgetful map $f:\Mgrb{g}{n+1}{r}\ra\Mgrb{g}{n}{r}$, and each $\ul$, a vacuum isomorphism:
      \begin{equation*}
        f^*\Nu_g(\lambda_1,\dotsc,\lambda_n)\simeq \Nu_g(\lambda_1,\dotsc,\lambda_n,0)
      \end{equation*}
      and a canonically isomorphism $\Nu_0(0,0)\simeq \underline{R}$;
      \item[(Perm)] For each $\ul\in\Lambda^n$ and permutation $\sigma\in S_n$, an isomorphism:
      \begin{equation*}
        \Nu_g(\lambda_1,\dotsc,\lambda_n)\simeq \sigma^*\Nu_g(\lambda_{\sigma(1)},\dotsc,\lambda_{\sigma(n)}).
      \end{equation*}
  \end{description}
  The isomorphisms of \textbf{(G-sep)}, \textbf{(G-nonsep)}, \textbf{(N)} and \textbf{(Perm)}
  are to be compatible with each other and repeated applications.
  Moreover, we ask for the gluing to be symmetric in the sense that
  for each gluing isomorphism above, the change of variable $\mu\mapsto\mu^\dagger$
  on the right-hand side has the same effect as permuting the summands and applying the \textbf{(Perm)} isomorphisms
  $\Nu_0(\mu,\mu^\dagger)\simeq\Nu_0(\mu^\dagger,\mu)$ induced by $\sigma\in S_2\setminus\{\id\}$.
  
  The functor is also assumed to verify the non-degeneracy axiom:
  \begin{description}
      \item[(nonD)] For each $\lambda$, $\Nu_0(\lambda,\lambda^\dagger)\neq 0$.
  \end{description}
\end{definition}

Sometimes, we will shorten $\Nu_0(\ul)$ to $\Nu(\ul)$.

Again, as in the topological case, one can give similar geometric definitions of genus $0$ modular functor of level $r$
and ribbon functor of level $r$: for the genus $0$ case we restrict the above definition to $g=0$ and remove the \textbf{(G-sep)} axiom;
for ribbon functors, we replace the gluing axioms by the gluing axiom \textbf{(G-nonsep)} in cobordism notation
and in the permutation axiom allow only permutations of inputs.

As for braided functors, one needs to work with slight variations of the moduli spaces.

\begin{notation}
  For $n\geq 1$, denote by $\Mrpo{n}{r}$ the moduli space of connected stable nodal $r$-twisted curves
  of genus $0$ with $n$ distinct smooth points and $1$ order $r$ stacky point.
  \emph{and a section at the stacky point}. We will use the convention $\Mrpo{1}{r}=*$.
\end{notation}

\begin{remark}
  Let us mark the points in $\Mgrp{0}{n+1}{r}$ from $0$ to $n$.
  The space $\Mrpo{n}{r}$ can de identified with $\Sigma_0\subset \Cgrp{0}{n+1}{r}$,
  the $0$-th stacky marked point $\Sigma_0$ in the universal curve $\Cgrp{g}{n}{r}\ra \Mgrp{g}{n}{r}$.
  Its fundamental group can be identified with
  $\PMod{D_n}/\langle T_\delta^r\mid \delta\rangle$, where $D_n$ is the closed disk with $n$ points removed.
  The pure mapping class group $\PMod{D_n}$ is the pure braid group $\mathrm{PB}_n$.
\end{remark}

Replacing the $\Mgrb{0}{n+1}{r}$ with the $\Mrpo{n}{r}$ in the definition of geometric ribbon functor, one gets
the definition of geometric braided functor. For details, the reader can find a full definition in \cite[2.20]{godfardQuasigeometricityConformalBlocks2025}.

We now define isomorphisms of geometric modular functors.

\begin{definition}\label{definitionmorphismsgeometric}
  Let $\Nu^1$, $\Nu^2$ be geometric modular functors with respective sets of colors $\Lambda_1$ and $\Lambda_2$.
  An isomorphism form $\Nu^1$ to $\Nu^2$ is a pair $(f,\phi)$, where $f:\Lambda_1\ra\Lambda_2$
  is a bijection preserving $0$ and commuting to $(\cdot)^\dagger$, and $\phi$ is a family of isomorphisms:
  \begin{equation*}
    \phi:\Nu^1_g(\ul)\simeq \Nu^2_g(f(\ul))
  \end{equation*}
  commuting to isomorphisms \textbf{(G-sep)}, \textbf{(G-nonsep)}, \textbf{(N)} and \textbf{(Perm)} of
  \Cref{definitiongeometricnonanomalousmodularfunctor}.
\end{definition}

\begin{theorem}
  There is an equivalence between the groupoid of
  topological non-anomalous modular functors with level $r$
  and the groupoid of geometric non-anomalous
  modular functors with level $r$.
\end{theorem}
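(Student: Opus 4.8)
The plan is to reduce both notions to the same linear-algebraic datum --- compatible families of representations of the level-$r$ quotients $\Modl{r}{S_g^n}=\PMod{S_g^n}/\langle T_\delta^r\rangle$ --- and to build the equivalence from that common description. On the topological side, a level-$r$ modular functor factors through $\Surf(r)$, so restricting to each connected surface $S_g^n$ with a fixed boundary coloring $\ul$ yields exactly a representation of $\Modl{r}{S_g^n}$ for every $(g,n,\ul)$, and the monoidal structure reduces disconnected surfaces to tensor products of these. On the geometric side, the equivalence between local systems of finite projective $R$-modules on a DM stack and representations of its orbifold fundamental groupoid, combined with the identification $\pi_1^{\mathrm{orb}}(\Mgrb{g}{n}{r})\simeq\Modl{r}{S_g^n}$ recorded above, shows that the datum of each $\Nu_g(\ul)$ is again a representation of $\Modl{r}{S_g^n}$. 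First I would fix these two identifications and use them to define comparison functors on objects, in both directions.

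The core of the argument is matching the gluing data, and here I would invoke \Cref{separatinggluing,nonseparatinggluing} directly. These state that the separating gluing $q$ and the non-separating gluing $p$ induce on orbifold fundamental groups precisely the topological surface-gluing homomorphisms. Therefore, under the dictionary above, pullback of a local system along $q$ (resp.\ $p$) corresponds to restriction of the associated representation along the surface-gluing map, and the geometric isomorphisms \textbf{(G-sep)} and \textbf{(G-nonsep)} translate into the topological gluing isomorphism \textbf{(G)} for a separating, respectively non-separating, curve. The one genuine piece of bookkeeping is turning the geometric summand $\Nu_0(\mu,\mu^\dagger)$ into the topological factor $\Nu(S_0^2,\mu,\mu^\dagger)^\vee$; this is exactly what the cobordism notation and \Cref{gluingcobordism} are for, using the one-dimensionality of $\Nu(S_0^2,\lambda,\mu)$ together with the convention $\Mgrb{0}{2}{r}=\Br{r}$ to identify the two-pointed genus-$0$ factors and their duals.

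The remaining axioms match directly. The geometric normalization \textbf{(N)} --- the vacuum isomorphism along the forgetful map $\Mgrb{g}{n+1}{r}\to\Mgrb{g}{n}{r}$ and $\Nu_0(0,0)\simeq\underline{R}$ --- corresponds to the topological normalization and to the vacuum isomorphism of \Cref{remarkvacuum}, since capping a boundary circle by a disk is the topological shadow of forgetting a marked point. The permutation isomorphisms \textbf{(Perm)} correspond to the relabelling morphisms already present in $\Surf$, and the symmetry of gluing matches the symmetric-gluing square of \Cref{definitionnonanomalousmodularfunctor} through the involution $\iota$. On morphisms, an isomorphism $(f,\phi)$ transports through the same identifications, and its compatibility with gluing, normalization and permutation is preserved because the fundamental-groupoid identifications are natural; this produces a functor in each direction whose composites are canonically isomorphic to the identities.

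The main obstacle I anticipate is not any single step but the simultaneous coherence of \emph{all} of these isomorphisms: one must check that the entire web of gluing, vacuum and permutation isomorphisms, including commutativity of gluings along disjoint curves and the effect of iterated gluings, is carried faithfully from one side to the other. Two technical points deserve the most care. First, the double cover appearing in \Cref{separatinggluing} (for $g_1=g_2$, $n_1=n_2=0$) and in \Cref{nonseparatinggluing} must be reconciled with the symmetry axiom and the involution $\iota$, since it encodes precisely the exchange $\mu\leftrightarrow\mu^\dagger$. Second, the $\mu_r^n$-gerbe structure of $\Mgrb{g}{n}{r}$ must be shown to reproduce the correct data: the weight of the gerbe at the $i$-th marked point should act as the boundary-twist eigenvalue $t_{\lambda_i}$ of \Cref{remarklevel}, so that the tangent-vector and section data on the geometric side line up with the boundary parametrizations on the topological side.
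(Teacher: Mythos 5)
Your proposal is a reasonable outline, but it takes a genuinely different route from the paper, which disposes of this statement in one line: ``This is essentially [Bakalov--Kirillov, 6.4.2] together with the definition of the level.'' That is, the paper outsources the entire topological-to-geometric dictionary --- including the coherence of all gluing, vacuum and permutation isomorphisms --- to Bakalov and Kirillov's theorem for their (non-proper, tangent-vector) moduli spaces, and the only new content is that a level-$r$ functor factors through $\Surf(r)$, whose connected automorphism groups are exactly the orbifold fundamental groups of the twisted compactifications $\Mgrb{g}{n}{r}$ (\Cref{remarkmgnr} and the surrounding remarks), so the flat bundles extend across the boundary root stacks. Your direct construction makes this last point --- the role of the level and of the $\mu_r$-gerbe/root-stack structure --- admirably explicit, and your identification of the gluing homomorphisms via \Cref{separatinggluing,nonseparatinggluing} and of the dual factor via the cobordism notation is the right dictionary. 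What your route does not buy is precisely the step you flag as ``the main obstacle'': the simultaneous coherence of the whole web of isomorphisms is not a routine check but the actual substance of Bakalov--Kirillov's proof (a long chapter involving the Lego--Teichm\"uller game of \Cref{subsectionlego}), so as written your argument defers rather than supplies the hardest part. Your two technical worries are well chosen: the double cover in \Cref{separatinggluing,nonseparatinggluing} is indeed reconciled with the symmetry-of-gluing square through $\iota$, and the $\mu_r$ at the $i$-th marked point does act by the boundary twist $t_{\lambda_i}$, which is killed by the level-$r$ hypothesis --- this is exactly the ``definition of the level'' half of the paper's citation.
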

\begin{proof}
  This is essentially \cite[6.4.2]{bakalovLecturesTensorCategories2000} together with the definition of the level.
\end{proof}
Again, we have similar results for genus $0$ modular functors of level $r$, ribbon functors of level $r$ and braided functors of level $r$.


\subsection{Anomalous modular functors}\label{subsectionanomaly}

For our results to include the examples of interest ($\SU$ modular functors), we need to work in the more general framework
of anomalous modular functors. In a nutshell, anomalous modular functors are functors from a specific central extension of $\Surf$ by $\Z$
(with the topological definition) or local systems on a specific cyclic gerbe over the $\Mgrb{g}{n}{r}$ (in the geometric definition).
The introduction of the anomaly is of little consequence
for the proof of \Cref{maintheoremcomplex}.
In this subsection, for completeness we review the definition of these extensions. 
The main reference for this section is \cite[5.7 and 6.7]{bakalovLecturesTensorCategories2000}.

Let us start with the treatment of the anomaly in the topological setting.

\begin{definition}\label{definitionlagrangian}
  Let $S$ be a compact surface, which can have non-empty boundary.
  Let $\widehat{S}$ be the closed surface obtained from $S$ by capping off each boundary component $B\simeq S^1$ with $D^2$.
  A Lagrangian $L$ on $S$ is a subspace of $H_1(\widehat{S};\R)$ of half dimension on which the intersection form vanishes.
  A split Lagrangian on $S$ is a Lagrangian $L$ that is a direct sum of Lagrangians on the connected components of $S$.

  We denote by $T_S$ the fundamental groupoid of the topological space of split Lagrangians on $S$, with the convention
  that in genus $0$, the unique Lagrangian $\{0\}$ has automorphism group $\Z$.
\end{definition}

Note that $T_S$ is connected and that for each object $L$,
$\Aut{L}\simeq \Z^{\pi_0(S)}$ canonically (see \cite[5.7.2]{bakalovLecturesTensorCategories2000}).

\begin{definition}\label{definitionsurfanomaly}
  Let $\Lambda$ be a set of colors. We define $\tSurf$ as the groupoid whose:
  \begin{description}
    \item[(1)] objects are objects $\Sigma=(S,\varphi,\underline{\lambda})$ of $\Surf$ together with a split Lagrangian $L\in\mathrm{ob}\:T_S$;
    \item[(2)] morphisms from $(\Sigma_1,L_1)$ to $(\Sigma_2,L_2)$ are pairs $(f,u)$ with $f$ a morphism from $\Sigma_1$
    to $\Sigma_2$ in $\Surf$ and $u$ a morphism from $L_1$ to $f^*L_2$ in $T_{S_1}$. Compositions are done in the obvious way.
  \end{description}
This category has a natural monoidal structure induced by the disjoint union $\sqcup$.
\end{definition}

We now describe how gluings are performed in $\tSurf$.
With the notations of the discussion after \Cref{definitioncolouredcategory},
we assume that $S$ comes equipped with a Lagrangian $L$.
Then the gluing $S_{\pm}$ comes equipped with the Lagrangian $L_{\pm}$
generated by the image of $L$ together with the image of $\partial_{\pm}S$.

\begin{definition}[Modular Functor]\label{definitionmodularfunctor}
  Let $\Lambda$ be a set of colors and $R$ a ring.
  Then a modular functor is a monoidal functor:
  $$\Nu:\tSurf\lra \text{finite projective }R-\text{modules}$$
  together with the data and properties \textbf{(G)}, \textbf{(N)}, \textbf{(nonD)}
  and symmetry of gluing described in \Cref{definitionnonanomalousmodularfunctor},
  where the gluing in $\Surf$ is replaced by that in $\tSurf$.
\end{definition}

\begin{remark}\label{remarkcentralcharge}
  For any connected surface $\tilde{\Sigma}$ in $\tSurf$ extending
  an object $\Sigma$ in $\Surf$, we have a canonical central extension:
  \begin{equation*}
    0\lra \Z\lra \Aut{\tilde{\Sigma}} \lra \Aut{\Sigma}\lra 1.
  \end{equation*}
  We shall denote by $\gamma$ the generator $1\in\Z$.
  Then for any modular functor $\Nu$, $\gamma$ acts on $\Nu(\tilde{\Sigma})$
  by multiplication by a scalar $c\in R^\times$ independent of $\tilde{\Sigma}$.
  This scalar $c$ will be called the \textbf{central charge} of $\Nu$.
\end{remark}

\begin{remark}\label{remarkmgnrs}
  As the anomaly central extension is canonically split in genus $0$, we have a natural embedding $\Surf^0\subset\tSurf$.
  Hence we get a forgetful functor:
  \begin{equation*}
    \tMod\lra \Mod^0
  \end{equation*}
  from the category of modular functors to the category of genus $0$ modular functors.
\end{remark}

We can extend the definition of the level to include the central charge.

\begin{proposition}\label{propositionlevel}
  Let $\Nu$ be a modular functor over an integral domain $R$ with set of colors $\Lambda$.
  Then the twists $t_\mu\in R^\times$ (see \Cref{remarklevel}) and the central charge $c\in R^\times$ (see \Cref{remarkcentralcharge}) are roots of unity in $R$.
\end{proposition}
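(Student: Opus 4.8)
The plan is to separate the two assertions: the twists are handled by reduction to \Cref{propositionlnonanomalousevel}, while the central charge is isolated by means of a determinant character. For the twists $t_\mu$, note that these are defined exactly as in \Cref{remarklevel}, namely as the block scalar by which a Dehn twist $T_\gamma$ about a gluing curve acts, and this definition does not see the anomaly. I would therefore run the argument of \Cref{propositionlnonanomalousevel} essentially unchanged: realize the gluing curve carrying color $\mu$ inside a closed surface of genus $g\geq 3$ and apply quasi-unipotence of Dehn twist images \cite[2.5]{aramayonaRigidityPhenomenaMapping2016}. The only new feature is that $\rho$ is now a representation of $\tPMod{S_g^0}$ rather than of $\PMod{S_g^0}$; since $\gamma$ is central and, by the central-charge step below, acts by a root of unity, the eigenvalues of $\rho(T_\gamma)$ remain roots of unity, so each $t_\mu$ is a root of unity as before.

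For the central charge, fix $g\geq 4$ and take $\Sigma=S_g^0$ closed and connected, so that $\Aut{\tilde\Sigma}=\tPMod{S_g^0}$ is the central extension $0\to\Z\to\tPMod{S_g^0}\to\PMod{S_g^0}\to 1$ with $\gamma$ the generator. Iterating the gluing axiom down to the nonzero vacuum block shows $\Nu(\tilde\Sigma)\neq 0$, so $d:=\mathrm{rank}_R\Nu(\tilde\Sigma)\geq 1$. Over $K=\mathrm{Frac}(R)$ the representation $\rho:\tPMod{S_g^0}\to\GLn{d}{K}$ has a determinant character $\det\rho:\tPMod{S_g^0}\to K^\times$ which, being valued in an abelian group, factors through the abelianization $\tPMod{S_g^0}^{\mathrm{ab}}$. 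By \Cref{remarkcentralcharge}, $\gamma$ acts by the scalar $c$, so $\det\rho(\gamma)=c^{d}$; it therefore suffices to show that $\tPMod{S_g^0}^{\mathrm{ab}}$ is finite, for then $\mathrm{im}(\det\rho)$ is a finite subgroup of $K^\times$, whence $c^{d}$ and so $c$ is a root of unity.

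The finiteness of $\tPMod{S_g^0}^{\mathrm{ab}}$ is the heart of the matter. Since $\PMod{S_g^0}$ is perfect for $g\geq 3$, the five-term exact sequence of the central extension yields a surjection $\Z\twoheadrightarrow\tPMod{S_g^0}^{\mathrm{ab}}$ whose kernel is the image of the transgression $H_2(\PMod{S_g^0};\Z)\to\Z$ dual to the extension class. For $g\geq 4$ one has $H^2(\PMod{S_g^0};\Q)\cong\Q$ (Harer), generated by $\kappa_1$, and the Lagrangian/Maslov anomaly extension defined in \Cref{definitionlagrangian} represents a nonzero multiple of this generator. Thus the transgression is rationally nonzero, $\tPMod{S_g^0}^{\mathrm{ab}}\otimes\Q=0$, and since $\tPMod{S_g^0}$ is finitely generated its abelianization is finite, as required.

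I expect the non-triviality of the anomaly class in $H^2(\PMod{S_g^0};\Q)$ to be the only genuinely nontrivial input; everything else is formal. As a self-contained alternative avoiding Harer's computation, I would instead exhibit an explicit relation: capping off the boundary in a chain relation such as $(T_{c_1}\cdots T_{c_5})^6=T_\partial$ on a genus-$2$ surface with one boundary component sends $T_\partial$ to a nonzero power $\gamma^{k}$, giving $(T_{c_1}\cdots T_{c_5})^6=\gamma^{k}$ in $\tPMod{S_2^0}$. Applying $\det\rho$ and using that the $t_\mu$, and hence the individual Dehn-twist determinants, are roots of unity then forces $c^{kd}$ to be a root of unity; here the only point to verify is $k\neq 0$, which is again precisely the non-triviality above. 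This alternative has the advantage of working in low genus, since it relies only on the genus-independent fact that the $t_\mu$ are roots of unity.
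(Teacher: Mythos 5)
The paper states \Cref{propositionlevel} without proof, so there is no argument of the author's to compare yours against; I can only assess the proposal on its own terms. Its architecture looks right and is presumably close to what is intended: the central charge is controlled by finiteness of $\tPMod{S_g^0}^{\mathrm{ab}}$, which follows from perfectness of $\PMod{S_g^0}$ for $g\geq 3$, the five-term sequence, $H_2(\PMod{S_g^0};\Q)\cong\Q$ for $g\geq 4$, and the rational nontriviality of the Lagrangian/Maslov extension class of \Cref{definitionsurfanomaly}. You correctly isolate that last fact as the only substantive input; it is standard (the extension is pulled back from the Maslov extension of $\mathrm{Sp}(2g,\Z)$, whose class is a nonzero multiple of the generator detected by Meyer's signature cocycle) but should be cited rather than asserted. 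The determinant step, the nonvanishing of $\Nu$ on a closed surface via the vacuum summand, and the passage from $c^{d}$ to $c$ are all fine.

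The one genuine gap is in the twists. The quasi-unipotence result you invoke applies to representations of $\PMod{S_g^0}$, whereas $\rho$ is a representation of the central extension $\tPMod{S_g^0}$, and knowing that $\gamma$ acts by a root of unity does \emph{not} by itself let you transfer the conclusion: $\rho$ still does not descend to $\PMod{S_g^0}$, so the sentence ``the eigenvalues of $\rho(T_\gamma)$ remain roots of unity'' needs an argument. A routine repair using only what you already have: $\rho\otimes\rho^{\vee}$ kills the center and hence descends to $\PMod{S_g^0}$, so quasi-unipotence shows every ratio $\lambda_i/\lambda_j$ of eigenvalues of $\rho(\widetilde{T}_\gamma)$ is a root of unity; finiteness of $\tPMod{S_g^0}^{\mathrm{ab}}$ already gives that $\det\rho(\widetilde{T}_\gamma)=\prod_i\lambda_i$ is a root of unity; and $\lambda_i^{d}=\det\rho(\widetilde{T}_\gamma)\cdot\prod_j(\lambda_i/\lambda_j)$ then forces each $\lambda_i$, hence each $t_\mu$ (realized as such an eigenvalue on a block that is nonzero by \textbf{(nonD)} and the vacuum axiom), to be a root of unity. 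Note that this fix also decouples the two halves of your argument and removes the circularity latent in your closing alternative, where the twists were to serve as input for the central charge while themselves depending on it.
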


\begin{definition}[Level]\label{definitionlevel}
  Let $\Nu$ be a modular functor over a ring $R$ with set of colors $\Lambda$.
  A level for $\Nu$ is a pair $(r,s)$ of positive integers such that $c^s=1$ and $t_\mu^r=1$ for all $\mu\in\Lambda$.
  In particular, any modular functor over an integral domain admits a level.
\end{definition}

Let us now describe briefly the equivalent treatment of the anomaly in the geometric setting.

\begin{definition}\label{definitionanomalybundle}
  Let $\pi:\justC\ra\justM$ be a family of nodal curves. Then we will denote by $H_\justM$ the holomorphic line bundle
  $\mathrm{det}(\pi_*\omega_{\justC/\justM})^{\otimes(-2)}$
  over $\justM$, where $\omega_{\justC/\justM}$ is the relative dualizing sheaf,
  and by $\tjustM$ the corresponding principal $\C^*$-bundle on $\justM$.
\end{definition}

\begin{definition}\label{definitionmgnrs}
  Let $s\geq 1$ be an integer.
  Let $\widetilde{\overline{\justM}}_{g}^{n}(r)$ be the principal $\C^\times$-bundle associated to $\Mgrb{g}{n}{r}$ via \Cref{definitionanomalybundle}.
  Then by applying the procedure of \Cref{cstartomur}, we get a $\mu_s$-gerbe over $\Mgrb{g}{n}{r}$ that we denote $\Mgrbt{g}{n}{r}{s}$.
\end{definition}

\begin{remark}
  The fundamental groupoid of $\Mgrbt{g}{n}{r}{s}$ is the quotient $\tPMod{S_g^n}/\langle\gamma^s,T_\delta^r\mid [\delta]\in L\rangle$
  where $\tPMod{S_g^n}$ is the anomaly central extension by $\Z$ of the pure mapping class of $S_g^n$ corresponding to the Lagrangian $L$,
  and $\gamma$ is the generator $1\in\Z$ in the extension. Note that for $[\delta]\in L$, $T_\delta\in\tPMod{S_g^n}$
  is the pair $(T_\delta,\id_L:L\ra L=T_\delta^*L)$.
\end{remark}

Let $f:\justM\ra\justM'$ be a gluing map or the contraction of some genus $0$
components.
By studying relative (log)-differential forms, we see that $f^*H_{\justM'}\simeq H_\justM$ canonically.
So we get a natural map $\tilde{f}:\tjustM\ra\tjustM'$.
One can then use this fact to extend the gluing maps to:
\begin{align}
  \Mgrbt{g_1}{n_1+1}{r}{s}\times\Mgrbt{g_2}{n_2+1}{r}{s}&\lra \Mgrbt{g_1+g_2}{n_1+n_2}{r}{s}\label{equationgluinganomalysep} \\
  \Mgrbt{g-1}{n+2}{r}{s}&\lra \Mgrbt{g}{n}{r}{s}\label{equationgluinganomalynonsep}
\end{align}

\begin{definition}[Modular Functor]\label{definitiongeometricmodularfunctor}
  Let $\Lambda$ be a set of colors, $r,s\geq 1$ and $R$ a ring.
  Then the geometric definition of an (anomalous) modular functor is obtained from that of a non-anomalous modular
  functor by replacing the moduli spaces $\Mgrb{g}{n}{r}$ by $\Mgrbt{g}{n}{r}{s}$ and the gluing maps
  by those of \Cref{equationgluinganomalysep,equationgluinganomalynonsep}.
\end{definition}

One defines isomorphisms of topological and geometric modular functors as in \Cref{definitionmorphismstopological,definitionmorphismsgeometric}.
Then again, by \cite[6.7.12]{bakalovLecturesTensorCategories2000}, one has the following.

\begin{theorem}
  There is an equivalence between the groupoid of
  topological (anomalous) modular functors with level $(r,s)$ and the groupoid of geometric (anomalous) modular functors
  with level $(r,s)$.
\end{theorem}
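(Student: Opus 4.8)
The plan is to mirror the non-anomalous equivalence theorem above, reducing to \cite[6.7.12]{bakalovLecturesTensorCategories2000} and tracking the level data $(r,s)$. The genuinely new ingredient compared with the non-anomalous case is the central extension by $\Z$, which must be matched on the two sides; once that is done, the remaining verifications are identical to those of the non-anomalous statement.

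First I would observe that on the topological side a modular functor of level $(r,s)$ factors through a quotient of $\tSurf$. By \Cref{propositionlevel} the central charge $c$ and the twists $t_\mu$ are roots of unity, and by the definition of level $c^s=1$ and $t_\mu^r=1$; hence any such functor sends the $s$-th power of the anomaly generator $\gamma$ of \Cref{remarkcentralcharge} and all $r$-th powers of Dehn twists to the identity, and therefore descends to the quotient groupoid $\tSurf/\langle\gamma^s, T_\delta^r\rangle$.

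Next, on the geometric side, a modular functor of level $(r,s)$ is by \Cref{definitiongeometricmodularfunctor} a family of local systems on the gerbes $\Mgrbt{g}{n}{r}{s}$, that is, representations of their orbifold fundamental groupoids. By the remark following \Cref{definitionmgnrs}, each such groupoid is exactly $\tPMod{S_g^n}/\langle\gamma^s, T_\delta^r\mid [\delta]\in L\rangle$. Thus on both sides one is describing representations of the same quotient of the anomaly-extended mapping class groups, and the asserted equivalence of groupoids reduces to identifying these two presentations compatibly with their gluing and normalization structures.

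The heart of the argument, which is the content of \cite[6.7.12]{bakalovLecturesTensorCategories2000}, is the identification of the two central extensions of $\PMod{S_g^n}$ by $\Z$: the topological one, built from the groupoid $T_S$ of split Lagrangians of \Cref{definitionsurfanomaly} (a Maslov/signature cocycle), and the geometric one, built from the anomaly line bundle $H_\justM=\det(\pi_*\omega_{\justC/\justM})^{\otimes(-2)}$ of \Cref{definitionanomalybundle} and its associated $\C^\times$-bundle $\tjustM$. One then checks that the Lagrangian gluing rule of $\tSurf$ corresponds, under this identification, to the extended geometric gluing maps \Cref{equationgluinganomalysep,equationgluinganomalynonsep}, via the canonical isomorphism $f^*H_{\justM'}\simeq H_\justM$; the axioms \textbf{(G)}, \textbf{(N)}, \textbf{(nonD)} and the symmetry of gluing then translate between the two settings exactly as in the non-anomalous equivalence. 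I expect the main obstacle to be precisely this comparison of central extensions — matching the topological signature cocycle with the first Chern class of the Hodge determinant bundle restricted to mapping class groups — but since it is supplied by Bakalov-Kirillov, the proof reduces to citing their result together with the bookkeeping of the level $(r,s)$.
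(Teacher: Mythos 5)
Your proposal matches the paper's own argument, which consists precisely of citing \cite[6.7.12]{bakalovLecturesTensorCategories2000} together with the bookkeeping of the level $(r,s)$, in parallel with the non-anomalous case. The additional detail you supply about matching the Lagrangian central extension with the determinant-of-cohomology $\C^\times$-bundle is exactly the content being delegated to Bakalov--Kirillov, so there is no divergence in approach.
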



\section{Hodge structures on modular functors}\label{sectionhodge}

Throughout this section, a level $(r,s)$ and a set of colors $\Lambda$ are fixed.


\subsection{Definition of Hodge structure}


\subsubsection{Hodge structures on flat bundles}

Let us define what we mean by Hodge structure in the case of flat bundles.

\begin{definition}[CVHS]\label{definitionCVHS}
  Let $(E,\nabla)$ be a flat $\C$-local system over a complex DM stack $B$. A complex variation of Hodge structure (CVHS) on $(E,\nabla)$
  is a rational number $m$ called weight together with a decomposition:
  \begin{equation}\label{equationHodgedecomposition}
      E = \bigoplus_{\substack{p+q=m\\p,q\in\Q}}E^{p,q}
  \end{equation}
  into smooth subbundles satisfying Griffiths transversality, i.e. for all $p,q$:
  \begin{equation*}
      \nabla E^{p,q}\subset E^{p-1,q+1}\otimes\Omega_B^{1,0}\oplus E^{p,q}\otimes\Omega_B^{1}\oplus E^{p+1,q-1}\otimes\Omega_B^{0,1}.
  \end{equation*}
  We also assume that the structure is polarizable, i.e. that there exists a flat Hermitian form $h$ on $E$
  for which the decomposition is orthogonal such that for all $p+q=m$, $h$ is definite on $E^{p,q}$, and if $p-p'\in\Z$,
  the signs of $h$ on $E^{p,m-p}$ and $E^{p',m-p'}$ differ by $(-1)^{p-p'}$.
  The choice of a polarization is not part of the data of a CVHS.
\end{definition}

\begin{remark}\label{remarkrationalweight}
  Our choice to allow $p$, $q$ and $m$ to be in $\Q$ and not just $\Z$ may seem peculiar.
  Indeed, with our definition, by transversality, for each class $x\in\Q/\Z$, the summand $\bigoplus_{p\in x,q}E^{p,q}$
  is stabilized by the connection. Hence any CVHS with $p,q\in \Q$ can be translated on irreducible factors into one with $p,q\in \Z$.
  However, when we will define the notion of CVHS on a modular functor, we will need to allow $p$ and $q$ to be rational
  to ensure that our proof of existence works. Indeed, the proof relies on the map $r$ in \Cref{actiontoCVHSmodular},
  which only produces a CVHS with bidegrees in $\R$. To pass from $\R$ to $\Q$ we can choose a map $\pi:\R\ra\Q$
  of $\Q$-vector spaces sending $1$ to $1$ and changing every bidegree $(p,q)$ to $(\pi(p),\pi(q))$.
  One could be tempted to find some map $\pi:\R\ra\Z$ that works, but because of the tensor products appearing in the gluing axioms,
  the map would need to be one of $\Z$-modules and send $1$ to $1$, which is not possible.
\end{remark}

\begin{remark}
  For $E=0$, we use the convention that a CVHS on $E$ is empty data. In particular, $E=0$ is of weight $m$ for every $m\in\Q$.
\end{remark}

\begin{proposition}[{\cite[4.1]{simpsonHiggsBundlesLocal1992}}]
  If the local system associated to $(E,\nabla)$ is irreducible, then a complex variation of Hodge structure on $E$ is unique up to translation
  (i.e. $\exists x,y\;\forall p,q,\;E'^{p,q}=E^{p+x,q+y}$).
\end{proposition}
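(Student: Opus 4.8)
The plan is to translate the statement into the language of Higgs bundles, where a polarized complex variation of Hodge structure of weight $m$ on $(E,\nabla)$ is exactly a grading of the associated Higgs bundle making it a \emph{system of Hodge bundles}, i.e. a decomposition $\cE=\bigoplus_p\cE^p$ of the holomorphic bundle with $\theta(\cE^p)\subseteq\cE^{p-1}\otimes\Omega^{1,0}_B$. Such a grading is the same datum as a one-parameter subgroup $\rho\colon\C^\times\to\GL{\cE}$ acting by $t^p$ on $\cE^p$ and acting on $\theta$ with a fixed nonzero weight, say $\mathrm{Ad}(\rho(t))\,\theta=t^{-1}\theta$. Uniqueness up to translation then becomes the assertion that a stable Higgs bundle admits, up to an overall shift of the grading, only one such one-parameter subgroup, and this will follow from the fact that its holomorphic automorphisms are scalars.

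First I would recall (Simpson, \cite{simpsonHiggsBundlesLocal1992}) that a polarized CVHS is a harmonic bundle whose Hodge metric is harmonic; hence the Higgs bundle $(\cE,\dol,\theta)$ attached to a CVHS on $(E,\nabla)$ is precisely the one the non-abelian Hodge correspondence associates to $(E,\nabla)$ (see \Cref{sectionnonabelian}). Since $(E,\nabla)$ is irreducible this Higgs bundle is stable, with $\mathrm{End}(\cE,\dol,\theta)=\C\cdot\id$, and it is \emph{independent}, up to isomorphism, of the chosen CVHS. Given two CVHS on $E$, of weights $m$ and $m'$, I would fix an isomorphism between their Higgs bundles and thereby view both as gradings $\rho_1,\rho_2$ of one and the same stable Higgs bundle; each $\rho_i(t)$ is then a holomorphic automorphism of $\cE$, the graded pieces being holomorphic subbundles. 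The element $\sigma(t):=\rho_1(t)\rho_2(t)^{-1}$ is holomorphic and, because both $\rho_i$ act on $\theta$ with the same weight, satisfies $\mathrm{Ad}(\sigma(t))\,\theta=\theta$; thus $\sigma(t)$ is a holomorphic automorphism of $(\cE,\dol,\theta)$. By stability $\sigma(t)=\lambda(t)\,\id$ for a character $\lambda$ of $\C^\times$, so $\rho_1=\lambda\cdot\rho_2$, which means exactly that the two gradings coincide after a constant shift. Unwinding this shift on the Hodge bidegrees yields $E'^{p,q}=E^{p+x,q+y}$ with $x+y=m-m'$, as desired.

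The main obstacle is the input that two \emph{distinct} CVHS produce the same stable Higgs bundle: this is where harmonicity of the Hodge metric and uniqueness of the harmonic metric on an irreducible local system are used, and it is the only non-formal ingredient. A secondary point requiring care is that the bidegrees are allowed to be rational (see \Cref{remarkrationalweight}), so $\rho_1,\rho_2$ are one-parameter subgroups with $\Q$-weights rather than genuine algebraic characters of $\C^\times$; passing to a finite cover (or working with $\Q$-graded subgroups) one still obtains that $\lambda$ is a rational-weight character, which produces a translation $(x,y)\in\Q^2$ rather than merely an integral one. One may of course alternatively invoke \cite[4.1]{simpsonHiggsBundlesLocal1992} directly.
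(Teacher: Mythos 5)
Your argument is correct and is essentially Simpson's own proof of \cite[4.1]{simpsonHiggsBundlesLocal1992}, which the paper cites without reproducing: pass to the (unique, by irreducibility and uniqueness of the pluri-harmonic metric) associated stable Higgs bundle, view the two gradings as $\C^\times$-actions shifting $\theta$ by the same weight, and use that a stable Higgs bundle has only scalar endomorphisms to conclude the gradings differ by a constant shift. This is also exactly the mechanism the paper itself deploys later in \Cref{actiontoCVHS} and in the uniqueness part of \Cref{maintheoremcomplex}, so there is nothing to add.
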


\begin{remark}
  Note that if $(E,\nabla)\ra B$ is unitary, it supports a trivial CVHS.
\end{remark}

A CM number field is a number field $L$ such that there exists an involution $C:L\ra L$ which for every embedding $\iota:L\ra \C$
coincides with complex conjugation. Equivalently, a CM number field is a quadratic extension of a totally real field.
In particular, there is a notion of Hermitian form on a vector space over a CM field.

\begin{definition}[RVHS]\label{definitionRVHS}
  Let $(E,\nabla)$ be a flat $L$-local system over a complex DM stack $B$, for $L$ a CM number field.
  A rational variation of Hodge structures (RVHS) on $(E,\nabla)$ is the data of a rational number $m$ called weight, and for each embedding
  $\iota:L\hookrightarrow\C$, of a CVHS on $E_\iota=E\otimes \C$. We also ask that if $\iota:L\hookrightarrow\C$ and
  $\overline{\iota}:L\hookrightarrow\C$ are complex conjugate embeddings, then for all $p,q$, $\overline{E_\iota^{p,q}}=E_{\overline{\iota}}^{q,p}$.
  We also ask that the structure is polarizable over $L$ in the sense that there exists a Hermitian form $h$ on $E$ defined over $L$ and preserved by $\nabla$,
  such that for each $\iota:L\ra \C$, $h$ is a polarization for the CVHS on $E_\iota$. Such $h$ is called a polarization for the RVHS.

  Such a RVHS will be said to be strongly polarized if $m\in\Z$, all Hodge bidegrees of each $E_\iota$ are in $\Z^2$, and it is given with a polarization $h$
  (defined over $L$) such that $h$ has sign $(-1)^p$ on each $E_\iota^{p,q}$.
\end{definition}

\begin{definition}
  Let $E$ be a local system with a Hodge structure (CVHS or RVHS), then its dual $E^\vee$ has a Hodge structure with
  $(E^\vee)^{p,q}=(E^{-p,-q})^\vee$. If $E'$ is another local system with Hodge structure on the same base,
  then $E\otimes E'$ has a Hodge structure with decomposition:
  \begin{equation*}
    (E\otimes E')^{p,q}=\bigoplus_{\substack{p_1+p_2=p\\q_1+q_2=q}}E^{p_1,q_1}\otimes E'^{p_2,q_2}.
  \end{equation*}
\end{definition}


\subsubsection{Semisimplicity and definition over number fields}

As mentioned in \Cref{remarksemisimplicity}, the main result (\Cref{maintheoremcomplex}) relies on knowing that conformal blocks are semisimple.
The semisimplicity of conformal blocks in general is proved in the separate paper \cite{godfardSemisimplicityConformalBlocks2025}.

\begin{theorem}[{\cite[1.1]{godfardSemisimplicityConformalBlocks2025}}]\label{theoremsemisimplicity}
  Let $\Nu$ be a geometric modular functor.
  Then, for any $g,n\geq 0$, $(g,n)\neq (0,0),(0,1),(1,0)$, and $\ul\in\Lambda$, the conformal block $(\Nu_g(\ul),\nabla)$ is semisimple.
  The same holds for genus $0$ modular, ribbon or braided functors.
\end{theorem}

The following is a known consequence of Ocneanu rigidity.

\begin{theorem}[\ref{functornumberfield}]\label{theoremdefinednumberfield}
  Let $\Nu$ be a topological complex modular, genus $0$ modular, ribbon or braided functor.
  Then there exists a number field $K\subset\C$ such that $\Nu$ is definable over $K$.
\end{theorem}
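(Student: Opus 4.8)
The plan is to encode the entire modular functor $\Nu$ as a point of an affine scheme of finite type over $\Q$ and to use Ocneanu rigidity to show this point is isolated modulo gauge, so that it automatically descends to $\overline{\Q}$. First I would fix the combinatorial skeleton: a level $(r,s)$ for $\Nu$, which exists by \Cref{propositionlevel}, the finite set of colors $\Lambda$, and the fusion rules $N_{\lambda\mu}^\nu=\dim\Nu_0(\lambda,\mu,\nu^\dagger)$, which are non-negative integers. By the Bakalov--Kirillov reconstruction \cite[chap. 5--6]{bakalovLecturesTensorCategories2000} together with the equivalence between topological and geometric modular functors, specifying $\Nu$ up to isomorphism with this skeleton is the same as specifying a finite collection of structure morphisms --- the associativity isomorphisms, the braiding and twist data, and the gluing isomorphisms --- subject to the Moore--Seiberg coherence equations (pentagon, hexagon, and the modular-functor axioms of \Cref{definitiongeometricnonanomalousmodularfunctor}). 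These equations are polynomial, with coefficients in $\Q$, in the matrix entries of the structure morphisms, so they cut out an affine scheme $\mathcal{S}$ of finite type over $\Q$ on which an algebraic gauge group $G$ over $\Q$ (basis changes in each finite-dimensional Hom-space, i.e. monoidal natural isomorphisms) acts; the central charge contributes only a root of unity by \Cref{propositionlevel} and is harmless. Our functor determines a point $x\in\mathcal{S}(\C)$.

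Next I would invoke Ocneanu rigidity. Since $\CN$ is ribbon, \Cref{theoremocneanuribbon} says $\CN$ admits no nontrivial continuous deformation; translated into $\mathcal{S}$, this means the relative tangent space $T_x\mathcal{S}/T_x(G\cdot x)$ vanishes, so the $G$-orbit of $x$ is Zariski-open in the irreducible component $C\subset\mathcal{S}_{\mathrm{red}}$ through $x$. Hence $C=G\cdot x$ is a single orbit, a homogeneous space for $G$, all defined over $\Q$. As $C$ is a nonempty finite-type $\Q$-scheme it has a point $x_0$ over $\overline{\Q}$ by the Nullstellensatz, defined over some number field $K\subset\C$. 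Because $C$ is one $G$-orbit, $x$ and $x_0$ lie in the same $G(\C)$-orbit, so the structure defined by $x$ is isomorphic over $\C$ to the structure defined by $x_0$, and the latter is defined over $K$. Using the full faithfulness of $\Nu\mapsto\CN$ from \Cref{sectioncn}, together with the reconstruction above, to recover the modular functor from its tensor-categorical and gluing data, the $\overline{\Q}$-point $x_0$ yields a modular functor $\Nu_K$ over $K$ with $\Nu_K\otimes_K\C\simeq\Nu$, proving definability over $K$. The same argument applies verbatim to genus $0$ modular functors and to rooted functors.

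The main obstacle is the passage from the infinitesimal statement of Ocneanu rigidity to scheme-theoretic openness of the orbit: one must set up $\mathcal{S}$ and $G$ so that the deformation complex computing $T_x\mathcal{S}/T_x(G\cdot x)$ is exactly the one controlled by \Cref{theoremocneanuribbon}, and verify that no deformations of the higher-genus gluing data go unaccounted for. This is precisely where the full faithfulness of $\Nu\mapsto\CN$ is essential, since it guarantees that rigidity of the genus $0$ ribbon category controls the entire functor across all genera. A secondary technical point is ensuring that $C$ may be taken reduced, and that the rational base-point argument identifying $x$ with a $\overline{\Q}$-point really produces an isomorphism of modular functors over $\C$, rather than merely of the underlying categories.
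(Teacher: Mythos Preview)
Your approach uses the same two ingredients as the paper --- Ocneanu rigidity and full faithfulness of $\Nu\mapsto\CN$ --- but packages them differently. The paper never builds a finite-type scheme parametrizing modular functors; it applies the open-orbit argument only to the scheme of ribbon structures on $\VL$ (manifestly finite type over $\Q$) to conclude that $\CN$ is defined over a number field $K$ (\Cref{ribbonnumberfield}), and then lifts this to $\Nu$ by Galois descent: a $K$-structure on $\CN$ is a cocycle isomorphism $\varphi:i_1^*\CN\simeq i_2^*\CN$ over $\C\otimes_K\C$, and since full faithfulness (\Cref{theoremfullfaithfulness}) is proved over an arbitrary base ring, $\varphi$ and its cocycle condition lift uniquely to $\Nu$. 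Your route instead encodes all of $\Nu$, including the higher-genus gluing isomorphisms, as a point of a single $\Q$-scheme $\mathcal{S}$; this can be made to work, but only after invoking the Lego--Teichm\"uller/Moore--Seiberg finite presentation to cut the a priori infinite family of gluing maps down to finitely many generators and relations, which you gesture at via Bakalov--Kirillov but do not set up. The paper's descent argument sidesteps this entirely: it needs no scheme for modular functors and no reconstruction of $\Nu$ from $\CN$, only full faithfulness applied over $\C\otimes_K\C$ and $\C\otimes_K\C\otimes_K\C$. Your use of full faithfulness to show that higher-genus tangent directions in $\mathcal{S}$ are absorbed by the gauge group is precisely the infinitesimal shadow of this descent step.
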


The theorem above shows that any unitary complex modular functor gives rise to some Galois conjugates, that may then not be unitary
and thus be more interesting from the point of view of Hodge theory.
This is what happens in the case of complex modular functors coming from complex finite dimensional simple Lie algebras $\mathfrak{g}$.
They are known to be unitary (see \cite{belkaleUnitarityKZHitchin2012}), and hence carry trivial uninteresting Hodge structures.
However, in the topological definition, they are defined over cyclotomic fields and their Galois conjugates have non-trivial Hodge structures
(see \Cref{subintronumbers,sectionsu}).

\begin{remark}
  In the case $\mathfrak{g}=\mathfrak{sl}_2$, up to a change of sign in the twists of the ribbon structure, the associated modular functors
  are equivalent to the $\SU$ modular functors. See \Cref{sectionsu,subsectionlie} below for more details.
\end{remark}

\subsubsection{Hodge structures on modular functors}

Let us first introduce weight functions.

\begin{definition}[Weight function]\label{definitionweightfunction}
  Let $\Nu$ be a modular functor with set of colors $\Lambda$.
  A weight function on $\Nu$ is a function $w$ from the set $\{(g,\ul)\mid \Nu_g(\ul)\neq 0\}$ to $\Q$ which is
  invariant under permutation of the coordinates of $\ul$ and
  compatible with gluings and vacuum in the sense that, whenever all the terms are defined, we have the equalities:
  \begin{description}
    \item[(sep)] for $g=g_1+g_2$ and any $\ul$, $\umu$ and $\nu$, $w_g(\ul,\umu)=w_{g_1}(\ul,\nu)+w_{g_2}(\umu,\nu^\dagger)-w_0(\nu,\nu^\dagger)$;
    \item[(non-sep)] for any $g$, $\ul$ and $\nu$, $w_g(\ul)=w_{g-1}(\ul,\nu,\nu^\dagger)-w_0(\nu,\nu^\dagger)$;
    \item[(vacuum)] for any $g$ and $\ul$,  $w_g(\ul,0)=w_g(\ul)$. Moreover, $w_0(0,0)=0$.
  \end{description}
  A weight function $w$ is called exact if there exists a function $f:\Lambda\ra \Q$ with $f(0)=0$ such that for all $g$ and $\lambda$,
  $w_g(\ul)=\sum_i f(\lambda_i)$.

  One can give similar definitions for genus $0$ modular and ribbon functors.
\end{definition}

\begin{remark}
  We can always extend a weight function to cobordism notation by $w_g(\umu;\ul):=w_g(\umu^\dagger,\ul)-\sum_iw_0(\mu_i,\mu_i^\dagger)$. 
\end{remark}

\begin{remark}
  If the involution $\lambda\mapsto\lambda^\dagger$ is the identity, any weight function is exact,
  as one can in this case choose the function $f$
  to be such that $f(\lambda)$ is half the weight of the Hodge structure on $\Nu_0(\lambda,\lambda^\dagger)$.
\end{remark}

We will deduce the following from Ocneanu rigidity. See \Cref{subocneanu} for the proof.

\begin{proposition}\label{exactweightfunctions}
  Let $\Nu$ be a complex modular, genus $0$ modular, ribbon or braided functor.
  Then any weight function on $\Nu$ is exact.
\end{proposition}

By \Cref{remarklocalsystemsconnections}, over $R=\C$, we can view the local systems $\Nu_g(\lambda_1,\dotsc,\lambda_n)$ of a modular functor $\Nu$
as vector bundles with flat connections. Hence we may talk of CVHS on the bundles $\Nu_g(\lambda_1,\dotsc,\lambda_n)$.

\begin{definition}[Complex Hodge structure]\label{definitionCVHSmodular}
  Let $\Nu$ be a complex modular functor. A complex Hodge structure on $\Nu$ is the data, for each $g$, $n$ and $\ul\in\Lambda^n$,
  of a CVHS on $\Nu_g(\lambda_1,\dotsc,\lambda_n)$, such that all gluing, vacuum and permutation isomorphisms
  in \Cref{definitiongeometricnonanomalousmodularfunctor} are isomorphisms of CVHS. We also require that $\Nu_0(0,0)=\Nu_0(0,0)^{0,0}$ is trivial of weight $0$.

  Similarly, one defines the notions of complex Hodge structure on a genus $0$ modular functors and on a ribbon or braided functor, where,
  for ribbon and braided functors,
  we omit the assumption on $\Nu_0(0,0)$ as it is not defined in that case.

  To a Hodge structure we assign the weight function $w$ on $\Nu$ which sends a pair $(g,\ul)$ with $\Nu_g(\ul)\neq 0$
  to the weight of the Hodge structure on $\Nu_g(\ul)$.
\end{definition}

\begin{remark}
  The definition above is independent of the choice of the level $(r,s)$ for $\Nu$, as for $r\mid r'$ and $s\mid s'$,
  there are compatible finite maps $\Mgrbt{g}{n}{r'}{s'}\ra\Mgrbt{g}{n}{r}{s}$.
\end{remark}

\begin{definition}[Shifts]
  Let $\Lambda$ be a finite set. Then the space of shifts $\Shift$ is the set of maps $\delta:\Lambda\ra\Q^2$ with $\delta(0)=(0,0)$.
  Its subspace of weight $0$ shifts $\Shiftz$ is the set of such $\delta$ with image in $\{(p,-p)\}\subset\Q^2$.
\end{definition}

\begin{proposition}[Action of shifts]
  Let $\Nu$ be a complex modular, genus $0$ modular or ribbon functor, with set of colors $\Lambda$.
  Then $\Shift$ acts on the set of complex Hodge structures on $\Nu$,
  and the action of its subspace $\Shiftz$ preserves weight functions.
\end{proposition}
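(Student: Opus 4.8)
The plan is to write the action down explicitly and then check, one structural isomorphism at a time, that it carries complex Hodge structures to complex Hodge structures. Given $\delta\in\Shift$ and a complex Hodge structure $H$ assigning to each $\Nu_g(\ul)=\bigoplus_{p+q=m}E^{p,q}$ a CVHS, I would define $\delta\cdot H$ by shifting all bidegrees on $\Nu_g(\ul)$ by the fixed pair $\Delta(\ul):=\sum_{i=1}^n\delta(\lambda_i)\in\Q^2$; writing $(a,b)=\Delta(\ul)$, this makes $\delta\cdot H$ assign to $\Nu_g(\ul)$ the decomposition ${E'}^{p,q}=E^{p-a,\,q-b}$ of weight $m'=m+a+b$. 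Because every bidegree is relabelled by a single global translation, Griffiths transversality is immediate, and I would check that the flat hermitian form $h$ polarizing $E$ still polarizes $E'$: orthogonality and definiteness of the decomposition are untouched, while for $p-p'\in\Z$ the required sign relation for $E'$ reduces to the one for $E$ through the identity ${E'}^{p,m'-p}=E^{p-a,\,m-(p-a)}$ together with $(p-a)-(p'-a)=p-p'$. Thus $\delta\cdot H$ is a CVHS on each $\Nu_g(\ul)$, and the normalization $\Nu_0(0,0)=\Nu_0(0,0)^{0,0}$ of weight $0$ survives since $\delta(0)=(0,0)$ forces its shift to vanish.

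Next I would verify compatibility with the isomorphisms of \Cref{definitiongeometricnonanomalousmodularfunctor}, for which it suffices that each isomorphism shift both of its sides by the same element of $\Q^2$. For the permutation isomorphisms this is clear, since $\Delta(\ul)$ is invariant under reordering; for the vacuum isomorphisms it follows from $\Delta(\ul,0)=\Delta(\ul)$, again using $\delta(0)=(0,0)$. The two gluing isomorphisms form the only substantial case. Here the factor $\Nu_0(\mu,\mu^\dagger)$ in each summand must be read with the bidegree convention making gluing weight-additive, that is with its dual, as recorded both by the cobordism form \Cref{gluingcobordism} and by the term $-w_0(\nu,\nu^\dagger)$ in \Cref{definitionweightfunction}; its contribution to the shift is accordingly $-(\delta(\mu)+\delta(\mu^\dagger))$. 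In the separating case, splitting the external colours into the two blocks $\ul$ and $\umu$, the summand indexed by $\mu$ is shifted by the sum of its three factor-shifts, namely $\Delta(\ul)+\delta(\mu)+\Delta(\umu)+\delta(\mu^\dagger)-(\delta(\mu)+\delta(\mu^\dagger))=\Delta(\ul)+\Delta(\umu)$, which matches the shift of the left-hand side; the non-separating case is identical, the shift $\delta(\mu)+\delta(\mu^\dagger)$ of the newly glued pair cancelling against the dual factor. As both sides shift equally and the original map is an isomorphism of CVHS, so is its image under $\delta$.

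It then remains to observe that $\delta\mapsto(H\mapsto\delta\cdot H)$ is a group action, since $\Delta$ is additive in $\delta$ and $\delta=0$ acts trivially, and to track weights: the weight function of $\delta\cdot H$ sends $(g,\ul)$ to $w_g(\ul)+\sum_i|\delta(\lambda_i)|$, where $|(a,b)|:=a+b$. For $\delta\in\Shiftz$ each value $\delta(\lambda_i)$ lies on the antidiagonal $\{(p,-p)\}$, so $|\delta(\lambda_i)|=0$ and the weight function is preserved. The genus $0$ modular and rooted cases follow from the same bookkeeping restricted to the gluings present there. I expect the single delicate point to be the gluing compatibility: one must account correctly for the dual on the $\Nu_0(\mu,\mu^\dagger)$ factor, since it is precisely this dual that makes the shifts of the internal colours $\mu$ and $\mu^\dagger$ cancel, leaving the two sides of the gluing isomorphism shifted by the same amount.
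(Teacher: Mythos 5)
Your proposal is correct and follows the same route as the paper, which simply defines $\delta\cdot\Nu^1$ by translating the CVHS on $\Nu_g(\ul)$ by $\sum_i\delta(\lambda_i)$ and leaves the compatibility checks implicit. You spell out those checks — in particular the cancellation of the internal-color shifts against the dualized $\Nu_0(\mu,\mu^\dagger)$ factor in the gluing isomorphisms, consistent with the $-w_0(\nu,\nu^\dagger)$ term in the weight-function axioms — and they are all sound.
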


\begin{proof}
  We describe the case of modular functors. The others are similar.
  For a Hodge structure $E=\bigoplus E^{p,q}$ and $x,y\in\Q$ define the translation $E[x,y]$
  by $E[x,y]^{p,q}=E^{p+x,q+y}$. Let $\delta\in\Shift$. Then for any Hodge structure $\Nu^{1}$ on $\Nu$, we can define a Hodge structure
  $\Nu^{2}=\delta\cdot\Nu^{1}$ by:
  \begin{equation*}
    \Nu^2_g(\lambda_1,\dotsc,\lambda_n)=\Nu^1_g(\lambda_1,\dotsc,\lambda_n)[\sum_i\delta(\lambda_i)].
  \end{equation*}
\end{proof}


\subsection{Existence and uniqueness}

\begin{theorem}[Existence and uniqueness]\label{maintheoremcomplex}
  Let $\Nu$ be a complex modular, genus $0$ modular, ribbon or braided functor. Denote by $\Lambda$ its set of colors.
  Then for any weight function $w$, $\Nu$ admits a complex Hodge structure whose weights are given by $w$.
  Moreover, $\Shift$ acts transitively on the set of complex Hodge structures on $\Nu$.
\end{theorem}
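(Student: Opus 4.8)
The plan is to follow the strategy already outlined in the paper's introduction, carrying out the non-abelian Hodge correspondence together with Ocneanu rigidity. First I would focus on the case of a modular functor $\Nu$ (the genus $0$ and rooted cases being analogous), and apply the Simpson correspondence (\Cref{correspondencemodular}) to pass from the family of flat bundles $(\Nu,\nabla)$ to the associated family of Higgs bundles $(\Nu,\theta)$; here semisimplicity of the quantum representations is what guarantees each $\Nu_g(\ul)$ is of harmonic type, so the correspondence applies. The key structural input is that for each $t\in\R_+^*\setminus\{1\}$ one has the Higgs field rescaling $(\Nu,\theta)\mapsto(\Nu,t\theta)$, and the maps $r,i$ of \Cref{actiontoCVHSmodular} relating isomorphisms $f:(\Nu,\theta)\ra(\Nu,t\theta)$ to structures of CVHS, satisfying $r\circ i=\id$.

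For \textbf{existence}, the plan is to produce such an isomorphism $f$ for some fixed $t$. Here is where Ocneanu rigidity enters: both the assignment $\Nu\mapsto\CN$ and the Simpson correspondence are continuous, so $\mathrm{C}_{\Nu_t}$ is a continuous deformation of the ribbon category $\CN$. By \Cref{theoremocneanuribbon} a ribbon category admits no nontrivial continuous deformation, hence $\mathrm{C}_{\Nu_t}\simeq\CN$; combined with full faithfulness of $\Mod\ra\Wrib$ (\Cref{sectioncn}) and of the Simpson correspondence, this yields $(\Nu,\theta)\simeq(\Nu,t\theta)$. Applying $r$ to a chosen isomorphism $f$ then gives a complex Hodge structure on $\Nu$. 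To control the weight I would incorporate the prescribed weight function $w$ into the choice of $i$ in \Cref{actiontoCVHSmodular}; since $r(i(w))$ recovers weight function $w$, this produces a Hodge structure with the desired weights. I would also need to verify that the CVHS produced on each $\Nu_g(\ul)$ is genuinely compatible with the gluing, vacuum and permutation isomorphisms, i.e. that $r$ applied to an isomorphism of \emph{modular functors of Higgs bundles} respects these structures — this is built into the functoriality of $r$ and of the Simpson correspondence, but must be checked carefully.

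For \textbf{uniqueness}, given two Hodge structures $\Nu^1,\Nu^2$ on $\Nu$ with the same weight function $w$, I would use $i$ to obtain, for each $t$, isomorphisms $f^1_t,f^2_t:(\Nu,\theta)\ra(\Nu,t\theta)$. Then $(f^2_t)^{-1}\circ f^1_t$ is a continuous family of automorphisms of $(\Nu,\theta)$, inducing via the correspondence a continuous family $k_t$ of automorphisms of $(\Nu,\nabla)$ and hence of the ribbon category $\CN$. By Ocneanu rigidity for endofunctors (\Cref{theoremocneanufunctor}), any continuous deformation of the identity functor of $\CN$ is given by a map $\delta_t:\Lambda\ra\C^*$, so $k_t$ acts on $\Nu_g(\ul)$ by the scalar $\prod_i\delta_t(\lambda_i)$. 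Using $r\circ i=\id$ and the explicit form of $r$, I would then read off $\Nu^1=(\pi\circ\log_t|\delta_t|)\cdot\Nu^2$, where the shift $\pi\circ\log_t|\delta_t|$ lies in $\Shift$; the equality of weight functions forces this shift into the weight-zero subspace $\Shiftz$. This establishes transitivity of the $\Shiftz$-action on Hodge structures with weight function $w$.

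The main obstacle I anticipate is not any single rigidity input but rather the bookkeeping of \emph{compatibility with all the structure isomorphisms} throughout. The correspondence must be applied not to individual bundles but to the whole coherent system $(\Nu,\nabla)$ with its gluing, vacuum and permutation data, and one must know that continuity, full faithfulness and the maps $r,i$ all interact correctly with this compatibility; in particular, verifying that the deformation $(\Nu,t\theta)$ is again a genuine modular functor of Higgs bundles (so that $\mathrm{C}_{\Nu_t}$ is defined and ribbon-deformed) is the technical heart, and it is precisely what the ribbon hypothesis on $\CN$ and the functoriality results of \Cref{sectioncn,sectionnonabelian} are designed to supply. Extracting from $\delta_t$ a well-defined shift independent of $t$, and matching it to the $\Shiftz$-action rather than the full $\Shift$-action once weights are fixed, is the other place where care is needed.
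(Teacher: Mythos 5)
Your proposal is correct and follows essentially the same route as the paper's proof: Simpson correspondence for modular functors, the rescaled Higgs field $(\Nu,t\theta)$, Ocneanu rigidity for associators (in its continuous form, \Cref{continuityassociators}) plus fullness of $\TMod_c\ra\Wrib$ to produce the isomorphism $f:(\Nu,\theta)\ra(\Nu,t\theta)$, and Ocneanu rigidity for endofunctors to extract the shift $\pi\circ\log_t|\delta_t|$ for uniqueness. The only detail worth flagging is that the paper justifies applying rigidity of \emph{associators} by first observing that $\theta=0$ on $\Mgrb{0}{2}{r}$ and $\Mgrb{0}{3}{r}$, so the braiding and twist of $\CNt$ are literally unchanged and only the associator deforms — a small but necessary step your outline leaves implicit.
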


\begin{corollary}[RVHS on conformal blocks]\label{rationalqreps}
  Let $\Nu$ be as in \Cref{maintheoremcomplex}.
  Then for any $g,n\geq 0$ tangent-stable and $\ul\in\Lambda^n$,
  the local system $\Nu_g(\ul)$ over $\Mgrbt{g}{n}{r}{s}$ can be defined over a CM number field $L$
  and supports a rational variation of Hodge structures over $L$,
  which can be assumed to have weight $m$ in $\Z$, Hodge bidegrees in $\Z^2$, and be strongly polarized over $L$ (see \Cref{definitionRVHS}).
\end{corollary}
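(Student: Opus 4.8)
The plan is to deduce \Cref{rationalqreps} from the already-established existence statement in \Cref{maintheoremcomplex} by descending the complex Hodge structure to an arithmetic one. First I would invoke \Cref{theoremdefinednumberfield} (reproduced as \ref{functornumberfield} in the excerpt) to obtain a number field $K\subset\C$ over which $\Nu$, and in particular each flat bundle $\Nu_g(\ul)$, is definable. By \Cref{propositionunitary} and the remark following \Cref{theoremdefinednumberfield}, one knows moreover that $\Nu$ has a unitary Galois conjugate, so all Galois conjugates $\Nu^\iota$ are semisimple and \Cref{maintheoremcomplex} applies to each. The key point is to upgrade from a field $K$ to a CM field $L$: since unitarity of one conjugate forces the associated hermitian forms $h_g(\ul)$ to exist and be flat, and since a field admitting such a compatible family of hermitian forms that are definite (after polarization) on the graded pieces is naturally a CM field, I would enlarge $K$ if necessary to a CM field $L$ carrying the polarizing forms, using that a CM field is exactly a quadratic imaginary extension of a totally real field.

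Next I would assemble the CVHS on the various complex conjugates into a single RVHS over $L$. By \Cref{maintheoremcomplex}, after fixing a weight function $w$, each $E_\iota=\Nu_g(\ul)\otimes_{L,\iota}\C$ carries a CVHS, unique up to the transitive action of $\Shiftz$. The compatibility condition $\overline{E_\iota^{p,q}}=E_{\overline{\iota}}^{q,p}$ demanded in \Cref{definitionRVHS} is not automatic for an arbitrary choice of CVHS on each conjugate; here I would use the uniqueness (transitivity of $\Shiftz$) together with the hermitian structure to fix compatible representatives. Concretely, the polarizing form $h$ defined over $L$ interchanges $E_\iota$ and $E_{\overline{\iota}}$ via complex conjugation, and requiring $h$ to polarize every conjugate pins down the shift so that the conjugation relation holds. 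Since a shift acts by $\sum_i\delta(\lambda_i)$ and the involution $(\cdot)^\dagger$ together with the gluing axioms constrain these shifts compatibly across all $(g,\ul)$, one can choose a single globally consistent normalization.

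Finally I would arrange the numerical normalizations asserted at the end of the statement: weight $m\in\Z$, Hodge bidegrees in $\Z^2$, and strong polarizability over $L$. The rationality of $m$, $p$, $q$ comes from \Cref{definitionCVHS}, and by the discussion in \Cref{remarkrationalweight} each irreducible factor can be translated to have integral bidegrees; using the fixed weight function from \Cref{exactweightfunctions} (which guarantees $w$ is exact, hence built from a single $f:\Lambda\to\Q$) I would clear denominators by composing with a suitable $\Q$-linear rescaling that sends $1$ to $1$, making all bidegrees integral simultaneously across the whole functor. Strong polarizability — that $h$ has sign $(-1)^p$ on each $E_\iota^{p,q}$ — then follows from the sign condition built into the polarizability clause of \Cref{definitionCVHS}, after possibly a global even shift to fix parity.

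I expect the main obstacle to be the second step: producing a \emph{single} RVHS over $L$ rather than an unrelated collection of CVHS on the conjugates, i.e. verifying the conjugation-compatibility $\overline{E_\iota^{p,q}}=E_{\overline{\iota}}^{q,p}$ and the existence of one $L$-rational polarizing form simultaneously for all embeddings. The transitivity part of \Cref{maintheoremcomplex} is exactly what makes this tractable, since it reduces the problem of matching conjugates to choosing a consistent element of $\Shiftz$; the delicate bookkeeping is checking that this choice can be made compatibly with the gluing, vacuum, and permutation isomorphisms for every $(g,\ul)$ at once, which is where the structure of the modular functor (as opposed to a single local system) does real work.
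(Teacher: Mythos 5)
There is a genuine gap at the heart of your argument: the descent to a CM field. You propose to ``enlarge $K$ if necessary to a CM field $L$ carrying the polarizing forms,'' but an arbitrary number field need not embed into any CM field (subfields of CM fields are totally real or CM, so e.g.\ $\Q(2^{1/3})$ is contained in none), and nothing in \Cref{functornumberfield} guarantees that the field $K$ it produces is of this type. The paper's proof gets around this in two steps that your sketch omits entirely: first it shows that the \emph{trace field} $L$ of each irreducible factor $\rho$ of $\rho_g(\ul)$ is totally real or CM, because every complex embedding $\sigma$ yields a local system carrying a CVHS, hence a flat hermitian form, hence $\overline{\sigma}(\rho)\simeq\sigma(\rho^*)$, so that $\sigma^{-1}\overline{\sigma}$ is an involution of $L$ independent of $\sigma$; second, it invokes Larsen's lemma (\cite[4.8]{simpsonHiggsBundlesLocal1992}) to produce a model of each irreducible factor over a CM extension $F'\cdot L$. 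Without these two ingredients the statement ``defined over a CM number field'' is not established.

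A secondary problem is that you read the corollary as asking for a single RVHS compatible with the gluing, vacuum and permutation isomorphisms across the whole functor; the statement only concerns each local system $\Nu_g(\ul)$ separately, and the paper's proof accordingly works one irreducible factor at a time. Your stronger reading forces you into the ``delicate bookkeeping'' you flag, and in particular into clearing denominators by a $\Q$-linear rescaling applied uniformly across the functor --- which is exactly the manoeuvre \Cref{remarkrationalweight} explains is impossible (a map $\R\ra\Z$ of $\Z$-modules sending $1$ to $1$ compatible with tensor products does not exist). For a single irreducible factor, integral bidegrees are obtained simply by translation, conjugation-compatibility is obtained by \emph{choosing} the conjugate CVHS on the conjugate embedding (legitimate by uniqueness up to translation for irreducible local systems), and the strong polarization sign $(-1)^p$ is arranged not by a shift but by multiplying $h_\rho$ by a totally real element with prescribed signs at all real places, using that $F'\setminus\{0\}\ra(\R^*)^{\{\tau:F'\hookrightarrow\R\}}$ hits every connected component. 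The transitivity of $\Shiftz$ from \Cref{maintheoremcomplex}, which you lean on, plays no role in the paper's argument here.
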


The following corollary is the main motivation for this article, as it explains how to extend genus $0$ knowledge to higher genus.

\begin{corollary}\label{uniqueextension}
  Let $\Nu$ be a complex modular functor.
  Then any complex Hodge structure on the braided functor
  \footnote{which is a genus $0$ structure!}
  associated to $\Nu$
  extends uniquely to the whole modular functor. In particular, if the Hodge numbers are known for the associated braided functor,
  then they are known for the modular functor.
\end{corollary}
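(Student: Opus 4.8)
The plan is to prove that the restriction map $\mathrm{res}\colon\mathcal{H}(\Nu)\to\mathcal{H}(\Nu^{\mathrm{root}})$ from complex Hodge structures on $\Nu$ to those on its associated rooted functor is a bijection; ``extends uniquely'' is precisely this. The map is well defined because a Hodge structure on $\Nu$ (\Cref{definitionCVHSmodular}) in particular endows each genus-$0$ block $\Nu_0(\ul)$ with a CVHS compatible with the genus-$0$ gluing, vacuum and permutation isomorphisms, which under the forgetful functors $\Mod\to\Mod^0\to\Root$ is exactly the data of a Hodge structure on $\Nu^{\mathrm{root}}$. The whole argument runs through \Cref{maintheoremcomplex} applied in parallel to $\Nu$ and to $\Nu^{\mathrm{root}}$, using that $\mathrm{res}$ is $\Shiftz$-equivariant: a shift $\delta$ acts by the same translation $[\sum_i\delta(\lambda_i)]$ in every genus, so it commutes with passing to genus $0$.

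The key technical input is a reduction lemma: a weight function on $\Nu$ is determined by its genus-$0$ part, and conversely every weight function on $\Nu^{\mathrm{root}}$ extends to $\Nu$. For determination, if $\Nu_g(\ul)\neq 0$ then \textbf{(G-nonsep)} yields $p^*\Nu_g(\ul)\simeq\bigoplus_\nu\Nu_{g-1}(\ul,\nu,\nu^\dagger)\otimes\Nu_0(\nu,\nu^\dagger)$, so some summand has $\Nu_{g-1}(\ul,\nu,\nu^\dagger)\neq 0$; the \textbf{(non-sep)} identity $w_g(\ul)=w_{g-1}(\ul,\nu,\nu^\dagger)-w_0(\nu,\nu^\dagger)$ then expresses $w_g(\ul)$ through strictly lower genus and genus-$0$ values, and iterating reduces everything to genus $0$. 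For the extension I invoke \Cref{exactweightfunctions}: a weight function on $\Nu^{\mathrm{root}}$ is exact, $w^{\mathrm{root}}(\ul)=\sum_i f(\lambda_i)$ with $f(0)=0$, and the same formula defines a weight function on $\Nu$ (the \textbf{(sep)}, \textbf{(non-sep)} and \textbf{(vacuum)} identities are immediate for exact functions) restricting to it. Hence $\mathrm{res}$ induces a bijection on weight functions.

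Now fix a weight function $w$ on $\Nu$ with restriction $w^{\mathrm{root}}$, and let $\mathcal{H}_w$, $\mathcal{H}_{w^{\mathrm{root}}}$ be the corresponding sets of Hodge structures. By \Cref{maintheoremcomplex} both are nonempty and carry transitive $\Shiftz$-actions. For surjectivity, given $H^{\mathrm{root}}\in\mathcal{H}_{w^{\mathrm{root}}}$, choose any $H\in\mathcal{H}_w$; then $\mathrm{res}(H)\in\mathcal{H}_{w^{\mathrm{root}}}$, so transitivity downstairs gives $\delta\in\Shiftz$ with $\delta\cdot\mathrm{res}(H)=H^{\mathrm{root}}$, and equivariance yields $\mathrm{res}(\delta\cdot H)=H^{\mathrm{root}}$. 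For injectivity, if $\mathrm{res}(H_1)=\mathrm{res}(H_2)$ with $H_1,H_2\in\mathcal{H}_w$, transitivity upstairs gives $H_2=\delta\cdot H_1$; then $\delta$ fixes $\mathrm{res}(H_1)$, i.e. $\sum_i\delta(\lambda_i)=0$ for every genus-$0$ coloring with $\Nu_0(\ul)\neq 0$. The assignment $(g,\ul)\mapsto\sum_i\delta(\lambda_i)$ is an exact weight function vanishing in genus $0$, so by the reduction lemma it vanishes wherever $\Nu_g(\ul)\neq 0$; thus $\delta$ acts trivially on $\Nu$ and $H_1=H_2$. Ranging over all $w$ via the weight-function bijection shows $\mathrm{res}$ is a bijection, which is the asserted unique extension; the Hodge-number statement follows because the extension leaves the genus-$0$ data unchanged while pinning down every $\Nu_g(\ul)$.

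The crux, and the only place the higher-genus structure is genuinely used, is the stabilizer computation inside injectivity: a shift could a priori act trivially in genus $0$ yet nontrivially in higher genus, which would destroy uniqueness. Ruling this out is exactly the reduction lemma, powered by the non-separating gluing together with non-degeneracy $\Nu_0(\nu,\nu^\dagger)\neq 0$, and ultimately by \Cref{exactweightfunctions} (hence by Ocneanu rigidity) forcing all weight-type data to originate in genus $0$. Everything else---well-definedness and $\Shiftz$-equivariance of $\mathrm{res}$, and the transitivity inputs---is formal once this is in hand.
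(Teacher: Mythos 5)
Your proof is correct, and it is essentially the argument the paper intends: the corollary is stated as an immediate consequence of \Cref{maintheoremcomplex} applied to both $\Nu$ and its rooted functor, and you supply exactly that deduction. The extra care you take — the bijection between weight functions via \Cref{exactweightfunctions}, and the stabilizer computation showing that a shift acting trivially in genus $0$ acts trivially in all genera (via non-separating degeneration and \textbf{(nonD)}) — is precisely the bookkeeping the paper leaves implicit, and it is handled correctly.
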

For a description of how to actually compute the Hodge numbers in all genera from the genus $0$ Hodge numbers, see \Cref{theoremhodgenumbers} below.
For formulas in the $\SU$ case, see \Cref{theoremhodgenumberssu}.


\subsection{The Frobenius algebra governing the Hodge numbers}\label{subfrobenius}

\begin{definition}[Frobenius algebra]
  A Frobenius algebra on a ring $A$ is a finite free\footnote{i.e. finite free as an $A$-module.}
  algebra $V$ together with a linear map $\epsilon: V\ra A$ such that the bilinear form
  $\eta:a\otimes b\mapsto \epsilon(ab)$ is non-degenerate.
\end{definition}

\begin{remark}
  The notion of a Frobenius algebra is equivalent to that of $(1+1)$-dimensional TFT, see \cite{kockFrobeniusAlgebras2D} for details.
\end{remark}

\begin{notation}
  We will denote by $\Zuv$ the ring of polynomials in rational powers of $u$ and $v$ with coefficients in $\Z$.
\end{notation}

\begin{definition}[Hodge polynomial]
  If $E$ is a Hodge structure with decomposition $E=\bigoplus_{p+q=m}E^{p,q}$, then we define
  the Hodge polynomial of $E$ by:
  \begin{equation*}
    e(E)=\sum_{p+q=m}\dim (E^{p,q})u^pv^q\in\Zuv.
  \end{equation*}
\end{definition}

\begin{definition}\label{definitionquantumproduct}
  Let $\Nu$ be a ribbon functor over $\C$ with a Hodge structure. Denote by $\Lambda$ its set of colors.
  Define $V$ as the free $\Zuv$-module with basis $\Lambda$.
  Let $\star$ be the symmetric bilinear operation on $V$ defined by:
  \begin{equation*}
    [\lambda]\star [\mu] = \sum_{\nu\in\Lambda} e(\Nu(\nu;\lambda,\mu))[\nu]
  \end{equation*}
  and let $\epsilon: V\ra \Zuv$ be the linear map such that $\epsilon([0])=1$ and 
  $\epsilon([\lambda])=0$ for $\lambda\neq 0$. 
\end{definition}

\begin{proposition}
  The operation $\star$ is associative and defines on $V$ the structure of a commutative $\Zuv$-algebra.
  Moreover $(V,\epsilon)$ is a Frobenius algebra.
\end{proposition}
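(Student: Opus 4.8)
The plan is to deduce everything from a single structural fact: the Hodge polynomial $e$ is additive over direct sums and multiplicative over tensor products of CVHS (immediate from the formula in the definition of $e$ together with the tensor formula in \Cref{definitionCVHSmodular}), so it descends to a ring homomorphism from the Grothendieck ring of CVHS to $\Zuv$. Since every gluing, vacuum and permutation isomorphism of a Hodge structure on the rooted functor is \emph{by definition} an isomorphism of CVHS, I can apply $e$ to each of the rooted-functor isomorphisms of \Cref{definitionrootedfunctor} and read off identities among the structure constants $e(\Nu(\nu;\lambda,\mu))$ of $\star$ from \Cref{definitionquantumproduct}.

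First, commutativity follows from the permutation isomorphism (in cobordism form), which gives a CVHS isomorphism $\Nu(\nu;\lambda,\mu)\simeq\Nu(\nu;\mu,\lambda)$, hence $e(\Nu(\nu;\lambda,\mu))=e(\Nu(\nu;\mu,\lambda))$; this symmetry is already recorded in \Cref{definitionquantumproduct}. For the unit I claim $[0]$ works. By the vacuum axiom \textbf{(N)}, $\Nu(\nu;0,\mu)\simeq\Nu(\nu;\mu)$ as CVHS, and by normalization this vanishes unless $\nu=\mu$, so $[0]\star[\mu]=e(\Nu(\mu;\mu))[\mu]$. To see $e(\Nu(\mu;\mu))=1$, glue two copies of the cylinder $D^1$ (input and output colored $\mu$) along an input and an output: the result is again $D^1$, and the gluing axiom \textbf{(G)} gives a CVHS isomorphism $\Nu(\mu;\mu)\simeq\Nu(\mu;\mu)\otimes\Nu(\mu;\mu)$. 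Applying $e$ yields $x=x^2$ with $x=e(\Nu(\mu;\mu))$; as $\Nu(\mu;\mu)$ is one-dimensional, $x$ is a monomial $u^pv^q$, hence a unit of $\Zuv$, forcing $x=1$.

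Associativity is the crux, and it comes from the two pair-of-pants decompositions of the disk $D^3$ with output $\sigma$ and inputs $\lambda,\mu,\rho$. Presenting $D^3$ as the gluing of a $D^2$ carrying $\lambda,\mu$ onto a $D^2$ carrying the new edge and $\rho$, and alternatively as the gluing that pairs $\mu,\rho$ first, axiom \textbf{(G)} provides two CVHS decompositions
\begin{equation*}
  \Nu(D^3,\sigma;\lambda,\mu,\rho)\simeq\bigoplus_{\nu\in\Lambda}\Nu(\nu;\lambda,\mu)\otimes\Nu(\sigma;\nu,\rho)\simeq\bigoplus_{\tau\in\Lambda}\Nu(\tau;\mu,\rho)\otimes\Nu(\sigma;\lambda,\tau).
\end{equation*}
Applying the ring homomorphism $e$ gives
\begin{equation*}
  e(\Nu(D^3,\sigma;\lambda,\mu,\rho))=\sum_{\nu\in\Lambda}e(\Nu(\nu;\lambda,\mu))\,e(\Nu(\sigma;\nu,\rho))=\sum_{\tau\in\Lambda}e(\Nu(\tau;\mu,\rho))\,e(\Nu(\sigma;\lambda,\tau)),
\end{equation*}
and the two outer sums are exactly the coefficients of $[\sigma]$ in $([\lambda]\star[\mu])\star[\rho]$ and in $[\lambda]\star([\mu]\star[\rho])$. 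I expect the only real work to be bookkeeping with the cobordism notation and invoking the compatibility of repeated gluings already demanded in \Cref{definitionrootedfunctor}, so as to certify that both bracketings compute $e$ of the same object; there is no analytic content beyond the axioms.

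Finally, for the Frobenius property I compute $\eta([\lambda],[\mu])=\epsilon([\lambda]\star[\mu])=e(\Nu(0;\lambda,\mu))$, since $\epsilon$ kills $[\nu]$ for $\nu\neq0$. By the duality axiom \textbf{(Dual)} this vanishes unless $\mu=\lambda^\dagger$, and when $\mu=\lambda^\dagger$ the space $\Nu(0;\lambda,\lambda^\dagger)$ is one-dimensional (by vacuum it is the pairing $\Nu_0(\lambda,\lambda^\dagger)$, which has dimension $1$), so $e(\Nu(0;\lambda,\lambda^\dagger))$ is a monomial and thus a unit of $\Zuv$. Hence, in the basis $\Lambda$, the Gram matrix of $\eta$ is a generalized permutation matrix for the involution $\lambda\mapsto\lambda^\dagger$ with unit entries, so it is invertible; therefore $\eta$ is nondegenerate and $(V,\epsilon)$ is a Frobenius algebra.
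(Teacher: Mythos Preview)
Your argument is correct and follows essentially the same route as the paper: associativity from the two gluing decompositions of the three-input disk (the paper phrases this as restricting to the two nodal curves in $\Mgrb{0}{4}{r}$), commutativity from \textbf{(Perm)}, and nondegeneracy of $\eta$ from the fact that $e(\Nu(0;\lambda,\lambda^\dagger))$ is a monomial. Your explicit verification that $[0]$ is a unit via the idempotent equation $x=x^2$ is a nice addition not spelled out in the paper.
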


We will sometimes omit $\star$ when writing products in $V$.

\begin{proof}
  Consider the gluing axiom on $\Mgrb{0}{4}{r}$, where the output is the point $4$. Let $C_{(12)3}$ be the nodal curve where point $2$ is on the same
  component as point $1$ and $C_{1(23)}$ the one where $2$ is with $3$. Then by the gluing axiom (in cobordism notation):
  \begin{align*}
    \Nu(\nu;\lambda_1,\lambda_2,\lambda_3)_{\mid C_{1(23)}}&\simeq \bigoplus_\delta \Nu(\nu;\lambda_1,\delta)\otimes \Nu(\delta;\lambda_2,\lambda_3), \\
    \Nu(\nu;\lambda_1,\lambda_2,\lambda_3)_{\mid C_{(12)3}}&\simeq \bigoplus_\delta \Nu(\nu;\delta,\lambda_3)\otimes \Nu(\delta;\lambda_1,\lambda_2). \\
  \end{align*}
  As these are isomorphisms of Hodge structures, they have the same Hodge numbers. This implies associativity of the product.
  Commutativity results from the \textbf{(Perm)} axiom of \Cref{definitiongeometricnonanomalousmodularfunctor}.
  Non-degeneracy of $\eta$ is ensured by the fact that for each $\lambda$, $e(\Nu(0;\lambda,\lambda^\dagger))$ is a monomial and hence invertible.
\end{proof}

\begin{theorem}[Hodge numbers]\label{theoremhodgenumbers}
  Let $\Nu$ be a complex modular functor with Hodge structure. Denote by $V$ the associated Frobenius algebra,
  and $\eta^{-1}\in V^{\otimes 2}$ the inverse of its bilinear form. We will use the notation:
  \begin{equation*}
    \Omega=\star(\eta^{-1})=\sum_{\lambda\in\Lambda}\epsilon([\lambda]\star[\lambda^\dagger])^{-1}[\lambda]\star[\lambda^\dagger]\in V.
  \end{equation*}
  Let $g\geq 0$ and $\lambda_1,\dotsc,\lambda_n\in\Lambda$. Then the Hodge numbers are computed by:
  \begin{equation*}
    e(\Nu_g(\lambda_1,\dotsc,\lambda_n))=\epsilon([\lambda_1]\star\dotsb \star[\lambda_n]\star\Omega^{\star g}).
  \end{equation*}
  Similarly, for $\Nu$ a ribbon or braided functor with Hodge structure, the Hodge polynomial $e(\Nu(\mu;\lambda_1,\dotsc,\lambda_n))$ is equal to
  the coefficient of $[\mu]$ in $[\lambda_1]\star\dotsb \star[\lambda_n]$.
\end{theorem}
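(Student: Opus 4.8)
The plan is to deduce both formulas from the behaviour of the Hodge polynomial $e$ under the gluing isomorphisms, using that $e$ is additive on direct sums and multiplicative on tensor products. Indeed, from the formulas $(E\otimes E')^{p,q}=\bigoplus_{p_1+p_2=p,\,q_1+q_2=q}E^{p_1,q_1}\otimes E'^{p_2,q_2}$ and $(E^\vee)^{p,q}=(E^{-p,-q})^\vee$ one reads off $e(E\oplus E')=e(E)+e(E')$, $e(E\otimes E')=e(E)\,e(E')$ and $e(E^\vee)(u,v)=e(E)(u^{-1},v^{-1})$; in particular $e(E^\vee)=e(E)^{-1}$ when $E$ is a line, which applies to the one-dimensional spaces $\Nu_0(\mu,\mu^\dagger)$. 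Throughout I use that the gluing, vacuum and permutation isomorphisms are isomorphisms of CVHS, so that $e$ turns $\oplus,\otimes$ into $+,\times$, and that the Hodge numbers of each $\Nu_g(\ul)$ are constant (the base $\Mgrbt{g}{n}{r}{s}$ is connected) and hence unchanged under pullback along the gluing maps.

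I would first prove the rooted statement by induction on $n$. The cases $n\leq 1$ follow from normalization \textbf{(N)}, which also shows that $[0]$ is the unit of $\star$, and $n=2$ is the definition of $\star$. For $n\geq 3$, glue a trinion carrying the inputs $\lambda_{n-1},\lambda_n$ and intermediate colour $\delta$ onto one input of the disk with inputs $\lambda_1,\dotsc,\lambda_{n-2},\delta$ and output $\mu$; since \Cref{gluingrooted} is an isomorphism of Hodge structures, applying $e$ gives
\[
e(\Nu(\mu;\lambda_1,\dotsc,\lambda_n))=\sum_\delta e(\Nu(\mu;\lambda_1,\dotsc,\lambda_{n-2},\delta))\,e(\Nu(\delta;\lambda_{n-1},\lambda_n)).
\]
The induction hypothesis identifies the first factor with the coefficient of $[\mu]$ in $[\lambda_1]\dotsb[\lambda_{n-2}][\delta]$ and the second with the coefficient of $[\delta]$ in $[\lambda_{n-1}][\lambda_n]$; summing over $\delta$ and invoking associativity of $\star$ (established above) yields the coefficient of $[\mu]$ in $[\lambda_1]\dotsb[\lambda_n]$, as claimed.

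For a modular functor I would settle genus $0$ first: the vacuum isomorphism \textbf{(N)} together with $\Nu_0(0,0)\simeq\underline{R}$ identifies $\Nu_0(\lambda_1,\dotsc,\lambda_n)$ with the rooted space $\Nu(0;\lambda_1,\dotsc,\lambda_n)$ as Hodge structures, so by the rooted case $e(\Nu_0(\ul))$ is the coefficient of $[0]$ in $[\lambda_1]\dotsb[\lambda_n]$, namely $\epsilon([\lambda_1]\dotsb[\lambda_n])=\epsilon([\lambda_1]\dotsb[\lambda_n]\Omega^{0})$. Then I induct on $g$ using one nonseparating handle. Pulling back along $p$ preserves Hodge numbers, so $e(\Nu_g(\ul))=e(p^*\Nu_g(\ul))$, and the factorization rule (in the form \Cref{gluingunrooted}, with the dual cylinder factor $\Nu_0(\mu,\mu^\dagger)^\vee$) gives an isomorphism of Hodge structures, whence
\[
e(\Nu_g(\ul))=\sum_\mu e(\Nu_{g-1}(\lambda_1,\dotsc,\lambda_n,\mu,\mu^\dagger))\,e(\Nu_0(\mu,\mu^\dagger))^{-1}.
\]
By the induction hypothesis the first factor is $\epsilon([\lambda_1]\dotsb[\lambda_n][\mu][\mu^\dagger]\,\Omega^{g-1})$, while the genus-$0$ computation gives $e(\Nu_0(\mu,\mu^\dagger))=\epsilon([\mu][\mu^\dagger])$, a monomial. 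Substituting and using commutativity of $V$, the sum over $\mu$ is exactly multiplication by $\Omega=\sum_\mu\epsilon([\mu][\mu^\dagger])^{-1}[\mu][\mu^\dagger]$, so $e(\Nu_g(\ul))=\epsilon([\lambda_1]\dotsb[\lambda_n]\,\Omega^{g})$.

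The step that needs care is matching the handle factor with $\Omega$: the factorization rule contributes the dual $\Nu_0(\mu,\mu^\dagger)^\vee$ of the cylinder, and it is precisely the resulting inverse monomial $e(\Nu_0(\mu,\mu^\dagger))^{-1}=\epsilon([\mu][\mu^\dagger])^{-1}$ that reproduces the coefficients of the copairing $\eta^{-1}$, and hence of $\Omega=\star(\eta^{-1})$, rather than those of $\eta$. Keeping this duality straight, and checking at each step that the isomorphisms invoked are genuinely isomorphisms of CVHS so that $e$ is preserved, is where the bookkeeping lies; the remainder is the standard fact that a system of invariants satisfying the factorization axioms is computed by the universal Frobenius-algebra formula.
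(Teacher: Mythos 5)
Your proof is correct and rests on the same mechanism as the paper's: the gluing, vacuum and permutation isomorphisms are isomorphisms of CVHS, so the Hodge polynomial $e$ converts the factorization rule into the Frobenius-algebra state sum. The only organizational difference is in the higher-genus step: the paper proves the stronger one-output identity $\sum_\mu e(\Nu_g(\mu;\ul))[\mu]=[\lambda_1]\star\dotsb\star[\lambda_n]\star\Omega^{\star g}$ by induction over decompositions into a genus-$0$ trinion plus one or two lower-complexity pieces (all separating), with $\Omega$ entering through the base case $\sum_\mu e(\Nu_1(\mu;))[\mu]$ computed on the nodally degenerate one-holed torus, whereas you first settle genus $0$ via the rooted statement and then induct on $g$ by a single nonseparating degeneration, letting $\Omega$ emerge directly from the handle sum $\sum_\mu\epsilon([\mu]\star[\mu^\dagger])^{-1}[\mu]\star[\mu^\dagger]$. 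These are interchangeable instances of the same argument; yours is marginally cleaner on the induction bookkeeping. You are also right to flag the dual cylinder factor as the delicate point: the topological axiom \textbf{(G)} carries $\Nu(S_0^2,\mu,\mu^\dagger)^\vee$ explicitly, and it is this inverse monomial (forced in any case by axiom \textbf{(non-sep)} for weight functions) that produces the coefficients of the copairing $\eta^{-1}$; the paper absorbs this silently into the cobordism notation $\Nu(S,\umu;\ul)$, which is why no $\epsilon(\cdot)^{-1}$ appears in its displayed computation.
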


The idea that Hodge numbers of braided functors could be organized in an algebra appeared independently in the recent paper \cite{belkaleMotivicFactorisationKZ2023}
of Belkale, Fakhruddin and Mukhopadhyay. See \Cref{subintroBFM,subsectionlie} for more comments on their paper.

\begin{remark}\label{remarkhodgenumbers}
  With $\Nu$ modular as in \Cref{theoremhodgenumbers}, consider curves of genus $g$ with $n$ inputs and $1$ output. Then:
  \begin{equation*}
    \sum_\mu e(\Nu_g(\mu;\lambda_1,\dotsc,\lambda_n))[\mu]=[\lambda_1]\star\dotsb\star[\lambda_n]\star\Omega^{\star g}.
  \end{equation*}
\end{remark}

\begin{proof}
  Let us first prove \Cref{remarkhodgenumbers} by induction. For $g=0$ and $n=2$, this is the definition of the product.
  For $g=1$ and $n=0$, we compute $\sum_{\mu}e(\Nu_1(\mu;))$ on the nodal curve and we get $\Omega$. For any other pair $(g,n)$,
  we can compute on a curve $C$ with $2$ or $3$ components as in \Cref{figurecurve}, $1$ of which, $C_1$, is of genus $0$ with $3$ marked points/nodes,
  one of which is the output. Say $C_i$ has genus $g_i$, the inputs of $C_2$ are marked $\lambda_1,\dotsc,\lambda_{n'}$,
  and the inputs of $C_3$ are marked $\lambda_{n'+1},\dotsc,\lambda_n$. Then, by the gluing axiom, the case $n=2$ and induction hypothesis, we have:
  \begin{align*}
    \sum_\mu e(\Nu_g(\mu;\ul))[\mu] &= (\sum_{\mu_2}e(\Nu_g(\mu_2;\lambda_1,\dotsc,\lambda_{n'}))[\mu_2])
    \star(\sum_{\mu_3}e(\Nu_g(\mu_3;\lambda_{n'+1},\dotsc,\lambda_{n}))[\mu_3]) \\
    &= ([\lambda_1]\star\dotsb\star[\lambda_{n'}]\star\Omega^{\star g_2})\star([\lambda_{n'+1}]\star\dotsb\star[\lambda_{n}]\star\Omega^{\star g_3}) \\
    &= [\lambda_1]\star\dotsb\star[\lambda_n]\star\Omega^{\star g}.
  \end{align*}
  The first statement of the theorem then results directly from \Cref{remarkhodgenumbers} for $\mu=0$ and the vacuum axiom.
\end{proof}

\begin{figure}
  \def\svgwidth{0.5\linewidth}
\begingroup%
  \makeatletter%
  \providecommand\color[2][]{%
    \errmessage{(Inkscape) Color is used for the text in Inkscape, but the package 'color.sty' is not loaded}%
    \renewcommand\color[2][]{}%
  }%
  \providecommand\transparent[1]{%
    \errmessage{(Inkscape) Transparency is used (non-zero) for the text in Inkscape, but the package 'transparent.sty' is not loaded}%
    \renewcommand\transparent[1]{}%
  }%
  \providecommand\rotatebox[2]{#2}%
  \newcommand*\fsize{\dimexpr\f@size pt\relax}%
  \newcommand*\lineheight[1]{\fontsize{\fsize}{#1\fsize}\selectfont}%
  \ifx\svgwidth\undefined%
    \setlength{\unitlength}{366.22730165bp}%
    \ifx\svgscale\undefined%
      \relax%
    \else%
      \setlength{\unitlength}{\unitlength * \real{\svgscale}}%
    \fi%
  \else%
    \setlength{\unitlength}{\svgwidth}%
  \fi%
  \global\let\svgwidth\undefined%
  \global\let\svgscale\undefined%
  \makeatother%
  \begin{picture}(1,0.77395065)%
    \lineheight{1}%
    \setlength\tabcolsep{0pt}%
    \put(0,0){\includegraphics[width=\unitlength,page=1]{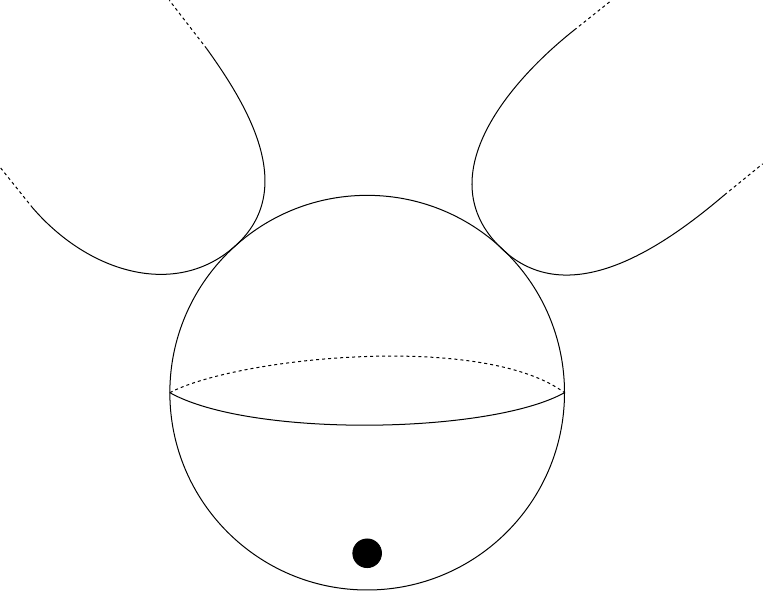}}%
    \put(0.46810145,0.11601982){\color[rgb]{0,0,0}\makebox(0,0)[lt]{\lineheight{1.25}\smash{\begin{tabular}[t]{l}$\mu$\end{tabular}}}}%
    \put(0.46551141,0.34741401){\color[rgb]{0,0,0}\makebox(0,0)[lt]{\lineheight{1.25}\smash{\begin{tabular}[t]{l}$C_1$\end{tabular}}}}%
    \put(0.14213756,0.55934539){\color[rgb]{0,0,0}\makebox(0,0)[lt]{\lineheight{1.25}\smash{\begin{tabular}[t]{l}$C_2$\end{tabular}}}}%
    \put(0.74386903,0.55934539){\color[rgb]{0,0,0}\makebox(0,0)[lt]{\lineheight{1.25}\smash{\begin{tabular}[t]{l}$C_3$\end{tabular}}}}%
  \end{picture}%
\endgroup%

  \caption{The nodal curve $C$ with $2$ or $3$ components in the proof of \Cref{theoremhodgenumbers}.
  If $C$ has only $2$ components, replace $C_3$ by a marked point.}
  \label[figure]{figurecurve}
\end{figure}


\subsection{The Cohomological Field Theory of a Hodge structure}

The main reference for the definition of a CohFT is \cite[0.5]{pandharipandeRelations$overlinemathcalM_gn$3spin2015a}.

\begin{definition}[CohFT with unit]
  Let $V$ be a finite free module over a $\Q$-algebra $A$ with a chosen element $1\in V$ and non-degenerate pairing $\eta:V\otimes V\ra A$.
  Denote the inverse of the pairing by $\eta^{-1}=\sum_i a_i\otimes b_i\in V\otimes V$.
  A Cohomological Field Theory (CohFT) with unit on $V$ is a family of maps:
  \begin{equation*}
    \omega_{g,n}:V^{\otimes n}\ra H^*(\oMg{g}{n};\Q)\otimes A
  \end{equation*}
  for $3g-3+n>0$, each equivariant for the action of $S_n$ and satisfying the gluing and vacuum rules:
  \begin{description}
    \item[(G-sep)] For each gluing map $q:\oMg{g_1}{n_1+1}\times\oMg{g_2}{n_2+1}\ra \oMg{g}{n}$,
    we have:
    \begin{equation*}
      q^*\omega_{g_1+g_2,n_1+n_2}(v_1\otimes\dotsb \otimes v_n)=\sum_i\omega_{g_1,n_1}(v_1\otimes\dotsb \otimes v_{n_1}\otimes a_i)\otimes
      \omega_{g_2,n_2}(v_{n_1+1}\otimes\dotsb \otimes v_{n}\otimes b_i).
    \end{equation*}
    \item[(G-nonsep)] For each gluing map $p:\oMg{g-1}{n+2}\ra \oMg{g}{n}$,
    we have:
    \begin{equation*}
      p^*\omega_{g,n}(v_1\otimes\dotsb \otimes v_n)=\sum_i\omega_{g-1,n+2}(v_1\otimes\dotsb \otimes v_{n}\otimes a_i\otimes b_i).
    \end{equation*}
    \item[(V)] For each forgetful map $f:\oMg{g}{n+1}\ra\oMg{g}{n}$ we have:
    \begin{equation*}
      f^*\omega_{g,n}(v_1\otimes\dotsb \otimes v_n)=\omega_{g,n+1}(v_1\otimes\dotsb \otimes v_n\otimes 1).
    \end{equation*}
  \end{description}
  The topological part of $\omega$ is the family of maps $\omega^0_{g,n}:V^{\otimes n}\ra A$ induced by the restriction to cohomological degree $0$.
\end{definition}

\begin{proposition}
  For a CohFT $\omega$ on $V$, its topological part induces the structure of a Frobenius algebra $(\star,\epsilon)$ on $V$ by the formulas:
  \begin{equation*}
    \eta(v_1\star v_2,v_3)=\omega^0_{0,3}(v_1\otimes v_2\otimes v_3)\text{ and }\epsilon(v)=\eta(v\otimes 1).
  \end{equation*}
\end{proposition}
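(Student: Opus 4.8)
The plan is to exploit non-degeneracy of $\eta$ to convert $\omega^0_{0,3}$ into a product $\star$, and then to read off each Frobenius-algebra axiom from one of the CohFT axioms. The first observation is that $\oMg{0}{3}$ is a point, so $H^*(\oMg{0}{3};\Q)$ is concentrated in degree $0$: thus $\omega_{0,3}$ already equals its topological part $\omega^0_{0,3}$ and takes values in $A$. For fixed $v_1,v_2$ the assignment $v_3\mapsto\omega^0_{0,3}(v_1\otimes v_2\otimes v_3)$ is an $A$-linear functional on $V$, hence by non-degeneracy of the (symmetric) pairing $\eta$ it is represented by a unique element $v_1\star v_2\in V$ with $\eta(v_1\star v_2,v_3)=\omega^0_{0,3}(v_1\otimes v_2\otimes v_3)$. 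Bilinearity of $\star$ follows from linearity of $\omega_{0,3}$ together with non-degeneracy of $\eta$.

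Next I would extract commutativity and a ``Frobenius'' symmetry of the pairing. Since $\oMg{0}{3}$ is a point, the $S_3$-action on $H^0(\oMg{0}{3};\Q)\otimes A$ is trivial, so $S_3$-equivariance of $\omega_{0,3}$ forces $\omega^0_{0,3}(v_1\otimes v_2\otimes v_3)$ to be totally symmetric in its three arguments. Transposing $v_1$ and $v_2$ gives $\eta(v_1\star v_2,v_3)=\eta(v_2\star v_1,v_3)$ for all $v_3$, hence $v_1\star v_2=v_2\star v_1$. Combining the symmetry of $\omega^0_{0,3}$ with that of $\eta$ yields the identity $\eta(x\star y,z)=\eta(x,y\star z)$, which I will use below.

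The core step is associativity, obtained from the separating gluing axiom on $\oMg{0}{4}$ after truncating to cohomological degree $0$. A separating gluing map $q$ sends $\oMg{0}{3}\times\oMg{0}{3}$ onto a boundary point of $\oMg{0}{4}$, and restriction of a class in $H^*(\oMg{0}{4};\Q)\otimes A$ to any such point recovers precisely its degree-$0$ component; hence the degree-$0$ part of $q^*\omega_{0,4}(v_1\otimes\dotsb\otimes v_4)$ equals $\omega^0_{0,4}(v_1\otimes\dotsb\otimes v_4)$ regardless of which of the three boundary divisors $q$ meets. On the other hand, writing $\eta^{-1}=\sum_i a_i\otimes b_i$ and using $\sum_i\eta(x,a_i)\,b_i=x$, the degree-$0$ part of the right-hand side of (G-sep) for the partition $\{1,2\}\mid\{3,4\}$ collapses:
\begin{equation*}
  \sum_i \eta(v_1\star v_2,a_i)\,\eta(v_3\star v_4,b_i)=\eta(v_1\star v_2,v_3\star v_4).
\end{equation*}
So $\omega^0_{0,4}(v_1\otimes\dotsb\otimes v_4)=\eta(v_1\star v_2,v_3\star v_4)$, and likewise $=\eta(v_2\star v_3,v_1\star v_4)$ from the partition $\{2,3\}\mid\{1,4\}$. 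Using the identity $\eta(x\star y,z)=\eta(x,y\star z)$ and commutativity, this gives
\begin{equation*}
  \eta\bigl((v_1\star v_2)\star v_3,v_4\bigr)=\eta(v_1\star v_2,v_3\star v_4)=\eta(v_2\star v_3,v_1\star v_4)=\eta\bigl(v_1\star(v_2\star v_3),v_4\bigr)
\end{equation*}
for all $v_4$, so non-degeneracy of $\eta$ yields associativity.

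Finally I would treat the unit and the Frobenius condition, setting $\epsilon(v)=\eta(v,1)$. The remaining input is the unit normalization $\omega^0_{0,3}(v_1\otimes v_2\otimes 1)=\eta(v_1,v_2)$; granting it, total symmetry of $\omega^0_{0,3}$ gives $\eta(1\star v,w)=\omega^0_{0,3}(1\otimes v\otimes w)=\eta(v,w)$ for all $w$, so $1\star v=v$, and $\epsilon(a\star b)=\eta(a\star b,1)=\omega^0_{0,3}(a\otimes b\otimes 1)=\eta(a,b)$, whence the Frobenius form $(a,b)\mapsto\epsilon(a\star b)$ coincides with $\eta$ and is non-degenerate. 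The step demanding the most care is the degree-$0$ truncation of (G-sep) on $\oMg{0}{4}$: one must verify that restriction to a boundary point genuinely extracts the global $H^0$-component (so that all three boundary divisors yield the same $\omega^0_{0,4}$) and that the $\eta^{-1}$-contraction reduces to $\eta(v_1\star v_2,v_3\star v_4)$. A secondary point is that the unit normalization $\omega^0_{0,3}(\,\cdot\,\otimes\,\cdot\,\otimes 1)=\eta$ is not literally among (G-sep), (G-nonsep) and (V) as stated, and must be supplied as part of the CohFT-with-unit data, or deduced by extending the vacuum axiom (V) to the unstable forgetful map $\oMg{0}{3}\to\oMg{0}{2}$ under the convention $\omega_{0,2}=\eta$.
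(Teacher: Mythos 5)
Your argument is correct, and it is essentially the standard Kontsevich--Manin derivation; the paper itself states this proposition without proof, so there is no competing argument to compare against. All the key steps check out: $\oMg{0}{3}$ being a point makes $\omega_{0,3}=\omega^0_{0,3}$ and $S_3$-equivariance makes it totally symmetric; non-degeneracy of $\eta$ defines $\star$; pulling back to a boundary point of the connected space $\oMg{0}{4}$ extracts the $H^0$-component, and the contraction $\sum_i\eta(v_1\star v_2,a_i)\,\eta(v_3\star v_4,b_i)=\eta(v_1\star v_2,v_3\star v_4)$ is exactly the defining property of $\eta^{-1}$, so the three boundary divisors (reached via $S_4$-equivariance) give associativity through the Frobenius identity $\eta(x\star y,z)=\eta(x,y\star z)$.

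Your closing caveat is not a defect of your proof but a genuine imprecision in the paper's \emph{definition}: as literally written, the axioms \textbf{(G-sep)}, \textbf{(G-nonsep)}, \textbf{(V)} do not yield $\omega^0_{0,3}(v\otimes w\otimes 1)=\eta(v,w)$. Axiom \textbf{(V)} applied to $f:\oMg{0}{4}\to\oMg{0}{3}$ only gives $\eta(v_1\star v_2,v_3)=\eta(v_1\star v_2,v_3\star 1)$, which forces $v_3\star 1=v_3$ only if the image of $\star$ spans $V$ --- not guaranteed a priori. The cited reference's definition of a CohFT with unit does include the normalization $\omega_{0,3}(v\otimes w\otimes 1)=\eta(v\otimes w)$ as part of the data, and this is what the unit law and the identification of the Frobenius form $\epsilon(a\star b)$ with $\eta$ actually rest on; you are right to make that dependence explicit.
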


In our situation, we get directly the following Theorem from the definitions of a CohFT, and of a Hodge structure on a modular functor.
Notice that as $\oMg{g}{n}$ and $\Mgrbt{g}{n}{r}{s}$ have the same coarse space, they have the same rational cohomology.

\begin{theorem}\label{cohft}
  Let $\Nu$ be a modular functor with a Hodge structure.
  Let $\Lambda$ be its set of colors and let $V_\Q$ be the free $\Quv$-module with basis $\Lambda$.
  Then the family of maps:
  \begin{equation*}
    \omega_{g,n}:\begin{cases}
      V_\Q^{\otimes n} &\lra H^*(\oMg{g}{n};\Q)\otimes \Quv \\
      [\lambda_1]\otimes \dotsb \otimes [\lambda_n] &\longmapsto \sum_{p,q}\ch(\Nu_g(\lambda_1,\dotsc,\lambda_n)^{p,q})u^pv^q
    \end{cases}
  \end{equation*}
  form a Cohomological Field Theory, whose associated Frobenius algebra agrees with that of \Cref{definitionquantumproduct} over $\Quv$.
\end{theorem}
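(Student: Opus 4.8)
The plan is to read each axiom of a CohFT directly off the corresponding structural isomorphism of the Hodge structure, using that the Hodge-weighted Chern character
\[
  \ch^H(E) := \sum_{p,q}\ch(E^{p,q})\,u^pv^q
\]
is, for any $C^\infty$-decomposed bundle $E=\bigoplus E^{p,q}$ over a base $B$, additive in $\oplus$, multiplicative in $\otimes$, and natural under pullback along maps of bases; these are the standard formal properties of the Chern character applied to the smooth Hodge subbundles $E^{p,q}$ (which are only $C^\infty$, not flat, which is exactly why the classes are nontrivial). Here $\omega_{g,n}([\lambda_1]\otimes\dotsb\otimes[\lambda_n])=\ch^H(\Nu_g(\ul))$, viewed in $H^*(\oMg{g}{n};\Q)\otimes\Quv$ through the canonical isomorphism $H^*(\Mgrbt{g}{n}{r}{s};\Q)\simeq H^*(\oMg{g}{n};\Q)$ coming from the common coarse space. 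I equip $V_\Q$ with unit $1=[0]$ and with the pairing $\eta$ of the Frobenius algebra of \Cref{definitionquantumproduct} base-changed to $\Quv$, whose copairing is $\eta^{-1}=\sum_\mu\epsilon([\mu]\star[\mu^\dagger])^{-1}\,[\mu]\otimes[\mu^\dagger]$.

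I would then check the axioms one at a time. The $S_n$-equivariance is the image under $\ch^H$ of the \textbf{(Perm)} isomorphisms of CVHS in \Cref{definitiongeometricnonanomalousmodularfunctor}. The vacuum rule \textbf{(V)} is the image of the vacuum isomorphism $f^*\Nu_g(\ul)\simeq\Nu_g(\ul,0)$; together with the normalization $\Nu_0(0,0)=\Nu_0(0,0)^{0,0}$ of weight $0$, which forces $\ch^H(\Nu_0(0,0))=1$, this identifies $[0]$ with the CohFT unit. For the gluing rules \textbf{(G-sep)} and \textbf{(G-nonsep)} I apply $\ch^H$ to the separating and non-separating gluing isomorphisms of CVHS: by multiplicativity the right-hand sides become sums over $\mu$ of products $\ch^H(\Nu_{g_1}(\dotsc,\mu))\cdot(\text{node factor})\cdot\ch^H(\Nu_{g_2}(\mu^\dagger,\dotsc))$, and because $\Nu_0(\mu,\mu^\dagger)$ is a line bundle over a base with trivial positive-degree rational cohomology, the node factor is the monomial $e(\Nu_0(\mu,\mu^\dagger))^{\pm1}=\epsilon([\mu]\star[\mu^\dagger])^{\pm1}$, a unit in $\Quv$. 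The sum over $\mu$ of these node factors is then to be identified with $\eta^{-1}$, after which the gluing identities coincide verbatim with the CohFT axioms. Compatibility of the left-hand sides uses that the twisted gluing maps of \Cref{separatinggluing,nonseparatinggluing} cover the Knudsen–Deligne–Mumford gluing maps and hence induce the same pullback on rational cohomology.

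To identify the resulting Frobenius algebra, I note that the topological part $\omega^0_{0,3}$ is the degree-$0$ component of $\ch^H$ over $\oMg{0}{3}$, a point, so $\omega^0_{0,3}([\lambda_1]\otimes[\lambda_2]\otimes[\lambda_3])=e(\Nu_0(\lambda_1,\lambda_2,\lambda_3))$. This is exactly the symmetric three-point function $\epsilon([\lambda_1]\star[\lambda_2]\star[\lambda_3])$ attached to $(\star,\epsilon)$ in \Cref{definitionquantumproduct}; since the pairing and unit are chosen to be those of that Frobenius algebra, the product and counit the CohFT induces on $V_\Q$ agree with $(\star,\epsilon)$ over $\Quv$.

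The step I expect to require genuine care is the normalization in the gluing rules, namely that the node factor must enter as $e(\Nu_0(\mu,\mu^\dagger))^{-1}$ rather than with the opposite power, so that summing over $\mu$ assembles the copairing $\eta^{-1}$ of the Frobenius pairing (and not $\eta$ itself). This is what reconciles the factor $\Nu_0(\mu,\mu^\dagger)$ with the dual $\Nu(S_0^2,\mu,\mu^\dagger)^\vee$ appearing in the topological gluing \Cref{gluingunrooted}, and with the handle element $\Omega=\star(\eta^{-1})$ used in \Cref{theoremhodgenumbers}; getting this power wrong would make the CohFT pairing the inverse of $\eta$ and break the agreement of Frobenius algebras. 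The only other point needing justification is the cohomological identification $H^*(\Mgrbt{g}{n}{r}{s};\Q)\simeq H^*(\oMg{g}{n};\Q)$ and its compatibility with every gluing map; everything else is a formal consequence of the additivity, multiplicativity, and naturality of the Chern character.
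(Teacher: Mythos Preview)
Your proposal is correct and follows the same route as the paper: the paper's entire proof is the sentence ``we get directly the following Theorem from the definitions of a CohFT, and of a Hodge structure on a modular functor,'' together with the remark that $\oMg{g}{n}$ and $\Mgrbt{g}{n}{r}{s}$ share a coarse space and hence rational cohomology. You have simply spelled out what that sentence means, axiom by axiom, via additivity, multiplicativity and naturality of the Chern character; and you have correctly isolated the one point that is not purely formal, namely the power of the node factor $e(\Nu_0(\mu,\mu^\dagger))^{\pm1}$ needed to assemble $\eta^{-1}$ (this is indeed governed by the dual $\Nu(S_0^2,\mu,\mu^\dagger)^\vee$ in the topological gluing \eqref{gluingunrooted} and is what makes the handle element $\Omega$ in \Cref{theoremhodgenumbers} come out right).
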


Here $\ch(E)$ denotes the Chern character of a vector bundle $E$.


\subsection{Polarizations of Hodge structures on modular functors}

\begin{definition}\label{definitionprolarizedHSonMF}
  Let $\Nu$ be a complex modular or genus $0$ modular functor with a Hodge structure.
  A polarization of its Hodge structure is the data of a flat Hermitian metric $h_g(\ul)$ on each bundle $\Nu_g(\ul)$,
  which polarizes the CVHS on $\Nu_g(\ul)$ (see \Cref{definitionCVHS}),
  and such that all gluing, permutation, vacuum and normalization isomorphisms are isomorphisms of Hermitian bundles
  \footnote{$\Nu_0(0,0)\simeq \underline{\C}$ is thus assumed to have the trivial Hermitian form $(a,b)\mapsto a\overline{b}$.}.

  A similar definition can be provided for complex ribbon and braided functors.
\end{definition}

We can define the notion of a polarization for a functor defined over a CM number field $L$ by asking for forms $h_g(\ul)$
defined over $L$ that are polarizations for every embedding $L\hookrightarrow \C$.

\begin{proposition}\label{DeroinMarche}
  Let $\Nu$ be complex modular functor with a Hodge structure.
  Assume that every index $(p,q)\in\Q^2$ appearing in a non-zero term of the Hodge structure is such that $p$ and $q$ have odd denominators in reduced form.
  Let $h$ be a polarization of the Hodge structure which has sign $(-1)^p$ on each block of index $(p,q)$.

  Then $h$ makes $\Nu$ a Hermitian modular functor and the CohFT over $\Quv$ associated to its Hodge structure by \Cref{cohft}
  can be specialized at $u=-1$ and $v=1$ to obtain the Deroin-Marché CohFT over $\Q$ associated to $h$ (\cite[4.3]{deroinToledoInvariantsTopological2022}).
  In particular, the Frobenius algebra of \Cref{definitionquantumproduct} specializes to the Frobenius algebra computing signatures of $h$
  (\cite[5.1]{deroinToledoInvariantsTopological2022}).
\end{proposition}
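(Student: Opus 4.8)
The plan is to reduce everything to the observation that substituting $(u,v)=(-1,1)$ turns the Hodge polynomial of a polarized block into its signature. I would first settle the hermitian claim, which is essentially definitional: by \Cref{definitionprolarizedHSonMF} a polarization of the Hodge structure is a family of flat hermitian forms $h_g(\ul)$ polarizing each CVHS and compatible with all gluing, permutation, vacuum and normalization isomorphisms, which is exactly the data making $\Nu$ a hermitian modular functor. So nothing is to be checked there beyond recalling the two definitions.

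Next I would justify the specialization on coefficients. Every monomial $u^pv^q$ occurring in a value of the CohFT of \Cref{cohft} has $p,q$ of odd denominator in reduced form, so it suffices to evaluate on the subring of $\Quv$ generated by such monomials. On that subring $u^p\mapsto(-1)^p$ is a well-defined ring homomorphism, since for a rational $p$ with odd denominator $(-1)^p$ depends only on the numerator of $p$ modulo $2$, turning addition of exponents into multiplication of signs. Composing the CohFT of \Cref{cohft} with $u\mapsto-1$, $v\mapsto 1$ thus yields a CohFT over $\Q$ sending $[\lambda_1]\otimes\dotsb\otimes[\lambda_n]$ to $\sum_{p,q}(-1)^p\ch(\Nu_g(\ul)^{p,q})$.

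The heart is to recognize this as the Deroin--Marché CohFT of $h$. Because $h$ has sign $(-1)^p$ on the block of index $(p,q)$, I would split each $\Nu_g(\ul)$ as a smooth $h$-orthogonal sum $E_+\oplus E_-$, with $E_+=\bigoplus_{p\text{ even}}\Nu_g(\ul)^{p,q}$ a maximal positive subbundle and $E_-=\bigoplus_{p\text{ odd}}\Nu_g(\ul)^{p,q}$ a maximal negative one. Then $\sum_{p,q}(-1)^p\ch(\Nu_g(\ul)^{p,q})=\ch(E_+)-\ch(E_-)$ is the Chern character of the signature $K$-theory class $[E_+]-[E_-]$ of the polarized flat bundle $(\Nu_g(\ul),h)$, a class independent of the choice of maximal positive subbundle as the space of such subbundles is contractible. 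The Deroin--Marché CohFT \cite[4.3]{deroinToledoInvariantsTopological2022} is defined by exactly this class, so the two coincide on each $\oMg{g}{n}$. Restricting to cohomological degree $0$, $\ch$ computes rank, so $\ch(E_+)-\ch(E_-)$ becomes $\dim E_+-\dim E_-=\mathrm{signature}(h)$ on each $\Nu(\nu;\lambda,\mu)$; hence the structure constants $e(\Nu(\nu;\lambda,\mu))$ of \Cref{definitionquantumproduct} specialize to these signatures, i.e. to Marché's signature Frobenius algebra \cite[5.1]{deroinToledoInvariantsTopological2022}.

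The step I expect to be the main obstacle is matching $\ch(E_+)-\ch(E_-)$ against the precise normalization of \cite[4.3]{deroinToledoInvariantsTopological2022}, verifying that their characteristic class, their inverse pairing and their unit are arranged so that the two CohFTs agree on the nose and not merely up to an overall sign or twist. The mechanism that makes this work is that $(-1)^p$ is multiplicative in $p$ while Hodge bidegrees add under the gluing isomorphisms, so the symmetry and gluing compatibilities hold automatically; this is essentially the gluing axiom already verified in \Cref{cohft}.
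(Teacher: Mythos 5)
Your proposal is correct and follows essentially the same route as the paper, which disposes of the statement in two sentences: the specialization at $u=-1$ is legitimate because only odd denominators occur, and the Deroin--Marché CohFT is by construction the signature analogue of the Hodge CohFT. Your write-up merely makes explicit what the paper leaves implicit (the multiplicativity of $p\mapsto(-1)^p$ on rationals with odd denominator, and the identification of $\sum_{p,q}(-1)^p\ch(E^{p,q})$ with the class $\ch(E_+)-\ch(E_-)$ of the signature decomposition), so no further comparison is needed.
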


\begin{remark}
  The hypothesis that the sign is $(-1)^p$ on blocks of index $(p,q)$ is rather arbitrary. All we need is a coherent formula depending only on $p$ and $q$.
  See \Cref{thoeremgeometricsign} below for an example coming from geometry.
\end{remark}

The specialization of $u$ to $-1$ is possible as no powers of $u$ with even denominators appear in the decomposition.
The proof of \Cref{DeroinMarche} is essentially the fact that the Deroin-Marché construction of the CohFT is the same as ours replacing Hodge decompositions with
signature decompositions.


\section{\texorpdfstring{Application to $\SU$ modular functors and modular functors associated to Lie algebras}
{Application to SU(2) modular functors and modular functors associated to Lie algebras}}\label{sectionsu}


\subsection{\texorpdfstring{The $\SU$ modular functors}{The SU(2) modular functors}}

In this section, we use constructions developed in the first article \cite{godfardConstructionHodgeStructures2024} of this series of two
to compute the Hodge numbers explicitly for $\SU$ modular functors.
We briefly remind the results we will need from that paper.

\begin{proposition}\label{factsSU}
  For each $r\geq 3$ odd integer, the $\SU$ modular functor of level $2r$ is defined over the cyclotomic field $\Q(\zeta_{4r})$,
  has set of colors $\Lambda=\{0,1,2,\dotsc,r-2\}$
  and has a Hermitian structure $h$ (coming from the Skein construction).
  Moreover, whenever $\sum_i\lambda_i$ is odd, for all $g$, $\Nu_g(\ul)=0$.
\end{proposition}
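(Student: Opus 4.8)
The statements that the $\SU$ modular functor of level $2r$ is definable over $\Q(\zeta_{4r})$, has color set $\Lambda=\{0,1,\dots,r-2\}$, and carries a hermitian structure $h$ are all established in the first article \cite{godfardConstructionHodgeStructures2024} via the Kauffman bracket skein construction, so for these my plan is simply to recall that construction. In it one fixes $A=\zeta_{4r}$, forms the Jones--Wenzl idempotents $f_\lambda$ for $0\leq\lambda\leq r-2$ (the range in which they are defined at this root), and realizes the conformal blocks as skein modules. The matrices representing mapping class group generators are built from twist eigenvalues (powers of $A$) and from change-of-basis data given by quantum integers and $6j$-symbols; these are rational functions of $A$ with rational coefficients, so specializing $A=\zeta_{4r}$ places the whole functor over $\Q(\zeta_{4r})$. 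The form $h$ is the skein pairing obtained by doubling the surface across its boundary.

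For semisimplicity, the plan is to invoke \Cref{propositionunitary}. The Galois conjugate of $\Nu$ corresponding to the substitution $A^4\mapsto\zeta_r=e^{(r-1)i\pi/r}$ is the unitary $\SU$ theory: at this root the skein pairing $h$ is positive definite, which is the classical unitarity of the Witten--Reshetikhin--Turaev $\SU$ theory and is recorded in \cite{godfardConstructionHodgeStructures2024}. Since $\Nu$ is defined over the number field $\Q(\zeta_{4r})$ and admits a unitary Galois conjugate, \Cref{propositionunitary} yields at once that every $\Nu_g(\ul)$ is semisimple, and that the same holds for every embedding $\Q(\zeta_{4r})\hookrightarrow\C$.

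It remains to prove the vanishing $\Nu_g(\ul)=0$ when $\sum_i\lambda_i$ is odd, and here I would argue combinatorially from the fusion rules rather than by citation. Choosing a pants decomposition of $S_g^n$ and applying the gluing axiom of \Cref{definitiongeometricnonanomalousmodularfunctor} repeatedly, $\Nu_g(\ul)$ acquires a basis indexed by the admissible colorings of the internal curves, where a trivalent vertex carrying colors $(a,b,c)$ is admissible only if $a+b+c$ is even; this $\Z/2$-parity constraint is part of the $\SU$ fusion rules. Summing the relation ``$a+b+c$ is even'' over all vertices of the dual graph, every internal edge is counted at both of its endpoints and hence contributes $0\bmod 2$ (this includes loop edges at a single vertex, which are counted twice there), while each boundary leg $\lambda_i$ is counted once; therefore $\sum_i\lambda_i\equiv 0\pmod 2$ is forced. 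When $\sum_i\lambda_i$ is odd no admissible coloring exists, the basis is empty, and $\Nu_g(\ul)=0$.

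The main obstacle I anticipate is bookkeeping rather than anything conceptual: matching the level and root conventions (level $2r$, $A=\zeta_{4r}$, $\zeta_r=\zeta_{4r}^4$) so that the color range $\{0,\dots,r-2\}$ and the identification $\zeta_{4r}^4=e^{(r-1)i\pi/r}$ come out consistently, and verifying that positivity of the skein pairing holds at precisely this root and not at its conjugates. Both points are settled in \cite{godfardConstructionHodgeStructures2024}, so in the present paper this proposition is essentially a citation packaged with the short parity argument above.
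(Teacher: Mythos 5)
Your proposal is correct and matches the paper's treatment: Proposition \ref{factsSU} is stated there without proof as a recollection of facts from \cite{godfardConstructionHodgeStructures2024}, with semisimplicity justified exactly as you do, via unitarity of the Galois conjugate at $\zeta_r=\zeta_{4r}^4$ together with \Cref{propositionunitary}. Your parity argument for the vanishing when $\sum_i\lambda_i$ is odd is a correct standard supplement (using that the involution $(\cdot)^\dagger$ is trivial here, so glued edges carry equal colors and cancel mod $2$), and the remaining points are, as you say, convention bookkeeping settled in the first paper.
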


The following theorem gives a geometric construction of the braided functor associated to $\Nusl$.
The notation $\Nuab$ is for the Abelian modular functor of level $2r$, see \cite[2.1.3]{godfardConstructionHodgeStructures2024}.

\begin{theorem}[{\cite[3.3]{godfardConstructionHodgeStructures2024}}]\label{theoremgeometricconstruction}
  Let $r\geq 3$ be an odd integer, $n\geq 2$ and $b,a_1,\dotsc,a_n\in\{0,1,\dotsc,r-2\}$,
  such that $m=\frac{a_1+\dotsb+a_n-b}{2}$ is an integer.
  Consider the $\Q(\zeta_r)$-local system $\Line$ 
  over $\sMrpo{n+m}{r}/S_m$ corresponding to the antisymmetric part of $\Nuab(b;a_1,\dotsc,a_n,-2,\dotsc,-2)$ for the $S_m$ action.
  Let us consider the forgetful map:
  \begin{equation*}
      p:\sMrpo{n+m}{r}/S_m\lra \sMrpo{n}{r}.
  \end{equation*}
  Then there is an isomorphism:
  \begin{equation*}
      \Nusl(b;a_1,\dotsc,a_n) \simeq \mathrm{im}\left(R^mp_!\Line\lra R^mp_*\Line\right).
  \end{equation*}
  In particular, $\Nusl(b;a_1,\dotsc,a_n)$ has a Hodge structure of weight $m$ with only $(p,q)$ indices in $\{(p,q)\mid p,q\in\N, p+q=m\}$.
\end{theorem}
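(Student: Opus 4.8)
The plan is to realize the conformal blocks of $\Nusl$ through the free-field (Coulomb-gas) construction, in which they appear as \emph{screening integrals} paired against the rank-one local system $\Line$, and then to reinterpret these integrals sheaf-theoretically as the interior cohomology of the fibers of $p$. First I would make $\Line$ explicit: the abelian modular functor $\Nuab$ is a lattice/Heisenberg theory, so in genus $0$ its rooted functor attaches to a configuration of colored points a rank-one local system whose monodromy is governed by the quadratic form of the weight lattice (pairwise exponents proportional to $a_ia_j/r$ between the genuine points, together with the exponents coupling the screening charge to the $n$ genuine points and to the other screening points). The symmetric group $S_m$ permutes the $m$ points colored $-2$, and $\Line$ is the antisymmetric (sign) isotypic component, which descends to $\sMrpo{n+m}{r}/S_m$.

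Second, I would invoke the free-field realization of the quantum-$\mathfrak{sl}_2$ conformal blocks due to Felder and Wieczerkowski and, in this twisted rooted normalization, to Martel. It identifies $\Nusl(b;a_1,\dots,a_n)$ with the span of the screening integrals $\int_\Gamma\Line$, where $\Gamma$ ranges over locally finite twisted cycles on the fiber $p^{-1}(\mathrm{pt})$, the configuration space of the $m$ screening points on $\mathbb{P}^1$ avoiding the $n$ genuine ones. The number of independent such integrals is the Verlinde fusion dimension, and the hypothesis $m=(a_1+\dots+a_n-b)/2\in\N$ is exactly the charge-neutrality condition needed for a nonzero block.

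Third --- and this is the heart of the matter --- I would translate ``screening integrals'' into the forget-supports map $R^mp_!\Line\to R^mp_*\Line$. Each fiber of $p$ is a smooth affine variety of dimension $m$ (an $S_m$-quotient of a hyperplane-arrangement complement), so by Artin vanishing the only interesting degree is the top one: $R^mp_*\Line$ is its twisted cohomology, $R^mp_!\Line$ its compactly supported (Borel--Moore dual) counterpart, and pairing a compactly supported class against a locally finite cycle is precisely integration. Where $\Line$ is non-resonant the two coincide and both equal the full block space; but at the root of unity $\zeta_r$ the local system is resonant, so $R^mp_*\Line$ acquires spurious classes supported along the boundary divisors while $R^mp_!\Line$ acquires a kernel. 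The subspace cut out by genuine convergent screening integrals is exactly $\myim{R^mp_!\Line\lra R^mp_*\Line}$, the interior cohomology; this is the standard mechanism (Schechtman--Varchenko, Looijenga) by which resonant hypergeometric integrals select the correct conformal-block subquotient. Flatness of the identification over $\sMrpo{n}{r}$ is then the comparison of the Gauss--Manin connection on $R^mp_*\Line$ with the KZ connection, again a theorem of Schechtman--Varchenko. The main obstacle lives here: the resonance bookkeeping, namely showing that the image has \emph{exactly} the Verlinde rank --- neither inflated by surviving boundary classes nor deflated below the fusion-allowed count --- which is really where the content of the Felder--Wieczerkowski/Martel models and of Looijenga's vanishing theorems is consumed.

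Finally, for the Hodge structure I would argue that, since each fiber is smooth affine of dimension $m$ and $\Line$ is of geometric origin (a summand of the cohomology of the finite abelian cover that trivializes its monodromy, hence an object of a category of mixed Hodge modules, pure of weight $0$ after normalization), both higher direct images underlie mixed Hodge modules in the sense of Saito, and the image of $R^mp_!\Line$ in $R^mp_*\Line$ is pure of weight $m$. Purity forces $p+q=m$, and the affine dimension bound gives $F^{m+1}=0$ together with its conjugate, whence $0\le p,q$; thus the Hodge bidegrees lie in $\{(p,q)\mid p,q\in\N,\ p+q=m\}$, yielding the stated variation of Hodge structure of weight $m$.
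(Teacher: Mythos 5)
Your sketch follows the same route as the source: this theorem is not reproved in the present paper---it is imported verbatim from the companion paper \cite[3.3]{godfardConstructionHodgeStructures2024}---but the construction there is precisely the Felder--Wieczerkowski/Martel free-field model you describe, realizing $\Nusl(b;a_1,\dotsc,a_n)$ as the interior cohomology $\myim{R^mp_!\Line\lra R^mp_*\Line}$ of the configuration-space fibration and deducing the weight and the bounds $0\le p,q$ from purity together with the affine dimension bound on the fibers. Your remark that the genuine content lies in the resonance bookkeeping (showing the image has exactly the Verlinde rank) correctly locates where the work in the companion paper is done.
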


From this fact and computations we proved that gluing maps are compatible with these geometric Hodge structures
(\cite[3.18]{godfardConstructionHodgeStructures2024}).
This provides the following.

\begin{theorem}
  Let $r\geq 3$ be odd. The braided functor associated to $\Nusl$ supports a rational Hodge structure such that each
  $\Nusl(b;a_1,\dotsc,a_n)$ has weight $m=\frac{a_1+\dotsb+a_n-b}{2}$ and all Hodge bidegrees $(p,q)$ are pairs of non-negative integers.
\end{theorem}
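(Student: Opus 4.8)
The plan is to assemble the pointwise Hodge structures produced by \Cref{theoremgeometricconstruction} into a single Hodge structure on the rooted functor, in the sense of the rooted version of \Cref{definitionCVHSmodular}, and then to check that the gluing, vacuum and permutation isomorphisms respect them. First I would dispose of the degenerate case: by \Cref{factsSU}, whenever $a_1+\dotsb+a_n-b$ is odd, that is, when $m$ fails to be an integer, the space $\Nusl(b;a_1,\dotsc,a_n)$ is $0$, so I assign it the empty Hodge structure, which by convention has every weight. For $m\in\Z$, \Cref{theoremgeometricconstruction} already realizes $\Nusl(b;a_1,\dotsc,a_n)$ as $\mathrm{im}(R^mp_!\Line\ra R^mp_*\Line)$ and records that this carries a pure Hodge structure of weight $m$ whose bidegrees $(p,q)$ are non-negative integers with $p+q=m$. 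This is exactly the pointwise input I need, and I would take it as given; conceptually it comes from the fact that $\Line$ descends from $\Nuab$ and underlies a polarizable variation of Hodge structure of weight $0$ on the smooth base $\sMrpo{n+m}{r}/S_m$, so that $R^mp_!\Line$ and $R^mp_*\Line$ carry mixed Hodge structures and the image of the natural map between them is the pure weight-$m$ part.

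Next I would record the arithmetic of weights. The weight $m=\frac{a_1+\dotsb+a_n-b}{2}$ is manifestly symmetric in the inputs $a_i$, and I would verify that the assignment $(b;\ua)\mapsto m$ satisfies the cobordism form of the weight-function axioms of \Cref{definitionweightfunction}, which amounts to checking additivity of $m$ across a node and the vacuum normalization; both are immediate from the formula. Since the local system $\Line$ is a $\Q(\zeta_r)$-local system and $\Q(\zeta_r)$ is CM, with the polarization furnished by the hermitian structure $h$ of \Cref{factsSU}, the resulting family is a rational Hodge structure in the sense of \Cref{definitionRVHS}.

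I would then verify the three compatibility isomorphisms. Permutation is essentially automatic: the forgetful map $p$ and the local system $\Line$ are equivariant for relabeling the marked points carrying $a_1,\dotsc,a_n$, so the induced permutation isomorphisms are morphisms of the geometric Hodge structures by functoriality. The vacuum isomorphism, which adjoins a color-$0$ input, corresponds on the geometric side to pulling back along a forgetful map and is compatible with the Hodge filtration for the same reason. The one substantial point is the gluing axiom \textbf{(G)} of \Cref{definitionrootedfunctor}: I must show that the rooted fusion isomorphism \Cref{gluingrooted} is an isomorphism of Hodge structures. Geometrically this reflects a factorization of the configuration-space cohomology as the relevant marked points degenerate, and one must identify the resulting Hodge-theoretic direct sum decomposition with the algebraic fusion decomposition, matching weights summand by summand across the node.

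The hard part will be precisely this gluing compatibility, as it requires controlling the behavior of the geometric model under degeneration and identifying the Hodge splitting with the fusion splitting rather than merely a splitting of the underlying local systems. This is the content supplied by the computations of \cite[3.18]{godfardConstructionHodgeStructures2024}, on which I would rely; everything else, namely pointwise purity and integrality of bidegrees (from \Cref{theoremgeometricconstruction}), permutation, vacuum, and the weight bookkeeping, is comparatively routine once that geometric realization is in hand.
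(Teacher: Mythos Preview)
Your proposal is correct and matches the paper's approach: the paper presents this theorem without a formal proof, simply as the consequence of \Cref{theoremgeometricconstruction} together with the gluing compatibility established in \cite[3.18]{godfardConstructionHodgeStructures2024}, which is exactly the structure you outline. Your expansion of the routine checks (permutation, vacuum, weight bookkeeping, the degenerate odd-sum case) is accurate and appropriately flags the gluing compatibility as the only substantive ingredient, deferred to the companion paper.
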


By \Cref{uniqueextension}, we get the following.

\begin{corollary}\label{corollaryhodgesu}
  Let $r\geq 3$ be odd, then the modular functor $\Nusl$ supports a rational Hodge structure with non-negative integer 
  Hodge bidegrees and weight function $w_g(a_1,\dotsc,a_n)=\sum_ia_i/2$.
\end{corollary}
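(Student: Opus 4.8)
The plan is to obtain the corollary by combining the genus $0$ Hodge structure of the preceding theorem with the extension result \Cref{uniqueextension}, the only genuine work being to promote the \emph{complex}, a priori $\Q$-bidegree, Hodge structure furnished by the general machinery to one with \emph{non-negative integer} bidegrees and rational coefficients.

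First I would check that $\Nusl$ meets the hypotheses of \Cref{uniqueextension}: its quantum representations are semisimple by \Cref{factsSU} (the hermitian form is unitary at $\zeta_r$), and its associated category $\CN$ is ribbon because, up to a sign in the twists, $\Nusl$ is the modular functor attached to $\mathfrak{sl}_2$, whose category is the modular, hence ribbon, category of $U_q(\mathfrak{sl}_2)$ at a root of unity. The preceding theorem endows the rooted functor associated to $\Nusl$ with a rational Hodge structure; complexifying and applying \Cref{uniqueextension} extends it uniquely to a complex Hodge structure on the modular functor $\Nusl$, which gives existence.

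Next I would identify the weight function. Via cobordism notation and the vacuum isomorphism, $\Nu_0(a_1,\dotsc,a_n)\simeq\Nusl(0;a_1,\dotsc,a_n)$, which by the preceding theorem has weight $\frac{a_1+\dotsb+a_n}{2}=\sum_i a_i/2$; this is already of the exact form $\sum_i f(\lambda_i)$ with $f(a)=a/2$ (the value $\Nu_0(a,a)$ pinning down $f$). Since $\CN$ is ribbon, \Cref{exactweightfunctions} shows that the weight function of the extended structure is exact, so it is determined by its genus $0$ values and therefore equals $\sum_i a_i/2$ in every genus. By \Cref{factsSU}, $\Nu_g(\ul)=0$ unless $\sum_i\lambda_i$ is even, so these weights are integers whenever the bundle is nonzero.

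The main obstacle, and what makes this statement sharper than the general \Cref{maintheoremcomplex} (which only guarantees bidegrees in $\Q$), is controlling the bidegrees in positive genus. I would proceed by induction on $g$, the base case $g=0$ being the preceding theorem. The non-separating gluing isomorphism \textbf{(G-nonsep)}, which is an isomorphism of Hodge structures,
\[
  p^*\Nu_g(\ul)\simeq \bigoplus_\mu \Nu_{g-1}(\ul,\mu,\mu^\dagger)\otimes\Nu_0(\mu,\mu^\dagger),
\]
reduces genus $g$ to genus $g-1$ (with two extra marked points) tensored with a genus $0$ factor. Pullback along $p$ preserves the $\Cinf$ Hodge decomposition and, the source stack being connected, the rank of each bidegree summand, so the Hodge numbers of $\Nu_g(\ul)$ coincide with those of the right-hand side. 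As tensor products add bidegrees, the inductive hypothesis for genus $g-1$ together with the genus $0$ input propagates non-negative integrality to $\Nu_g(\ul)$. Finally, rationality of the structure in all genera follows from \Cref{rationalqreps}, or equivalently by transporting the genus $0$ rational structure through the gluing isomorphisms, which are defined over the field of definition of $\Nusl$.
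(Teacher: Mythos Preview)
Your proof is correct and follows the same approach the paper intends: apply \Cref{uniqueextension} to extend the genus $0$ structure, then propagate integrality and non-negativity of bidegrees through the gluing isomorphisms. The paper's own proof is the one-line remark ``By \Cref{uniqueextension}'' preceding the statement, so you are simply making explicit the induction on genus that the paper otherwise packages into the Frobenius algebra formalism of \Cref{theoremhodgenumbers}.
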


Let us now turn to the computations of the Frobenius algebra governing the Hodge numbers.
We use the Hermitian structure $h$ to compute them. To that end, in \cite{godfardConstructionHodgeStructures2024},
we computed the signs of $h$ on the Hodge decomposition.

\begin{theorem}[{\cite[3.9, 3.12]{godfardConstructionHodgeStructures2024}}]\label{thoeremgeometricsign}
  Let $r\geq 3$ be an odd integer, $n\geq 2$ and $b,a_1,\dotsc,a_n\in\{0,1,\dotsc,r-2\}$,
  such that $m=\frac{a_1+\dotsb+a_n-b}{2}$ is an integer.
  Let $s_{\Dc}$ be the intersection form on $\Nusl(b;a_1,\dotsc,a_n)$ coming from the geometric construction
  and let $h_{\Dc}$ be the Hermitian form on $\Nusl(b;a_1,\dotsc,a_n)$ coming from the Skein module construction ($h$ above).
  Then:
  \begin{equation*}
      s_{\Dc}=(\zeta_r^2-\zeta_r^{-2})^mh_{\Dc}
  \end{equation*}
  and $(-1)^{\frac{m(m-1)}{2}}s_{\Dc}$ is a polarization for the Hodge structure on $\Nusl(b;a_1,\dotsc,a_n)$.
  In particular, if $\Nusl(b;a_1,\dotsc,a_n)=\bigoplus_{p+q=m}E^{p,q}$ is the Hodge decomposition,
  then it is orthogonal for $h_{\Dc}$
  and $(-1)^{\frac{m(m-1)}{2}}i^{p-q}(\zeta_r^2-\zeta_r^{-2})^mh_{\Dc}$ is positive definite on $E^{p,q}$.
\end{theorem}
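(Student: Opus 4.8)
The plan is to read everything off the geometric construction of \Cref{theoremgeometricconstruction}, which realizes $\Nusl(b;a_1,\dotsc,a_n)$ as the interior cohomology $\myim{R^mp_!\Line\lra R^mp_*\Line}$ of the fibration $p:\sMrpo{n+m}{r}/S_m\lra\sMrpo{n}{r}$ with coefficients in the rank-one local system $\Line$ built from the abelian functor $\Nuab$. First I would note that, since the functor is unitary at $\zeta_r$, the local system $\Line$ is unitary, hence a polarizable weight-zero variation of Hodge structure (of Hodge type $(0,0)$); as the fibers of $p$ are smooth affine varieties of dimension $m$, the interior cohomology in degree $m$ then carries a pure, polarizable Hodge structure of weight $m$, which is the Hodge structure of \Cref{theoremgeometricconstruction}. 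The intersection form $s_{\Dc}$ is exactly the Poincaré-duality pairing on this interior cohomology, pairing $H^m_c(F,\Line)$ with $H^m(F,\Line^\vee)$.

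The polarization and positivity statements then reduce to the Hodge--Riemann bilinear relations for cohomology with coefficients in a polarized weight-zero VHS. For a pure Hodge structure of weight $m$ these relations yield orthogonality of the decomposition $\bigoplus E^{p,q}$ for the intersection form, together with positivity of $(-1)^{m(m-1)/2}i^{p-q}$ times that form on the $(p,q)$-piece. Once $s_{\Dc}$ is identified with the intersection form this is precisely the asserted statement: the decomposition is orthogonal for $s_{\Dc}$ (hence, by the proportionality below, for $h_{\Dc}$), and $(-1)^{m(m-1)/2}i^{p-q}s_{\Dc}$ is positive definite on $E^{p,q}$.

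It remains to identify $s_{\Dc}$ with $(\zeta_r^2-\zeta_r^{-2})^m h_{\Dc}$. Both are flat forms on the same local system $\Nusl(b;a_1,\dotsc,a_n)$, which we may take irreducible; after expressing the geometric intersection pairing as a sesquilinear form via a flat identification $\overline{\Line}\simeq\Line^\vee$, Schur's lemma forces it to be proportional to $h_{\Dc}$ by a single scalar $c$. To pin down $c$ I would compute it in a local model near the boundary, using the factorization compatibility of \cite[3.18]{godfardConstructionHodgeStructures2024}: each of the $m$ auxiliary points colored $-2$ contributes a residue pairing around the corresponding hyperplane, and the ratio of the Poincaré-residue normalization to the Skein normalization at one such point is the quantum quantity $\zeta_r^2-\zeta_r^{-2}$. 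Multiplicativity over the $m$ integration points then gives $c=(\zeta_r^2-\zeta_r^{-2})^m$; substituting $s_{\Dc}=(\zeta_r^2-\zeta_r^{-2})^m h_{\Dc}$ into the Hodge--Riemann positivity yields the final form of the statement.

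The main obstacle is the determination of $c$. One must match two a priori unrelated normalizations, the geometric Poincaré/residue pairing coming from the Felder--Wieczerkowski/Martel model and the Skein-theoretic hermitian form $h_{\Dc}$, and verify both the modulus and the phase of the per-point factor $\zeta_r^2-\zeta_r^{-2}$: since this quantity is purely imaginary, the phase $i^m$ it contributes is exactly what converts the (anti)symmetric intersection form into a hermitian one, and it is buried in the choice of the flat identification $\overline{\Line}\simeq\Line^\vee$. One must then confirm that the factor multiplies cleanly over the $m$ points. By comparison, the Hodge--Riemann sign $(-1)^{m(m-1)/2}$ and the Weil factor $i^{p-q}$ are standard once the Hodge type of $\Line$ and the direction of its polarization are fixed, so the delicate bookkeeping is entirely in $c$.
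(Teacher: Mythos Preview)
This theorem is not proved in the paper under review; it is imported wholesale from the companion article \cite[3.9, 3.12]{godfardConstructionHodgeStructures2024}, so there is no in-paper proof to compare against. Your outline is the expected one and almost certainly matches the companion paper's argument in structure: the Hodge--Riemann relations for the interior cohomology of the configuration fibration (with coefficients in the unitary, hence weight-$0$ polarized, rank-one system $\Line$) give the polarization statement together with the sign $(-1)^{m(m-1)/2}i^{p-q}$, while the constant $(\zeta_r^2-\zeta_r^{-2})^m$ must be extracted by an explicit comparison of normalizations between the geometric and skein models.

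One step needs care. You write that $\Nusl(b;a_1,\dotsc,a_n)$ ``may be taken irreducible'' and invoke Schur's lemma to force $s_{\Dc}=c\cdot h_{\Dc}$ for a single scalar $c$. But these local systems are only known to be irreducible at prime or twice-prime levels (see the remark following \Cref{corollarynogaps}); in general they are merely semisimple, so Schur alone does not yield a global scalar. The fix is already implicit in what you write next: both $s_{\Dc}$ and $h_{\Dc}$ are compatible with the gluing isomorphisms of the rooted functor, so the identity $s_{\Dc}=(\zeta_r^2-\zeta_r^{-2})^m h_{\Dc}$ can be checked directly on the generating blocks $\Nu(\nu;\lambda,\mu)$---which for $\SU$ are at most one-dimensional---and then propagated by multiplicativity under factorization. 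This bypasses irreducibility entirely. Your identification of the per-integration-point factor $\zeta_r^2-\zeta_r^{-2}$ as the crux, and of its phase $i^m$ as what converts the $(-1)^m$-symmetric intersection form into a hermitian one, is exactly the substance of the cited results.
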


\begin{proposition}\label{signaturessu}
  Let $0<s<r$ be prime to $r$ and odd. Then for $\zeta_r=-e^{i\pi s/r}$ and $1\leq k\leq r-2$,
  $h$ is definite of sign $-(-1)^{\lfloor\frac{ks}{r}\rfloor+\lfloor\frac{(k+1)s}{r}\rfloor}$ on $\Nu_0(k-1;k,1)$.
\end{proposition}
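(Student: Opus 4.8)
The plan is to reduce the statement to an evaluation of quantum integers at $\zeta_r$ and then track signs. First I would record that $\Nu_0(k-1;k,1)$ is one-dimensional. In the $\SU$ theory with colors $\{0,\dots,r-2\}$ (so of level $r-2$), the triple $(k-1,k,1)$ is admissible: its sum $2k$ is even, the triangle inequalities $1=|(k-1)-k|\le 1\le 2k-1$ hold, and $2k\le 2(r-2)$ since $k\le r-2$. Hence the fusion space is one-dimensional and $h$ is a single real scalar (in each embedding), whose sign is what must be computed.

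Next I would make the skein form explicit on the standard trivalent generator. Writing, in cobordism notation, $\Nu_0(k-1;k,1)=\Nu_0(k-1,k,1)\otimes\Nu_0(S_0^2,k-1,k-1)^\vee$, the value of $h$ on this generator is a positive real multiple of $\theta(k-1,k,1)$ times (a power of) the quantum dimension $d_{k-1}$ of the output color, the latter coming from the annulus factor. With internal labels $(i,j,k')=(1,0,k-1)$ the theta symbol collapses to $\theta(k-1,k,1)=[k+1]$, while $d_{k-1}=[k]$ in the $\SU$ normalization; since $d_{k-1}$ and $d_{k-1}^{-1}$ have the same sign, one gets
\[
  \mathrm{sign}\, h = \mathrm{sign}[k]\cdot\mathrm{sign}[k+1],
  \qquad [n]:=\frac{\zeta_r^{\,n}-\zeta_r^{-n}}{\zeta_r-\zeta_r^{-1}} .
\]

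The sign of each quantum integer is then elementary. From $\zeta_r=-e^{i\pi s/r}$ one finds $[n]=(-1)^{n-1}\sin(n\pi s/r)/\sin(\pi s/r)$. For $1\le n\le r-1$ we have $r\nmid ns$ because $\gcd(s,r)=1$, so $0<\{ns/r\}<1$ and $\mathrm{sign}\,\sin(n\pi s/r)=(-1)^{\lfloor ns/r\rfloor}$; as $\sin(\pi s/r)>0$ this gives $\mathrm{sign}[n]=(-1)^{n-1}(-1)^{\lfloor ns/r\rfloor}$. Taking $n=k$ and $n=k+1$ and multiplying, the prefactors combine as $(-1)^{(k-1)+k}=-1$, whence
\[
  \mathrm{sign}\, h=-(-1)^{\lfloor ks/r\rfloor+\lfloor (k+1)s/r\rfloor},
\]
which is the claim. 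Here the hypothesis that $s$ is odd is used to ensure $\zeta_r$ is a primitive $r$-th root of unity, so that we are genuinely in the level-$2r$ $\SU$ theory with the stated colors.

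The main obstacle is the normalization bookkeeping in the second step: in the Kauffman/Temperley--Lieb convention the quantum dimension carries an extra sign $(-1)^{k-1}$, which would corrupt the parity, so one must work in the $\SU$ convention (the ``change of sign in the twists'' noted after \Cref{theoremdefinednumberfield}) where $d_{k-1}=[k]$ with no sign, and one must check that the sesquilinearity of $h$ --- the conjugation $\zeta_r\mapsto\zeta_r^{-1}$ on one vertex --- introduces no further sign. A normalization-free alternative is available because here $m=\tfrac12\big((k)+(1)-(k-1)\big)=1$: \Cref{thoeremgeometricsign} then expresses the sign of $h$ on the one-dimensional weight-one space directly in terms of $i^{p-q}(\zeta_r^2-\zeta_r^{-2})=i^{p-q}\cdot 2i\sin(2\pi s/r)$, reducing everything to deciding whether the Hodge type is $(1,0)$ or $(0,1)$ from the geometric model of \cite{godfardConstructionHodgeStructures2024}; matching this with the formula above is then a short check comparing $\lfloor 2s/r\rfloor$ with $\lfloor ks/r\rfloor+\lfloor(k+1)s/r\rfloor$ modulo $2$.
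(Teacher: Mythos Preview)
The paper does not prove this statement; it simply writes ``For a reference see, for example, \cite[3]{marcheSignaturesTQFTsTrace2023}.'' Your direct computation via signs of quantum integers is precisely the kind of argument one finds there, and your arithmetic in the third paragraph is correct: from $\zeta_r=-e^{i\pi s/r}$ one does get $\mathrm{sign}[n]=(-1)^{n-1}(-1)^{\lfloor ns/r\rfloor}$ for $1\le n\le r-1$, and multiplying at $n=k,k+1$ gives $-(-1)^{\lfloor ks/r\rfloor+\lfloor(k+1)s/r\rfloor}$.

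The only real gap is the one you already flag: the identification $\mathrm{sign}\,h=\mathrm{sign}[k]\cdot\mathrm{sign}[k+1]$ is stated but not justified. In the Kauffman/BHMV convention one has $\theta(k-1,k,1)=(-1)^k[k+1]$ and $d_{k-1}=(-1)^{k-1}[k]$, so extra signs appear and cancel only after one tracks how the hermitian form on $\Nu_0(k-1;k,1)=\Nu_0(k-1,k,1)\otimes\Nu_0(k-1,k-1)^\vee$ is built from the unrooted forms. This bookkeeping is where the content of Marché's computation lies, so you should either carry it out explicitly (it is short once the conventions are fixed) or cite the reference as the paper does.

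Your proposed ``normalization-free alternative'' via \Cref{thoeremgeometricsign} is not a proof of the proposition as stated: it trades the sign of $h$ for the Hodge type $(1,0)$ versus $(0,1)$, but in this paper the Hodge type of $\Nu_0(k-1;k,1)$ is \emph{deduced} from the sign formula (see the proof of \Cref{theoremhodgenumberssu}), so using that direction here would be circular. It is true that the first paper's geometric model could in principle determine the Hodge type of this one-dimensional weight-one piece directly, but that would be an independent computation, not a shortcut.
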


For a reference see, for example, \cite[3]{marcheSignaturesTQFTsTrace2023}.
We now extend the description of signatures given by Marché in \cite[3]{marcheSignaturesTQFTsTrace2023} to Hodge numbers.

\begin{theorem}\label{theoremhodgenumberssu}
  Let $r\geq 3$ be odd and $0<s<r$ be odd and prime to $r$. Define, for $1\leq k\leq r-2$:
  \begin{equation*}
    w_k = \begin{cases} u &\text{if }\lfloor\frac{2s}{r}\rfloor+\lfloor\frac{ks}{r}\rfloor+\lfloor\frac{(k+1)s}{r}\rfloor\text{ is even,}\\
       v&\text{otherwise.}\end{cases}
  \end{equation*}
  Let $M$ be the following square matrix of size $r-1$ with coefficients in $\Z[u,v]$:
  \begin{equation*}
    M = \begin{pmatrix}
      0 & w_1     &           &           &     \\
      1 & 0       & w_2       &           &     \\
        & 1       & 0         & \ddots    &      \\
        &         & \ddots    & \ddots    & w_{r-2}   \\
        &         &           & 1         & 0
    \end{pmatrix}
  \end{equation*}
  and for $0\leq k\leq r-2$, let $M_k$ be its square top left submatrix of size $k$.
  Then the Frobenius algebra of \Cref{definitionquantumproduct} associated to the Hodge structure of \Cref{corollaryhodgesu} on $\Nusl$
  for $\zeta_r=-e^{i\pi s/r}$ is defined over $\Z[u^{\pm 1},v^{\pm 1}]$ by:
  \begin{equation*}
    V = \Z[u^{\pm 1},v^{\pm 1}][X]/\chi_M(X),\text{ and for }\:0\leq k\leq r-2,\;[k]=\chi_{M_k}(X)
  \end{equation*}
  where $\chi_M(X)=\mathrm{det}(XI_{r-1}-M)$ denotes the characteristic polynomial of $M$.
  Moreover specializing at $(u,v)=(-1,1)$ if $\lfloor\frac{2s}{r}\rfloor$ is even or $(u,v)=(1,-1)$ if $\lfloor\frac{2s}{r}\rfloor$ is odd
  gives the Frobenius algebra computing signatures (see \cite[3]{marcheSignaturesTQFTsTrace2023}).
\end{theorem}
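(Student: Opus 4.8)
The plan is to exhibit the Frobenius algebra of \Cref{definitionquantumproduct} as a quotient of a polynomial ring generated by the single class $[1]$, and to match its presentation with $V$. The arithmetic input is that the characteristic polynomials of the truncations $M_k$ satisfy the three-term recurrence $\chi_{M_k}(X)=X\,\chi_{M_{k-1}}(X)-w_{k-1}\,\chi_{M_{k-2}}(X)$, with $\chi_{M_0}=1$ and $\chi_{M_1}=X$; this follows from Laplace expansion of $\det(XI_k-M_k)$ along its last row and column, the relevant product of off-diagonal entries being $(-w_{k-1})(-1)=w_{k-1}$. As each $\chi_{M_k}$ is monic of degree $k$, the family $(\chi_{M_k})_{0\le k\le r-2}$ is a $\Z[u^{\pm1},v^{\pm1}]$-basis of $V$, and $V$ is generated as an algebra by $X$. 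Consequently the theorem reduces to proving the single family of identities $[1]\star[k]=[k+1]+w_k[k-1]$ in the Frobenius algebra (for $1\le k\le r-2$, where the term $[r-1]$ is understood to vanish): these say precisely that $[k]\mapsto\chi_{M_k}(X)$ is an algebra isomorphism, the relation $\chi_M([1])=0$ being forced by $[1]\star[r-2]=w_{r-2}[r-3]$, i.e. $\chi_{M_{r-1}}([1])=0$.

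To compute $[1]\star[k]=\sum_\nu e(\Nusl(\nu;1,k))[\nu]$ I would first invoke the $\SU$ fusion rules: $\Nusl(\nu;1,k)$ is nonzero only for $\nu\in\{k-1,k+1\}\cap\{0,\dotsc,r-2\}$ and is then one-dimensional. Reading the weight function of \Cref{corollaryhodgesu} in cobordism notation gives $\Nusl(\nu;1,k)$ weight $\tfrac12(1+k-\nu)$, which is $0$ for $\nu=k+1$ and $1$ for $\nu=k-1$. Since \Cref{corollaryhodgesu} also guarantees non-negative integer bidegrees, the weight-$0$ term forces $e(\Nusl(k+1;1,k))=1$, while the weight-$1$ term forces $e(\Nusl(k-1;1,k))=u^pv^q$ with $(p,q)\in\{(1,0),(0,1)\}$, i.e. a single variable $u$ or $v$. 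Everything thus hinges on deciding which of the two bidegrees occurs on the line $\Nusl(k-1;k,1)$.

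This sign determination is the heart of the argument and the step I expect to be the main obstacle. I would feed the sign $(-1)^{\lfloor ks/r\rfloor+\lfloor(k+1)s/r\rfloor+1}$ of the Skein form $h$ on $\Nusl(k-1;k,1)$, supplied by \Cref{signaturessu}, into the polarization identity of \Cref{thoeremgeometricsign}. At $m=1$ that identity says $i^{p-q}(\zeta_r^2-\zeta_r^{-2})\,h$ is positive definite on the bidegree-$(p,q)$ line. Writing $\zeta_r^2-\zeta_r^{-2}=2i\sin(2\pi s/r)$ and $\mathrm{sign}\,\sin(2\pi s/r)=(-1)^{\lfloor 2s/r\rfloor}$, the real number $i^{p-q}(\zeta_r^2-\zeta_r^{-2})$ has sign $(-1)^{\lfloor 2s/r\rfloor+1}$ when $(p,q)=(1,0)$ and sign $(-1)^{\lfloor 2s/r\rfloor}$ when $(p,q)=(0,1)$. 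Matching this against the sign of $h$ shows that the bidegree is $(1,0)$, hence $e(\Nusl(k-1;1,k))=u$, exactly when $\lfloor 2s/r\rfloor+\lfloor ks/r\rfloor+\lfloor(k+1)s/r\rfloor$ is even, and is $(0,1)$, hence $e(\Nusl(k-1;1,k))=v$, otherwise. That is precisely $e(\Nusl(k-1;1,k))=w_k$, which completes the family of product identities and the identification with $V$.

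For the final assertion I would specialize: when $\lfloor 2s/r\rfloor$ is even one sets $(u,v)=(-1,1)$, and when it is odd one sets $(u,v)=(1,-1)$. A direct check of the parity condition defining $w_k$ shows that in either case $w_k$ specializes to $(-1)^{\lfloor ks/r\rfloor+\lfloor(k+1)s/r\rfloor+1}$, which by \Cref{signaturessu} is exactly the sign of $h$ on $\Nusl(k-1;k,1)$. Hence $M$ specializes to the tridiagonal sign matrix underlying Marché's signature computation, and $V$ specializes to his Frobenius algebra of signatures \cite{marcheSignaturesTQFTsTrace2023}; alternatively, once the polarization signs are put in the normalized form, this is an instance of \Cref{DeroinMarche}.
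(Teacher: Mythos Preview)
Your proposal is correct and follows essentially the same approach as the paper's proof: both reduce to computing $[1]\star[k]$ via the $\SU$ fusion rules, read off the coefficient of $[k+1]$ as $1$ from the weight-$0$ constraint, and pin down the coefficient of $[k-1]$ as $w_k$ by matching the polarization sign from \Cref{thoeremgeometricsign} against the signature formula of \Cref{signaturessu}; the identification of $V$ with $\Z[u^{\pm1},v^{\pm1}][X]/\chi_M(X)$ via the three-term recurrence is dual to the paper's observation that $M$ is the matrix of multiplication by $[1]$ in the basis $[0],\dotsc,[r-2]$. Your write-up is in fact slightly more explicit about the sign computation and the specialization to signatures, which the paper leaves to the reader.
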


Note that $[1]=X$ is a generator of $V$. For details on how to extract the Hodge numbers from $V$, see \Cref{theoremhodgenumbers}.

In their recent paper \cite{belkaleMotivicFactorisationKZ2023}, Belkale, Fakhruddin and Mukhopadhyay independently computed the same rings $V$
by computing coefficients $a_k$ and $b_k$ (defined below) with a different method (see \cite[Th. 1.11]{belkaleMotivicFactorisationKZ2023}).
See \Cref{subsectionlie} below for more comments on their work.

\begin{proof}
  Let $V$ be the Frobenius algebra of \Cref{definitionquantumproduct}, which is defined over $\Z[u^{\pm 1},v^{\pm 1}]$ by \Cref{corollaryhodgesu}.
  By looking at dimensions, one sees that, with the convention $[r-1]=0$, for $1\leq k\leq r-2$, $[k][1]=a_k[k+1]+b_k[k-1]$ for some coefficients
  $a_k$ and $b_k$. As, for $k<r-2$, $\Nu_0(k+1;k,1)$ is geometric of dimension $1$ and weight $m=0$, its Hodge polynomial is $1$. Hence $a_k=1$.
  Now $\Nu_0(k-1;k,1)$ is geometric of dimension $1$ and weight $m=1$, so its Hodge polynomial is either $u$ or $v$.
  If it is $u$, then, by \Cref{thoeremgeometricsign}, $h$ has the sign of $i(\zeta_r^2-\zeta_r^{-2})$ on $\Nu_0(k-1;k,1)$.
  If it is $v$, the sign is opposite.
  However, by \Cref{signaturessu}, the sign is $-(-1)^{\lfloor\frac{ks}{r}\rfloor+\lfloor\frac{(k+1)s}{r}\rfloor}$.
  As $i(\zeta_r^2-\zeta_r^{-2})$ and $-(-1)^{\lfloor\frac{2s}{r}\rfloor}$ have the same sign, we have $b_k=w_k$.

  Notice that $[1]$ generates $V$ over $\Z[u^{\pm 1},v^{\pm 1}]$ and that $[0],[1],\dotsc,[r-2]$ form a basis.
  Now $M$ is the matrix of multiplication by $[1]$ in this basis, hence the result.
\end{proof}


\subsection{Modular functors associated to simple Lie Algebras}\label{subsectionlie}

In this subsection, we explain how under some assumption, one could deduce from the recent work \cite{belkaleMotivicFactorisationKZ2023} of Belkale, Fakhruddin and Mukhopadhyay
the Hodge numbers in all genera of modular functors associated to $\mathfrak{sl}_n$, $n\geq 2$.

Given a complex finite dimensional simple Lie algebra $\mathfrak{g}$ and an integer $\ell\geq 1$, one can define a family of Galois conjugate modular functors.
More precisely, if $m=\frac{(\alpha_l,\alpha_l)}{(\alpha_s,\alpha_s)}\in\{1,2,3\}$ with $\alpha_l$ a long root and $\alpha_s$ a short root,
and $h^\vee$ is the dual Coxeter number of $\mathfrak{g}$, then for each choice of $2m(\ell+h^\vee)$-th root of unity $\zeta$ we have a complex modular functor
$\Nu_{\mathfrak{g},\zeta}$. Its set of colors is $P_\ell=\{\lambda\in P_+\mid (\lambda,\theta)\leq \ell\}$ where $P_+$ is the set of dominant integral weights,
$\theta$ is the highest root, and $(\cdot,\cdot)$ is normalized so that $(\theta,\theta)=2$. See \cite[chp. 7]{bakalovLecturesTensorCategories2000} for details.

\begin{remark}
  For $r\geq 3$ odd and $\zeta$ a primitive $r$-th root of unity, the $\SU$ modular functor $\Nu_\zeta$ of level $2r$ has the same braided tensor category as 
  $\Nu_{\mathfrak{sl}_2,\zeta}$. However, the twists differ by a sign on each odd $\lambda\in\Lambda=P_r=\{0,1,\dotsc,r-2\}$.
  This slight difference is of little importance here as twists have finite order, so that $\Nu_\zeta$ and $\Nu_{\mathfrak{sl}_2,\zeta}$ have the same Hodge decompositions in genus $0$
  and hence the same Hodge numbers in all genera.
\end{remark}

In \cite{belkaleMotivicFactorisationKZ2023}, for any $\mathfrak{g}$ and $\zeta$ as above, Belkale, Fakhruddin and Mukhopadhyay
study the Hodge theory of a geometrically constructed braided functor $\widetilde{\Nu}_{\mathfrak{g},\zeta}$ whose quantum representations
coincide with those of $\Nu_{\mathfrak{g},\zeta}$.
In particular, they provide an algorithm for computing the Hodge numbers when $\mathfrak{g}$ is $\mathfrak{sl}_n$.

\begin{conjecture}\label{conjectureBFM}
  The braided functor associated to $\Nu_{\mathfrak{g},\zeta}$ is isomorphic to $\widetilde{\Nu}_{\mathfrak{g},\zeta}$.
  Equivalently, $\Nu_{\mathfrak{g},\zeta}$ and $\widetilde{\Nu}_{\mathfrak{g},\zeta}$ have isomorphic braided tensor categories.
\end{conjecture}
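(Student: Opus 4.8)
The plan is to factor the problem through the reconstruction correspondence between framed rooted functors and braided tensor categories, and then to identify both sides with the quantum group category at $\zeta$. Indeed, the full-faithfulness of $\Nu\mapsto\CN$ proved in \Cref{sectioncn}, in its rooted (genus $0$) incarnation, means that an isomorphism of framed rooted functors is the same datum as an isomorphism of the associated braided tensor categories — this is exactly the content of the ``equivalently'' clause in the statement. So it suffices to produce a braided tensor equivalence between the category attached to $\Nu_{\mathfrak{g},\zeta}$ and the one attached to $\widetilde{\Nu}_{\mathfrak{g},\zeta}$, compatibly with the framing/ribbon twist $t_\mu$.

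First I would fix one side. By the theorems of Kazhdan--Lusztig, Finkelberg and Huang--Lepowsky, the braided tensor category of the WZW functor $\Nu_{\mathfrak{g},\zeta}$ is braided-tensor equivalent to the semisimplification $\overline{\mathrm{Rep}}\,U_\zeta(\mathfrak{g})$ of tilting modules over the quantum group at $q=\zeta$, with simple objects indexed by $P_\ell$ and braiding given by the quantum $R$-matrix. This pins down $\CN$ for $\Nu_{\mathfrak{g},\zeta}$ completely.

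The substance is to identify the braided tensor category of the geometric functor $\widetilde{\Nu}_{\mathfrak{g},\zeta}$ with the \emph{same} quantum group category. By construction (Schechtman--Varchenko, as used by Belkale--Fakhruddin--Mukhopadhyay), its spaces of conformal blocks are the cohomology of configuration spaces in $\mathbb{P}^1$ with coefficients in a rank-one local system, and its braided tensor structure is read off from the Gauss--Manin (i.e. KZ) connection: the braiding is the monodromy as marked points are exchanged, and the associativity isomorphisms are the limits of parallel transport as points collide along the boundary strata of $\overline{M}_{0,n}$ that define the factorization isomorphisms. By the Drinfeld--Kohno theorem this monodromy is governed by the quantum $R$-matrix and the Drinfeld associator, which are precisely the braiding and associativity of $\overline{\mathrm{Rep}}\,U_\zeta(\mathfrak{g})$. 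One then checks that the identification of colors with weights in $P_\ell$ is compatible with the coincidence of quantum representations already established by Belkale--Fakhruddin--Mukhopadhyay, and that the framing matches the ribbon twist.

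The hard part is this third step, and within it the associator rather than the braiding. The coincidence of quantum representations already forces the braid-group monodromy, hence the braiding, to agree once the underlying local systems are identified; what is genuinely delicate is showing that the factorization isomorphisms of the cohomological model — obtained by degenerating the configuration and regularizing the resulting multivalued pairing — reproduce the quantum group's associativity constraints, i.e. that the geometric fusion is a \emph{monoidal} functor and not merely an isomorphism of underlying flat bundles. Controlling the limiting behaviour of the Gauss--Manin connection near the boundary divisors of $\overline{M}_{0,n}$, together with the normalization of the framing and the matching of the Schechtman--Varchenko resonance conventions for general $\mathfrak{g}$, is where the argument becomes subtle; it is also precisely where the general case exceeds the explicit $\mathfrak{sl}_2$ computation of the companion paper, which is why the statement remains conjectural.
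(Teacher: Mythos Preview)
The statement is a \emph{conjecture}, and the paper does not prove it. The only remark the paper makes toward a proof is the single sentence following the conjecture: ``To prove this Conjecture, essentially, one needs to identify gluing isomorphisms of both framed rooted functors.'' There is therefore no proof in the paper to compare your proposal against.

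That said, your outline is entirely consistent with the paper's remark and correctly locates the difficulty. You rightly observe that the ``equivalently'' clause is a consequence of the full-faithfulness result of \Cref{sectioncn} (specifically \Cref{theoremfullfaithfulness} restricted to $\Root\to\Wrib$), so that the conjecture reduces to exhibiting a braided tensor equivalence. Your identification of the WZW side with $\overline{\mathrm{Rep}}\,U_\zeta(\mathfrak{g})$ via Kazhdan--Lusztig/Finkelberg is the standard route. And you correctly isolate the crux: the braid monodromy of $\widetilde{\Nu}_{\mathfrak{g},\zeta}$ already matches by the coincidence of quantum representations, so the genuine content is that the \emph{factorization/gluing isomorphisms} of the Schechtman--Varchenko cohomological model coincide with the quantum-group associator --- exactly what the paper singles out as the missing step. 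You yourself acknowledge in your final paragraph that this step is where the argument becomes subtle and why the statement remains conjectural; that self-assessment is accurate, and your proposal should be read as a plausible strategy rather than a proof.
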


To prove this Conjecture, essentially, one needs to identify gluing isomorphisms of both braided functors.
Assuming the Conjecture, we can, as in the case of $\SU$ modular functors, apply unicity of the Hodge structure to get the following result.

\begin{proposition}\label{propositionBFM}
  Let $n\geq 2$, $\ell\geq 1$ and $\zeta$ be a $2(\ell+n)$-th root of unity. Then if $\Nu_{\mathfrak{sl}_n,\zeta}$ satisfies \Cref{conjectureBFM},
  the algorithm \cite[1.11]{belkaleMotivicFactorisationKZ2023}
  of Belkale, Fakhruddin and Mukhopadhyay computes all Hodge numbers of $\Nu_{\mathfrak{sl}_n,\zeta}$ (for any genus $g$).
\end{proposition}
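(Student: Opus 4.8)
The plan is to chain together Conjecture \ref{conjectureBFM}, the unique extension of genus $0$ Hodge structures (\Cref{uniqueextension}), and the explicit Frobenius-algebra formula of \Cref{theoremhodgenumbers}. Write $\Nu=\Nu_{\mathfrak{sl}_n,\zeta}$ for brevity. First I would record that $\Nu$ satisfies the hypotheses of \Cref{maintheoremcomplex,uniqueextension}: by the Proposition above it has semisimple quantum representations and its associated weakly ribbon category is ribbon. Hence \Cref{uniqueextension} applies to $\Nu$, so any complex Hodge structure on the rooted functor associated to $\Nu$ extends uniquely to the whole modular functor, and knowing the genus $0$ Hodge numbers suffices to know all of them.

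Next, assuming Conjecture \ref{conjectureBFM}, the framed rooted functor associated to $\Nu$ is isomorphic to the geometrically constructed $\widetilde{\Nu}_{\mathfrak{sl}_n,\zeta}$. The geometric origin of $\widetilde{\Nu}_{\mathfrak{sl}_n,\zeta}$ equips it with a Hodge structure in the genus $0$ sense of \Cref{definitionCVHSmodular}, whose Hodge numbers are exactly what the algorithm \cite[1.11]{belkaleMotivicFactorisationKZ2023} computes. Transporting this Hodge structure along the isomorphism of Conjecture \ref{conjectureBFM} produces a complex Hodge structure on the rooted functor associated to $\Nu$ with the same Hodge numbers; in particular it determines the Hodge polynomials $e(\Nu(\nu;\lambda,\mu))$ of the elementary three-pointed blocks.

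Finally, I would feed this genus $0$ data into \Cref{theoremhodgenumbers}. The polynomials $e(\Nu(\nu;\lambda,\mu))$ are precisely the structure constants of the product $\star$ and hence determine the Frobenius algebra $(V,\star,\epsilon)$ of \Cref{definitionquantumproduct}, the element $\Omega=\star(\eta^{-1})$, and then every higher-genus Hodge polynomial through
\[
  e(\Nu_g(\lambda_1,\dotsc,\lambda_n)) = \epsilon([\lambda_1]\star\dotsb\star[\lambda_n]\star\Omega^{\star g}).
\]
Because \Cref{uniqueextension} guarantees uniqueness of the extension, the Hodge numbers so produced are the genuine Hodge numbers of $\Nu$ in every genus, and the right-hand side is computed entirely from the BFM output. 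This gives the claim.

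The main obstacle is matching the framing. The isomorphism of Conjecture \ref{conjectureBFM} is an isomorphism of \emph{framed} rooted functors, so one must check that forgetting the framing still yields an isomorphism of rooted functors compatible with the gluing, vacuum and permutation isomorphisms, so that the transported structure really is a Hodge structure in the sense of \Cref{definitionCVHSmodular} and the Frobenius algebra of \Cref{definitionquantumproduct} is unchanged. One must also track the shift ambiguity: the extension of \Cref{uniqueextension} is unique only up to the action of $\Shift$, which multiplies each $e(\Nu_g(\ul))$ by a single monomial $u^av^b$ with $(a,b)=\sum_i\delta(\lambda_i)$; this leaves all relative Hodge numbers unchanged and is pinned down once the weight function coming from the geometric construction is imposed. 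Granting Conjecture \ref{conjectureBFM}, these are bookkeeping points, and the substance of the proposition is carried entirely by \Cref{uniqueextension,theoremhodgenumbers}.
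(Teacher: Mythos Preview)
Your proposal is correct and follows exactly the approach the paper intends: the paper's own ``proof'' is the single sentence preceding the proposition, ``Assuming the Conjecture, we can, as in the case of $\SU$ modular functors, apply unicity of the Hodge structure to get the following result,'' and you have spelled out precisely this chain (Conjecture~\ref{conjectureBFM} $\Rightarrow$ Hodge structure on the rooted functor $\Rightarrow$ \Cref{uniqueextension} $\Rightarrow$ \Cref{theoremhodgenumbers}). One small correction: the extension in \Cref{uniqueextension} is genuinely unique, not unique up to $\Shift$; the shift ambiguity lives in the choice of Hodge structure on the rooted functor, and as you say it is fixed by the geometric construction.
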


\subsection{Gaps in Hodge decompositions}\label{subsectiongaps}

For simplicity this section is written for modular functors.
But, with the exception of \Cref{propositionremovinggaps}, all definitions and statements extend to genus $0$ modular functors, ribbon and braided functors.

Hodge numbers can detect reducibility in the following way.

\begin{definition}
  Let $(E,\nabla)$ be a flat bundle with a CVHS $E=\bigoplus_{p+q=m}E^{p,q}$.
  For $x\in\Q/\Z$, we will use the notation:
  $$E^x=\bigoplus_{\substack{p+q=m\\p\in x}}E^{p,q}.$$
  Each $E^x$ is preserved by the connection.
  We say that the CVHS has a \textit{gap of type $1$} when the decomposition $E=\bigoplus_{x\in\Q/\Z}E^x$ is non trivial.

  Fix some $x\in\Q/\Z$ and $p\in x$ such that $E^{p,m-p}=0$. Then we have a decomposition $E^x=E^{x,>p}\oplus E^{x,<p}$
  preserved by the connection, where:
  $$E^{x,>p}=\bigoplus_{\substack{p'>p\\p'\in x}}E^{p',m-p'}\text{ and } 
  E^{x,<p}=\bigoplus_{\substack{p'<p\\p'\in x}}E^{p',m-p'}.$$
  When there exists a choice of $x$ and $p$ such that the decomposition $E^x=E^{x,>p}\oplus E^{x,<p}$ is non-trivial,
  we say that the CVHS has a \textit{gap of type $2$}.

  A decomposition $E=\bigoplus_{i\in I}E_i$ preserved by the connection \textit{comes from gaps in the CVHS}
  if it is a combination of decompositions of type $1$ and/or $2$ as above.
\end{definition}

\begin{remark}
  A decomposition $E=\bigoplus_{i\in I}E_i$ coming from gaps is always induced by the Hodge decomposition, in the sense that each $E_i$
  is a sum of Hodge summands $E^{p,q}$. However, in general, not all decompositions induced by the Hodge decomposition and preserved by the connection
  come from gaps. Those that come from gaps are exactly those that can be observed by looking only at the Hodge numbers.
\end{remark}

\begin{proposition}\label{propositiongaps}
  Let $\Nu$ be a complex modular functor with a complex Hodge structure. Fix $g,n\geq 0$ tangent stable and $\ul\in\Lambda^n$.
  For any boundary divisor $D\subset\Mgrbt{g}{n}{r}{s}$, denote by:
  \begin{equation*}
    \Nu_g(\ul)_{\mid D}=\bigoplus_{\mu\in\Lambda} \Nu_D(\ul,\mu,\mu^\dagger)
  \end{equation*}
  the corresponding gluing isomorphism. Now any decomposition $\Nu_g(\ul)=\bigoplus_{i\in I}\Nu_i$ coming from gaps is compatible
  with gluing decompositions in the sense that for all $i$ and $D$:
  \begin{equation*}
    \Nu_{i\mid D}=\bigoplus_{\mu\in\Lambda} \left(\Nu_{i\mid D}\cap\Nu_D(\ul,\mu,\mu^\dagger)\right).
  \end{equation*}
  As a corollary, any decomposition $\Nu_g(\ul)=\bigoplus_{i\in I}\Nu_i$ coming from gaps is preserved by the action of curve operators
  (see \Cref{definitioncurveoperator}).
\end{proposition}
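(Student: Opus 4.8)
The plan is to exploit two facts: that a decomposition coming from gaps is, by construction, a direct sum of Hodge summands indexed by a fixed set of bidegrees, and that gluing isomorphisms are isomorphisms of CVHS and hence respect Hodge bidegrees. So I would first record that any decomposition $\Nu_g(\ul)=\bigoplus_{i\in I}\Nu_i$ coming from gaps has the form $\Nu_i=\bigoplus_{(p,q)\in S_i}E^{p,q}$, where $\Nu_g(\ul)=\bigoplus_{p+q=m}E^{p,q}$ is the Hodge decomposition and the $S_i$ partition the set of bidegrees according to the gap data (type $1$ gaps group the $E^{p,q}$ by the class of $p$ in $\Q/\Z$, type $2$ gaps split a single class at a vanishing level). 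This is exactly the content of the remark following the definition of gaps, and it reduces the statement to checking that each $\Nu_{i\mid D}$ respects the $\mu$-grading.

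Then I would fix a boundary divisor $D$ and write $W:=\Nu_g(\ul)_{\mid D}$ for the pullback of $\Nu_g(\ul)$ along the gluing map $q$ (resp. $p$) whose image is $D$, so that the gluing isomorphism gives $W=\bigoplus_{\mu\in\Lambda}W_\mu$ with $W_\mu:=\Nu_D(\ul,\mu,\mu^\dagger)$. By \Cref{definitionCVHSmodular} this is an isomorphism of CVHS, and since pullback along an algebraic map carries the Hodge decomposition of a CVHS to that of its pullback, I would conclude that $W^{p,q}=q^*(E^{p,q})$ and $W^{p,q}=\bigoplus_\mu W_\mu^{p,q}$, where $W_\mu=\bigoplus_{p,q}W_\mu^{p,q}$ is the inherited Hodge decomposition. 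Setting $W_i:=\Nu_{i\mid D}=\bigoplus_{(p,q)\in S_i}W^{p,q}$, the desired compatibility then follows from the chain of identities
\begin{equation*}
  W_i=\bigoplus_{(p,q)\in S_i}W^{p,q}=\bigoplus_{\mu}\Big(\bigoplus_{(p,q)\in S_i}W_\mu^{p,q}\Big)=\bigoplus_{\mu}\left(W_i\cap W_\mu\right),
\end{equation*}
where the last step uses $W_i\cap W_\mu=\bigoplus_{(p,q)\in S_i}W_\mu^{p,q}$ inside $W=\bigoplus_{\mu,p,q}W_\mu^{p,q}$. This is the asserted equality $\Nu_{i\mid D}=\bigoplus_\mu(\Nu_{i\mid D}\cap\Nu_D(\ul,\mu,\mu^\dagger))$.

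For the corollary I would use that a curve operator attached to a simple closed curve $\gamma$ acts block-diagonally on the gluing decomposition along the divisor determined by $\gamma$, by a scalar on each summand $\Nu_D(\ul,\mu,\mu^\dagger)$ — the basic instance being the action of the Dehn twist $T_\gamma$ by $t_\mu$ in \Cref{remarklevel}, and the general case being the defining property of curve operators in \Cref{definitioncurveoperator}. Each $W_\mu$ is therefore preserved, and since the previous paragraph writes $\Nu_{i\mid D}$ as a sum of subspaces each lying in a single $W_\mu$, the operator preserves $\Nu_i$.

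The main obstacle I anticipate is purely in unwinding definitions at the two points where I invoke functoriality. First, I must check that pullback along the gluing map — which factors through a $\mu_r$-gerbe and possibly a double cover before the embedding of $D$ — genuinely carries the Hodge decomposition to the Hodge decomposition, so that $W^{p,q}=q^*E^{p,q}$ and, in particular, the gap data is unchanged by restriction; this is standard functoriality of CVHS under algebraic maps but should be stated carefully given the stacky source. Second, I must extract from \Cref{definitioncurveoperator} that curve operators are indeed block-diagonal for the relevant gluing, so that preserving the $\mu$-grading suffices. Neither step looks like a real difficulty once the definitions are in hand.
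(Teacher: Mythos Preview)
Your proposal is correct and follows essentially the same approach as the paper: decompositions coming from gaps are induced by the Hodge decomposition, the gluing isomorphism is an isomorphism of CVHS and hence respects bidegrees, and curve operators act block-diagonally on the gluing decomposition along $\gamma$. One minor overreach: you assert that curve operators act \emph{by a scalar} on each $\mu$-block, but the paper (and your argument) only needs block-diagonality, which is what \Cref{definitioncurveoperator} actually gives; the scalar action is a property of Dehn twists (\Cref{remarklevel}), not of general curve operators.
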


\begin{proof}
  By the definition of a complex Hodge structure on a modular functor, the gluing decompositions of $\Nu_g(\ul)_{\mid D}$ above
  is a decomposition of CVHS. In particular, it must be compatible with any decomposition of $\Nu_g(\ul)$
  induced by the Hodge decomposition. As for curve operators, notice that any curve operator $\Op{\gamma}{\lambda}$ acts block-diagonally on the
  gluing decomposition associated to the {\scc} $\gamma$, and hence preserves any decomposition coming from gaps.
\end{proof}

For the general definitions of $\SO$ and $\SU$ modular functors, see \cite{blanchetTopologicalQuantumField1995}.
Koberda and Santharoubane prove the following for $\SU$ modular functors, but the proof also applies to $\SO$ modular functors.

\begin{proposition}[{\cite[3.2]{koberdaIrreducibilityQuantumRepresentations2018}}]\label{propositionramanujan}
  Let $\Nu$ be any $\SO$ or $\SU$ modular functor. Denote by $\Lambda$ its set of colors.
  Then for any $g,n\geq 0$ and $\ul\in\Lambda^n$, $\Nu_g(\ul)$ is irreducible for the action of curve operators.
\end{proposition}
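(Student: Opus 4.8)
The statement is \cite[3.2]{koberdaIrreducibilityQuantumRepresentations2018} for $\SU$; my plan is to reproduce their argument in the form best suited to the gluing language of this paper and then check that the two combinatorial inputs it requires survive in the $\SO$ normalization. Let $0\neq W\subseteq\Nu_g(\ul)$ be a subspace preserved by all curve operators; I want to show $W=\Nu_g(\ul)$. First I would fix a pants decomposition $P$ of $S_g^n$ with interior \sccs{} $\gamma_1,\dotsc,\gamma_N$. Iterating the gluing axioms \textbf{(G-sep)} and \textbf{(G-nonsep)} of \Cref{definitiongeometricnonanomalousmodularfunctor} along the $\gamma_i$ realizes
\[
  \Nu_g(\ul)\simeq\bigoplus_{c}L_c,
\]
a sum over the admissible colorings $c$ of the edges of the trivalent graph dual to $P$, where each nonzero $L_c$ is a line (a tensor product of the $\le 1$-dimensional spaces $\Nu_0(a,b,c^\dagger)$ attached to the trivalent vertices). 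As recalled in the proof of \Cref{propositiongaps}, each $\Op{\gamma_i}{\lambda}$ is block-diagonal for this decomposition, acting on $L_c$ by a scalar depending only on the color $c$ assigns to $\gamma_i$ (a Verlinde eigenvalue $S_{\lambda\mu}/S_{0\mu}$).

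The first key point, specific to $\SU$ and $\SO$, is \emph{eigenvalue separation}: there is a color $\lambda_0$ (the fundamental) whose eigenvalues $S_{\lambda_0\mu}/S_{0\mu}$, of the form $2\cos\bigl(\pi(\mu+1)/r\bigr)$ up to normalization, are strictly monotonic and hence pairwise distinct for $\mu\in\Lambda$. Thus the commuting diagonalizable operators $\Op{\gamma_1}{\lambda_0},\dotsc,\Op{\gamma_N}{\lambda_0}$ have simple joint spectrum, their common eigenlines being exactly the $L_c$. Since $W$ is preserved by all of them, it is a sum of eigenlines: $W=\bigoplus_{c\in T}L_c$ for some nonempty set $T$ of admissible colorings.

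It remains to show $T$ is everything. For this I would bring in curve operators attached to \sccs{} $\delta$ transverse to $P$. Expanding $\Op{\delta}{\lambda_0}$ in the basis $\{v_c\}$ via the fusion ($F$-) and $S$-matrices of the underlying category, invariance of $W$ forces $c'\in T$ whenever $c\in T$ and the matrix entry $\langle v_{c'},\Op{\delta}{\lambda_0}v_c\rangle$ is nonzero. One then forms the graph on admissible colorings whose edges record such nonzero transitions, and argues that it is connected; this yields $T=\{\text{all colorings}\}$ and hence $W=\Nu_g(\ul)$.

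The main obstacle is precisely this connectivity, which rests on the nonvanishing of the relevant structure constants --- most cleanly on the fact that the $\SU$/$\SO$ $S$-matrix entries $S_{\lambda\mu}\propto\sin\bigl(\pi(\lambda+1)(\mu+1)/r\bigr)$ are all nonzero for $\lambda,\mu\in\Lambda$, together with enough nonzero $F$-matrix entries to route between pants decompositions. This is the combinatorial core of \cite{koberdaIrreducibilityQuantumRepresentations2018} in the $\SU$ case. Since the $\SO$ functor is built from the same quantum integers and the restriction of the same $S$-matrix to its set of colors, both the eigenvalue-separation step and the connectivity step go through verbatim, giving the result for $\SO$ as well.
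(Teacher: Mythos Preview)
Your sketch is essentially the argument of the cited reference, and that is all the paper does here: it states the result with a citation to \cite{koberdaIrreducibilityQuantumRepresentations2018} and only adds the remark that the same proof carries over to $\SO$. Your two-step outline --- separating the pants-basis lines $L_c$ via the distinct Verlinde eigenvalues of curve operators along the decomposition curves, then linking admissible colorings using curve operators for transverse curves and the nonvanishing of $S$-matrix entries --- matches Koberda--Santharoubane's approach, and your observation that the $\SO$ theory uses the same quantum integers (so both inputs persist) is exactly the point the paper is making.
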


Applying \Cref{propositiongaps}, we get the following.

\begin{corollary}\label{corollarynogaps}
  For any complex Hodge structure on a $\SO$ or $\SU$ modular functor $\Nu$, any $g,n\geq 0$ and $\ul\in\Lambda^n$,
  there are no decompositions of $\Nu_g(\ul)$ coming from gaps in the corresponding CVHS.
\end{corollary}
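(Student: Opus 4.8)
The plan is to combine the two preceding propositions, which together render the corollary essentially immediate. First I would recall that, by \Cref{propositiongaps}, any decomposition $\Nu_g(\ul)=\bigoplus_{i\in I}\Nu_i$ coming from gaps in the CVHS is preserved by the action of the curve operators $\Op{\gamma}{\lambda}$. Thus such a decomposition is in particular an invariant direct-sum decomposition of $\Nu_g(\ul)$ for the algebra generated by all curve operators. The input to this step is the compatibility of gap decompositions with gluing, which itself rests on the gluing isomorphisms being isomorphisms of CVHS and is proved upstream; here I only need its stated consequence.

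Next I would invoke \Cref{propositionramanujan} of Koberda and Santharoubane, which asserts that for any $\SO$ or $\SU$ modular functor, and any $g,n\geq 0$ and $\ul\in\Lambda^n$, the space $\Nu_g(\ul)$ is irreducible under the action of curve operators. An irreducible representation admits no nontrivial invariant direct-sum decomposition. Hence any decomposition of $\Nu_g(\ul)$ coming from gaps must be trivial, i.e.\ it has a single nonzero summand equal to $\Nu_g(\ul)$ itself. This is precisely the assertion that there are no nontrivial decompositions coming from gaps, which completes the argument.

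As for obstacles, there is essentially none at this final step: the corollary is a formal consequence of the two cited results, and the only verification needed is the bookkeeping that a decomposition "coming from gaps" is connection-preserving and hence, via \Cref{propositiongaps}, curve-operator-preserving, together with the trivial remark that irreducibility forbids any nontrivial such splitting. The genuine content lies entirely in the two ingredients I am permitted to assume, namely \Cref{propositiongaps} (compatibility of gap decompositions with the gluing isomorphisms) and \Cref{propositionramanujan} (irreducibility of $\Nu_g(\ul)$ for the action of curve operators).
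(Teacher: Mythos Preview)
Your proposal is correct and follows exactly the paper's own argument: the corollary is obtained by combining \Cref{propositiongaps} (gap decompositions are curve-operator-invariant) with \Cref{propositionramanujan} (irreducibility under curve operators for $\SO$ and $\SU$ modular functors). The paper itself records no separate proof beyond ``Applying \Cref{propositiongaps}, we get the following.''
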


\begin{remark}
  It is known that at prime and $2$ times prime levels, the corresponding $\SO$/$\SU$ quantum representations are all irreducible
  (for $n=0$ this is \cite{robertsIrreducibilityQuantumRepresentations2001},
  for a proof in general, see for example the discussion in \cite[A.]{godfardRigidityFibonacciRepresentations2023}).
  However, at more general levels, little is known about irreducibility. \Cref{corollarynogaps} tells us that Hodge numbers will be of
  no help in investigating this question.
\end{remark}

Let us finish this subsection by explaining how absence of gaps of type $1$ can imply existence of complex Hodge structures
with \emph{integer} bidegrees.

\begin{proposition}\label{propositionremovinggaps}
  Let $\Nu$ be a modular functor with a complex Hodge structure $\Nu^1$ such that:
  \begin{description}
    \item[(1)] Its weight function has values in $\Z$;
    \item[(2)] For all $g,n\geq 0$ and $\ul\in\Lambda^n$, the CVHS on $\Nu_g(\ul)$ has no gaps of type $1$; 
    \item[(3)] For each $\lambda\in\Lambda$, there exists $g\geq 1$ such that $\Nu_g(\lambda)\neq 0$.
  \end{description}
  Then $\Nu$ admits a complex Hodge structure $\Nu^2$, all of whose bidegrees $(p,q)$ are in $\Z^2$.
\end{proposition}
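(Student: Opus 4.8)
The plan is to produce $\Nu^2$ from $\Nu^1$ purely by the action of a weight-preserving shift, so that the only thing to arrange is killing the fractional parts of all bidegrees simultaneously. First I would record, for each $(g,\ul)$ with $\Nu_g(\ul)\neq 0$, the fractional part $x_{g,\ul}\in\Q/\Z$ of the first Hodge index $p$ on $\Nu^1_g(\ul)$. This is well defined by hypothesis \textbf{(2)}: absence of gaps of type $1$ means all nonzero summands $E^{p,q}$ of $\Nu^1_g(\ul)$ share a single class $p\bmod\Z$. By hypothesis \textbf{(1)} the weight $m=p+q$ is an integer, so the fractional part of $q$ is $-x_{g,\ul}$; hence a single $\Q/\Z$-valued datum controls the non-integrality of the whole decomposition.

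Next I would observe that $x$ is a weight function valued in $\Q/\Z$. Indeed, the gluing, vacuum and permutation isomorphisms of a complex Hodge structure (\Cref{definitionCVHSmodular}) are isomorphisms of CVHS, and under tensor products Hodge indices add while under duals they negate; therefore $(g,\ul)\mapsto x_{g,\ul}$ satisfies, modulo $\Z$, exactly the relations \textbf{(sep)}, \textbf{(non-sep)} and \textbf{(vacuum)} of \Cref{definitionweightfunction} obeyed by the honest integer-valued weight function of $\Nu^1$. In particular $x_{0,(0,0)}=0$ and $x_{g,(\ul,0)}=x_{g,\ul}$.

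The heart of the argument is to show that $x$ is \emph{exact}, i.e. that there is $f\colon\Lambda\to\Q/\Z$ with $f(0)=0$ and $x_{g,\ul}=\sum_i f(\lambda_i)$ for all nonzero $\Nu_g(\ul)$; this is where hypothesis \textbf{(3)} enters, replacing the ribbon hypothesis used in \Cref{exactweightfunctions}. For each color I choose, using \textbf{(3)}, an integer $g_\lambda\geq 1$ with $\Nu_{g_\lambda}(\lambda)\neq 0$ and set $f(\lambda):=x_{g_\lambda,\lambda}$ (and $f(0):=0$). The key additivity move is the separating gluing of \Cref{definitiongeometricnonanomalousmodularfunctor} with gluing color $\nu=0$: since $\Nu_0(0,0)$ is trivial of weight $0$ and $\Nu_g(\ul,0)\simeq\Nu_g(\ul)$ via the vacuum isomorphism, splitting off a positive-genus component carrying a single marked point $\lambda_i$ yields $x_{g,\ul}=x_{g_1,\ul'}+x_{g_2,\lambda_i}$ whenever $\Nu_{g_1}(\ul')\neq 0$ and $\Nu_{g_2}(\lambda_i)\neq 0$, where $\ul'$ omits $\lambda_i$ and $g=g_1+g_2$ (using \textbf{(2)} again to read off the $\nu=0$ summand). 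Peeling the colors off one at a time, together with the base computation $x_{1,()}=x_{0,(\nu,\nu^\dagger)}-x_{0,(\nu,\nu^\dagger)}=0$, I would deduce $x_{g,\ul}=\sum_i f(\lambda_i)$ along with the auxiliary identities $x_{g,()}=0$ and $x_{0,(\nu,\nu^\dagger)}=f(\nu)+f(\nu^\dagger)$.

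Finally, choosing any lift $f\colon\Lambda\to\Q$ of the resulting $\Q/\Z$-valued solution with $f(0)=0$ and setting $\delta=(f,-f)$, I get $\delta\in\Shiftz$. Acting by $\delta$ sends the bidegree $(p,q)$ of $\Nu^1_g(\ul)$ to $(p-\sum_i f(\lambda_i),\,q+\sum_i f(\lambda_i))$; by exactness $\sum_i f(\lambda_i)\equiv x_{g,\ul}\pmod\Z$, so both new indices become integers, while the weight $m=p+q\in\Z$ is unchanged because $\delta\in\Shiftz$. Thus $\Nu^2:=\delta\cdot\Nu^1$ is the desired complex Hodge structure with bidegrees in $\Z^2$. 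The main obstacle is the exactness step: one must carefully manage the non-vanishing hypotheses needed to apply the $\nu=0$ separating gluing at each stage, organizing the argument as a double induction on the genus and the number of marked points with base case $x_{1,()}=0$. Hypothesis \textbf{(3)} is precisely what guarantees that every color can be isolated on a positive-genus piece so that these splittings are always available.
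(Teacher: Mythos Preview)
Your proposal is correct and follows essentially the same route as the paper: extract from the Hodge structure a $\Q/\Z$-valued ``weight function'' using the no-gaps hypothesis, prove it is exact via the gluing axioms and hypothesis \textbf{(3)}, then lift to $\Q$ and act by the resulting shift. The one point the paper makes explicit that you leave implicit in your ``double induction'' is the genus-independence $x_{g+1,\ul}=x_{g,\ul}$ (immediate from the \textbf{(non-sep)} relation with $\nu=0$ and vacuum), which is what lets you raise the genus enough for the peeling argument to apply even when $g=0$; isolating this step would make your exactness argument cleaner.
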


\begin{proof}
  By the second assumption, we can define a function:
  \begin{equation*}
    w:\{(g,\ul)\mid \Nu_g(\ul)\neq 0\}\lra \Q/\Z
  \end{equation*}
  by asking that for all $g$, $\ul$, $\Nu_g(\ul)=\Nu_g(\ul)^{w_g(\ul)}$. This function satisfies the same axioms as a weight function,
  replacing $\Q$ by $\Q/\Z$ (see \Cref{definitionweightfunction}). In particular, for all $g$ and $\ul$,
  whenever $w_g(\ul)$ is defined, so is $w_{g+1}(\ul)$ and $w_{g+1}(\ul)=w_g(\ul)$.
  From $w$ define $f:\Lambda\ra \Q/\Z$ as follows: for $\lambda\in\Lambda$, choose $g\geq 1$ such that $\Nu_g(\lambda)\neq 0$,
  and set $f(\lambda)=w_g(\lambda)$. It is independent of the choice of $g$ and we have:
  \begin{equation*}
    \bm{(*)}\;\forall g,\ul,\;w_g(\ul)=\sum_if(\lambda_i).
  \end{equation*}
  Note that by normalization, $f(0)=0$. Choose a lift $\widetilde{f}:\Lambda\ra\Q$ of $f$ with $\widetilde{f}(0)=0$.
  Then, seeing $\widetilde{f}$ as an element of $\Shift$, we define $\Nu^2:=(-\widetilde{f})\cdot \Nu^1$.
  Now, by $\bm{(*)}$ above and assumption \textbf{(1)}, all bidegrees of $\Nu^2$ are in $\Z^2$.
\end{proof}

\begin{remark}
  The assumptions of \Cref{propositionremovinggaps} are satisfied by any $\SO$ modular functor.
\end{remark}


\section{The ribbon fusion category of a modular functor}\label{sectioncn}


\subsection{Description of the ribbon fusion category}\label{subweaklyribbon}

We will use the model of $D^n$ described in figure \Cref{figuredisk}, i.e. we align inputs $1$ to $n$ from left to right on a horizontal segment in the disk.

\begin{figure}
  \def\svgwidth{0.3\linewidth}
\begingroup%
  \makeatletter%
  \providecommand\color[2][]{%
    \errmessage{(Inkscape) Color is used for the text in Inkscape, but the package 'color.sty' is not loaded}%
    \renewcommand\color[2][]{}%
  }%
  \providecommand\transparent[1]{%
    \errmessage{(Inkscape) Transparency is used (non-zero) for the text in Inkscape, but the package 'transparent.sty' is not loaded}%
    \renewcommand\transparent[1]{}%
  }%
  \providecommand\rotatebox[2]{#2}%
  \newcommand*\fsize{\dimexpr\f@size pt\relax}%
  \newcommand*\lineheight[1]{\fontsize{\fsize}{#1\fsize}\selectfont}%
  \ifx\svgwidth\undefined%
    \setlength{\unitlength}{320.31498226bp}%
    \ifx\svgscale\undefined%
      \relax%
    \else%
      \setlength{\unitlength}{\unitlength * \real{\svgscale}}%
    \fi%
  \else%
    \setlength{\unitlength}{\svgwidth}%
  \fi%
  \global\let\svgwidth\undefined%
  \global\let\svgscale\undefined%
  \makeatother%
  \begin{picture}(1,0.99999993)%
    \lineheight{1}%
    \setlength\tabcolsep{0pt}%
    \put(0,0){\includegraphics[width=\unitlength,page=1]{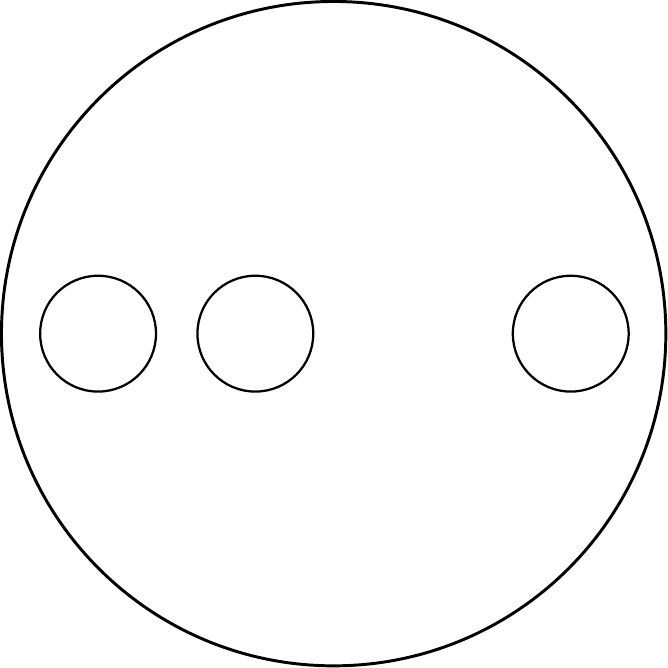}}%
    \put(0.12382489,0.46940974){\color[rgb]{0,0,0}\makebox(0,0)[lt]{\lineheight{1.25}\smash{\begin{tabular}[t]{l}$1$\end{tabular}}}}%
    \put(0.36510583,0.46940974){\color[rgb]{0,0,0}\makebox(0,0)[lt]{\lineheight{1.25}\smash{\begin{tabular}[t]{l}$2$\end{tabular}}}}%
    \put(0.46688688,0.11520547){\color[rgb]{0,0,0}\makebox(0,0)[lt]{\lineheight{1.25}\smash{\begin{tabular}[t]{l}$D_n$\end{tabular}}}}%
    \put(0.57637496,0.46940974){\color[rgb]{0,0,0}\makebox(0,0)[lt]{\lineheight{1.25}\smash{\begin{tabular}[t]{l}$\dotsb$\end{tabular}}}}%
    \put(0.83738051,0.46940974){\color[rgb]{0,0,0}\makebox(0,0)[lt]{\lineheight{1.25}\smash{\begin{tabular}[t]{l}$n$\end{tabular}}}}%
  \end{picture}%
\endgroup%

  \caption{The model for the disk $D_n$.}
  \label[figure]{figuredisk}
\end{figure}

\begin{definition}\label{definitionlegomaps}
  Let $n\geq 1$ and $m\geq 0$. For $1\leq i\leq n$, denote by $\circ_i:D^n\sqcup D^m\ra D^{n+m-1}$ the gluing of the output of $D^m$ to the $i$-th input of $D^n$
  as in \Cref{figurediskgluing}.

  For $n\geq 2$ and $1\leq i\leq n-1$, let $\beta_i:D^n\ra D^n$ be the braiding of the $i$-th and $(i+1)$-th inputs.
  For $n\geq 1$ and $1\leq i\leq n$, let $\theta_i:D^n\ra D^n$ be the Dehn twist around the $i$-th input.
\end{definition}

\begin{figure}
  \def\svgwidth{0.8\linewidth}
  \import{figures/}{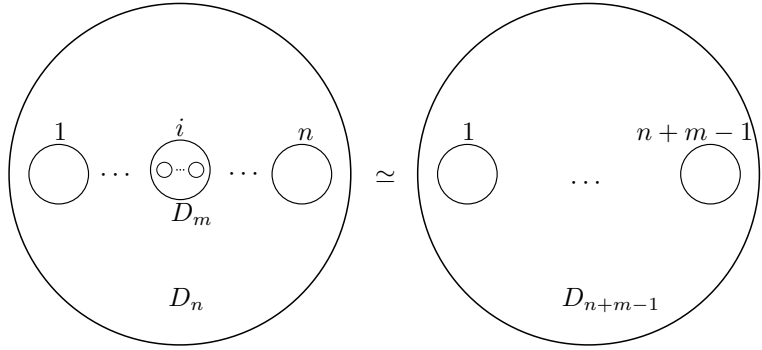}
  \caption{The gluing $\circ_i:D^n\cup D^m\ra D^{n+m-1}$.}
  \label[figure]{figurediskgluing}
\end{figure}

\begin{definition}\label{definitionsemisimplecategory}
  For $\Lambda$ a finite set and $R$ a ring, denote by $\VLR$ the category of $\Lambda$-decorated modules over $R$,
  whose objects are families $W=(W_\lambda)_{\lambda\in\Lambda}$ of finite projective $R$-modules, and morphisms from $W$ to $V$
  are families of morphisms $(\varphi_\lambda:W_\lambda\ra V_\lambda)_{\lambda\in\Lambda}$ of $R$-modules.

  We will use the notation $(W_\lambda)_{\lambda\in\Lambda}=\sum_\lambda W_\lambda[\lambda]$.
\end{definition}

We will abbreviate $\VLR$ to $\VL$ when $R$ is obvious.
For definitions of monoidal category, rigid monoidal category, fusion category, braided monoidal category,
ribbon tensor category and modular category, see \cite[2.1, 2.10, 4.1, 8.1, 8.10 and 8.13]{etingofTensorCategories2015}.

Let us now describe how to construct from a ribbon functor $\Nu$ a ribbon structure on $\VL$,
and from a braided functor $\Nu$ braided structure on $\VL$.
Because of how $\VL$ is defined, we only need to define the structure on the generating set of simple objects $\{[\lambda]\mid\lambda\in\Lambda\}$.
We define the monoidal unit by $1=[0]$, the monoidal product by:
\begin{equation*}
  [\lambda]\otimes [\mu]=\sum_{\nu\in\Lambda}\Nu(D^2,\nu;\lambda,\mu)[\nu]
\end{equation*}
and the associator $c$ on simple objects by the following composition of gluing maps:
\begin{align*}
  ([\lambda]\otimes [\mu]) \otimes [\nu]&= \bigoplus_{\delta,\kappa}\Nu(D^2,\delta;\kappa,\nu)\otimes\Nu(D^2,\kappa;\lambda,\mu)[\delta] \\
                                        &\stackrel{\circ_1}{\simeq}\bigoplus_{\delta}\Nu(D^3,\delta;\lambda,\mu,\nu)[\delta] \\
                                        &\stackrel{\circ_2^{-1}}{\simeq}\bigoplus_{\delta,\kappa}\Nu(D^2,\delta;\lambda,\kappa)\otimes\Nu(D^2,\kappa;\mu,\nu)[\delta]\\
                                        &=[\lambda]\otimes ([\mu] \otimes [\nu]).
\end{align*}
The braiding $\beta_{\lambda,\mu}:[\lambda]\otimes [\mu]\ra [\mu]\otimes[\lambda]$ is given on the $[\nu]$ components
by the action of the braid $\beta_1$ on $\Nu(D^2,\nu;\lambda,\mu)$. As for the twist $\theta_\lambda:[\lambda]\ra[\lambda]$ in the case of ribbon functors,
it is given, after the canonical identification $[\lambda]=R[\lambda]\simeq \Nu(D^1,\lambda;\lambda)[\lambda]$, by the action of the Dehn twist $\theta_1$
on $\Nu(D^1,\lambda;\lambda)$. We can also simply describe $\theta_\lambda$ as multiplication by $t_\lambda$.
As this is quite folklore, we leave it to the reader to check that the axioms of a braided monoidal structure (with a twist)
are satisfied (otherwise, an equivalent but slightly different definition of the structures, together with a proof, are provided in
\cite[5.3]{bakalovLecturesTensorCategories2000}).

Let us remark that because of axiom \textbf{(Dual)} in \Cref{definitionrootedfunctor}, the monoidal structure on $\VL$ is weakly rigid in the sense
of \cite[5.3.4]{bakalovLecturesTensorCategories2000}, i.e. for each object $x$, the functor $y\mapsto \hom(1,x\otimes y)$
is (left) representable by an object $x^*$, called its weak right dual, and the functor $x\mapsto x^*$ is an equivalence.
In our case, the weak duals are given on simple objects by $[\lambda]^*=\Nu(D^2,0;\lambda,\lambda^\dagger)^\vee[\lambda^\dagger]$.

Moreover, Etingof and Penneys recently proved the long-standing conjecture that braided weakly fusion categories are braided fusion categories
\cite{etingofRigidityNonnegligibleObjects2024}. Hence the category $(\VL,\otimes,c,\beta)$ is braided fusion,
and in the case where $\Nu$ is a ribbon functor, $(\VL,\otimes,c,\beta,\theta)$ is ribbon.

Hence we have proved the following.

\begin{proposition}\label{propositionribbonstructure}
  Let $\Nu$ be a braided, respectively ribbon, functor over $R$ with set of colors $\Lambda$.
  Then $\Nu$ induces on $\VL$ the structure of a braided fusion category, respectively ribbon fusion category.
  We shall denote this category with structure $\CN$.
\end{proposition}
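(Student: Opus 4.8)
The plan is to verify, one axiom at a time, that the monoidal product, associator, braiding and twist defined above on the generating simple objects $\{[\lambda]\}_{\lambda\in\Lambda}$ satisfy the coherence axioms of a weakly ribbon category. The guiding principle is that every structure morphism is assembled from the gluing isomorphisms of axiom \textbf{(G)} together with the actions of the mapping class group elements $\beta_i$ and $\theta_i$ on the disks $D^n$ of \Cref{definitionlegomaps}. Consequently each coherence axiom becomes an identity between two composites of such data, and the two essential inputs in checking it are the functoriality of the gluing isomorphisms (their equivariance under the morphisms of $\Disk$, i.e.\ under the mapping class group) and their mutual compatibility under repeated gluings.

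First I would dispatch the unit and triangle constraints: the normalization axiom \textbf{(N)} canonically identifies $\Nu(D^0,0;)$ and $\Nu(D^1,\mu;\mu)$ with $R$, and combined with compatibility of gluing this makes $[0]$ a two-sided unit and forces the triangle identity. For the pentagon, the point is that both composite paths identify the four reassociations of $[\lambda]\otimes[\mu]\otimes[\nu]\otimes[\kappa]$ with the single object $\bigoplus_{\delta}\Nu(D^4,\delta;\lambda,\mu,\nu,\kappa)[\delta]$. Each associator is the composite $\circ_2^{-1}\circ\circ_1$ on $D^3$, so, working in the model of $D^4$ with aligned inputs, the pentagon reduces to the assertion that the various ways of gluing three binary disks together to form $D^4$ agree, which is precisely the compatibility of gluing isomorphisms with one another.

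Next I would treat the braided and balanced structure. The braiding $\beta_{\lambda,\mu}$ is the action of $\beta_1$ on $\Nu(D^2,\nu;\lambda,\mu)$, while the twist $\theta_\lambda$ is multiplication by the Dehn-twist scalar $t_\lambda$ of \Cref{remarklevel}. After transporting through the gluing isomorphisms into $D^3$, the two hexagon axioms become the braid relation $\beta_1\beta_2\beta_1=\beta_2\beta_1\beta_2$ in the mapping class group of $D^3$ together with the way $\beta_1$ and $\beta_2$ push forward under the gluings $\circ_1$ and $\circ_2$; both are consequences of functoriality of $\Nu$. Likewise the balancing axiom $\theta_{\lambda\otimes\mu}=(\theta_\lambda\otimes\theta_\mu)\circ\beta_{\mu,\lambda}\circ\beta_{\lambda,\mu}$ follows from the mapping class group relation on $D^2$ expressing the Dehn twist about a loop enclosing both inputs as the product of the two single-input Dehn twists with the full double braid $\beta_1^2$, again carried through the gluing isomorphisms.

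Finally, weak rigidity has essentially already been recorded: axiom \textbf{(Dual)} together with normalization supplies, for each simple object, the weak right dual $[\lambda]^*=\Nu(D^2,0;\lambda,\lambda^\dagger)^\vee[\lambda^\dagger]$ representing $y\mapsto\hom(1,[\lambda]\otimes y)$, and the gluing isomorphisms make $[\lambda]\mapsto[\lambda]^*$ an equivalence. The main obstacle is not any individual axiom but the bookkeeping of tracking each mapping class group element faithfully through the gluing isomorphisms. The cleanest route is therefore to isolate the handful of mapping class group relations on $D^3$ and $D^2$ that are actually needed, namely the braid relations and the balancing relation, and feed each into functoriality of $\Nu$; this is exactly the verification carried out for the equivalent structures in \cite[5.3]{bakalovLecturesTensorCategories2000}, with which our definitions agree, so one may alternatively simply invoke that reference.
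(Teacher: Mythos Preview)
Your approach is correct and matches the paper's treatment, which likewise leaves the axiom-by-axiom verification as folklore and defers to \cite[5.3]{bakalovLecturesTensorCategories2000}. One small correction to your sketch: the hexagon axioms do not reduce to the braid relation $\beta_1\beta_2\beta_1=\beta_2\beta_1\beta_2$ (that is the Yang--Baxter equation, a \emph{consequence} of the hexagons and naturality); rather, each hexagon asserts that braiding one input past a glued pair in $D^3$ equals the composite of two elementary braidings through the associator, i.e.\ an identity of the form $\beta_{1,(23)}=\beta_2\beta_1$ transported through the gluing maps---which is in fact the content of your remark on how $\beta_1,\beta_2$ push forward under $\circ_1,\circ_2$.
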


From the definition of the associator and the gluing axiom we get the following.

\begin{proposition}\label{propositionhomsinCN}
  With the assumptions of \Cref{propositionribbonstructure}, we have, for each $\mu,\lambda_1,\dotsc,\lambda_n\in\Lambda$, canonical isomorphisms:
  \begin{equation*}
    \Nu(D^n,\mu;\lambda_1,\dotsc,\lambda_n)\simeq \hom_{\CN}([\mu],[\lambda_1]\otimes\dotsb\otimes[\lambda_n])
  \end{equation*}
  compatible with braid groups actions and such that gluing on the left-hand side corresponds to partial compositions on the right-hand side.
\end{proposition}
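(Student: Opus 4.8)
The plan is to reduce the statement to a component computation in $\VL$ and then induct on $n$, feeding only on additivity of the monoidal product and the gluing axiom \textbf{(G)} of \Cref{definitionrootedfunctor}; the two compatibility claims will then follow from the fact that every structure map involved is itself manufactured from gluings of disks.

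First I would record the shape of hom-spaces in $\VL$. Since an object is a family $W=\sum_\lambda W_\lambda[\lambda]$ and the simple object $[\mu]$ is $R$ placed in slot $\mu$ and $0$ elsewhere, the definition of morphisms yields a canonical identification $\hom_{\CN}([\mu],W)\simeq W_\mu$, the $\mu$-component of $W$. Thus the asserted isomorphism is equivalent to the claim that the $\mu$-component of $[\lambda_1]\otimes\dotsb\otimes[\lambda_n]$ is canonically $\Nu(D^n,\mu;\lambda_1,\dotsc,\lambda_n)$, which I would prove by induction on $n$. The case $n=1$ follows from normalization and non-degeneracy: as $D^1$ is a cylinder, $\Nu(D^1,\mu;\lambda_1)$ is $R$ when $\mu=\lambda_1$ and $0$ otherwise, so $[\lambda_1]=\sum_\mu\Nu(D^1,\mu;\lambda_1)[\mu]$; the case $n=2$ is the very definition of $\otimes$. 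For the inductive step I fix the parenthesization $([\lambda_1]\otimes\dotsb\otimes[\lambda_{n-1}])\otimes[\lambda_n]$, and by the induction hypothesis together with additivity of $\otimes$ in its first argument the $\mu$-component becomes $\bigoplus_\kappa\Nu(D^{n-1},\kappa;\lambda_1,\dotsc,\lambda_{n-1})\otimes\Nu(D^2,\mu;\kappa,\lambda_n)$. This is exactly the right-hand side of \textbf{(G)} for $D^n$ realized as $\circ_1\colon D^2\sqcup D^{n-1}\to D^n$ (gluing the first input of an outer $D^2$ with output $\mu$ and second input $\lambda_n$ onto the output of an inner $D^{n-1}$ carrying $\lambda_1,\dotsc,\lambda_{n-1}$), hence canonically $\Nu(D^n,\mu;\lambda_1,\dotsc,\lambda_n)$.

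Next I would verify the two compatibilities, both of which are built into the construction. That gluing corresponds to partial composition is immediate in spirit because the monoidal product, the unit, and above all the associator $c$ of \Cref{propositionribbonstructure} are defined purely through the gluing maps $\circ_1,\circ_2$: composing a morphism $[\lambda_i]\to[\kappa_1]\otimes\dotsb\otimes[\kappa_m]$ into the $i$-th tensor factor matches, after re-association, the gluing $\circ_i\colon D^n\sqcup D^m\to D^{n+m-1}$ of the corresponding elements of $\Nu$. Compatibility with braid actions is analogous: the braiding $\beta_{\lambda,\mu}$ was defined as the action of the homeomorphism $\beta_1$ on $\Nu(D^2,\nu;\lambda,\mu)$, while the $B_n$-action on $\Nu(D^n,\mu;\lambda_1,\dotsc,\lambda_n)$ comes from the homeomorphisms $\beta_i$ of $D^n$ (which simultaneously permute the colors $\lambda_i,\lambda_{i+1}$); since the gluing isomorphisms are natural for morphisms in $\Disk$, the $n$-strand braiding on $[\lambda_1]\otimes\dotsb\otimes[\lambda_n]$ decomposes through gluing precisely as the action of $\beta_i$.

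I expect the main obstacle to be coherence-theoretic rather than conceptual. The $n$-fold tensor product is only defined up to the associator, so one must check that the inductive isomorphism is independent of the chosen parenthesization and intertwines the associators with the identity on the $\Nu$ side, and that the braiding and composition statements hold against these choices. The resolution is that every map in sight (the associator, the braiding, and the identification itself) is assembled from gluing maps of disks, so all the required coherences collapse to the functoriality of the gluing isomorphisms and their mutual compatibility under repeated application, both of which are assumed in \Cref{definitionrootedfunctor}; the remaining effort is bookkeeping with the pentagon and hexagon diagrams, not a new idea.
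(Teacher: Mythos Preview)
Your proposal is correct and follows the approach the paper has in mind. The paper does not give a separate proof, stating only that the proposition follows ``from the definition of the associator and the gluing axiom''; your inductive argument on $n$ using $\hom_{\CN}([\mu],W)\simeq W_\mu$ and the gluing $\circ_1:D^2\sqcup D^{n-1}\to D^n$ is precisely the natural way to unpack that sentence, and your treatment of the compatibilities (everything is built from gluing, so the coherence axioms of \Cref{definitionrootedfunctor} do the work) is exactly the intended reasoning.
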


We know define a groupoids of ribbon fusion categories and braided fusion categories. Note that our notion of isomorphism is non-standard.

\begin{definition}\label{definitionweaklyribbon}
  For $R$ a ring, let $\Rib$ be the category whose:
  \begin{description}
    \item[(1)] objects are pairs $(\Lambda,\VL)$, where $\Lambda$ is some finite set, and $\VL$ is endowed with a ribbon structure;
    \item[(2)] morphisms from $(\Lambda_1,\VLone)$ to $(\Lambda_2,\VLtwo)$ are pairs $(f,\phi)$, with $f:\Lambda_1\ra\Lambda_2$ a bijection and 
    $\phi:\otimes_1\simeq f^*\otimes_2$ a natural isomorphism, such that they induce a monoidal isomorphism $\VLone\ra\VLtwo$
    compatible with the braidings and twists. In other words, morphisms form $(\Lambda_1,\VLone)$ to $(\Lambda_2,\VLtwo)$ are ribbon isomorphisms that
    are induced by a bijection $f:\Lambda_1\ra\Lambda_2$ at the level of the $R$-linear categories and are strict on tensor units
    \footnote{Strictness of $f^*[0]=[0]\stackrel{\id}{\simeq} [0]$ is a consequence of the normalization axiom of ribbon functors.}.
  \end{description}
  One similarly defines the category $\Braid$ of braided fusion categories.
\end{definition}


\subsection{Full-faithfulness of forgetful functors}

\begin{theorem}\label{theoremfullfaithfulness}
  Let $R$ be a ring, all of whose quotients have characteristic $0$ and $c\in R^\times$.
  Denote by $\Mod_c$ the full subcategory of $\Mod$ consisting of modular functors with
  central charge $c$. Then all \emph{horizontal}\footnote{The vertical ones are certainly not full.} forgetful functors in the
  commutative diagram:
  \[\begin{tikzcd}
    {\Mod_c} & {\Mod^0} & \Ribfun & \Rib \\
    && \Braidfun & \Braid
    \arrow[from=1-1, to=1-2]
    \arrow[from=1-2, to=1-3]
    \arrow[from=1-3, to=1-4]
    \arrow[from=1-3, to=2-3]
    \arrow[from=1-4, to=2-4]
    \arrow[from=2-3, to=2-4]
  \end{tikzcd}\]
  are fully faithful.
\end{theorem}

\begin{proof}[Proof of faithfulness]
  By \Cref{propositionhomsinCN}, the functors $\Ribfun\ra\Rib$ and $\Braidfun\ra\Braid$ are faithful.
  Moreover, as any surface can be cut into genus $0$ surfaces, we see that $\Mod_c\ra\Mod^0$ is also faithful.
  Let us turn to the functor $\Mod^0\ra\Ribfun$. For any colored genus $0$ surface $\Sigma=(S,\ul)$
  with at least one boundary component $B\subset\partial S$, say numbered $1$, we have for any $\Nu\in\Mod^0$, by definition:
  \begin{align*}
    \Nu(\Sigma)&\simeq\Nu(S,\lambda_1^\dagger;\lambda_2,\dotsc,\lambda_n)\otimes\Nu(S_0^2,\lambda_1,\lambda_1^\dagger)\\
               &\simeq\Nu(S,\lambda_1^\dagger;\lambda_2,\dotsc,\lambda_n)\otimes\Nu(D^2,0;\lambda_1,\lambda_1^\dagger)
  \end{align*}
  canonically. As the right-hand side depends only on the ribbon functor associated to $\Nu$, $\Mod^0\ra\Ribfun$ is faithful.
\end{proof}

The aim of the rest of this section is to prove the fullnesses.

\begin{remark}
  We expect \Cref{theoremfullfaithfulness} to be a consequence of a much more general result of Brochier and Woike \cite[6.8]{brochierClassificationModularFunctors2023}.
  However, their definition of the category of modular functors is more complicated and non-explicit.
  Some work would be necessary to simplify their definition in the case of semisimple modular functors and compare it to ours.
  In this paper, we provide a proof that does not use their results.
\end{remark}


\subsection{Remarks on equivalences of functors with categories and on truncations}

In this subsection, we explain why, over $\C$, the fully-faithful functors in \Cref{theoremfullfaithfulness}
are (almost) equivalences, and also extend the Theorem to the case of truncated functors.
We will need neither of these facts in this paper, but place them here to give the reader a fuller picture and for future reference.

\begin{notation}
  We will denote by $\Modcat\subset\Rib$ the full sub-groupoid of modular fusion category, i.e. those whose $S$-matrix is non-degenerate
  (see \cite[8.13.4]{etingofTensorCategories2015}).
\end{notation}

\begin{theorem}\label{theoremequivalences}
  Here, the base ring is $R=\C$. Consider the commutative diagram of forgetful functors
  \[\begin{tikzcd}
    {\bigsqcup_c\Mod_c} & {\Mod^0} & \Braidfun \\
    & \Ribfun \\
    \Modcat & \Rib & {\Braid.}
    \arrow[from=1-1, to=1-2]
    \arrow["{\sqcup_c\mathrm{f}_c}"', from=1-1, to=3-1]
    \arrow[from=1-2, to=1-3]
    \arrow["{\mathrm{f}^0}", from=1-2, to=2-2]
    \arrow["{\mathrm{f}^b}", from=1-3, to=3-3]
    \arrow["{\mathrm{f}^r}", from=2-2, to=3-2]
    \arrow[from=3-1, to=3-2]
    \arrow[from=3-2, to=3-3]
  \end{tikzcd}\]
  Then $\mathrm{f}^0$, $\mathrm{f}^r$ and $\mathrm{f}^b$ are equivalences of groupoids,
  each $\mathrm{f}_c$ is fully-faithful, and all objects of $\Modcat$ are in the essential image of $\mathrm{f}_c$ for some $c\in\C^\times$
  \footnote{See \cite[5.7.10]{bakalovLecturesTensorCategories2000} for the exact values $c$ may take.}.
\end{theorem}

\begin{proof}
  The facts about the essential images of $\mathrm{f}^r\circ\mathrm{f}^0$ and $\bigsqcup_c\mathrm{f}_c$ are proved in \cite[5.4.1, 6.7.13]{bakalovLecturesTensorCategories2000}.
  For $\bigsqcup_c\mathrm{f}_c$, this is the Reshetikhin-Turaev construction.
  All full-faithfulnesses are part of \Cref{theoremfullfaithfulness}. These imply that $\mathrm{f}^0$ and $\mathrm{f}^r$ are equivalences.
  Note that with our definition a braided functor is equivalent to an $E_2$-algebra in the monoidal $2$-category of $\C$-linear semisimple categories.
  Then, the fact that $\mathrm{f}^b$ is essential surjective follows from the known fact that $E_2$-algebras in the category of categories
  are equivalent to braided monoidal categories, see \cite[1.2.4, 1.2.5]{lurieDerivedAlgebraicGeometry2009}.
\end{proof}

Let us turn to the case of truncated functors. These are obtained by restricting the possible Euler characteristic of surfaces in the definitions.

\begin{definition}
  For $\chi\leq -3$ an integer, denote by $\tSurf^{\geq \chi}\subset\tSurf$ the full subgroupoid
  of surfaces of Euler characteristic $\geq \chi$.
  By replacing $\tSurf$ with $\tSurf^{\geq \chi}$ in the \Cref{definitionmodularfunctor} of modular functor,
  one obtains the definition of $\chi$-truncated modular functor. The groupoid of $\chi$-truncated
  modular functors with central charge $c$ will be denoted $\Mod_c^{\geq \chi}$.

  One similarly defines $\chi$-truncated genus $0$ modular functors, ribbon functors and braided functors.
  The corresponding groupoids will be denoted $\Mod^{0,\geq \chi}$, $\Ribfun^{\geq \chi}$ and $\Braidfun^{\geq \chi}$ respectively.
\end{definition}

\begin{theorem}\label{forgetsoverC}
  Here, the base ring is $R=\C$.
  Fix $c\in\C^\times$. Then for any $\chi\leq -3$, the following forgetful functors are
  well-defined and equivalences:
  \begin{equation*}
    \Mod_c\lra \Mod_c^{\geq \chi}\lra \Modcat.
  \end{equation*}
  The same result holds for genus $0$ modular functors,
  ribbon functors and braided functors, replacing $\Modcat$ by $\Rib$, $\Rib$ and $\Braid$ respectively.
\end{theorem}

\begin{remark}
  Note that $\chi\leq -3$ is required for $\Mod_c^{\geq \chi}\lra \Rib$ to be defined.
\end{remark}

\begin{proof}
  The non-trivial part of the well-definiteness is that categories in the essential image of $\Mod_c^{\geq \chi}\ra \Rib$ are modular.
  When Bakalov-Kirillov prove in \cite[5.5.1]{bakalovLecturesTensorCategories2000} that the ribbon fusion category associated to
  a modular functor is modular, they rely on relations in tori with $1$ and $2$ holes.\footnote{They assume that the associated category is rigid.
  However this assumption is superfluous thanks to the resent result of Etingof and Penneys \cite{etingofRigidityNonnegligibleObjects2024}.}
  These relations will also be satisfied in a $\chi$-truncated modular functor for $\chi\leq -3$, as these surfaces are accessible.
  Hence their proof also implies that the ribbon fusion category associated to a $\chi$-truncated modular functor for $\chi\leq -3$ is modular.

  Thanks to \Cref{theoremequivalences}, it remains only to prove that $\Mod_c^{\geq \chi}\lra \Modcat$, $\Mod^{0,\geq \chi}\lra \Rib$,
  $\Ribfun^{0,\geq \chi}\lra \Rib$, and $\Braidfun^{\geq \chi}\lra \Braid$ are fully-faithful.
  The point is to notice that the proofs given for $\chi=-\infty$ in the next subsections apply directly to the truncated case.
  Let us comment on the modular case, as the other cases are easier.
  In that case, the only non-trivial part of the proof is the use of curve operators, which requires
  $2$ points (see \Cref{subsubAU}):
  that Dehn twists be polynomials in the curve operators and that $g=1$, $n=3$ be accessible.
  The first point is a direct consequence of modularity of the associated ribbon fusion category.
  As for the second point, $\chi\leq-3$ ensures that $g=1$, $n=3$ is accessible.
\end{proof}


\subsection{Fullness for braided and ribbon functors}\label{subsectionbraidedribbon}

We prove fullness of $\Braidfun \ra \Braid$. The case of $\Ribfun \ra \Rib$ is similar, see \Cref{remarkribboncase} below.

Let $\Nu_1$ and $\Nu_2$ be $2$ braided functors with respective braided fusion categories $\CNone$ and $\CNtwo$.
Let $G:\CNone\ra\CNtwo$ be a morphism. We may assume, without loss of generality, that the two functors have the same set of colors $\Lambda$
and that $G$ acts trivially on $\Lambda$.
For $D^n$ the standard disk (see \Cref{subweaklyribbon}) and $\mu,\lambda_1,\dotsc,\lambda_n\in\Lambda$,
\Cref{propositionhomsinCN} provides canonical isomorphisms
\begin{equation*}
  \Nu_i(D^n,\mu;\lambda_1,\dotsc,\lambda_n)\simeq \hom_{\mathrm{C}_{\Nu_i}}([\mu],[\lambda_1]\otimes\dotsb\otimes[\lambda_n])
\end{equation*}
for $i=1,2$, which are compatible with braid group actions and with the standard gluing of standard disks described in
\Cref{definitionlegomaps} and \Cref{figurediskgluing}.
Hence $G$ provides isomorphisms
$$G_{n,\mu,\ul}:\Nu_1(D^n,\mu;\lambda_1,\dotsc,\lambda_n)\simeq \Nu_2(D^n,\mu;\lambda_1,\dotsc,\lambda_n)$$
compatible with standard gluings and braid group actions.

To show fullness, we need to extend them to any configuration
of disks. Fix $D$ a $\Lambda$-labeled configuration of $n$ disks for some $n\geq 0$.
Then there exists an isomorphism $\phi:D\simeq (D^n,\mu;\lambda_1,\dotsc,\lambda_n)$ for some choice of $\mu,\lambda_1,\dotsc,\lambda_n$.
We define an isomorphism $G_{D,\phi}:\Nu_1(D)\simeq \Nu_2(D)$ by imposing the commutation of
\[\begin{tikzcd}
	{\Nu_1(D)} & {\Nu_2(D)} \\
	{\Nu_1(D^n,\mu;\lambda_1,\dotsc,\lambda_n)} & {\Nu_2(D^n,\mu;\lambda_1,\dotsc,\lambda_n).}
	\arrow["{G_{D,\phi}}", from=1-1, to=1-2]
	\arrow["{\Nu_1(\phi)}", from=1-1, to=2-1]
	\arrow["{G_{n,\mu,\ul}}", from=2-1, to=2-2]
	\arrow["{\Nu_2(\phi^{-1})}"', from=2-2, to=1-2]
\end{tikzcd}\]

In fact $G_{D,\phi}$ does not depend on $\phi$ or the choice of the ordering of the $\lambda_i$.
Indeed, if $\phi':D\simeq (D^n,\mu;\lambda_1',\dotsc,\lambda_n')$ is another isomorphism, then there exists a unique
isomorphism $\alpha:(D^n,\mu;\lambda_1,\dotsc,\lambda_n)\simeq(D^n,\mu;\lambda_1',\dotsc,\lambda_n')$ compatible with $\phi$ and $\phi'$.
As $\alpha$ is a braid, $G_{n,\mu,\ul}$ and $G_{n,\mu,\ul'}$ are compatible with it, and thus we have commutation of
\[\begin{tikzcd}
	&& {\Nu_1(D)} \\
	& {\Nu_1(D^n,\mu;\lambda_1,\dotsc,\lambda_n)} \\
	& {\Nu_2(D)} & {\Nu_1(D^n,\mu;\lambda_1',\dotsc,\lambda_n')} \\
	{\Nu_2(D^n,\mu;\lambda_1,\dotsc,\lambda_n)} \\
	& {\Nu_2(D^n,\mu;\lambda_1',\dotsc,\lambda_n')}
	\arrow["{\Nu_1(\phi)}"', from=1-3, to=2-2]
	\arrow["{\Nu_1(\phi')}", from=1-3, to=3-3]
	\arrow["{\Nu_1(\alpha)}", from=2-2, to=3-3]
	\arrow["{G_{n,\mu,\ul}}"', from=2-2, to=4-1]
	\arrow["{G_{n,\mu,\ul'}}", from=3-3, to=5-2]
	\arrow["{\Nu_2(\phi^{-1})}"', from=4-1, to=3-2]
	\arrow["{\Nu_2(\alpha)}", from=4-1, to=5-2]
	\arrow["{\Nu_2(\phi^{\prime-1})}"', from=5-2, to=3-2]
\end{tikzcd}\]
and thus $G_{D,\phi'}=G_{D,\phi}$. Hence we may abbreviate $G_{D,\phi}$ to $G_{D}$, and the same argument proves that
the family $G_D$ induces a natural isomorphism of functors $\Nu_1\Rightarrow\Nu_2:\: \Disk^b\ra\C\text{-vector spaces}$.
Moreover, compatibility with gluing axioms then follows from the fact that any gluing map is isomorphic to a standard one.
This concludes the proof of fullness of $\Braidfun\ra\Braid$.

\begin{remark}\label{remarkribboncase}
  The proof works as is in the case of ribbon functors, replacing braid groups by framed braid groups and $\Disk^b$ by $\Disk$.
\end{remark}


\subsection{Decomposition complex, following Bakalov-Kirillov}\label{subsectionlego}

We give a concise introduction to the decomposition complex of a surface, following \cite[5.2]{bakalovLecturesTensorCategories2000}.
See the reference for details and proofs.

We will use a model of $S_0^n$ which we describe by $S_0^n=D^n\cup_\mathrm{output}D^0$ where $D^n$ is the model as in \Cref{figuredisk}.
We will denote simply by $i$ the $i$-th boundary component from left to right in the Figure.
Clearly, standard braiding maps and twists $\beta_i,\theta_i:D^n\ra D^n$ of \Cref{definitionlegomaps} extend to $\beta_i,\theta_i:S_0^n\ra S_0^n$.
For $n\geq 2$, we define $z:S_0^n\ra S_0^n$ as the homeomorphism that cyclically permutes boundary components around the horizontal axis
sending $1$ to $n$.
We can also define a standard gluing $\circ:S_0^n\sqcup S_0^m\ra S_0^{n+m-2}$ of $n$ in $S_0^n$ to $0$ in $S_0^m$.

\begin{definition}[Decomposition]\label{defintiondecomposition}
  Let $S$ be a compact surface with parametrized boundary. A decomposition of $S$ is a pair $(\Gamma,(\phi_{S_i})_i)$
  where:
  \begin{description}
    \item[(1)] $\Gamma$ is a family of disjoint simple closed curves 
    on $S$ such that the surface $S'=\bigsqcup_iS_i$ obtained by cutting along $\Gamma$ has all of its connected components $S_i$ of genus $0$,
    each with non empty boundary;
    \item[(2)] for each $S_i$, $\phi_{S_i}:S_i\ra S_0^{n_i}$ is a homeomorphism compatible with boundary parametrizations.
  \end{description}
\end{definition}

\begin{definition}[Decomposition moves]\label{defintiondecompositionmoves}
  Let $S$ be a compact surface with parametrized boundary. A $Z$-move on a decomposition $(\Gamma,(\phi_{S_i})_i)$ is done
  by choosing a component $S_i$ and post-composing $\phi_{S_i}:S_i\ra S_0^{n_i}$ by $z:S_0^{n_i}\ra S_0^{n_i}$.
  A $B$-move is done similarly by post-composing some $\phi_{S_i}:S_i\ra S^{n_i}$ with $n_i=3$ by the braid $\beta_1:S_0^3\ra S_0^3$.
  
  To perform an $F$-move, choose a component $\gamma\subset\Gamma$ such that, after cutting, it meets $2$ distinct components of $S'$,
  $S_i$ and $S_j$, and such that $\phi_{S_i}$ sends $\gamma$ to $n_i$ in $S_0^{n_i}$ and $\phi_{S_j}$ sends $\gamma$ to $0$ in $S_0^{n_j}$.
  Then the associated $F$-move consists in replacing $\Gamma$ by $\Gamma\setminus\gamma$,
  $S_i$ and $S_j$ by $S_i\cup_\gamma S_j$ with $\phi: S_i\cup_\gamma S_j\ra S_0^{n_i}\cup_\gamma S_0^{n_j}\stackrel{\circ}{\ra}S_0^{n_i+n_j-2}$.

  To perform an $S$-move, choose a component $S_i$ of $S'$ with $n_i=2$ and such that $2$ and $3$ correspond to the same component $\gamma\in\Gamma$.
  Then replace $\gamma$ by $\gamma'$ as in \Cref{figureSmove}, and change $S_i$ and $\phi_i$ accordingly.
\end{definition}

\begin{figure}
  \def\svgwidth{0.4\linewidth}
\begingroup%
  \makeatletter%
  \providecommand\color[2][]{%
    \errmessage{(Inkscape) Color is used for the text in Inkscape, but the package 'color.sty' is not loaded}%
    \renewcommand\color[2][]{}%
  }%
  \providecommand\transparent[1]{%
    \errmessage{(Inkscape) Transparency is used (non-zero) for the text in Inkscape, but the package 'transparent.sty' is not loaded}%
    \renewcommand\transparent[1]{}%
  }%
  \providecommand\rotatebox[2]{#2}%
  \newcommand*\fsize{\dimexpr\f@size pt\relax}%
  \newcommand*\lineheight[1]{\fontsize{\fsize}{#1\fsize}\selectfont}%
  \ifx\svgwidth\undefined%
    \setlength{\unitlength}{120.74451477bp}%
    \ifx\svgscale\undefined%
      \relax%
    \else%
      \setlength{\unitlength}{\unitlength * \real{\svgscale}}%
    \fi%
  \else%
    \setlength{\unitlength}{\svgwidth}%
  \fi%
  \global\let\svgwidth\undefined%
  \global\let\svgscale\undefined%
  \makeatother%
  \begin{picture}(1,0.8820764)%
    \lineheight{1}%
    \setlength\tabcolsep{0pt}%
    \put(0,0){\includegraphics[width=\unitlength,page=1]{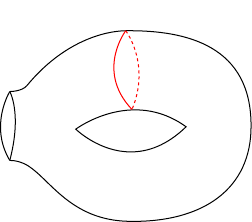}}%
    \put(0.470072,0.81834436){\color[rgb]{1,0,0}\makebox(0,0)[lt]{\lineheight{1.25}\smash{\begin{tabular}[t]{l}$\gamma$\end{tabular}}}}%
    \put(0.45328261,0.16637089){\color[rgb]{0.15686275,0,1}\makebox(0,0)[lt]{\lineheight{1.25}\smash{\begin{tabular}[t]{l}$\gamma'$\end{tabular}}}}%
    \put(0,0){\includegraphics[width=\unitlength,page=2]{Smove.pdf}}%
  \end{picture}%
\endgroup%

  \caption{The curves $\gamma$ and $\gamma'$ in $S_1^1$ describing an $S$-move.}
  \label[figure]{figureSmove}
\end{figure}

\begin{theorem}[{\cite[5.2.9]{bakalovLecturesTensorCategories2000}}]\label{theoremmovesconnected}
  Let $S$ be a compact surface with parametrized boundary.
  Then any two decompositions of $S$ are related by a sequence of $Z$, $B$, $F$ and $S$-moves or inverses of these moves.
\end{theorem}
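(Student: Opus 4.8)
The plan is to disentangle a decomposition $(\Gamma,(\phi_{S_i})_i)$ into its two constituents: the underlying system of \sccs{} $\Gamma$ used to cut $S$ into genus $0$ pieces, and the parametrizations $\phi_{S_i}$ identifying those pieces with the standard models $S_0^{n_i}$. Of the four moves, $F$ and $S$ modify $\Gamma$, whereas $Z$ and $B$ leave $\Gamma$ unchanged and act only on the parametrizations. I would therefore first show that any two cutting systems are joined by $F$- and $S$-moves (allowing the parametrizations to follow along), and then, with the cutting system fixed, show that $Z$- and $B$-moves connect any two choices of parametrization.

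First I would reduce to pants decompositions and then quote the connectivity of the pants complex. Whenever a piece $S_i$ has $n_i\geq 4$ boundary components, inserting a separating \scc{} cuts it into two genus $0$ pieces with strictly fewer boundaries, which is an inverse $F$-move; iterating (and merging any annuli or disks into neighbours by $F$-moves) reduces every decomposition to one all of whose pieces are pairs of pants. By the Hatcher--Thurston theorem, any two pants decompositions are related by finitely many elementary moves: the flip supported on a four-holed sphere and the handle move supported on a one-holed torus $S_1^1$. The flip is realized by an $F$-move merging two adjacent pants into an $S_0^4$ followed by an inverse $F$-move along the transverse \scc{}, and the handle move is exactly the $S$-move of \Cref{defintiondecompositionmoves} (see \Cref{figureSmove}); translating each elementary move in this way connects any two cutting systems $\Gamma,\Gamma'$ by a sequence of $F$- and $S$-moves.

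With $\Gamma$ now fixed, two decompositions differ only by post-composing each $\phi_{S_i}$ with a mapping class of $S_0^{n_i}$ respecting the boundary data. For genus $0$ pieces this mapping class group is generated by the rotation $z$ and the elementary braid $\beta_1$, realized respectively by $Z$- and $B$-moves; the boundary Dehn twists $\theta_i$ arise as well, since a full rotation $z^{n_i}$ differs from the identity by a product of such twists (a relation of ribbon type). Assembling the cutting-system connectivity with this piece-by-piece adjustment of parametrizations yields the claim.

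The main obstacle I anticipate is precisely this bookkeeping of parametrizations and framings. One must verify the exact dictionary between the Hatcher--Thurston moves and compositions of $F$- and $S$-moves, and, more delicately, confirm that $Z$- and $B$-moves account for every discrepancy between $\phi_{S_i}$ and $\phi'_{S_i}$: this includes the Dehn twists about the cutting curves (a twist shared by two adjacent pieces must be consistently transported across the $F$-move that separates them) and the choice, in the $S$-move, of which boundary components of the genus $0$ piece are glued to form the handle. Carrying this out coherently for all pieces simultaneously, rather than one at a time, is the crux. An alternative that avoids citing Hatcher--Thurston is an induction on the number of curves in $\Gamma$, but there the difficulty merely migrates to a change-of-coordinates lemma ensuring that the two given decompositions can be moved, via the allowed moves, to share a single cutting curve along which one may cut and apply the inductive hypothesis.
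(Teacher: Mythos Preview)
The paper does not prove this theorem; it simply cites \cite[5.2.9]{bakalovLecturesTensorCategories2000} (the subsection opens with ``See the reference for details and proofs''). Your sketch is essentially the argument given by Bakalov and Kirillov: reduce to pants decompositions via $F$-moves, invoke Hatcher--Thurston for the cutting system, and then adjust the parametrizations of the pants pieces using $Z$- and $B$-moves. So there is nothing to compare against in this paper itself, and your outline matches the standard route taken in the cited reference.

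One small point of care: by \Cref{defintiondecompositionmoves} the $B$-move is only defined for pieces with $n_i=3$, and the $F$-move requires the glued curve to sit at boundary $n_i$ of one piece and boundary $0$ of the other. This means that in practice every $F$- or $S$-step must be sandwiched between $Z$-moves that first put the parametrizations into the required position; your third paragraph alludes to this bookkeeping but it is worth saying explicitly that it is not an afterthought---it is woven through the entire argument, not just a final adjustment once $\Gamma$ is fixed. Bakalov and Kirillov handle exactly this by working with marked decompositions throughout and checking that each Hatcher--Thurston move lifts (after suitable $Z$-conjugation) to a composite of the four moves.
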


Let $\Nu$ be a modular functor with associated ribbon fusion category $\CN$. Then for colors $\lambda_1,\dotsc,\lambda_n\in\Lambda$,
we have natural\footnote{with respect to $\Nu$.} isomorphisms:
\begin{equation}\label{canonicaliso}
  \Nu(S_0^n,\lambda_1,\dotsc,\lambda_n)\stackrel{\text{\textbf{(V)}}}{\longleftrightarrow}
  \Nu(D^n,0;\lambda_1,\dotsc,\lambda_n)\stackrel{\ref{propositionhomsinCN}}{\longleftrightarrow}
  \hom_{\CN}([0],[\lambda_1]\otimes\dotsb\otimes[\lambda_n]).
\end{equation}

\begin{definition}\label{definitionGD}
  Let $\Nu_1$ and $\Nu_2$ be $2$ modular functors in $\Mod_c$ with respective ribbon fusion categories $\CNone$ and $\CNtwo$.
  Let $G:\CNone\ra\CNtwo$ be a morphism. For a colored surface $\Sigma=(S,\ul)$, and a decomposition $D=(\Gamma,(\phi_{S_i})_i)$ of $S$,
  we define an isomorphism of $R$-modules $G_D:\Nu_1(\Sigma)\ra\Nu_2(\Sigma)$ as follows.
  For $k=1,2$, the gluing axiom along $\Gamma$ expresses $\Nu_1(\Sigma)$ as a direct sum of tensor powers of terms
  of the form $\Nu_k(S_i,\umu)$ or $\Nu_k(S_0^2,\nu,\nu^\dagger)$. Now we have isomorphisms:
  \begin{equation*}
    \Nu_k(S_i,\umu)\stackrel{\phi_{S_i}}{\simeq}\Nu_k(S_0^{n_i},\umu)\stackrel{(\ref{canonicaliso})}{\simeq}
    \hom_{\mathrm{C}_{\Nu_k}}([0],[\mu_1]\otimes\dotsb\otimes[\mu_{n_i}])
  \end{equation*}
  and $\Nu_k(S_0^2,\nu,\nu^\dagger)\simeq \hom_{\mathrm{C}_{\Nu_k}}([0],[\nu]\otimes[\nu^\dagger])$.
  Hence $\Nu_k(\Sigma)$ is expressed via $D$ in terms of vector spaces depending only on $\mathrm{C}_{\Nu_k}$.
  This allows us to apply $G$ and get an isomorphism $G_D:\Nu_1(\Sigma)\ra\Nu_2(\Sigma)$.
\end{definition}

To prove fullness of $\Mod_c\ra\Rib$, we will need that $G_D$ is independent of the decomposition $D$ (see \Cref{prooffullness}).
For this, by \Cref{theoremmovesconnected}, it is sufficient to prove that $G_D$ is invariant under $Z$, $B$, $F$ and $S$-moves.
So we need to show that if $D$ and $D'$ differ by a move,
then the associated decompositions of $\Nu(\Sigma)$ as in \Cref{definitionGD} are related by isomorphisms depending only on $\CN$
and natural in $\Nu$.
Below, we explain the invariance under $Z$, $B$, $F$-moves. Invariance under $S$-moves will be the subject of the next subsection \ref{subsubAU}.

Invariance under $B$-move is a consequence of the definition of the braiding on $\CN$.
For invariance under $F$-moves, we need to show that genus $0$ gluing maps of a modular functor $\Nu$ are determined by $\CN$.
This is a consequence of the fact that the following composition:
\[\begin{tikzcd}
	{\Nu(S_0^n,\lambda_1,\dotsc,\lambda_{n-1},\mu)\otimes\Nu(S_0^2,\mu,\mu^\dagger)\otimes\Nu(S_0^m,\mu^\dagger,\nu_1,\dotsc,\nu_{m-1})} \\
	\begin{array}{c}     \hom_{\CN}([0],[\lambda_1]\otimes\dotsb\otimes[\lambda_{n-1}]\otimes[\mu])\otimes\hom_{\CN}([0],[\mu]\otimes[\mu^\dagger])\\\otimes\hom_{\CN}([0],[\mu^\dagger]\otimes[\nu_1]\otimes\dotsb\otimes[\nu_{m-1}]) \end{array} \\
	{    \hom_{\CN}([0],[\lambda_1]\otimes\dotsb\otimes[\lambda_{n-1}]\otimes[\mu])\otimes\hom_{\CN}([\mu],\otimes[\nu_1]\otimes\dotsb\otimes[\nu_{m-1}])} \\
	{    \hom_{\CN}([0],[\lambda_1]\otimes\dotsb\otimes[\lambda_{n-1}]\otimes[\nu_1]\otimes\dotsb\otimes[\nu_{m-1}])} \\
	{\Nu(S_0^n,\lambda_1,\dotsc,\lambda_{n-1},\nu_1,\dotsc,\nu_{m-1})}
	\arrow[tail reversed, from=1-1, to=2-1]
	\arrow["{\text{duality}}"', tail reversed, from=2-1, to=3-1]
	\arrow["{\text{partial composition}}"', tail reversed, from=3-1, to=4-1]
	\arrow[tail reversed, from=5-1, to=4-1]
\end{tikzcd}\]
is the gluing map.

As for $Z$-moves, notice that $z:S_0^n\ra S_0^n$ is induced by $z'=\beta'\circ\theta_1:D^n\ra D^n$ where $\beta'$ is the braiding of $1$ around all the other $i$
for $2\leq i\leq n$. Now, under the isomorphism of \Cref{canonicaliso}, $\beta'$ and $\theta_1$ can be described by the braiding and ribbon structure on $\CN$.
Hence $Z$-moves can be described in $\CN$.


\subsection{Higher genus and curve operators, following Andersen-Ueno}\label{subsubAU}

We define curve operators following \cite[4]{andersenModularFunctorsAre2012}.
Note that although our definition of modular functor is different from that of Andersen-Ueno, any modular functor $\Nu$ with our definition
satisfies their definition after choosing for each $\lambda\in\Lambda$ an isomorphism $\Nu(S_0^2,\lambda,\lambda^\dagger)\simeq R$.
Hence we can use their results.

\begin{definition}\label{definitioncurveoperator}
  Let $\Nu$ be a modular functor, $\Sigma$ a colored surface, $\gamma\subset\Sigma$ an oriented simple closed curve and $\mu\in\Lambda$ a color.
  Choose $2$ distinct points $x$ and $y$ on $\gamma$. Denote by $\Sigma'$ the colored surface obtained by removing disks around $x$ and $y$ and
  coloring the new boundary component corresponding to $x$ by $\mu$ and that corresponding to $y$ by $\mu^\dagger$.
  Let $\alpha$ and $\beta$ be the simple closed curves on $\Sigma'$ as on \Cref{figurecurveoperator}. Denote by $\Sigma'_\alpha$ and $\Sigma'_\beta$
  the cut of $\Sigma'$ along $\alpha$, respectively $\beta$.

  Then we define the curve operator $\Op{\gamma}{\mu}:\Nu(\Sigma)\ra\Nu(\Sigma)$ by tensoring the 
  following composition of maps with the identity of $\Nu(S_0^2,\mu,\mu^\dagger)^\vee$:
  \[\begin{tikzcd}
    {\bigoplus_\nu\Nu(\Sigma_\alpha,\nu,\nu^\dagger)\otimes\Nu(S_0^2,\nu,\nu^\dagger)^\vee} & {\Nu(\Sigma')} & {\bigoplus_\nu\Nu(\Sigma_\beta,\nu,\nu^\dagger)\otimes\Nu(S_0^2,\nu,\nu^\dagger)^\vee} \\
    {\Nu(\Sigma_\alpha,0,0)} && {\Nu(\Sigma_\beta,0,0)} \\
    {\Nu(\Sigma)\otimes\Nu(S_0^2,\mu,\mu^\dagger)} && {\Nu(\Sigma)\otimes\Nu(S_0^2,\mu,\mu^\dagger).}
    \arrow[tail reversed, from=2-1, to=3-1]
    \arrow[tail reversed, from=2-3, to=3-3]
    \arrow["{\nu=0\text{ summand}}"', hook, from=2-1, to=1-1]
    \arrow["{\nu=0\text{ summand}}", two heads, from=1-3, to=2-3]
    \arrow[tail reversed, from=1-1, to=1-2]
    \arrow[tail reversed, from=1-2, to=1-3]
  \end{tikzcd}\]
\end{definition}

\begin{figure}
  \def\svgwidth{0.5\linewidth}
\begingroup%
  \makeatletter%
  \providecommand\color[2][]{%
    \errmessage{(Inkscape) Color is used for the text in Inkscape, but the package 'color.sty' is not loaded}%
    \renewcommand\color[2][]{}%
  }%
  \providecommand\transparent[1]{%
    \errmessage{(Inkscape) Transparency is used (non-zero) for the text in Inkscape, but the package 'transparent.sty' is not loaded}%
    \renewcommand\transparent[1]{}%
  }%
  \providecommand\rotatebox[2]{#2}%
  \newcommand*\fsize{\dimexpr\f@size pt\relax}%
  \newcommand*\lineheight[1]{\fontsize{\fsize}{#1\fsize}\selectfont}%
  \ifx\svgwidth\undefined%
    \setlength{\unitlength}{547.16905489bp}%
    \ifx\svgscale\undefined%
      \relax%
    \else%
      \setlength{\unitlength}{\unitlength * \real{\svgscale}}%
    \fi%
  \else%
    \setlength{\unitlength}{\svgwidth}%
  \fi%
  \global\let\svgwidth\undefined%
  \global\let\svgscale\undefined%
  \makeatother%
  \begin{picture}(1,0.77456465)%
    \lineheight{1}%
    \setlength\tabcolsep{0pt}%
    \put(0,0){\includegraphics[width=\unitlength,page=1]{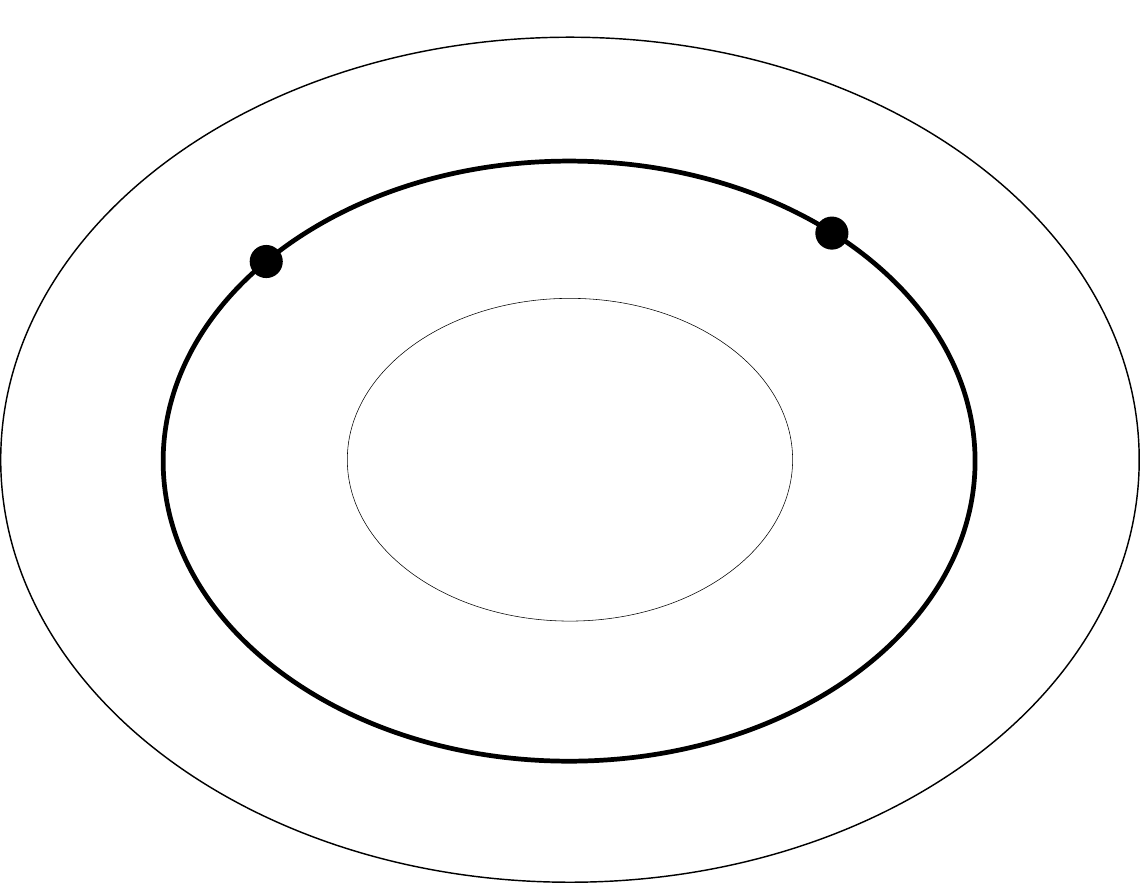}}%
    \put(0.2915926,0.61039327){\color[rgb]{0,0,0}\makebox(0,0)[lt]{\lineheight{1.25}\smash{\begin{tabular}[t]{l}$x$\\\end{tabular}}}}%
    \put(0.48038245,0.58463075){\color[rgb]{0,0,0}\makebox(0,0)[lt]{\lineheight{1.25}\smash{\begin{tabular}[t]{l}$\gamma$\\\end{tabular}}}}%
    \put(0.46895293,0.70306986){\color[rgb]{0.15294118,0.00392157,1}\makebox(0,0)[lt]{\lineheight{1.25}\smash{\begin{tabular}[t]{l}$\alpha$\\\end{tabular}}}}%
    \put(0.42100822,0.17872539){\color[rgb]{1,0.00392157,0.00392157}\makebox(0,0)[lt]{\lineheight{1.25}\smash{\begin{tabular}[t]{l}$\beta$\\\end{tabular}}}}%
    \put(0.61305344,0.63087639){\color[rgb]{0,0,0}\makebox(0,0)[lt]{\lineheight{1.25}\smash{\begin{tabular}[t]{l}$y$\\\end{tabular}}}}%
    \put(0,0){\includegraphics[width=\unitlength,page=2]{curveoperator.pdf}}%
  \end{picture}%
\endgroup%

  \caption{An annulus around $\gamma$ in $\Sigma$, with boundary components of $\Sigma'$ in red around $x$ and $y$, and curves $\alpha$ and $\beta$.}
  \label[figure]{figurecurveoperator}
\end{figure}

In words, we express $\Nu(\Sigma)$ as a summand in $\Nu(\Sigma')\otimes\Nu(S_0^2,\mu,\mu^\dagger)^\vee$ in $2$ different ways using $\alpha$ and $\beta$.
Then the curve operator $\Op{\gamma}{\mu}$ is the projection of the summand $\Nu(\Sigma)$ corresponding to $\alpha$
onto the one corresponding to $\beta$.

\begin{theorem}[{\cite[5.1]{andersenModularFunctorsAre2012}}]\label{theoremAU}
  Fix a modular functor $\Nu$ and a colored surface $\Sigma$.
  Then for each simple closed curve $\gamma\subset\Sigma$, the action of the Dehn twist $T_\gamma$ on $\Nu(\Sigma)$
  is a linear combination of the actions of the curve operators $\Op{\gamma}{\mu}$ for $\mu\in\Lambda$.
\end{theorem}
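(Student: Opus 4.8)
The plan is to simultaneously diagonalize $T_\gamma$ and all the curve operators $\Op{\gamma}{\mu}$ in the gluing decomposition along $\gamma$, and then to invert the resulting transition matrix. First I would cut $\Sigma$ along $\gamma$ into a colored surface $\Sigma_\gamma$ carrying two new boundary components, so that the gluing axiom gives
\[
  \Nu(\Sigma)\simeq\bigoplus_{\nu\in\Lambda}V_\nu,\qquad
  V_\nu:=\Nu(\Sigma_\gamma,\nu,\nu^\dagger)\otimes\Nu(S_0^2,\nu,\nu^\dagger)^\vee.
\]
By \Cref{remarklevel} the Dehn twist $T_\gamma$ preserves this decomposition and acts on $V_\nu$ by the scalar $t_\nu$. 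Thus $T_\gamma$ is diagonal with one eigenvalue per color, and the whole statement reduces to showing that the curve operators furnish enough diagonal operators to realize the vector $(t_\nu)_\nu$.

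The key step is to check that each $\Op{\gamma}{\mu}$ is \emph{also} block-diagonal for $\bigoplus_\nu V_\nu$, acting on $V_\nu$ by a scalar $s_{\mu\nu}$ determined purely by the genus $0$ data $\CN$. Indeed, once the disks around $x$ and $y$ are filled in, both curves $\alpha$ and $\beta$ of \Cref{definitioncurveoperator} become isotopic to $\gamma$; hence the $\alpha$- and $\beta$-decompositions of $\Nu(\Sigma')$ in \Cref{figurecurveoperator} are obtained from the $\gamma$-decomposition by $B$- and $F$-moves supported in an annular neighbourhood of $\gamma$. Since such moves act block-diagonally with respect to the color $\nu$ assigned to $\gamma$, the composite defining $\Op{\gamma}{\mu}$ respects the splitting $\bigoplus_\nu V_\nu$, and the scalar $s_{\mu\nu}$ is computed inside $\CN$ as the normalized Hopf-link entry $S_{\mu\nu}/S_{0\nu}$ of the $S$-matrix. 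In particular $s_{0\nu}=1$, so $\Op{\gamma}{0}$ is the identity operator and the identity already lies among the $\Op{\gamma}{\mu}$.

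It then remains to solve the linear system $t_\nu=\sum_{\mu}c_\mu\,s_{\mu\nu}$ for scalars $c_\mu$, which is possible as soon as the matrix $(s_{\mu\nu})_{\mu,\nu\in\Lambda}$ is invertible, i.e.\ as soon as the $S$-matrix of $\CN$ is non-degenerate. This invertibility is the main obstacle: although $\CN$ is only assumed weakly ribbon, the non-degeneracy of the $S$-matrix is forced by the genus $1$ gluing data of the full modular functor $\Nu$, and it is exactly the input that the genus $0$ curve operators cannot see by themselves. Granting it, I obtain coefficients $(c_\mu)$ with $t_\nu=\sum_\mu c_\mu s_{\mu\nu}$ for every $\nu$; since every operator in sight is diagonal in $\bigoplus_\nu V_\nu$, this yields $T_\gamma=\sum_{\mu\in\Lambda}c_\mu\,\Op{\gamma}{\mu}$, as claimed. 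The delicate points to verify carefully are the reduction of the $\alpha,\beta$ decompositions to genus $0$ moves and the precise identification of $s_{\mu\nu}$ with $S_{\mu\nu}/S_{0\nu}$, from which the required invertibility is inherited.
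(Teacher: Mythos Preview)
Your outline is precisely the Andersen--Ueno argument that the paper invokes rather than reproves: block-diagonalize both $T_\gamma$ and each $\Op{\gamma}{\mu}$ in the gluing decomposition along $\gamma$, identify the scalars $s_{\mu\nu}$ with normalized $S$-matrix entries, and invert. The paper itself does not carry this out; it treats \cite[5.1,\,5.2]{andersenModularFunctorsAre2012} as a black box over fields and adds only one step you omit, namely the passage from an arbitrary coefficient ring $R$ to a field. Concretely: if the matrix $C=(s_{\mu\nu})$ were not invertible over $R$, its determinant would lie in some maximal ideal $\mathfrak{m}$, and reducing $\Nu$ modulo $\mathfrak{m}$ would yield a modular functor over the field $R/\mathfrak{m}$ with singular $C$, contradicting the field case. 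Since \Cref{theoremAU} is applied in the proof of fullness in \Cref{theoremfullfaithfulness} over a general ring, this reduction is not optional.

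On the field case, the two ``delicate points'' you flag are in fact the entire content of the cited reference. Your sentence that non-degeneracy of the $S$-matrix is ``forced by the genus $1$ gluing data'' is exactly \cite[5.2]{andersenModularFunctorsAre2012}; the clean justification is that the $S$-move on $S_1^1$ is a mapping class and hence acts invertibly on each $\Nu(S_1^1,\lambda)$, while its matrix in the $\gamma$-decomposition is a rescaling of $(S_{\mu\nu})$. The identification $s_{\mu\nu}=S_{\mu\nu}/S_{0\nu}$ also presupposes that each $S_{0\nu}$ is a unit, which is part of the same package. So your strategy is correct and coincides with the cited one, but as written it leaves both the ring reduction and the invertibility lemma as assertions rather than arguments.
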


\begin{proof}
  Andersen and Ueno prove this for $R=\C$. However their proof works without changes for $R$ any field of characteristic $0$.
  
  It consists in showing that a matrix $C$ depending on the data of the modular functor is invertible \cite[5.2]{andersenModularFunctorsAre2012}.
  Assume by contradiction that the statement is false for some $\Nu$ defined over some ring $R$ all of whose quotients have characteristic $0$.
  Then $\det(C)$ is included in a maximal ideal $\mathfrak{m}$ of $R$.
  Hence the reduction of $\Nu$ over $R/\mathfrak{m}$ has non-invertible $C$-matrix. However $R/\mathfrak{m}$ is a field, so this contradicts
  Andersen and Ueno's result.
\end{proof}

\begin{remark}
  Another proof for $R$ a field of characteristic $0$
  is provided by the fact that the ribbon category associated to a modular functor is modular (see discussion in the proof of \Cref{forgetsoverC}).
  Indeed, the matrix $C$ in Andersen and Ueno's paper is invertible when the $S$-matrix is.
\end{remark}

We can now conclude on the fullness of $\Mod_c\ra\Rib$.

\begin{proof}[Proof of fullness in \Cref{theoremfullfaithfulness}]\label{prooffullness}
  Let $\Nu_1$ and $\Nu_2$ be two modular functors with respective ribbon fusion categories $\CNone$ and $\CNtwo$.
  Choose a morphisms $G:\CNone\ra\CNtwo$. Then for each decompositions $D$ of a surface $\Sigma$,
  \Cref{definitionGD} provides an isomorphism $G_D:\Nu_1(\Sigma)\ra\Nu_2(\Sigma)$.
  If we show that $G_D$ is independent of the decompositions $D$, then setting, for each colored surface $\Sigma$, $G_\Sigma:=G_D$,
  the family $(G_\Sigma)_\Sigma$ provides an isomorphism of modular functor $\widetilde{G}:\Nu_1\ra\Nu_2$ lifting $G:\CNone\ra\CNtwo$.

  Hence, by \Cref{theoremmovesconnected}, if we check that $G_D$ is invariant under $Z$, $B$, $F$ and $S$-moves, we are done.
  The discussion at the end of \Cref{subsectionlego} explains invariance under $Z$, $B$ and $F$-moves.
  Let us turn to $S$-moves. We only need to show that $G_{D_1}=G_{D_2}$
  where $D$ and $D'$ are the $2$ decompositions of the torus $\Sigma_1^1$ with one boundary component colored $\lambda$
  corresponding respectively to $\gamma$ and $\gamma'$ in \Cref{figureSmove}.
  But $D'=f(D)$ for an element $f$ of the mapping class group of $S_1^1$, and thus $G_{D'}=fG_{D}f^{-1}$.
  Hence we need only show that $G_{D}$ commutes to the action of the mapping class group of $\Sigma_1^1$.
  This mapping class group is generated by the $2$ Dehn twists $T_\gamma$ and $T_{\gamma'}$.
  Clearly, $G_{D}$ commutes to $T_\gamma$.
  As for $T_\gamma$, by \Cref{theoremAU}, it is sufficient to show that $G_{D}$ commutes to the curve operators $\Op{\gamma'}{\mu}$ for $\mu\in\Lambda$.
  On \Cref{figureoperatorasZBFmoves} we describe the horizontal gluing maps of the diagram in \Cref{definitioncurveoperator} in terms of $Z$, $B$ and $F$ moves.
  Hence $G$ commutes to these gluing maps.
  As the decompositions $D_\alpha$ and $D_\beta$ on $S_1^3$ in the figure both come from $D$ on $S_1^1$,
  $G$ also commutes to the vertical inclusions and projections maps of the diagram in \Cref{definitioncurveoperator}.
  So $\Op{\gamma'}{\mu}$ commutes to $G_{D}$ for all $\mu$, and thus $G_D$ is invariant under $S$-moves. We are done.
\end{proof}

\begin{figure}
  \def\svgwidth{\linewidth}
  \import{figures/}{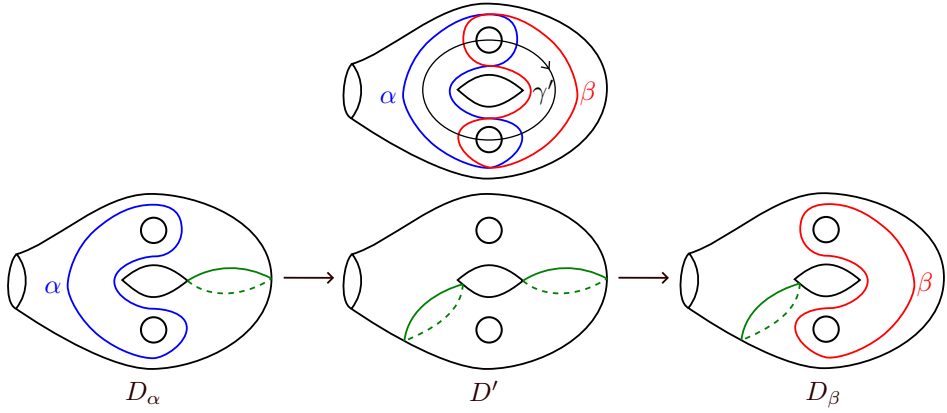}
  \caption{The decompositions $D_\alpha$ and $D_\beta$ of $S_1^3$ are linked by genus $0$ moves ($Z$, $B$ and $F$).}
  \label[figure]{figureoperatorasZBFmoves}
\end{figure}


\section{Non-Abelian Hodge correspondence}\label{sectionnonabelian}

Our reference for the non-Abelian Hodge correspondence is Simpson: \cite[1]{simpsonHiggsBundlesLocal1992} over compact Kähler manifolds,
and \cite[9]{simpsonLocalSystemsProper2011} over smooth proper DM stacks. For Hodge bundles and complex variations of Hodge structures,
see \cite[4]{simpsonHiggsBundlesLocal1992}.

All varieties, manifolds and stacks considered will be smooth.
Note that by DM stack we mean algebraic Deligne-Mumford stack over $\C$. These have realizations as smooth analytic/differentiable Deligne-Mumford stacks,
and hence we may talk about holomorphic/$\Cinf$-bundles over them. See \cite[2]{eyssidieuxInstantonsFramedSheaves2018} for more on this.


\subsection{Higgs bundles and local systems}

The purpose of this subsection is to state the non-Abelian Hodge correspondence for modular functors.

\begin{notation}
  Let $X$ be a DM stack. We will use the notation $\uE$, $\uF$, $\uG$,... for $\C$-local systems on $X$,
  the notation $E$, $F$, $G$,... for $\Cinf$ complex vector bundles on $X$
  and their sheaves of $\Cinf$  sections,
  and the notation $\cE$, $\cF$, $\cG$,... for holomorphic vector bundles and their sheaves of holomorphic sections.
  Connections will be denoted $\nabla$ when they are $\Cinf$ and $\rmd$ when they are holomorphic.
\end{notation}

Let us first give a reminder about flat connections on DM stacks.

\begin{proposition}\label{propositionlocalsystemsconnections}
  Let $X$ be a smooth Deligne-Mumford stack. Then we have equivalences between the $\C$-linear symmetric tensor categories
  \footnote{i.e. with a tensor product and duals.} of:
  \begin{description}
    \item[(1)] local systems $\uE$ of finite dimensional $\C$-vector spaces on $X$;
    \item[(2)] $\Cinf$ vector bundles $E$ on $X$ with flat $\Cinf$ connections $\nabla$;
    \item[(3)] holomorphic vector bundles $\cE$ on $X$ with flat holomorphic connections $\rmd$.
  \end{description}
  The equivalence between \textbf{(2)} and \textbf{(1)} is induced by the functor sending $(E,\nabla)$ to its sheaf of flat sections $\uE_\nabla$.
  The equivalence between \textbf{(3)} and \textbf{(2)}
  is induced by the functor sending $(\cE,\rmd)$ to $(E,\overline{\partial}_E+\rmd)$,
  where $E$ is the $\Cinf$ bundle associated to $\cE$ and $\overline{\partial}_E:E\ra E\otimes\Omega_X^{0,1}$ is the Dolbeault operator associated
  to the complex structure on $\cE$.
\end{proposition}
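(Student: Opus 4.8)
The plan is to first establish the three equivalences over an ordinary smooth complex manifold, where they are classical, and then to descend them to the DM stack $X$ using its analytic-local structure as a finite-group quotient.

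For the equivalence between \textbf{(1)} and \textbf{(2)} over a manifold, I would send a flat bundle $(E,\nabla)$ to its sheaf of flat sections $\uE_\nabla$ and, conversely, a local system $\uE$ to the $\Cinf$ bundle $\uE\otimes_{\underline{\C}}\Cinf_X$ equipped with the connection $1\otimes\rmd$. The crucial local input is that a flat connection is locally trivial: over a contractible chart, parallel transport (equivalently, the Frobenius integrability theorem applied to the horizontal distribution $\ker\nabla$) produces a full-rank frame of flat sections, so $\uE_\nabla$ is indeed a local system and the two functors are mutually inverse.

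For the equivalence between \textbf{(2)} and \textbf{(3)}, I would decompose a flat $\Cinf$ connection by type as $\nabla=\nabla^{1,0}+\nabla^{0,1}$. Flatness $\nabla^2=0$ splits into components by bidegree: the $(0,2)$-part gives $(\nabla^{0,1})^2=0$, so by the Koszul--Malgrange (Newlander--Nirenberg-type) integrability theorem $\overline{\partial}_E:=\nabla^{0,1}$ endows $E$ with a holomorphic structure $\cE$; the $(1,1)$-part says that $\rmd:=\nabla^{1,0}$ anticommutes with $\overline{\partial}_E$, i.e.\ is holomorphic; and the $(2,0)$-part says that $\rmd$ is flat. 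This is exactly the inverse of the stated functor $(\cE,\rmd)\mapsto(E,\overline{\partial}_E+\rmd)$. Compatibility with tensor products, duals, and the symmetric structure is then immediate, since all three constructions are fibrewise and natural.

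Finally I would pass from manifolds to the DM stack $X$. By the cited analytic theory of smooth DM stacks, $X$ is covered, in the analytic topology, by charts of the form $[U/G]$ with $U$ a complex manifold and $G$ a finite group, and each of the three categories is defined through such presentations as the category of $G$-equivariant objects on $U$; more precisely, each forms a stack of $\C$-linear tensor categories on the analytic site of $X$. Since the equivalences constructed above are natural, they are automatically $G$-equivariant and compatible with restriction, hence descend to equivalences on each chart and glue to equivalences of tensor categories over $X$. I expect the main obstacle to be formal rather than geometric: one must verify that all three notions genuinely satisfy descent along the atlas and that the integrability theorem, being a local analytic statement invariant under the finite group action, survives the passage to the quotient. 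Granting this, the equivalence for $X$ follows from the manifold case.
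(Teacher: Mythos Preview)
The paper does not actually supply a proof of this proposition: it is stated as a classical background fact and immediately followed by a remark on functoriality, with no argument given. Your outline is correct and is exactly the kind of argument one would write if pressed to justify the statement. The three ingredients you isolate---the local triviality of flat connections (Frobenius/parallel transport) for \textbf{(1)}$\Leftrightarrow$\textbf{(2)}, the Koszul--Malgrange integrability theorem for \textbf{(2)}$\Leftrightarrow$\textbf{(3)}, and \'etale-local presentation as $[U/G]$ with $G$ finite to descend from manifolds to DM stacks---are the right ones, and your observation that naturality of the constructions yields $G$-equivariance and compatibility with gluing is the point that makes the descent work. There is nothing to compare here; you have supplied what the paper left implicit.
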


\begin{remark}\label{remarklocalsystemsconnections}
  These equivalences are compatible with pullbacks along maps of DM stacks in the following sense.
  If $\DMC$ denotes the $2$-category of Deligne-Mumford stacks,
  the equivalences of \Cref{propositionlocalsystemsconnections} are actually equivalences of (weak) functors:
  \begin{equation*}
    (\DMC)^\mathrm{op}\lra \{\C-\text{linear symmetric tensor categories}\}.
  \end{equation*}

  From this we see that in the geometric definition of a modular functor (\ref{definitiongeometricmodularfunctor}),
  when $R=\C$, we can replace local systems by $\Cinf$ vector bundles with flat connections, or even holomorphic vector bundles with flat connections.
\end{remark}

\begin{definition}[Higgs bundle]\label{definitionhiggsbundle}
  Let $X$ be a proper smooth Deligne-Mumford stack.
  A Higgs bundle on $X$ is a holomorphic bundle $\cE$ together with an $\mathcal{O}_X$-linear map of sheaves $\theta:\cE\ra \cE\otimes\Omega_X^{1,0}$
  such that $\theta\wedge\theta=0$ in $\End{\cE}\otimes\Omega_X^{2,0}$. 
  Such a map $\theta$ is called a Higgs field.
\end{definition}

\begin{remark}
  Equivalently, a Higgs bundle is a $\Cinf$ bundle $E$ together with a flat $\dol$-connection $\dol_E:E\ra E\otimes\Omega_X^{0,1}$
  (i.e. a complex structure) and a $\Cinf_X$-linear map of sheaves $\theta:E\ra E\otimes\Omega_X^{1,0}$ such that $(\dol_E+\theta)^2=0$.
\end{remark}

For a holomorphic bundle $\cE$ one a curve, its slope is $\mu(\cE)=\frac{\mathrm{deg}\:\cE}{\mathrm{rk}\:\cE}$.

\begin{definition}
  A Higgs bundle $(\cE,\theta)$ on a smooth projective curve $Y$ is stable if every strict subsheaf $0\subsetneq \cF\subsetneq \cE$ preserved by $\theta$
  has strictly smaller slope, i.e. $\mu(\cF)<\mu(\cE)$.
  A Higgs bundle $(\cE,\theta)$ on a smooth projective curve $Y$ is polystable if it is a direct sum of stable Higgs bundles with the same slope.
\end{definition}

\begin{definition}[{\cite[9.6]{simpsonLocalSystemsProper2011}}]\label{definitionharmonictype}
  A Higgs bundle $(\cE,\theta)$ on a proper smooth DM stack $X$ is of harmonic type if:
  \begin{itemize}
    \item for every map $f:Y\ra X$ with $Y$ a smooth projective curve,
    the Higgs bundle $(f^*\cE,f^*\theta)$ on $Y$ is polystable;
    \item the rational Chern classes of $\cE$ vanish.
  \end{itemize}
\end{definition}

\begin{theorem}[{\cite[9.7]{simpsonLocalSystemsProper2011}}]\label{theoremsimpsoncorrespondance}
  Let $X$ be a proper smooth DM stack. The Simpson correspondence is an isomorphism between the $\C$-linear symmetric tensor categories of:
  \begin{description}
    \item[(1)] $\Cinf$ vector bundles $E$ on $X$ with semisimple flat $\Cinf$ connections $\nabla$;
    \item[(2)] Higgs bundles $(E,\dol_E,\theta)$ of harmonic type over $X$.
  \end{description}
  This isomorphism of categories is the identity on the $\Cinf$ bundle $E$ and on morphisms.
  Moreover, the correspondence is functorial with respect to maps of DM stacks, in the sense that it induces a strict isomorphism of (weak) functors:
  \begin{equation*}
    (\DMCp)^\mathrm{op}\lra \{\C-\text{linear symmetric tensor categories}\}
  \end{equation*}
  where $\DMCp$ is the $2$-category of proper smooth DM stacks.
\end{theorem}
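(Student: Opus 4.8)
The statement is exactly Simpson's theorem, so the plan is to reduce it to the non-abelian Hodge correspondence of \cite[9]{simpsonLocalSystemsProper2011}, organizing the three assertions (the equivalence of categories, the fact that it is the identity on the underlying $\Cinf$ bundle, and functoriality) in that order. First I would establish the correspondence over a single proper smooth DM stack $X$ at the level of objects. The classical input is the circle of ideas of Hitchin, Simpson and Corlette over compact Kähler manifolds: a semisimple flat bundle $(E,\nabla)$ carries a harmonic metric, whose decomposition of $\nabla$ yields a Dolbeault operator $\dol_E$ and a Higgs field $\theta$ with $(\dol_E+\theta)^2=0$, and the resulting Higgs bundle has vanishing rational Chern classes and is polystable on curves; conversely a Higgs bundle of harmonic type admits a harmonic metric solving Hitchin's self-duality equations, and assembling the Chern connection of this metric with $\theta$ and its metric adjoint produces the flat connection $\nabla$.

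The passage to DM stacks is where the curve-test definition of harmonic type (\Cref{definitionharmonictype}) is essential: since harmonicity and polystability are detected on the smooth projective curves mapping to $X$, one transports the correspondence from curves to $X$. Here I would note that a smooth DM stack admits an étale atlas and a coarse space, so that Chern classes, differential forms, and the relevant analysis make sense over $X$ as in \cite[2]{eyssidieuxInstantonsFramedSheaves2018} and \cite[9]{simpsonLocalSystemsProper2011}. The assertion that the correspondence is the identity on the $\Cinf$ bundle $E$ and on morphisms is then built into the construction: $\dol_E$, $\theta$ and $\nabla$ all live on the same fixed $\Cinf$ bundle, and a morphism of semisimple flat bundles is harmonic-metric-compatible, hence simultaneously holomorphic and $\theta$-commuting, so it is a morphism of Higgs bundles, and conversely.

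Compatibility with tensor products and duals follows because the harmonic metric on $E\otimes E'$ (respectively $E^\vee$) is the tensor product (respectively inverse transpose) of the harmonic metrics, so the two symmetric tensor structures agree under the correspondence. For functoriality I would check that for an algebraic map $f:Y\to X$ of proper smooth DM stacks, the pullback of a harmonic bundle is harmonic: the curve test for $f^*\cE$ factors through curves mapping to $X$, and $f^*$ commutes with the formation of $\dol_E$, $\theta$ and $\nabla$ because these are natural constructions from the harmonic metric, which itself pulls back. Assembling these compatibilities into a strict isomorphism of weak functors on $(\DMCp)^{\mathrm{op}}$ is then bookkeeping.

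The main obstacle is precisely the passage from the compact Kähler case to proper smooth DM stacks, i.e. verifying that Simpson's analytic machinery (existence and uniqueness of harmonic metrics, and the adequacy of the curve test for detecting harmonic type) genuinely extends to the stacky setting. This is exactly the content of \cite[9]{simpsonLocalSystemsProper2011}, so rather than reprove it I would invoke that result directly, and devote the written proof only to recording that its stated equivalence, its identity on $\Cinf$ bundles and morphisms, and its compatibility with tensor products, duals and pullbacks give all the claims in the desired form.
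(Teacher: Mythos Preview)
Your proposal is correct and matches the paper's treatment: the theorem is cited directly from Simpson \cite[9.7]{simpsonLocalSystemsProper2011} without an independent proof, and the paper likewise only provides a recap of the construction (pluri-harmonic/Hermitian--Yang--Mills metrics, the decomposition of $\nabla$ into $\dol_E$ and $\theta$, and compatibility with morphisms, tensor products, duals and pullbacks). The one point the paper makes more explicit than you do is the use of \Cref{lemmaharmonicforms} to justify that the correspondence is the identity on morphisms and that the Higgs structure is independent of the choice of pluri-harmonic metric.
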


\begin{remark}
  That the correspondence is the identity on morphisms means the following.
  A morphism $f:(E,\nabla)\ra(F,\nabla')$ is just a morphism $f:E\ra F$ which commutes to the connections.
  Similarly, a morphisms $g:(E,\overline{\partial},\theta)\ra (F,\overline{\partial}',\theta')$
  is just a morphism $g:E\ra F$ which commutes to the complex structures and Higgs fields.
  The claim on morphisms is that the correspondence then sends $f$ to $f$ and $g$ to $g$. This is a consequence of \Cref{lemmaharmonicforms} applied to $E^\vee\otimes F$.

  At the level of functors with source $(\DMCp)^\mathrm{op}$, this means that the isomorphism between the functor of semisimple flat connections
  and the functor of Higgs bundles of harmonic type is the identity over the functor of $\Cinf$ vector bundles.
\end{remark}

Let us briefly take the time to recap how in \Cref{theoremsimpsoncorrespondance},
one associates Higgs bundles of harmonic type to semisimple flat bundles and vice versa.
Let $(E,\nabla)$ be a flat bundle on $X$ proper smooth.
There is a unique decomposition $\nabla=d'+d''$ into $(1,0)$ and $(0,1)$ parts.
Choose a $\Cinf$ Hermitian metric $K$ on the underlying bundle $E$. Then there exists a unique $(0,1)$ connection $\delta''$
such that $d'+\delta''$ preserves $K$. Similarly, there exists a unique $(1,0)$ connection $\delta'$
such that $\delta'+d''$ preserves $K$. Then set:
\begin{equation*}
  \dol_E:=\frac{d''+\delta''}{2}\text{ and }\theta:=\frac{d'-\delta'}{2}.
\end{equation*}
The condition for $(E,\overline{\partial}_E,\theta)$ to be a Higgs bundle is $(\overline{\partial}_E+\theta)^2=0$.
We call the metric $K$ pluri-harmonic if this condition is satisfied. One way in the correspondence is then implied by the following result.
\begin{theorem}[{\cite[Th. 1]{simpsonHiggsBundlesLocal1992}}]
  The flat bundle $(E,\nabla)$ admits a pluri-harmonic metric if and only if $\nabla$ is semisimple.
  If moreover $\nabla$ is simple, the pluri-harmonic metric is unique up to scalar.
\end{theorem}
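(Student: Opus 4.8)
The plan is to follow the Corlette--Donaldson--Simpson strategy, recasting the statement in the language of equivariant harmonic maps. Write $\rho:\pi_1(X)\to\mathrm{GL}_n(\C)$ for the monodromy of $(E,\nabla)$ and pass to the universal cover $\widetilde X$. A $\Cinf$ hermitian metric $K$ on $E$ is then the same datum as a $\rho$-equivariant map $f_K:\widetilde X\to \mathrm{GL}_n(\C)/\mathrm{U}(n)$ into the symmetric space of positive-definite hermitian forms, which carries a complete, nonpositively curved metric. Relative to $K$ one decomposes $\nabla=\nabla_K+\Psi$, where $\nabla_K$ is the unique $K$-unitary connection and $\Psi$ is a $K$-self-adjoint $1$-form with values in $\End{E}$; the energy density of $f_K$ is $|\Psi|^2$, and $K$ is pluri-harmonic exactly when $f_K$ is harmonic (on a Kähler base the Kähler identities automatically upgrade harmonicity to pluriharmonicity).

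For the easy direction (pluri-harmonic $\Rightarrow$ semisimple), I would show that if $K$ is harmonic and $\uF\subset\uE$ is a $\nabla$-invariant subbundle, then its $K$-orthogonal complement is again $\nabla$-invariant: the harmonic-map equation forces the relevant second fundamental form to vanish, so $(E,\nabla)$ splits as an orthogonal direct sum. Iterating gives complete reducibility, i.e.\ semisimplicity.

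For the hard direction (semisimple $\Rightarrow$ existence), first reduce to the irreducible case by equipping each simple summand with its own harmonic metric and taking an orthogonal direct sum. For irreducible $\rho$ one produces the harmonic metric by minimizing the energy $\int_X|\Psi|^2$ over equivariant maps, equivalently by running the harmonic-map heat flow. The \emph{crux} of the whole argument, and the step I expect to be the main obstacle, is to rule out escape of the minimizing sequence to infinity in the noncompact target; this is precisely where reductivity enters. If the flow degenerated, one would extract a point at infinity of the symmetric space fixed by $\rho(\pi_1 X)$, i.e.\ a proper $\rho$-invariant flag, hence a nonzero proper $\nabla$-invariant subbundle, contradicting irreducibility. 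Thus the flow converges to a harmonic $f_K$ (Corlette's theorem), and the Eells--Sampson Bochner formula with its Kähler refinement (Sampson/Siu) yields $\dol_E$ together with a Higgs field $\theta$ satisfying $\dol_E\theta=0$ and $\theta\wedge\theta=0$.

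Finally, for uniqueness in the simple case, given two pluri-harmonic metrics $K_1,K_2$ I would write $K_2=K_1\cdot s$ with $s$ positive and $K_1$-self-adjoint. Subtracting the two harmonic-map equations shows $s$ is $\nabla$-parallel; one can package this as subharmonicity of the geodesic distance between $f_{K_1}$ and $f_{K_2}$, which on the compact $X$ is therefore constant. A flat self-adjoint endomorphism commutes with the monodromy, so by Schur's lemma applied to the irreducible $(E,\nabla)$ it is a positive scalar, giving $K_2=cK_1$.
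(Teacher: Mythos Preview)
The paper does not supply its own proof of this theorem: it is quoted verbatim as \cite[Th.~1]{simpsonHiggsBundlesLocal1992} and used as a black box in the recap of the non-abelian Hodge correspondence. So there is nothing in the paper to compare your argument against; the authors simply invoke Simpson's result.

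That said, your outline is a faithful sketch of the Corlette--Donaldson--Simpson proof as it appears in the cited reference and in Corlette's paper: the dictionary between hermitian metrics and equivariant maps to $\mathrm{GL}_n(\C)/\mathrm{U}(n)$, the identification of pluri-harmonicity with harmonicity (plus the K\"ahler identities for the Higgs-bundle conditions), Corlette's existence theorem via energy minimization with reductivity preventing escape to infinity, the easy direction via vanishing of the second fundamental form, and Schur's lemma for uniqueness in the simple case. One minor caution on the ``easy'' direction: to see that the $K$-orthogonal complement of a flat subbundle is again flat you need more than just the harmonic-map Euler--Lagrange equation; the cleanest way is via the K\"ahler identities (as in Simpson's Lemma~1.1), which force the second fundamental form to be both holomorphic and $L^2$-orthogonal to itself. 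Your sketch gestures at this but leaves it implicit.
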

If $(E,\nabla)$ is simple, by uniqueness of $K$ pluri-harmonic and the fact that the above procedure does not depend on the scalar normalization of $K$,
we have a unique pair $(\overline{\partial}_E,\theta)$. This is also true in the semisimple case, and is a consequence of the following result,
that Simpson proves using Kähler identities.

\begin{lemma}[{\cite[Lem. 1.2]{simpsonHiggsBundlesLocal1992}}]\label{lemmaharmonicforms}
  Let $(E,\nabla)$ be a semisimple flat bundle, $K$ a pluri-harmonic metric and $(E,\dol_E,\theta)$ the associated Higgs bundle.
  Then for any smooth global section $e$ of $E$, $\nabla e=0$ if and only if $(\dol_E+\theta)e=0$.
\end{lemma}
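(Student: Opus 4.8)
The plan is to reduce the statement to an equality of Laplacians supplied by the Kähler identities for harmonic bundles. First I would unwind the notation. Setting $\partial_E := (d'+\delta')/2$ for the $(1,0)$-part of the metric connection and $\bar\theta := (d''-\delta'')/2$ for the $K$-adjoint of $\theta$, one checks the purely formal identities $d' = \partial_E + \theta$ and $d'' = \dol_E + \bar\theta$. Writing $D'' := \dol_E + \theta$ and $D' := \partial_E + \bar\theta$, one then has the decomposition $\nabla = D' + D''$, with metric connection $d_K := \partial_E + \dol_E$ and $\nabla = d_K + (\theta + \bar\theta)$. The pluri-harmonicity of $K$, i.e.\ the hypothesis $(\dol_E+\theta)^2 = 0$, is exactly $(D'')^2 = 0$, and taking $K$-adjoints it also gives $(D')^2 = 0$. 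Since the operator in the lemma is precisely $D''$, the claim is that $\ker\nabla$ and $\ker D''$ agree on global sections.

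Next I would invoke the Kähler identities for harmonic bundles. Writing $L_\omega$ for wedging with the Kähler form and $\Lambda_\omega$ for its formal adjoint, these identities express $(D')^*$ and $(D'')^*$ as commutators $\pm i[\Lambda_\omega, D'']$ and $\pm i[\Lambda_\omega, D']$; combined with $(D')^2 = (D'')^2 = 0$, they yield the fundamental relation between Laplacians
\[
  \Delta_\nabla = 2\Delta_{D'} = 2\Delta_{D''},
\]
where $\Delta_\nabla = \nabla\nabla^* + \nabla^*\nabla$ and similarly for $D'$ and $D''$. This is the analytic heart of the theory and the only place where pluri-harmonicity is genuinely used; I would simply cite it.

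Finally I would run the Hodge-theoretic argument on the compact base. Since $X$ is proper, for a global smooth section $e$ (an $E$-valued form of degree $0$) we have $(D'')^* e = 0$ for degree reasons, so $\Delta_{D''} e = (D'')^* D'' e$ and hence $\langle \Delta_{D''} e, e\rangle = \lVert D'' e\rVert^2$; thus $\Delta_{D''} e = 0 \iff D'' e = 0$. The identical computation gives $\Delta_\nabla e = 0 \iff \nabla e = 0$. By the Laplacian identity $\Delta_\nabla e = 0 \iff \Delta_{D''} e = 0$, and chaining the three equivalences yields $\nabla e = 0 \iff (\dol_E + \theta)e = 0$, as desired.

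The main obstacle is the Kähler identity $\Delta_\nabla = 2\Delta_{D''}$ itself: establishing it requires the full force of the pluri-harmonic condition and a careful computation with the Lefschetz operators, and in the present setting one must also know it persists over a proper smooth DM stack rather than a manifold. Both are furnished by the cited work of Simpson (\cite[1]{simpsonHiggsBundlesLocal1992} over compact Kähler manifolds and \cite[9]{simpsonLocalSystemsProper2011} over smooth proper DM stacks), so no new analysis is needed and the remaining steps are the short formal manipulations above.
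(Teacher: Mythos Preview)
Your proposal is correct and follows precisely the approach the paper points to: the paper does not give its own proof but cites Simpson and notes that the result is proved ``using K\"ahler identities,'' which is exactly the argument you have reproduced (the identity $\Delta_\nabla = 2\Delta_{D''}$ followed by the standard degree-zero Hodge-theoretic equivalence on a compact base). There is nothing to add.
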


Indeed, if $K_1$ and $K_2$ are $2$ pluri-harmonic metrics on $(E,\nabla)$ with respective associated Higgs bundles $(E,\dol_E^1,\theta^1)$ and $(E,\dol_E^2,\theta^2)$
then $K_1^\vee\otimes K_2$ is pluri-harmonic on $E^\vee\otimes E=End(E)$ and has associated Higgs bundle
$(End(E),\dol_{End(E)},\theta_{End(E)})$. As the correspondence is compatible with duals and tensor products, for any $\phi$ global
section of $End(E)$ and $e$ global section of $E$, we have:
$$(\dol_{End(E)}+\theta_{End(E)})(\varphi)(e)+\varphi((\dol_E^1+\theta^1)e)=(\dol_E^2+\theta^2)\varphi(e).$$
Applying this for $\varphi=\id_E$, by the \Cref{lemmaharmonicforms} the first term vanishes and we get $(\dol_E^1+\theta^1)e=(\dol_E^2+\theta^2)e$.
Hence $\dol_E^1=\dol_E^2$ and $\theta^1=\theta^2$.

As for the other way in the correspondence, notice that given a Higgs bundle $(E,\dol_E,\theta)$
and a $\Cinf$ Hermitian metric $K$ on $E$,
we can reverse the above procedure to obtain a connection $\nabla=d'+d''$.
Assume $E$ has vanishing rational Chern classes. We say that $K$ is Hermitian-Yang-Mills when $\nabla^2=0$.
The other way in the correspondence is then implied by the following.
\begin{theorem}[{\cite[Th. 1]{simpsonHiggsBundlesLocal1992}}]
  Assume the rational Chern classes of $E$ vanish. Then the Higgs bundle $(E,\dol_E,\theta)$
  admits a Hermitian-Yang-Mills metric if and only if it is of harmonic type.
  If it is stable, the Hermitian-Yang-Mills metric is unique up to scalar.
\end{theorem}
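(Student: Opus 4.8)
The plan is to recast the statement as the Hitchin--Kobayashi correspondence for Higgs bundles and to follow Simpson's proof, adapted to the stacky setting as in \cite[9]{simpsonLocalSystemsProper2011}. First I would fix an orbifold Kähler metric $\omega$ on $X$ (available in the cases of interest since the relevant coarse spaces are projective, and one may equivalently work equivariantly on a smooth atlas). Given a $\Cinf$ metric $K$ on $E$, let $\partial_K$ be the unique $(1,0)$-connection compatible with $\dol_E$ and $K$, let $\theta^{\dagger}$ be the $K$-adjoint of $\theta$, and form $G_K=F_{\dol_E+\partial_K}+[\theta,\theta^{\dagger}]\in\End{E}\otimes\Omega_X^{1,1}$. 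Using the Higgs identities $\theta\wedge\theta=0$ and $\dol_E\theta=0$ one checks that the curvature of the reconstructed connection $\nabla=(\dol_E+\partial_K)+(\theta+\theta^{\dagger})$ is exactly $G_K$, so $K$ is Hermitian--Yang--Mills in the sense of the excerpt ($\nabla^2=0$) precisely when $G_K=0$. The genuinely elliptic equation one actually solves is the weaker Hermitian--Einstein equation $i\Lambda_\omega G_K=0$ (the slope constant vanishes because the rational Chern classes, hence the degree, are $0$); the bridge is that a Hermitian--Einstein metric with vanishing Chern classes automatically satisfies $G_K=0$, by the equality case of the Bogomolov--Lübke inequality applied to $\int_X \tr{G_K\wedge G_K}\wedge\omega^{\dim X-2}$. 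So the content is the existence and uniqueness of a Hermitian--Einstein metric.

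For the easy implication, suppose $K$ is HYM, so $\nabla^2=0$. For any $f:Y\ra X$ with $Y$ a smooth projective curve, $f^*\nabla$ is flat, so $G_{f^*K}=0$ and $f^*K$ is a degree-$0$ Hermitian--Einstein metric on $(f^*\cE,f^*\theta)$; by the classical correspondence on curves this forces $(f^*\cE,f^*\theta)$ to be polystable. Together with the hypothesis that the rational Chern classes vanish, this is exactly the definition of harmonic type (\Cref{definitionharmonictype}).

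The substantive implication is the converse, and I expect it to be the main obstacle. Assuming harmonic type, I would solve $i\Lambda_\omega G_K=0$ by the standard gauge-theoretic machine: set up Donaldson's functional on the space of metrics, run the Hermitian--Yang--Mills heat flow $\partial_t K = -K\,(i\Lambda_\omega G_K)$, and prove long-time existence together with convergence. The analytic heart is the a priori $C^0$- and higher-order estimates and Uhlenbeck-type compactness; the mechanism that converts the hypothesis into convergence is the numerical stability supplied by the curve condition, since the pull-back polystability in \Cref{definitionharmonictype} is precisely what rules out the destabilizing $\theta$-invariant saturated subsheaves that would obstruct the flow (equivalently, that would make the Donaldson functional unbounded below). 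The limit metric solves $i\Lambda_\omega G_K=0$, and the Chern-number argument above then upgrades Hermitian--Einstein to $G_K=0$. Carrying this out on the stack $X$ requires in addition running the estimates on an orbifold atlas and checking descent, which is the content of \cite[9]{simpsonLocalSystemsProper2011}.

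Finally, for uniqueness under stability, let $K_1,K_2$ be two HYM metrics and set $h=K_1^{-1}K_2\in\End{E}$, which is $K_1$-self-adjoint and positive. A Bochner/integration-by-parts computation on the compact Kähler stack, using $G_{K_1}=G_{K_2}=0$, shows that $h$ is covariantly constant for the Higgs structure, i.e. $(\dol_E+\theta)h=0$, so its eigenspaces split $(E,\dol_E,\theta)$ as an orthogonal direct sum of Higgs subbundles. Stability (hence simplicity) forces $h$ to have a single eigenvalue, that is $h=c\cdot\id$ with $c>0$, which is exactly the assertion that the Hermitian--Yang--Mills metric is unique up to a positive scalar.
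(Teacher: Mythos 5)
The statement you were asked to prove is not proved in the paper at all: it is quoted verbatim from Simpson (Theorem 1 of \cite{simpsonHiggsBundlesLocal1992}, in the form extended to proper DM stacks in \cite[9]{simpsonLocalSystemsProper2011}), so the only meaningful comparison is with Simpson's own proof. Your overall architecture --- solve the Hermitian--Einstein equation by the Donaldson functional / heat flow, extract convergence from the (poly)stability encoded in harmonic type, upgrade Hermitian--Einstein to flatness by a Chern--Weil argument, and get uniqueness from simplicity --- is exactly that proof, and your easy direction and uniqueness paragraphs are fine.

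There is, however, a concrete error at the point where you set up the reduction, and it propagates into a genuine gap. With $D_K=\dol_E+\partial_K$ and $\theta^{\dagger}$ the $K$-adjoint of $\theta$, the curvature of $\nabla=D_K+\theta+\theta^{\dagger}$ is \emph{not} $G_K$: using $\theta\wedge\theta=0$, $\dol_E\theta=0$ and $\partial_K\theta^{\dagger}=(\dol_E\theta)^{\dagger}=0$ one gets
\[
\nabla^2 \;=\; \underbrace{F_{D_K}+[\theta,\theta^{\dagger}]}_{=\,G_K}\;+\;\partial_K\theta\;+\;\dol_E\theta^{\dagger},
\qquad \dol_E\theta^{\dagger}=(\partial_K\theta)^{\dagger},
\]
so the $(2,0)$-component $\partial_K\theta$ and its adjoint survive. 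Consequently ``$\nabla^2=0$ precisely when $G_K=0$'' is false as a pointwise statement, and your Bogomolov--L\"ubke step, applied to $G_K$ alone, establishes something strictly weaker than the paper's Hermitian--Yang--Mills condition $\nabla^2=0$: it never touches $\partial_K\theta$. (The slip is invisible in your easy direction because on a curve there are no $(2,0)$-forms.) The repair is standard and is what Simpson actually does: run the Chern--Weil argument on the full curvature $F=\nabla^2$ of the genuine connection $\nabla$ --- it is $F$, not $G_K$, whose characteristic numbers compute the Chern numbers of $E$ --- noting that $i\Lambda_\omega F=i\Lambda_\omega G_K$ since $\Lambda_\omega$ annihilates $(2,0)$- and $(0,2)$-forms. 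The Hermitian--Einstein condition together with the vanishing of $\mathrm{ch}_1\cdot[\omega]^{n-1}$, $\mathrm{ch}_1^2\cdot[\omega]^{n-2}$ and $\mathrm{ch}_2\cdot[\omega]^{n-2}$ turns the identity expressing $\int_X\tr{F\wedge F}\wedge\omega^{n-2}$ in terms of the $L^2$-norms of $F^{2,0}$, $F^{1,1}$ and $\Lambda_\omega F$ into the simultaneous vanishing of all three components, i.e. $G_K=0$ \emph{and} $\partial_K\theta=0$, hence $\nabla^2=0$. With that correction your outline matches the cited proof; the remaining points (orbifold K\"ahler metric, the fact that polystability of pullbacks to curves as in \Cref{definitionharmonictype} rules out destabilizing $\theta$-invariant subsheaves on $X$, and uniqueness up to scalar from simplicity) are as in Simpson.
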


Notice that this correspondence is really a correspondence between $\nabla$ and $(\overline{\partial}_E,\theta)$.
It is compatible with morphisms (\Cref{lemmaharmonicforms}).
Moreover the pluri-harmonic and Hermitian-Yang-Mills conditions are compatible with direct sums, tensor products, duals and pullbacks.
This last point implies the functoriality of the correspondence with respect to $X$.

One crucial fact we will use in the proof of \Cref{maintheoremcomplex} is that the correspondence is continuous.
We do not need the full statement of continuity but only the following special case.

\begin{proposition}[{\cite[2.9, proof of 4.5]{simpsonHiggsBundlesLocal1992}}]\label{propositioncontinuity}
  Let $(\cE,\theta)$ be a Higgs bundle of harmonic type on a smooth proper DM stack $X$, and $x\in X$ a base point.
  For any $t\in\C^*$, $(\cE,t\theta)$ corresponds to some semisimple connection $\nabla_t$ on $E$.
  Let $\rho_t$ be the monodromy representation of $\nabla_t$ at $x$. Then the map $\C^*\ra\hom(\pi_1X,\GL{E_x})$, $t\mapsto \rho_t$ is continuous.
\end{proposition}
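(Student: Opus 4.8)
The plan is to make the flat connection $\nabla_t$ attached to $(\cE,t\theta)$ explicit and to trace all of its $t$-dependence through the harmonic metric. First I would observe that being of harmonic type is invariant under rescaling the Higgs field by a nonzero scalar: for $t\in\C^*$ and any $f:Y\to X$ from a smooth projective curve, the $t\theta$-invariant subsheaves of $f^*\cE$ are exactly the $\theta$-invariant ones, so polystability of the restrictions is unchanged, and the rational Chern classes of $\cE$ do not involve $\theta$ at all. Hence by \Cref{theoremsimpsoncorrespondance} each $(\cE,t\theta)$ does correspond to a semisimple flat connection, which, by the reverse procedure recalled before \Cref{propositioncontinuity}, has the standard form $\nabla_t=D_{K_t}+t\theta+\bar t\,\theta^{*_{K_t}}$, where $K_t$ is a Hermitian–Yang–Mills metric for $(\cE,t\theta)$, $D_{K_t}$ is the Chern connection of $(\cE,K_t)$, and $\theta^{*_{K_t}}$ is the $K_t$-adjoint of $\theta$. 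Since $D_{K_t}$ and $\theta^{*_{K_t}}$ depend smoothly on the metric, the statement reduces to two continuity assertions: that $t\mapsto K_t$ can be chosen continuously in the $\Cinf$-topology, and that the monodromy of a flat connection depends continuously on the connection.

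The hard part will be the continuity of the harmonic metric, which is exactly the analytic input of \cite[2.9]{simpsonHiggsBundlesLocal1992}; I would argue as follows. When $(\cE,\theta)$ is stable the Hermitian–Yang–Mills metric is unique up to a positive scalar, and the Hermitian–Yang–Mills equation for $(\cE,t\theta)$ has, at a solution, a linearization that is an isomorphism transverse to the line of scalar rescalings. The implicit function theorem on the relevant Sobolev completions then produces a local smooth family $t\mapsto K_t$ (after fixing a normalization to kill scalars), and Simpson's a priori $C^0$-estimates prevent degeneration, so the family extends over all of $\C^*$. In the polystable case I would split $(\cE,t\theta)$ into its stable constituents and apply the stable case factor by factor; the genuine subtlety is that this splitting and its slopes can jump with $t$, so the continuous choice must be organized at the level of the whole family. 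This is precisely what Simpson's continuity result furnishes on a smooth proper DM stack, using the étale presentations of \cite[9]{simpsonLocalSystemsProper2011}, and I would invoke it rather than redo the estimates.

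Granting that $t\mapsto K_t$ is continuous in $\Cinf$, the family $\nabla_t=D_{K_t}+t\theta+\bar t\,\theta^{*_{K_t}}$ is a continuous family of $\Cinf$ connection $1$-forms on the fixed bundle $E$. I would then deduce continuity of $t\mapsto\rho_t$ from the continuous dependence of parallel transport on the connection: fixing paths $\gamma_1,\dots,\gamma_k$ in an étale chart of $X$ that represent a finite generating set of $\pi_1(X,x)$, each $\rho_t(\gamma_j)$ is the endpoint value of the solution of the linear ODE $s'(\tau)+A_t(\gamma_j'(\tau))\,s(\tau)=0$, where $A_t$ is the local connection matrix of $\nabla_t$. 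The coefficients $A_t$ vary continuously with $t$, so by the standard theorem on continuous dependence of solutions of linear ODEs on parameters each $\rho_t(\gamma_j)$ varies continuously, whence $t\mapsto\rho_t$ is continuous for the topology of pointwise convergence on $\hom(\pi_1X,\GL{E_x})$. The main obstacle is entirely the metric-continuity step; everything downstream of it is formal.
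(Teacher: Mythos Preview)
The paper does not supply its own proof of this proposition: it is stated with a citation to \cite[2.9, proof of 4.5]{simpsonHiggsBundlesLocal1992} and used as a black box. So there is nothing in the paper to compare your argument against directly; what you have written is a sketch of Simpson's original argument, and as such it is broadly correct in outline.

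Two small comments. First, your worry that ``this splitting and its slopes can jump with $t$'' is misplaced in this specific setting: for $t\in\C^*$ the $\theta$-invariant subsheaves and the $t\theta$-invariant subsheaves coincide, so the decomposition of $(\cE,t\theta)$ into stable Higgs summands is literally the same decomposition $(\cE_i,t\theta_i)$ for every $t\neq 0$, and all slopes are zero since rational Chern classes vanish. The polystable case therefore reduces cleanly to the stable case factor by factor, with no jumping to manage. Second, you correctly identify that the only nontrivial input is the continuity of $t\mapsto K_t$, and you correctly defer to Simpson's estimates for this; the downstream steps (Chern connection, adjoint, parallel transport via ODE dependence on parameters) are indeed formal.
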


\begin{definition}[Modular functor of Higgs bundles]
  Let $\Lambda$ be a set of colors and $r,s\geq 1$ integers.
  Then the definition of a modular functor of Higgs bundles with level $(r,s)$
  is obtained from that of a (non-anomalous) complex modular
  functor (\ref{definitiongeometricnonanomalousmodularfunctor}, \ref{definitiongeometricmodularfunctor})
  by replacing the $\C$-local systems $\Nu_g(\ul)$ by Higgs bundles $(\Nu_g(\ul),\theta_g(\ul))$.
  
  Similarly for genus $0$ modular functors, ribbon and braided functors.
\end{definition}

From the correspondence, its functoriality and \Cref{theoremsemisimplicity} we immediately get the following.

\begin{corollary}[Non-Abelian Hodge correspondence for modular functors]\label{correspondencemodular}
  There is an equivalence between the categories of:
  \begin{description}
    \item[(1)] complex modular functors;
    \item[(2)] modular functors of Higgs bundles $(\Nu,\theta)$ all of whose bundles $(\Nu_g(\ul),\theta_g(\ul))$ are of harmonic type.
  \end{description}
  Similarly for genus $0$ modular functors, ribbon functors and braided functors.
\end{corollary}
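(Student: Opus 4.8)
The plan is to deduce this directly from the Simpson correspondence (\Cref{theoremsimpsoncorrespondance}) together with its functoriality, as the word ``immediately'' in the surrounding text suggests. First I would unwind the definitions: by \Cref{remarklocalsystemsconnections}, a complex modular functor with semisimple quantum representations is precisely the data of a \emph{semisimple} flat $\Cinf$ bundle $(\Nu_g(\ul),\nabla^{g,\ul})$ on each proper smooth DM stack $\Mgrbt{g}{n}{r}{s}$, together with the isomorphisms \textbf{(G-sep)}, \textbf{(G-nonsep)}, \textbf{(N)} and \textbf{(Perm)} of \Cref{definitiongeometricnonanomalousmodularfunctor}. Each such isomorphism is an isomorphism of flat bundles between a pullback $q^*\Nu$, $p^*\Nu$, $f^*\Nu$ or $\sigma^*\Nu$, taken along a map of proper smooth DM stacks, and a direct sum of tensor products of the various $\Nu_{g'}(\umu)$.

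Second, I would apply \Cref{theoremsimpsoncorrespondance} objectwise: to each semisimple $(\Nu_g(\ul),\nabla^{g,\ul})$ it assigns a Higgs bundle $(\Nu_g(\ul),\theta^{g,\ul})$ of harmonic type, leaving the underlying $\Cinf$ bundle unchanged. The key input is that this correspondence is a strict isomorphism of weak functors on $(\DMCp)^{\mathrm{op}}$ valued in $\C$-linear symmetric tensor categories. Concretely, it commutes with pullback along the gluing maps $q$, $p$ and the forgetful and permutation maps, and with $\oplus$, $\otimes$ and $(\cdot)^\vee$. Consequently each structure isomorphism, being a morphism of flat bundles, is carried to a morphism of the corresponding Higgs bundles; and because the correspondence is a functor, all the coherence diagrams asserted in \Cref{definitiongeometricnonanomalousmodularfunctor} (compatibility of repeated gluings, symmetry of gluing, etc.) transport as well. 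This produces a modular functor of Higgs bundles of harmonic type, and running the inverse equivalence gives the reverse assignment, with semisimplicity of the resulting connections guaranteed by the correspondence itself.

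Third, I would handle morphisms. An isomorphism $(f,\phi)$ of modular functors is a bijection of colors together with a family of isomorphisms of the underlying flat bundles commuting with all structure isomorphisms; since the Simpson correspondence is the identity on underlying $\Cinf$ bundles and on morphisms (this is exactly \Cref{lemmaharmonicforms}, applied to $E^\vee\otimes F$), such a family transports verbatim to an isomorphism of the associated modular functors of Higgs bundles, and conversely. Hence the two assignments are mutually inverse and define an equivalence of groupoids. The genus $0$ modular and rooted cases are identical after restricting to $g=0$ and replacing the separating/non-separating gluing diagrams by the relevant ones.

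The argument is genuinely formal once functoriality is in hand, so I do not expect a serious obstacle; the only points I would verify carefully are that the auxiliary axioms survive. Namely, \textbf{(N)} sends $\Nu_0(0,0)\simeq\underline{\C}$ to the trivial Higgs bundle (clear, as the trivial flat bundle carries the zero Higgs field), and non-degeneracy \textbf{(nonD)} is preserved since it only constrains ranks, which the correspondence fixes. The most delicate check is that harmonic type is stable under the operations appearing in the gluing axioms ($\oplus$, $\otimes$, $(\cdot)^\vee$, and pullback along $q$ and $p$); this is already subsumed in the symmetric-tensor-functoriality recorded in \Cref{theoremsimpsoncorrespondance}, so I would simply cite it rather than reprove it.
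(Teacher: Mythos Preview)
Your proposal is correct and matches the paper's own argument, which simply says the corollary follows immediately from \Cref{theoremsimpsoncorrespondance} and its functoriality. You have spelled out exactly the details the paper leaves implicit: applying the correspondence objectwise, transporting the structure isomorphisms via compatibility with pullbacks, $\oplus$, $\otimes$, $(\cdot)^\vee$, and checking that morphisms, coherence diagrams, and the auxiliary axioms survive.
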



\subsection{Systems of Hodge bundles}

This section is essentially paraphrasing Simpson \cite[4]{simpsonHiggsBundlesLocal1992}. See there for the proofs.
By CVHS with real bidegrees we shall mean that in the Hodge decomposition, we allow the weight $m$ and the bidegrees $(p,q)$ to be in $\R$,
and not just $\Q$.

\begin{definition}[system of Hodge bundles]
  Let $X$ be a smooth proper DM stack. A system of Hodge bundles on $X$ is a Higgs bundle $(\cE,\theta)$ together with a real number $m$ called weight
  and a holomorphic bundle decomposition\footnote{Here $p,q\in\R$.} $\cE=\bigoplus_{p+q=m}\cE^{p,q}$
  such that $\theta$ sends $\cE^{p,q}$ into $\cE^{p-1,q+1}\otimes\Omega_X^{1,0}$,
  and such that there exists a polarization $h$ for this decomposition, i.e. $h$ is a Hermitian form on the underlying $\Cinf$ bundle $E$ of $\cE$
  for which the decomposition is orthogonal, and $h$ is definite on $E^{p,q}$ with sign opposite to that on $E^{p-1,q+1}$.
\end{definition} 

\begin{proposition}[{\cite[4]{simpsonHiggsBundlesLocal1992}}]\label{propositionharmonicpolarization}
  Let $(\cE,\theta)$ be a Higgs bundle over a smooth DM stack $X$ with a decomposition $\cE=\bigoplus_{p+q=m}\cE^{p,q}$
  such that $\theta$ sends $\cE^{p,q}$ into $\cE^{p-1,q+1}\otimes\Omega_X^{1,0}$. Then this decomposition admits a polarization
  if and only if $(\cE,\theta)$ is of harmonic type.
\end{proposition}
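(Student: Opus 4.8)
The plan is to follow Simpson's treatment \cite[4]{simpsonHiggsBundlesLocal1992} and prove the two implications separately, in each direction passing through the reconstruction of a flat connection from a metric recalled just before \Cref{lemmaharmonicforms}. Throughout, given a positive-definite $\Cinf$ hermitian metric $K$ on the underlying bundle $E$ of $\cE$ that is orthogonal for the decomposition $\cE=\bigoplus_{p+q=m}\cE^{p,q}$, I will write $D_K=\dol_E+\partial_K$ for the associated Chern connection and $\theta^{*_K}$ for the $K$-adjoint of $\theta$, so that $\nabla_K:=D_K+\theta+\theta^{*_K}$ is the candidate flat connection; note that $\theta$ lowers $p$ by one while $\theta^{*_K}$ raises it by one, and $D_K$ preserves the grading.

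First I would prove that if the decomposition admits a polarization $h$, then $(\cE,\theta)$ is of harmonic type. From $h$, whose signs alternate with $p$, I form the positive-definite metric $K$ with $K|_{\cE^{p,q}}=(-1)^p\,h|_{\cE^{p,q}}$, which is orthogonal for the grading. Mirroring the flat polarization condition for a CVHS (\Cref{definitionCVHS}), the operative content of ``$h$ is a polarization'' is that $\nabla_K$ is flat, i.e. that $K$ is Hermitian-Yang-Mills for $(\cE,\theta)$; this is Simpson's curvature computation, using that the grading is holomorphic, that $\dol_E\theta=0$ and $\theta\wedge\theta=0$, and that the alternation of signs forces the Hitchin equation $F_{D_K}+[\theta,\theta^{*_K}]=0$ together with the vanishing of the $(2,0)$ and $(0,2)$ parts. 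A flat connection has vanishing rational Chern classes, so $(E,\nabla_K)$ is a flat bundle with pluri-harmonic metric $K$, hence semisimple, and the Higgs bundle the Simpson correspondence (\Cref{theoremsimpsoncorrespondance}) attaches to it is, by construction of the recipe, exactly $(\cE,\theta)$, which is therefore of harmonic type.

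Conversely, suppose $(\cE,\theta)$ is of harmonic type. By the Hermitian-Yang-Mills theorem it carries a harmonic metric $K$, and the corresponding $(E,\nabla_K)$ is a semisimple flat bundle. The crucial extra input is the grading: the holomorphic automorphism $u(t)=\bigoplus_{p+q=m}t^{-p}\,\id_{\cE^{p,q}}$ satisfies $u(t)\,\theta\,u(t)^{-1}=t\,\theta$, so $u(t)$ is an isomorphism $(\cE,\theta)\simeq(\cE,t\theta)$ and exhibits $(\cE,\theta)$ as a fixed point of the $\C^*$-action $\theta\mapsto t\theta$. I would then use uniqueness of the harmonic metric, up to a positive scalar on each stable factor of the polystable decomposition, to arrange that $K$ is invariant under this $\C^*$-action, which is equivalent to the decomposition $\cE=\bigoplus\cE^{p,q}$ being $K$-orthogonal. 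Once $K$ is grading-orthogonal, setting $h:=\bigoplus_{p+q=m}(-1)^p\,K|_{\cE^{p,q}}$ produces a hermitian form that is orthogonal for the decomposition, definite on each $\cE^{p,q}$ with signs alternating in $p$, and for which $\nabla_K$ is flat; that is, $h$ is a polarization.

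I expect the genuine obstacle to be this last point of the converse: showing that the harmonic metric can be chosen compatibly with the Hodge grading. The naive uniqueness statement only pins down $K$ up to scalars on stable summands, so I would argue via $\C^*$-equivariance, either by averaging the pulled-back metrics $u(t)^*K$ over the unit circle, using continuity of the correspondence (\Cref{propositioncontinuity}) to make sense of the resulting family, or by invoking that a $\C^*$-fixed polystable Higgs bundle decomposes into stable pieces that are themselves graded, so that Schur's lemma (via \Cref{lemmaharmonicforms}) forces orthogonality of the weight spaces. The remaining steps, namely the curvature identity in the forward direction and the bookkeeping of signs, are Simpson's and become routine once the metric is known to respect the grading.
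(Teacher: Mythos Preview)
The paper does not supply its own proof of this proposition; it simply cites Simpson. So there is nothing in the paper to compare against directly, and your proposal is essentially a reconstruction of Simpson's argument. Your converse direction is correct and captures the real content: for a Higgs bundle of harmonic type fixed by the $\C^*$-action $u(t)$, the harmonic metric can be chosen compatibly with the grading, and both mechanisms you suggest (averaging over the circle, or decomposing into stable graded summands and applying Schur via \Cref{lemmaharmonicforms}) are the standard ones.

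There is, however, a genuine gap in your forward direction. You write that ``the alternation of signs forces the Hitchin equation $F_{D_K}+[\theta,\theta^{*_K}]=0$''. This is not true: given only a $\Cinf$ hermitian form $h$ that is orthogonal for the grading with alternating definite signs, the positive metric $K$ with $K|_{\cE^{p,q}}=(-1)^p h|_{\cE^{p,q}}$ need \emph{not} be Hermitian-Yang-Mills. Concretely, on a curve take $\cE=L_1\oplus L_2$ graded by $\cE^{1,0}=L_1$, $\cE^{0,1}=L_2$, with $\theta:L_1\to L_2\otimes\Omega^1_X$ nonzero and $\deg L_2>\deg L_1$; then $L_2$ is a $\theta$-invariant destabilizing subsheaf, so $(\cE,\theta)$ is not of harmonic type, yet any pair of fiber metrics on $L_1,L_2$ furnishes a ``polarization'' in the weak sense of the paper's definition of a system of Hodge bundles. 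The curvature identity you invoke only shows that $\nabla_K^2=0$ is \emph{equivalent} to the Hitchin equation plus vanishing of the $(2,0)$ and $(0,2)$ parts; it does not derive these from the sign alternation alone.

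The underlying issue is that the paper's literal definition of polarization (orthogonal, alternating definite signs) is too weak for the forward implication to hold, since such an $h$ always exists on any graded bundle. What is actually needed, and what you correctly identify as the ``operative content'', is that $K$ be Hermitian-Yang-Mills, equivalently that the reconstructed $\nabla_K$ be flat and $h$ be $\nabla_K$-flat. Once you \emph{adopt} this as the meaning of polarization rather than trying to deduce it, the forward direction is immediate from \Cref{theoremsimpsoncorrespondance}: a Hermitian-Yang-Mills metric exists, hence $(\cE,\theta)$ is of harmonic type. So your instinct to strengthen the definition was right; the error is in then claiming the strengthened condition follows from the weak one.
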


Remember that the connection underlying a CVHS is always semisimple.

The Simpson correspondence induces a bijection between systems of Hodge bundles and complex variations of Hodge structures (with real bidegrees).
Let us describe this.

Given a CVHS on $(E,\nabla)$ with real bidegrees, Griffiths transversality tells us that $\nabla$ sends $E^{p,q}$ into:
\begin{equation*}
  E^{p+1,q-1}\otimes\Omega_X^{0,1}\oplus E^{p,q}\otimes\Omega_X^{1,0}\oplus E^{p,q}\otimes\Omega_X^{0,1}\oplus E^{p-1,q+1}\otimes\Omega_X^{1,0}.
\end{equation*}
Hence we can split $\nabla$ into $\overline{\theta}+\partial_E+\dol_E+\theta$ according to the decomposition above.
Then $(E,\dol_E,\theta)$ is the associated system of Hodge bundles, and any polarization for $(E,\nabla)$ is also one for $(E,\dol_E,\theta)$.
To see this, notice that for any polarization $h$, the metric $K=\sum_{p+q=m}\mathrm{sign}(h_{\mid E^{p,q}}) h_{\mid E^{p,q}}$ is pluri-harmonic.

Let $(\cE,\theta)$ be a system of Hodge bundles on $(\cE,\theta)$. Denote by $E$ the underlying $\Cinf$ bundle and by $\dol_E$ the Dolbeault operator.
Choose a polarization $h$. Now let $\partial_E$ be the unique $(1,0)$-connection such that $\partial_E+\dol_E$ preserves $h$,
and let $\overline{\theta}$ be the adjoint of $\theta$ for $h$.
Then $(E,\nabla=\overline{\theta}+\partial_E+\dol_E+\theta)$ is the associated CVHS (with real bidegrees) and $h$ polarizes it.
\textit{Moreover $\nabla$ is independent of the choice of the polarization $h$.} Again, to see this, notice
that the metric $K=\sum_{p+q=m}\mathrm{sign}(h_{\mid E^{p,q}}) h_{\mid E^{p,q}}$ is Hermitian-Yang-Mills.

The bijection described above is clearly functorial with respect to maps of DM stacks and compatible with tensor products and duals.

\begin{theorem}\label{theoremcvhshodgebundle}
  Let $X$ be a proper smooth DM stack. The non-Abelian Hodge correspondence described in \Cref{theoremsimpsoncorrespondance}
  puts bundles with flat connections over $X$ admitting a CVHS structure in bijection with Higgs bundles over $X$ admitting a structure of system of Hodge bundles.
  Moreover, it induces, as described above, an isomorphism between the $\C$-linear symmetric tensor categories of:
  \begin{description}
    \item[(1)] complex variations of Hodge structures over $X$ with real bidegrees;
    \item[(2)] systems of Hodges bundles over $X$.
  \end{description}
  This isomorphism of categories is functorial in the sense that it extends to an isomorphism of functors:
  \begin{equation*}
    (\DMCp)^\mathrm{op}\lra \{\C-\text{linear symmetric tensor categories}\}.
  \end{equation*}
\end{theorem}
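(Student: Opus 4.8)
The plan is to recognize the two explicit constructions recorded in the discussion above — from a CVHS with real bidegrees to a system of Hodge bundles via the splitting $\nabla = \overline{\theta} + \partial_E + \dol_E + \theta$, and its reverse built from a polarization — as the restriction of the correspondence of \Cref{theoremsimpsoncorrespondance} to the polarizable objects on each side, and then to transport all the categorical structure already proved there. The single nontrivial input is the metric identification of \cite[4]{simpsonHiggsBundlesLocal1992}; everything else is bookkeeping.

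First I would verify that each construction is well defined and that the two are mutually inverse. Given a CVHS with real bidegrees on $(E,\nabla)$, Griffiths transversality produces the four-term splitting of $\nabla$; the $(0,1)$-part $\dol_E$ squares to zero and, together with $\theta$, defines a Higgs bundle carrying the holomorphic decomposition $\cE = \bigoplus_{p+q=m}\cE^{p,q}$ with $\theta(\cE^{p,q}) \subset \cE^{p-1,q+1}\otimes\Omega^{1,0}_X$; any polarization $h$ of the CVHS polarizes this decomposition, so it is a system of Hodge bundles, automatically of harmonic type by \Cref{propositionharmonicpolarization}. Conversely a system of Hodge bundles with a polarization $h$ yields $\nabla = \overline{\theta} + \partial_E + \dol_E + \theta$, for which Griffiths transversality comes from the grading shifts, flatness from $\nabla^2 = 0$, and polarizability from $h$. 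The crux is that for a polarization $h$ the metric $K = \pm\sum_{p+q=m}(-1)^p h_{\mid E^{p,q}}$ is simultaneously pluri-harmonic for $(E,\nabla)$ and Hermitian--Yang--Mills for $(E,\dol_E,\theta)$, so both constructions compute exactly the pair $(\dol_E,\theta)$ (resp.\ $\nabla$) assigned by \Cref{theoremsimpsoncorrespondance}; this also shows $\nabla$ is independent of the choice of $h$, equivalently via \Cref{lemmaharmonicforms} applied to $\End{E}$. In particular $(E,\nabla)$ admits a CVHS if and only if the corresponding Higgs bundle admits a system of Hodge bundles structure, which is the asserted object bijection.

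Next I would read off the categorical statements. Morphisms on either side are $\Cinf$ bundle maps commuting with the respective extra structure, and since \Cref{theoremsimpsoncorrespondance} is the identity on the underlying $\Cinf$ bundle and on morphisms, a map commutes with $\nabla,\nabla'$ if and only if it commutes with $(\dol_E,\theta),(\dol_{E'},\theta')$; full faithfulness follows, and essential surjectivity onto systems of Hodge bundles is the object bijection just proved. Compatibility with tensor products and duals follows from the formulas $(E^\vee)^{p,q} = (E^{-p,-q})^\vee$ and $(E\otimes E')^{p,q} = \bigoplus_{p_1+p_2=p,\,q_1+q_2=q}E^{p_1,q_1}\otimes E'^{p_2,q_2}$ together with the fact that Simpson's correspondence is a tensor functor: the Hodge gradings are visibly those induced on duals and tensor products.

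Finally, functoriality in $X$ reduces to checking that pullback along a map $f\colon Y\to X$ in $\DMCp$ commutes with both constructions, which is immediate since $f^*$ preserves the holomorphic decomposition and the Higgs field and sends a polarization to a polarization, hence the metric $K$ to the metric of the pullback. As \Cref{theoremsimpsoncorrespondance} already provides a strict isomorphism of functors $(\DMCp)^\mathrm{op}\to\{\C\text{-linear symmetric tensor categories}\}$ at the flat-bundle/Higgs-bundle level, carrying the Hodge gradings along yields the desired isomorphism of functors between CVHS and systems of Hodge bundles. The main obstacle is the metric identification in the second paragraph — verifying that $K = \pm\sum_{p+q=m}(-1)^p h_{\mid E^{p,q}}$ is pluri-harmonic and Hermitian--Yang--Mills — which is genuinely analytic and is exactly what \cite[4]{simpsonHiggsBundlesLocal1992} supplies.
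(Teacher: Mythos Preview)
Your proposal is correct and follows essentially the same approach as the paper: the paper's argument is contained in the discussion immediately preceding the theorem, where the two constructions are described, the key metric $K = \pm\sum_{p+q=m}(-1)^p h_{\mid E^{p,q}}$ is identified as simultaneously pluri-harmonic and Hermitian--Yang--Mills (citing \cite[4]{simpsonHiggsBundlesLocal1992}), and functoriality and tensor compatibility are declared clear. You have simply spelled out these same steps with more care.
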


The fact of interest to us is the following.

\begin{lemma}[{\cite[4.1]{simpsonHiggsBundlesLocal1992}}]
  A Higgs bundle of harmonic type $(\cE,\theta)$ admits a structure of system of Hodge bundles if and only if there exists $t\in\C^*$ not a root of unity
  such that $(\cE,\theta)$ is isomorphic to $(\cE,t\theta)$.
\end{lemma}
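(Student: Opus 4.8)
I would prove the two directions separately, since they are of very different difficulty. For the forward direction, suppose $\cE=\bigoplus_{p+q=m}\cE^{p,q}$ is a system of Hodge bundles, so $\theta(\cE^{p,q})\subset\cE^{p-1,q+1}\otimes\Omega_X^{1,0}$. I would pick any $t\in\C^*$ that is not a root of unity (say $t=2$) together with a branch of $\log t$, and let $f$ act on the holomorphic summand $\cE^{p}:=\bigoplus_{q}\cE^{p,q}$ by the scalar $e^{-p\log t}$. Each $\cE^{p}$ is a holomorphic subbundle, so $f$ is a holomorphic automorphism of $\cE$; and since $e^{-(p-1)\log t}=t\,e^{-p\log t}$, for a local section $v$ of $\cE^{p}$ one computes $(f\otimes\id)\theta(v)=e^{-(p-1)\log t}\theta(v)=t\,\theta(e^{-p\log t}v)=t\,\theta(fv)$, which is exactly the condition that $f\colon(\cE,\theta)\to(\cE,t\theta)$ be an isomorphism of Higgs bundles.

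For the reverse direction, I would start from an isomorphism $f\colon(\cE,\theta)\to(\cE,t\theta)$ with $t$ not a root of unity, i.e.\ a holomorphic automorphism of $\cE$ with $f\theta f^{-1}=t\theta$, and first reduce to the case where $f$ is semisimple. Because $X$ is proper, the coefficients of the characteristic polynomial of $f$ are global holomorphic functions, hence locally constant, so $f$ has constant eigenvalues and admits a holomorphic multiplicative Jordan decomposition $f=f_s f_u$. Letting $\mathrm{Ad}(f)\otimes\id$ act on $\End{\cE}\otimes\Omega_X^{1,0}$, the relation $f\theta f^{-1}=t\theta$ exhibits $\theta$ as an eigenvector of eigenvalue $t\neq 0$; since $\mathrm{Ad}(f)=\mathrm{Ad}(f_s)\,\mathrm{Ad}(f_u)$ is the Jordan decomposition of $\mathrm{Ad}(f)$ and the semisimple part acts by the eigenvalue on each eigenvector, it follows that $\mathrm{Ad}(f_s)\theta=t\theta$ as well. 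Hence I may replace $f$ by $f_s$ and assume $f$ is diagonalizable, with a holomorphic eigenbundle decomposition $\cE=\bigoplus_{\lambda\in\Sigma}\cE_\lambda$ over its finite set of eigenvalues $\Sigma\subset\C^*$.

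Next I would read off the grading. If $fv=\lambda v$ then $(f\otimes\id)\theta(v)=t\theta(fv)=t\lambda\,\theta(v)$, so $\theta(\cE_\lambda)\subset\cE_{t\lambda}\otimes\Omega_X^{1,0}$. Because $t$ is not a root of unity, the elements $\{t^{j}\lambda\}_{j\in\Z}$ are pairwise distinct, so $\Sigma$ splits into finite chains $\{\lambda_0,t\lambda_0,\dots,t^{N}\lambda_0\}$ with no cycles. I would then assign to the eigenvalue $t^{j}\lambda_0$ of a chain the integer $p=-j$ (choosing the gradings on distinct chains independently, which is harmless since $\theta$ only connects eigenvalues within a single chain) and set $\cE^{p}=\bigoplus_{p(\lambda)=p}\cE_\lambda$, so that $\theta(\cE^{p})\subset\cE^{p-1}\otimes\Omega_X^{1,0}$. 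Choosing any weight $m$ and putting $\cE^{p,q}:=\cE^{p}$ with $q=m-p$ gives a holomorphic decomposition with $\theta(\cE^{p,q})\subset\cE^{p-1,q+1}\otimes\Omega_X^{1,0}$. Finally, since $(\cE,\theta)$ is of harmonic type (\Cref{definitionharmonictype}), \Cref{propositionharmonicpolarization} provides a polarization for this decomposition, so $(\cE,\theta)$ is a system of Hodge bundles, as required.

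The hard part is entirely in the reverse direction, and within it two points need care. The first is the passage from $f$ to its semisimple part $f_s$: this relies on the Jordan decomposition being available in the holomorphic category (which is where properness of $X$ enters, forcing the eigenvalues to be globally constant) and on its compatibility with the conjugation relation via $\mathrm{Ad}$. The second is the construction of a consistent degree function from the eigenvalue chains, and here the hypothesis that $t$ is not a root of unity is precisely what forbids cyclic chains $t^{N}\lambda_0=\lambda_0$ and makes $p(\lambda)$ well defined. I expect the semisimplicity reduction to be the main obstacle to write cleanly; by contrast, polarizability — the delicate analytic input in Simpson's original argument — is here handed to us directly by \Cref{propositionharmonicpolarization}.
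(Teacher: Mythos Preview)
Your argument is correct. The paper does not prove this lemma directly (it is cited from Simpson), but the proof of the refinement \Cref{actiontoCVHS} contains essentially the reverse direction, and there the route is a little different from yours. Rather than first replacing $f$ by its semisimple part and then decomposing into true eigenspaces along chains, the paper works directly with the generalized eigenspaces $\ker(f-x\cdot\id)^{\dim\cE}$ and groups them by the real number $q=\log_t|x|$; the identity $(f-x\cdot\id)^{N}\theta=\theta(tf-x\cdot\id)^{N}$ then shows at once that $\theta$ shifts $q$ by $1$. This sidesteps the Jordan decomposition entirely, at the price of needing $|t|\neq 1$ (indeed the refinement is stated only for $t\in\R_+^*\setminus\{1\}$). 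Your approach via $f_s$ and eigenvalue chains is a bit more work to set up but has the advantage of covering the full statement, including $t$ on the unit circle but not a root of unity, where grouping by $\log_t|x|$ degenerates. The forward direction is the same in both: your map $v\mapsto e^{-p\log t}v$ is, up to a global scalar $t^{-m}$, the map $i$ of \Cref{actiontoCVHS}, which acts by $t^{q}$ on $\cE^{p,q}$.
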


We shall use the following refinement of this Lemma. For $t\in \R_+^*\setminus\{1\}$, we denote by $\log_tx:=\ln(x)/\ln(t)$ the logarithm of $x$ in base $t$.

\begin{lemma}\label{actiontoCVHS}
  Let $(\cE,\theta)$ be a Higgs bundle of harmonic type on a smooth proper DM stack $X$, $m$ a real number and $t\in \R_+^*\setminus\{1\}$.
  Then there are maps of sets natural in $X$:
  \[\begin{tikzcd}
    {\{\text{isomorphisms }f:(\cE,\theta)\ra(\cE,t\theta)\}} \\
    {\{\text{structures of system of Hodge bundles on }(\cE,\theta)\text{ with weight }m\}}
    \arrow["r"', curve={height=6pt}, two heads, from=1-1, to=2-1]
    \arrow["i"', curve={height=6pt}, hook, from=2-1, to=1-1]
  \end{tikzcd}\]
  such that $r\circ i=\id$. The map $r$ sends $f$ to the decomposition $\cE=\bigoplus\cE^{p,q}$ where
  $\cE^{m-q,q}=\bigoplus_{\log_t|x|=q}\ker(f-x\cdot\id)^{\dim\cE}$
  is on each fiber the characteristic spaces of $f$ for the eigenvalues $x$ such that $\log_t|x|=q$.
  The map $i$ sends a decomposition $\cE=\bigoplus\cE^{p,q}$ to the maps $f:(\cE,\theta)\ra(\cE,t\theta)$
  such that $f_{\mid \cE^{p,q}}$ is multiplication by $t^q$.
\end{lemma}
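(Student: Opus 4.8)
The plan is to construct the two maps by pure fibrewise linear algebra applied to $f$, using the single relation $(f\otimes\id)\circ\theta=(t\theta)\circ f$ that expresses that $f$ is a morphism of Higgs bundles; the only non-elementary ingredient of a system of Hodge bundles, namely the existence of a polarization, will be supplied for free by \Cref{propositionharmonicpolarization}, since $(\cE,\theta)$ is assumed of harmonic type. Thus it will suffice to produce a holomorphic decomposition $\cE=\bigoplus_{p+q=m}\cE^{p,q}$ with $\theta(\cE^{p,q})\subseteq\cE^{p-1,q+1}\otimes\Omega_X^{1,0}$, and to check $r\circ i=\id$. Throughout I may assume $X$ connected, treating connected components separately otherwise.

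The map $i$ is the easy direction. Given a structure of system of Hodge bundles of weight $m$, define $f$ to act on $\cE^{p,q}$ by the scalar $t^{q}$. Since $t\in\R_+^*$, each $t^q$ is a positive real, so $f$ is an invertible holomorphic endomorphism of $\cE$. To see $f$ is a Higgs morphism $(\cE,\theta)\to(\cE,t\theta)$, take a local section $e$ of $\cE^{p,q}$: then $\theta(e)\in\cE^{p-1,q+1}\otimes\Omega_X^{1,0}$, so $(f\otimes\id)(\theta e)=t^{q+1}\theta e$, while $(t\theta)(f e)=t\,\theta(t^q e)=t^{q+1}\theta e$; the two agree, so $f\in i(\text{decomposition})$ is well defined.

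For $r$, start from an isomorphism $f\colon(\cE,\theta)\to(\cE,t\theta)$, i.e.\ a holomorphic automorphism of $\cE$ with $(f\otimes\id)\circ\theta=t\,\theta\circ f$. The crucial (and only substantive) point is that the generalized eigenspace decomposition of $f$ is holomorphic with globally constant eigenvalues. Indeed the coefficients of the characteristic polynomial of $f$ lie in $H^0(X,\mathcal{O}_X)=\C$ because $X$ is proper and connected, so the eigenvalues $x_1,\dots,x_k$ are fixed nonzero complex numbers with fixed algebraic multiplicities. Cayley–Hamilton then gives $\prod_j(f-x_j)^{m_j}=0$, and Lagrange–Hermite interpolation expresses the spectral projectors $\pi_j$ as fixed polynomials in $f$, hence as holomorphic idempotents; their ranks equal the constant functions $\operatorname{tr}\pi_j$, so the $\cE_{x_j}=\ker(f-x_j)^{\dim\cE}=\myim{\pi_j}$ are holomorphic subbundles and $\cE=\bigoplus_j\cE_{x_j}$. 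Setting $\cE^{m-q,q}=\bigoplus_{\log_t|x_j|=q}\cE_{x_j}$ defines the decomposition; the relation $((f\otimes\id)-tx)^N\theta=t^N\theta(f-x)^N$ shows $\theta(\cE_x)\subseteq\cE_{tx}\otimes\Omega_X^{1,0}$, and since $\log_t|tx|=\log_t|x|+1$ this is exactly $\theta(\cE^{p,q})\subseteq\cE^{p-1,q+1}\otimes\Omega_X^{1,0}$ with $p=m-q$. Applying \Cref{propositionharmonicpolarization} yields a polarization, so $r(f)$ is a genuine system of Hodge bundles of weight $m$. All constructions are manifestly compatible with pullback along maps of DM stacks, giving naturality in $X$. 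I expect the constancy of the eigenvalues and the holomorphy of the spectral projectors to be the main obstacle, as everything else is formal.

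Finally, $r\circ i=\id$. Feeding $i(\{\cE^{p,q}\})$, which acts by the scalar $t^{q}$ on $\cE^{p,q}$, into $r$: because $t\ne 1$ and $t>0$, the map $q\mapsto t^q$ is injective, so the distinct summands $\cE^{p,q}$ are honest eigenspaces of $f$ for distinct positive eigenvalues $t^q$. Since $\log_t|t^q|=q$, grouping the eigenvalues by the value of $\log_t|x|$ recovers precisely the summand $\cE^{m-q,q}$, whence $r(i(\{\cE^{p,q}\}))=\{\cE^{p,q}\}$, as required.
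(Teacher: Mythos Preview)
Your proof is correct and follows essentially the same approach as the paper's: both construct $i$ by the obvious scalar action and $r$ via the generalized eigenspace decomposition of $f$, using properness to get constant eigenvalues, the intertwining relation $f\theta=t\theta f$ to see that $\theta$ shifts the grading, and \Cref{propositionharmonicpolarization} to supply the polarization. Your argument is in fact more detailed than the paper's, which dispatches the holomorphy of the decomposition and the verification of $r\circ i=\id$ in a sentence each; your use of spectral projectors as polynomials in $f$ is a clean way to make the holomorphy explicit.
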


\begin{proof}
  We need only check that $r$ and $i$ are well defined. That $i$ is well defined is clear since $f$
  is an isomorphism of Higgs bundles if and only if $f\theta=t\theta f$. Let us turn to $r$.
  Given $f$, its eigenvalues on the fibers vary holomorphically.
  As $X$ is proper, they are constant. So the spaces $\cE^{m-q,q}:=\bigoplus_{\log_t|x|=q}\ker(f-x\cdot\id)^{\dim\cE}$
  define a direct sum decomposition of $\cE$. The equality $(f-x\cdot\id)^{\dim\cE}\theta=\theta(tf-x\cdot\id)^{\dim\cE}$ shows that
  $\theta$ sends $\cE^{m-q,q}$ into $\cE^{m-q-1,q+1}$. As $(\cE,\theta)$ is of harmonic type, by \Cref{propositionharmonicpolarization}, 
  the decomposition is a system of Hodge bundles.
\end{proof}

\begin{corollary}\label{actiontoCVHSmodular}
  Let $\Nu$ be a complex modular functor. Let $\Lambda$ be its set of colors
  and $(\Nu,\theta)$ the associated modular functor of Higgs bundles (\ref{correspondencemodular}).
  Choose a weight function $w$ on $\Nu$ and some $t\in \R_+^*\setminus\{1\}$.
  Then the maps $i$ and $r$ of \Cref{actiontoCVHS} induce maps:
  \[\begin{tikzcd}
    {\{\text{isomorphisms }f:(\Nu,\theta)\ra(\Nu,t\theta)\}} \\
    {\{\text{Hodge structures on }\Nu\text{ with weight function }w\text{ and real bidegrees}\}}
    \arrow["r"', curve={height=6pt}, two heads, from=1-1, to=2-1]
    \arrow["i"', curve={height=6pt}, hook, from=2-1, to=1-1]
  \end{tikzcd}\]
  such that $r\circ i=\id$.
\end{corollary}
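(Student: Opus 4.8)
The plan is to deduce the statement from \Cref{actiontoCVHS} by applying its maps $r$ and $i$ fiberwise over all $(g,\ul)$ and then checking that the fiberwise data assemble into morphisms of modular functors. First I would record the elementary but essential point that scaling every Higgs field by a common $t\in\R_+^*$ preserves the structure of a modular functor of Higgs bundles: since the Higgs field of a tensor product is $\theta\otimes\id+\id\otimes\theta'$ and that of a direct sum is $\theta\oplus\theta'$, the gluing, vacuum and permutation isomorphisms of $(\Nu,t\theta)$ are literally the \emph{same} underlying bundle maps as those of $(\Nu,\theta)$. Thus an isomorphism $f\colon(\Nu,\theta)\to(\Nu,t\theta)$ is a family $f_g(\ul)$ of Higgs-bundle isomorphisms $(\Nu_g(\ul),\theta_g(\ul))\to(\Nu_g(\ul),t\theta_g(\ul))$ commuting with these shared structure maps. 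Using \Cref{theoremcvhshodgebundle}, whose equivalence is symmetric monoidal and functorial in $X$, I would freely identify a Hodge structure on $\Nu$ with real bidegrees and weight function $w$ with a system-of-Hodge-bundles structure of weight function $w$ on the modular functor of Higgs bundles $(\Nu,\theta)$, reducing the problem to comparing the latter with isomorphisms $f$.

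For the map $i$, each fiber $\Nu_g(\ul)$ carries a system of Hodge bundles of weight $w_g(\ul)$, and feeding it to the map $i$ of \Cref{actiontoCVHS} with $m=w_g(\ul)$ produces $f_g(\ul)$, acting by $t^{q}$ on the $(p,q)$-summand. To see that $(f_g(\ul))_{g,\ul}$ is a morphism of modular functors of Higgs bundles, the key is that the maps $r,i$ of \Cref{actiontoCVHS} are compatible not only with pullbacks (the naturality in $X$ already asserted there) but also with direct sums and tensor products. Compatibility with $\oplus$ is immediate from block-diagonality; compatibility with $\otimes$ rests on the multiplicativity $\log_t|x_1x_2|=\log_t|x_1|+\log_t|x_2|$ of generalized eigenvalues, which matches precisely the additivity of the $q$-grading under the tensor-product rule for the Hodge decomposition. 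Since the structure isomorphisms respect the full bidegree decomposition and $f$ depends on it only through the scalar $t^q$, which is multiplicative under $\otimes$ and constant on summands, the family $f$ intertwines the shared structure maps of $(\Nu,\theta)$ and $(\Nu,t\theta)$; for instance, across a separating node both $q^*f_{g_1+g_2}(\ul)$ and $\bigoplus_\mu f_{g_1}\otimes f_0\otimes f_{g_2}$ are identified by the gluing isomorphism.

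For the map $r$, given such an $f$ I would apply the map $r$ of \Cref{actiontoCVHS} on each fiber with $m=w_g(\ul)$, obtaining the characteristic-space decomposition with summands $\Nu_g(\ul)^{w_g(\ul)-q,q}$; this is a genuine system of Hodge bundles since each $\Nu_g(\ul)$ is of harmonic type (\Cref{correspondencemodular}), so \Cref{propositionharmonicpolarization} supplies a polarization, and \Cref{theoremcvhshodgebundle} turns it into a polarizable CVHS. Reading the same $\oplus/\otimes/$pullback-compatibility in reverse, the fact that $f$ commutes with the structure maps shows that the resulting decompositions are preserved by all gluing, vacuum and permutation isomorphisms, i.e.\ that these become isomorphisms of CVHS; the weights assemble into the prescribed $w$ by construction, consistency across nodes being exactly what the defining axioms of a weight function (\Cref{definitionweightfunction}) are engineered to guarantee. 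Hence $r(f)$ is a Hodge structure on $\Nu$ with weight function $w$, and $r\circ i=\id$ follows immediately from the fiberwise identity in \Cref{actiontoCVHS}. I expect the only genuine work to lie in the $\otimes$-compatibility of $r$ and $i$ — where one must track generalized eigenspaces of a tensor product rather than mere eigenvalues — and in confirming that the weight bookkeeping across a node, including the contribution of the factor $\Nu_0(\mu,\mu^\dagger)$, is precisely the one encoded by the weight-function axioms.
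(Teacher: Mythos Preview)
Your proposal is correct and follows the same route as the paper: apply \Cref{actiontoCVHS} fiberwise over all $(g,\ul)$, use naturality to assemble into maps of modular functors of Higgs bundles, and invoke \Cref{theoremcvhshodgebundle} to pass between systems of Hodge bundles and CVHS. Your write-up is in fact more careful than the paper's, which simply appeals to ``functoriality of $r$ and $i$ in the Lemma'': you correctly make explicit that one needs not only naturality in $X$ (which is all \Cref{actiontoCVHS} states) but also compatibility of $r$ and $i$ with $\oplus$ and $\otimes$, and you supply the reason this holds (multiplicativity of $\log_t|\cdot|$ matching additivity of the $q$-grading).
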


\begin{proof}
  We apply \Cref{actiontoCVHS} to each $(\Nu_g(\ul),\theta_g(\ul))$. By functoriality of $r$ and $i$ in the Lemma,
  we get similar maps $r$ and $i$ between the set of isomorphisms $f$ as above,
  and the set of structures of systems of Hodge bundles on $(\Nu,\theta)$ with weight function $w$ and real weights.
  Now by \Cref{theoremcvhshodgebundle}, systems of Hodge bundles on $(\Nu,\theta)$ are in bijection with CVHS on $\Nu$.
  This bijection preserves the weight function $w$.
\end{proof}


\section{Ocneanu rigidity}\label{sectionocneanu}


\subsection{Deformations of associators and endofunctors of ribbon fusion categories}\label{subocneanu}

\begin{definition}
  A fusion category over a ring $R$ is the data of a finite set $\Lambda$
  together with a rigid $R$-linear monoidal structure on $\VLR$.
  By rigid, we mean that each object in $\VLR$ has a right dual and a left dual (see \cite[2.10]{etingofTensorCategories2015}).
  Defining morphisms as in \Cref{definitionweaklyribbon}, we get a category $\Fus$ of fusion categories.
\end{definition}

Note that our definition of morphisms of fusion and ribbon fusion categories is more restrictive than the usual one.

\begin{notation}
  We will denote by $\Ceps:=\C[X]/X^2$ the ring of dual numbers ($X=\epsilon$).
  For any structure $\bullet$ over $\C$, we will use the subscript $\bullet_\Ceps$ to denote its trivial deformation,
  i.e. the structure over $\Ceps$ obtained by tensoring along $\C\ra\Ceps$.
\end{notation}

\begin{definition}
  Let $\Cat=(\VL,\otimes,c)$ be a fusion category over $\C$, where $c$ denotes its associator.
  A deformation of $c$ is an associator $\widetilde{c}$ on $(\VL^{\Ceps},\otimes_{\Ceps})$ that restricts to $c$ when taking the quotient by $\epsilon$.
  We assume that $\widetilde{c}$ is compatible with the trivial deformations of the left and right unitors in $\Cat$.

  Such a deformation is said to be trivial if there exists an (infinitesimal) isomorphism $(\id_\Lambda,(\id+\epsilon\phi):\otimes_\Ceps\simeq\otimes_\Ceps)$
  from $(\VL^{\Ceps},\otimes_{\Ceps},c_\Ceps)$ to $(\VL^{\Ceps},\otimes_{\Ceps},\widetilde{c})$.

  If $\Cat=(\VL,\otimes,c,\beta,\theta)$ is a ribbon fusion category over $\C$, a deformation of $c$ is an associator $\widetilde{c}$
  such that $(\VL^{\Ceps},\otimes_{\Ceps},\widetilde{c},\beta_\Ceps,\theta_\Ceps)$ is a braided category with a twist.
  Such a deformation is trivial if there exists a pair $(\id_\Lambda,(\id+\epsilon\phi))$ as above that is an isomorphism compatible with the braiding and twist.
  Similarly for braided fusion categories over $\C$.
\end{definition}

Note that any deformation $\widetilde{c}$ of $c$ is always of the form $c(\id+\epsilon d)$
for $d:(-\otimes-)\otimes-\ra (-\otimes-)\otimes-$ a natural transformation.

\begin{theorem}[{Ocneanu rigidity for associators (\cite[9.1.4]{etingofTensorCategories2015})}]\label{theoremocneanu}
  Let $\Cat=(\VL,\otimes,c)$ be a fusion category over $\C$. Then any deformation $\widetilde{c}$ of $c$ is trivial.
\end{theorem}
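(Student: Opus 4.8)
The plan is to read the statement as the vanishing of a deformation cohomology group and to extract that vanishing from the semisimplicity of $\Cat$. First I would make the parametrization of deformations precise. By the remark preceding the statement, every deformation has the form $\widetilde{c}=c\,(\id+\epsilon\,d)$; using $c$ to identify the two triple–tensor–product functors, $d$ becomes a natural endomorphism of the functor $(X,Y,Z)\mapsto (X\otimes Y)\otimes Z$, i.e.\ a cochain $d\in C^3$ in degree $3$ of the Davydov--Yetter complex $C^\bullet(\Cat)$, whose degree-$n$ term is the space of natural endomorphisms of the $n$-fold tensor product functor. Expanding the pentagon axiom for $\widetilde{c}$ to first order in $\epsilon$ shows that $d$ is a $3$-cocycle, $\partial d=0$. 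An infinitesimal isomorphism $(\id_\Lambda,\id+\epsilon\phi)$ with $\phi\in C^2$ (a natural endomorphism of $\otimes$) carries $c_\Ceps$ to $c\,(\id+\epsilon\,\partial\phi)$, so $\widetilde{c}$ is trivial in the sense of the definition precisely when $d=\partial\phi$ is a coboundary. Thus the theorem reduces to the assertion $H^3\bigl(C^\bullet(\Cat)\bigr)=0$.

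Next I would record that, since $\Cat=\VL$ is semisimple with simple objects indexed by $\Lambda$, each cochain space decomposes as
\[
  C^n \;\cong\; \bigoplus_{\lambda_1,\dotsc,\lambda_n\in\Lambda}\End{[\lambda_1]\otimes\dotsb\otimes[\lambda_n]},
\]
so the complex is finite-dimensional in each degree and is built entirely from the fusion data of $\Cat$. The coface maps are the standard ones: insertion of the unit object $[0]$ (via the trivially deformed unit constraints) in the outer slots, and tensoring of adjacent variables in the inner slots.

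The crucial and hardest step is the vanishing $H^3=0$. Here semisimplicity enters through separability: over $\C$ a fusion category has nonzero global dimension, and the tensor product functor admits two-sided adjoints realizing $\Cat$ as a separable bimodule category over itself. This lets me build an explicit contracting homotopy $h\colon C^n\to C^{n-1}$ by capping off one tensor factor against a normalized coevaluation/evaluation pair (equivalently, by averaging against a separability idempotent, which exists precisely because categorical dimensions are invertible in characteristic zero), and then verifying the homotopy identity $\partial h+h\partial=\id$ on $C^{\geq 1}$. This forces $H^n=0$ for all $n\geq 1$, hence $H^3=0$. More conceptually, and as an alternative I would keep in reserve, the complex $C^\bullet(\Cat)$ computes the $\mathrm{Ext}$-groups of the regular bimodule inside the semisimple category $\Cat\boxtimes\Cat^{\mathrm{rev}}$, all of which vanish in positive degree, leaving only $H^0$.

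I expect the main obstacle to be exactly the construction and verification of the contracting homotopy (or the identification of $C^\bullet$ with an $\mathrm{Ext}$-complex): one must check that the coevaluation/evaluation capping is natural, that it genuinely inverts $\partial$ up to homotopy, and that the normalization by the global dimension is well defined — which is where semisimplicity and characteristic zero are used essentially. Granting the vanishing, the triviality of $\widetilde{c}$ follows immediately from the reduction carried out in the first step.
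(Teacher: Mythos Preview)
The paper does not prove this theorem: it is quoted from \cite[9.1.4]{etingofTensorCategories2015} and used as a black box. Immediately after the statement the paper does spell out the reduction you carry out in your first step, namely that triviality of deformations is equivalent to surjectivity of $\partial:C^2(\Cat)\to Z^3(\Cat)$, and notes that this map is a differential in the Davydov--Yetter complex, whose acyclicity in positive degrees is the usual formulation of Ocneanu rigidity (with a reference to \cite{etingofFusionCategories2005}). So your reduction matches exactly what the paper records, but the paper stops there and defers the vanishing to the literature.

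Your sketch of the vanishing itself is the standard ENO argument and is essentially correct: nonzero global dimension over $\C$ gives separability, and one builds a contracting homotopy from a normalized (co)evaluation. One small imprecision: the conceptual identification you give at the end is not quite right as stated. The Davydov--Yetter cohomology is naturally $\mathrm{Ext}^\bullet_{Z(\Cat)}(\mathbf{1},\mathbf{1})$ in the Drinfeld center $Z(\Cat)$, not $\mathrm{Ext}$ of the regular bimodule in $\Cat\boxtimes\Cat^{\mathrm{rev}}$; the vanishing then follows because $Z(\Cat)$ is semisimple for a fusion category over $\C$. The two pictures are related (bimodule categories over $\Cat$ versus $\Cat\boxtimes\Cat^{\mathrm{rev}}$-modules), but if you want to invoke this route you should formulate it via the center.
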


We now briefly explain how this result for fusion categories can be extended to ribbon fusion categories.
A quick proof of this is explained in \cite[Rmk. 2.33]{etingofFusionCategories2005}.
Here we give another proof, and take this as an opportunity to better understand the meaning of \Cref{theoremocneanu}.
Fix a ribbon fusion category $\Cat=(\VL,\otimes,c,\beta,\theta)$. Let $C^3(\Cat)$ be the vector space of
all natural transformations $d:(-\otimes-)\otimes-\ra (-\otimes-)\otimes-$,
let $Z^3(\Cat)$ be the subspace of the $d$ such that $c(\id+d)$ is an associator for $(\VL^{\Ceps},\otimes_{\Ceps})$,
and let $Z^3(\Cat,\beta)$ be the subspace of the $d$ such that $c(\id+d)$ is an associator compatible with the braiding $\beta_\Ceps$.

Now define $C^2(\Cat)$ to be the vector space of natural transformations $\phi:\otimes\ra\otimes$ such that $\phi(-\otimes [0])$
and $\phi([0]\otimes -)$ are the identity natural transformations.
Define $C^2(\Cat,\beta)$ as the subspace of $C^2(\Cat)$ whose elements are the $\phi$ such that $\beta\circ\phi=\phi\circ\beta$.

That the associator $c(\id+d)$ with $d\in C^3(\Cat)$ is trivial means that there exists $\phi\in C^2(\Cat)$ such that the following square commutes:
\[\begin{tikzcd}
	{(-\otimes-)\otimes-} & {-\otimes(-\otimes-)} \\
	{(-\otimes-)\otimes-} & {-\otimes(-\otimes-).}
	\arrow["{c(\id+\epsilon d)}", from=1-1, to=1-2]
	\arrow["{(\id+\epsilon\phi)\circ((\id+\epsilon\phi)\otimes\id)}"', from=1-1, to=2-1]
	\arrow["{(\id+\epsilon\phi)\circ(\id\otimes(\id+\epsilon\phi))}", from=1-2, to=2-2]
	\arrow["c", from=2-1, to=2-2]
\end{tikzcd}\]

As the inverse of $(\id+\epsilon\phi)$ is $(\id-\epsilon\phi)$, using the square above, we can linearly associate to each $\phi$ a $d$,
and we get linear maps:
\begin{equation*}
  \partial:C^2(\Cat)\lra Z^3(\Cat)\text{ and }\partial_\beta:C^2(\Cat,\beta)\lra Z^3(\Cat,\beta).
\end{equation*}
Ocneanu rigidity (\ref{theoremocneanu}) is equivalent to the statement that $\partial$ is surjective.
The map $\partial$ is a differential in the proper Davydov-Yetter complex of $\id_\Cat$. This complex is quasi-isomorphic
to the usual Davydov-Yetter complex (see \cite[(47) and B.3]{faitgAdjunctionTheoremDavydovYetter2024}).
Ocneanu rigidity is usually stated as the acyclicity of this latter complex (see for example \cite[7.22, 9.1.3]{etingofFusionCategories2005}).

We would like to show that this is also the case for $\partial_\beta$. For this, notice that compatibility of an associator $c$
to a braiding $\beta$ is the statement that the following diagram commutes:
\[\begin{tikzcd}
	{(-_1\otimes-_2)\otimes-_3} & {-_1\otimes(-_2\otimes-_3)} \\
	{(-_3\otimes-_2)\otimes-_1} & {-_3\otimes(-_2\otimes-_1).}
	\arrow["c", from=1-1, to=1-2]
	\arrow["{\beta\circ(\beta\otimes\id)}"', from=1-1, to=2-1]
	\arrow["{\beta\circ(\id\otimes\beta)}", from=1-2, to=2-2]
	\arrow["c", from=2-1, to=2-2]
\end{tikzcd}\]
Let $d$ be in $Z^3(\Cat,\beta)$. Then, by Ocneanu rigidity, there exists $\phi$ in $C^2(\Cat)$ with $\partial\phi=d$.
As $\partial\phi$ is in $Z^3(\Cat,\beta)$, using the $2$ diagrams above, we see that $\partial\phi=\partial(\beta\phi\beta^{-1})$,
and in particular $d=\partial\phi=\partial\frac{1}{2}(\phi+\beta\phi\beta^{-1})$. But $\frac{1}{2}(\phi+\beta\phi\beta^{-1})$ is in $C^2(\Cat,\beta)$.
Hence $\partial_\beta$ is surjective. Note that any element of $C^2(\Cat,\beta)$ is compatible with the twist $\theta$ in $\Cat$. Hence we have the following.

\begin{corollary}[{Ocneanu rigidity for associators in ribbon and braided fusion categories (\cite[2.33]{etingofTensorCategories2015})}]\label{theoremocneanuribbon}
  Let $\Cat=(\VL,\otimes,c,\beta,\theta)$ be a ribbon fusion category over $\C$. Then any deformation $\widetilde{c}$ of $c$ is trivial.
  Similarly for braided fusion categories over $\C$.
\end{corollary}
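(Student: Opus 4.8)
The plan is to deduce this from Ocneanu rigidity in the fusion setting (\Cref{theoremocneanu}) by an averaging argument that upgrades a fusion-categorical trivialization to a ribbon one. Phrased in the cochain language set up just above, \Cref{theoremocneanu} asserts surjectivity of $\partial\colon C^2(\Cat)\to Z^3(\Cat)$, and the statement to prove is exactly surjectivity of $\partial_\beta\colon C^2(\Cat,\beta)\to Z^3(\Cat,\beta)$. So the goal is to produce, from a braiding-compatible infinitesimal associator deformation, a trivializing natural transformation $\phi$ that itself respects the braiding (and hence the twist).

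First I would take $d\in Z^3(\Cat,\beta)$, i.e. $\widetilde c=c(\id+\epsilon d)$ is an associator compatible with $\beta_\Ceps$. Forgetting the braiding, $d\in Z^3(\Cat)$, so \Cref{theoremocneanu} yields some $\phi\in C^2(\Cat)$ with $\partial\phi=d$. The heart of the argument is to show that conjugation by the braiding sends solutions to solutions: combining the compatibility square between $c$ and $\beta$ (the second diagram above) with the square defining $\partial$, one checks $\partial(\beta\phi\beta^{-1})=d$ as well, precisely because $d$ was assumed to lie in $Z^3(\Cat,\beta)$. Then the average $\psi=\tfrac12(\phi+\beta\phi\beta^{-1})$ satisfies $\partial\psi=d$ and, by construction, commutes with $\beta$, so $\psi\in C^2(\Cat,\beta)$; this is the desired surjectivity of $\partial_\beta$.

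It remains to promote $\psi$ to a genuine ribbon isomorphism. Here I would invoke that every element of $C^2(\Cat,\beta)$ is automatically compatible with the twist $\theta$, since in a ribbon structure $\theta$ is expressed through $\beta$ and the (weak) duality; thus the infinitesimal monoidal isomorphism $(\id_\Lambda,(\id+\epsilon\psi))$ intertwines $c_\Ceps$ and $\widetilde c$ while preserving $\beta_\Ceps$ and $\theta_\Ceps$, which is exactly triviality of the deformation $\widetilde c$ in the ribbon sense.

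I expect the main obstacle to be the verification that $\beta\phi\beta^{-1}$ remains a trivialization of the same $d$ — that is, the identity $\partial(\beta\phi\beta^{-1})=\beta(\partial\phi)\beta^{-1}$ together with the fact that braiding-compatibility of $\widetilde c$ forces $\beta d\beta^{-1}=d$. Making this precise means carefully tracking the two commuting squares (the associator-trivialization square and the associator--braiding compatibility square) and checking that the unit-normalization conditions are preserved under both conjugation by $\beta$ and averaging, so that $\psi$ genuinely lands in $C^2(\Cat,\beta)$ and not merely in the larger space of braiding-commuting natural transformations.
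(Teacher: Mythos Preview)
Your proposal is correct and follows essentially the same approach as the paper: apply fusion Ocneanu rigidity to get $\phi\in C^2(\Cat)$ with $\partial\phi=d$, use the associator--braiding compatibility square to deduce $\partial(\beta\phi\beta^{-1})=d$, average to land in $C^2(\Cat,\beta)$, and observe that compatibility with the twist is automatic. The paper's argument is exactly this averaging trick, with the same identification of the key step as the verification that $\partial\phi=\partial(\beta\phi\beta^{-1})$ via the two diagrams.
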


We now reproduce the standard algebro-geometric argument in \cite[9.1.4]{etingofTensorCategories2015}
to deduce triviality of continuous deformations of associators in ribbon fusion categories.
Let us fix a $\C$-linear category $\Cat=(\VL,\otimes,\beta,\theta)$ where $\otimes:\VL\times\VL\ra\VL$ is bilinear, $\beta:-_1\otimes-_2\ra-_2\otimes-_1$
and $\theta:\id\ra\id$ are linear. Then the data of an associator $c$ for $\otimes$ which makes $\Cat$ into a ribbon category is a family
of linear maps satisfying algebraic conditions. Hence such associators are parametrized
by an affine $\C$-scheme of finite type, that we denote $\Ass(\Cat)$.

Now $2$ such associators $c_1$ and $c_2$ may be gauge equivalent in the sense that there exists an automorphism
$f:\otimes\ra\otimes$ preserving $\beta$ such that $c_2=(f^{-1}\otimes\id)f^{-1} c_1 f (f\otimes\id)$.
Such automorphisms $f$ are parametrized by an affine complex algebraic
group $\Autb(\otimes)$ which acts algebraically on $\Ass(\Cat)$.

The Zariski tangent space of $\Autb(\otimes)$ at $\id$ is $C^2(\beta)=C^2(\Cat,\beta)$ above,
and the Zariski tangent space of $\Ass(\Cat)$ at $c$ is $Z^3(c,\beta)=Z^3(\Cat,\beta)$ above.

The differential of the action $\Autb(\otimes)\ra\Ass(\Cat),f\mapsto f\cdot c$ at $f=\id$ is then the map
$\partial_\beta:C^2(c,\beta)\lra Z^3(c,\beta)$ above.
By \Cref{theoremocneanuribbon} this map is surjective, so by \Cref{lemmaopen}, the $\Autb(\otimes)$-orbit of any $c$ is Zariski open.

\begin{lemma}\label{lemmaopen}
  Let $G$ be a connected complex algebraic group acting on a scheme $X$ of finite type over $\C$. If for $x\in X$,
  the differential of the map $G\ra X$, $g\mapsto g\cdot x$ is surjective, then set theoretic orbit $G\cdot x$
  is Zariski open in $X$.
\end{lemma}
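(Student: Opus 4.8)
The plan is to exploit homogeneity: since $G$ acts by automorphisms of $X$ and the orbit $O = G\cdot x$ is $G$-invariant, it suffices to produce a single open neighborhood of $x$ contained in $O$; translating it by elements $g\in G$ then yields such a neighborhood at every point $g\cdot x\in O$, so that $O$ is open. The structural input I would invoke is the standard theory of orbits under an algebraic group action: because $G$ is connected, hence irreducible, the orbit $O$ is an irreducible, smooth, locally closed subvariety of $X$, isomorphic (in characteristic $0$) to $G/G_x$ for $G_x$ the stabilizer, and $O$ is open in its closure $\overline O$. I expect this structural fact to be the one genuine obstacle to keeping things self-contained; I would simply cite it.

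Granting this, the first concrete step is to identify the image of the differential $da_e\colon \mathfrak g = T_eG \to T_xX$ of the orbit map $a\colon g\mapsto g\cdot x$. Factoring $a$ as $G \twoheadrightarrow O \hookrightarrow X$, where the first arrow is a smooth surjection and the second a locally closed immersion, the image of $da_e$ is exactly $T_xO$ viewed inside $T_xX$. By hypothesis $da_e$ is surjective, whence $T_xO = T_xX$. I would then run the dimension chain $\dim O = \dim_\C T_xO = \dim_\C T_xX \ge \dim_x X \ge \dim O$, using smoothness of $O$ at $x$ for the first equality, the general bound $\dim_x X \le \dim_\C T_xX$ in the middle, and $\overline O \subseteq X$ with $\overline O$ irreducible through $x$ (so $\dim_x\overline O = \dim O$) at the end. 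All terms then coincide, so $\dim_x X = \dim_\C T_xX$: the scheme $X$ is smooth at $x$, of local dimension $\dim O$.

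Finally, smoothness lets me compare $\overline O$ with $X$ near $x$. Shrinking to a connected open neighborhood $U$ of $x$ on which $X$ is smooth, $U$ is irreducible of dimension $\dim O$; the closed irreducible subset $\overline O \cap U$ contains $x$ and has full dimension $\dim U$, hence equals $U$, so $U \subseteq \overline O$. Writing $O = \overline O \cap V$ with $V$ open in $X$ (possible since $O$ is open in $\overline O$), the intersection $U\cap V$ is an open neighborhood of $x$ contained in $O$. The homogeneity argument then concludes: for any $y = g\cdot x \in O$, the automorphism of $X$ induced by $g$ carries $U\cap V$ to an open neighborhood of $y$ lying inside $O$, so $O$ is Zariski open in $X$.
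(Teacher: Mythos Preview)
Your proof is correct and follows the same overall strategy as the paper: the orbit is smooth and locally closed, and the surjectivity of the differential forces its dimension to match the local dimension of $X$, whence openness. The execution differs in two places worth noting. First, the paper does not cite the structural fact about orbits but derives it on the spot: Chevalley's theorem gives constructibility, generic smoothness in characteristic $0$ gives a smooth locally closed point, and homogeneity spreads this to the whole orbit. You instead invoke this as a black box, which is perfectly legitimate but hides exactly the step the paper chose to make explicit. Second, to compare $O$ with $X$ at the end the paper first reduces to $X$ integral (using that the irreducible orbit lies in a single component), whereas you extract from the equality $T_xO=T_xX$ the additional fact that $X$ itself is smooth at $x$, and use the resulting local irreducibility of $X$ instead. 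Your route yields a bit more information (smoothness of $X$ along the orbit) and avoids the global reduction; the paper's route is more self-contained about why orbits are smooth locally closed.
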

\begin{proof}
  We may replace $G$ by its neutral connected component. Then as $G$ is smooth, it is irreducible, and hence $G\cdot x$ lies in an irreducible
  component of $X$. So we may also assume that $X$ is integral. By Chevalley's theorem, $G\cdot x$ is constructible. Hence there exists $y\in G\cdot x$
  such that $G\cdot x$ is locally closed near $y$. Because we are in characteristic $0$, we may further assume that $G\cdot x$ is smooth at $y$.
  As $G$ acts transitively, $G\cdot x$ is locally closed and smooth at every point. Now by smoothness,
  the local dimension of $G\cdot x$ equals that of its tangent spaces, which is maximal by the assumption. This concludes.
\end{proof}

From the discussion above, we get the following.

\begin{corollary}\label{continuityassociators}
  Let $\Cat=(\VL,\otimes,c,\beta,\theta)$ be a ribbon fusion category over $\C$.
  Then for any continuous family of associators $(c_t)_{t\in (-1,1)}$ with $c_0=c$,
  there exists a continuous family $(f_t)_{t\in(-1,1)}$ of elements in $\Autb(\otimes)$
  such that $f_0=\id$ and for all $t$, $c_t=f_t\cdot c$. The same result holds in the braided fusion case.
\end{corollary}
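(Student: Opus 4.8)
The plan is to read the statement through the orbit map and deduce it from the local structure of algebraic group actions, using the material already assembled. Fix the ribbon category $\Cat=(\VL,\otimes,c,\beta,\theta)$ and write $\pi\colon\Autb(\otimes)\to\Ass(\Cat)$, $g\mapsto g\cdot c$, for the orbit map based at the given associator $c$. By the discussion preceding \Cref{lemmaopen}, the differential of $\pi$ at $\id$ is the map $\partial_\beta\colon C^2(\Cat,\beta)\to Z^3(\Cat,\beta)$, which is surjective by \Cref{theoremocneanuribbon}; and by \Cref{lemmaopen} the orbit $O:=\Autb(\otimes)\cdot c$ is Zariski open in $\Ass(\Cat)$. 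Throughout I would regard all these objects in the complex-analytic topology, so that $O$ is in particular analytically open in $\Ass(\Cat)$.

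First I would record that $O$, being an orbit of an algebraic group, is a smooth locally closed subvariety with $T_cO=\myim{\partial_\beta}=Z^3(\Cat,\beta)=T_c\Ass(\Cat)$; hence $\Ass(\Cat)$ is smooth along $O$ and $O$ is a smooth analytic neighbourhood of $c$. Because the action is algebraic and $\pi(gh)=g\cdot\pi(h)$, surjectivity of $d\pi_{\id}$ propagates by equivariance to every point of $\Autb(\otimes)$, so $\pi$ is a submersion of smooth analytic manifolds onto $O$. Equivalently, writing $H=\mathrm{Stab}(c)$ for the closed stabiliser, $\pi$ is the projection of the analytic principal $H$-bundle $\Autb(\otimes)\to\Autb(\otimes)/H\cong O$.

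The key step is then to produce a local section. By the local normal form for submersions (the holomorphic implicit function theorem), $\pi$ admits a holomorphic section $\sigma$ on an analytic neighbourhood $U\subseteq O$ of $c$ with the prescribed value $\sigma(c)=\id$; the freedom to prescribe the value of $\sigma$ at $c$ is exactly the statement that a submersion has a section through any chosen point of the fibre over $c$. Since $O$ is analytically open and $t\mapsto c_t$ is continuous with $c_0=c\in U$, after shrinking $\epsilon$ we have $c_t\in U$ for all $t\in(-\epsilon,\epsilon)$, and I set $f_t:=\sigma(c_t)\in\Autb(\otimes)$. Then $f_t$ depends continuously on $t$, $f_0=\sigma(c)=\id$, and $c_t=\pi(\sigma(c_t))=f_t\cdot c$, as required.

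The only genuine input beyond the algebraic geometry already in place is the existence of the continuous (indeed holomorphic) local section $\sigma$ with $\sigma(c)=\id$, and this is where I expect the technical care to go: one must verify that $O$ is smooth and that $\pi$ is an honest submersion in the analytic category — both of which follow from surjectivity of $\partial_\beta$ together with homogeneity of the action — before invoking the implicit function theorem. Everything else, namely analytic openness of $O$ and continuity of the resulting family $(f_t)$, is then immediate.
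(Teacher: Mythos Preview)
Your approach and the paper's agree on the substance: both identify the orbit map $\pi\colon\Autb(\otimes)\to\Ass(\Cat)$, use surjectivity of $\partial_\beta$ (via \Cref{theoremocneanuribbon}) together with \Cref{lemmaopen} to see that the orbit $O$ of $c$ is open, and then lift the path $t\mapsto c_t$ through $\pi$. The difference is in how the lift is produced, and this is where your argument falls short of the statement as written.

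You construct a holomorphic local section $\sigma$ of $\pi$ on a neighbourhood $U$ of $c$ via the implicit function theorem, and then \emph{shrink $\epsilon$} so that $c_t\in U$. But the corollary asserts the existence of the lift on the \emph{given} interval $(-\epsilon,\epsilon)$, not on a smaller one; your local section does not see the part of the path that leaves $U$. The paper avoids this by observing that $\Autb(\otimes)\to\Autb(\otimes)/H\cong O_c^{\mathrm{red}}$ is a Serre fibration (a quotient of a Lie group by a closed subgroup), and then invoking the homotopy lifting property to lift the entire path at once. You have in fact already done the work for this: you note that $\pi$ is the projection of a principal $H$-bundle, which is exactly what is needed. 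Replacing the implicit-function-theorem step by path lifting for this fibration removes the need to shrink $\epsilon$ and completes the proof.

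A minor point: you assert that $\Ass(\Cat)$ is smooth along $O$ from the equality of tangent spaces. This need not hold if $\Ass(\Cat)$ is non-reduced; the paper is careful to work with $O_c^{\mathrm{red}}$ for this reason. Since only the underlying complex points matter for the lifting argument, this does not affect the outcome, but the claim as written is not justified.
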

\begin{proof}
  As all the $\Autb(\otimes)$-orbits are open, they are also all closed. Hence the family $(c_t)_{t\in (-1,1)}$ lies in the orbit of $c$.
  Let $c\in\Ass(\Cat)(\C)$, $O_c$ its orbit and $G_c\subset \Autb(\otimes)$ the stabilizer.
  Then the associated reduced subscheme $O_c^\mathrm{red}$ is isomorphic to the quotient $\Autb(\otimes)/G_c$.
  Analytically, $\Autb(\otimes)/G_c$ is the quotient of a Lie group by a subgroup. So the action $\Autb(\otimes)\ra O_c^\mathrm{red}=\Autb(\otimes)/G_c$
  is a Serre fibration and we can lift the map $c_t:(-1,1)\ra O_c^\mathrm{red}$ to $\Autb(\otimes)$.
\end{proof}

Let us now turn to infinitesimal automorphisms of ribbon fusion categories.

\begin{definition}
  Let $\Cat=(\VL,\otimes,c,\beta,\theta)$ be a ribbon fusion category.
  An infinitesimal automorphism of $\Cat$ is a functor of ribbon categories $(\id_\Lambda,\id+\epsilon k):\Cat_\Ceps\ra\Cat_\Ceps$.
  Similarly for fusion categories and braided fusion categories.

  Such an isomorphism $(\id_\Lambda,\id+\epsilon k)$ is said to be trivial if there exists $\delta:\Lambda\ra\C$ with $\delta(0)=0$,
  such that for each $\lambda,\mu,\nu\in\Lambda$, $k$ is on $\hom([\lambda],[\mu]\otimes[\nu])$ multiplication
  by $\delta(\mu)+\delta(\nu)-\delta(\lambda)$.
\end{definition}

\begin{theorem}[{Ocneanu rigidity for endofunctors (\cite[9.1.5]{etingofTensorCategories2015})}]\label{theoremocneanufunctor}
  Any infinitesimal automorphism of a fusion category over $\C$ is trivial.
\end{theorem}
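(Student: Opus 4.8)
The plan is to recast the statement as the vanishing of a degree-two Davydov--Yetter cohomology group and then produce an explicit primitive by averaging. First I would extend the complex $C^2(\Cat)\xrightarrow{\partial}C^3(\Cat)$ used in \Cref{theoremocneanuribbon} one step to the left by setting $C^1(\Cat)=\mathrm{End}(\id_\Cat)$. Since $\Cat$ is semisimple, $C^1(\Cat)\cong\C^\Lambda$, an element being a function $\delta:\Lambda\to\C$ acting on each simple $[\lambda]$ by the scalar $\delta(\lambda)$; the differential $\partial\colon C^1(\Cat)\to C^2(\Cat)$ is $(\partial\delta)_{[\mu],[\nu]}=\delta_{[\mu]}\otimes\id+\id\otimes\delta_{[\nu]}-\delta_{[\mu]\otimes[\nu]}$. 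An infinitesimal automorphism $(\id_\Lambda,\id+\epsilon k)$ is monoidal exactly when $k\in C^2(\Cat)$ is a $2$-cocycle, $\partial k=0$; and a direct check shows that $\partial\delta$, acting by post-composition on $\hom_\Cat([\lambda],[\mu]\otimes[\nu])$, is multiplication by $\delta(\mu)+\delta(\nu)-\delta(\lambda)$. Thus "$k$ is trivial'' means precisely "$k\in\mathrm{im}\,\partial$'', and the theorem is equivalent to exactness at $C^2(\Cat)$, i.e. $H^2_{DY}(\Cat)=0$, with the normalization $\delta(0)=0$ to be recovered from unitality of $k$.

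Next I would build a contracting homotopy. Working in the ribbon (hence pivotal, spherical) categories to which we apply the result, partial traces are available, and I would define $s\colon C^2(\Cat)\to C^1(\Cat)$ by
\[
  (sk)(X)=\delta(X):=\frac{1}{\dim\Cat}\sum_{Z}\dim(Z)\,\mathrm{tr}_Z\big(k_{X,Z}\big),\qquad \dim\Cat=\sum_{\lambda\in\Lambda}\dim([\lambda])^2,
\]
the sum running over representatives $Z$ of the simple objects and $\mathrm{tr}_Z$ denoting the right partial trace over the last tensor factor. The crucial identity is $\sum_Z\dim(Z)\,\mathrm{tr}_Z\big(k_{X,Y}\otimes\id_Z\big)=\big(\sum_Z\dim(Z)^2\big)\,k_{X,Y}=\dim(\Cat)\,k_{X,Y}$, which isolates $k_{X,Y}$ from the last term of $\partial k$. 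Together with the analogous identifications of the remaining terms this yields $\partial s+s\,\partial=\id$ on $C^{\geq 1}(\Cat)$; applying it to a cocycle $k$ gives $k=\partial(sk)=\partial\delta$, exhibiting $k$ as trivial. Unitality $k_{\mathbf 1,Z}=0$ forces $\delta(0)=\delta(\mathbf 1)=0$, as required.

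The hard part will be twofold. The genuinely deep input is the nonvanishing $\dim\Cat\neq 0$, needed to divide in the definition of $s$: this is the positivity of the global dimension in characteristic zero (trivial when $\dim[\lambda]>0$, e.g. in the (pseudo-)unitary case, but a theorem of Etingof--Nikshych--Ostrik in general). The technical heart is verifying $\partial s+s\,\partial=\id$, where one must track the cross terms carefully through the associator and use sphericity (cyclicity of the trace) to match the contribution of $k_{\cdots,X_n\otimes Z}$ with that of $k_{\cdots,X_n}$. For the statement in full generality, where $\Cat$ is only assumed fusion and a pivotal structure need not exist, I would instead replace the explicit homotopy by the identification $H^n_{DY}(\Cat)\cong\Ext^n_{\mathcal{Z}(\Cat)}(\mathbf 1,\mathbf 1)$ in the Drinfeld center together with semisimplicity of $\mathcal{Z}(\Cat)$ over $\C$ (again Etingof--Nikshych--Ostrik, via $\dim\Cat\neq 0$), which forces $\Ext^n=0$ for $n\geq 1$ and in particular $H^2_{DY}(\Cat)=0$.
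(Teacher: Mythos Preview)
The paper does not actually prove this theorem: it is quoted as a black box from \cite[9.1.5]{etingofTensorCategories2015}, and the paper only uses it to deduce the ribbon version (\Cref{theoremocneanuribbonfunctor}) and \Cref{exactweightfunctions,continuityautomorphisms}. So there is no ``paper's own proof'' to compare against; your proposal is a sketch of the argument behind the cited result.

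That said, your sketch is sound and follows the standard ENO strategy. The reformulation as $H^2_{DY}(\Cat)=0$ via the extended complex $C^1\xrightarrow{\partial}C^2\xrightarrow{\partial}C^3$ is exactly right, and you correctly isolate the nontrivial input, namely $\dim\Cat\neq 0$ (or equivalently semisimplicity of $\mathcal{Z}(\Cat)$). Two small comments. First, you write $\partial s+s\partial=\id$ on $C^{\geq 1}$ but only define $s$ on $C^2$; to make the homotopy identity literally hold on $C^2$ you would need $s:C^3\to C^2$ as well, though for your purpose it suffices to check directly that $\partial(sk)=k$ on cocycles. Second, your averaging formula uses partial traces and hence a pivotal/spherical structure; since it is open whether every fusion category is pivotal, the explicit homotopy only covers the cases used in this paper (where $\CN$ is ribbon), and your Drinfeld-center fallback is the correct route to the general statement. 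Both are the arguments in the reference the paper cites.
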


As any trivial infinitesimal fusion automorphism is also a braided/ribbon automorphism, we deduce the following.

\begin{corollary}\label{theoremocneanuribbonfunctor}
  Any infinitesimal automorphism of a braided (or ribbon) fusion category over $\C$ is trivial.
\end{corollary}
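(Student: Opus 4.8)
The plan is to deduce this directly from Ocneanu rigidity for endofunctors of fusion categories (\Cref{theoremocneanufunctor}), exactly as the sentence preceding the statement suggests, by observing that the fusion and ribbon notions of triviality are governed by the \emph{same} function $\delta:\Lambda\ra\C$. The entire content sits in the cited theorem; what remains is a short formal reduction together with one routine compatibility check.

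First I would note that a ribbon category $\Cat=(\VL,\otimes,c,\beta,\theta)$ is in particular a fusion category: by definition it is a rigid weakly ribbon category, so forgetting the braiding $\beta$ and the twist $\theta$ leaves the rigid monoidal structure $(\VL,\otimes,c)$. Given an infinitesimal automorphism of the ribbon category, that is, a ribbon functor $(\id_\Lambda,\id+\epsilon k):\Cat_\Ceps\ra\Cat_\Ceps$, forgetting its compatibility with $\beta$ and $\theta$ yields an infinitesimal automorphism of the underlying fusion category. Applying \Cref{theoremocneanufunctor} to this underlying fusion automorphism produces a function $\delta:\Lambda\ra\C$ with $\delta(0)=0$ such that $k$ acts on each $\hom([\lambda],[\mu]\otimes[\nu])$ by multiplication by $\delta(\mu)+\delta(\nu)-\delta(\lambda)$. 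Since this is precisely the defining condition for a ribbon infinitesimal automorphism to be trivial, the corollary follows.

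The only point that genuinely requires checking is the compatibility of the two notions of triviality, i.e. the assertion in the text that a trivial infinitesimal fusion automorphism is automatically a ribbon automorphism. I would verify this by direct inspection of the $\delta$-formula: under the isomorphism $\hom([\lambda],[\mu]\otimes[\nu])\simeq\hom([\lambda],[\nu]\otimes[\mu])$ induced by $\beta_{\mu,\nu}$, the scalar $\delta(\mu)+\delta(\nu)-\delta(\lambda)$ is symmetric in $\mu$ and $\nu$, so $k$ commutes with $\beta$; and since the twist $\theta_\lambda$ is a scalar endomorphism $t_\lambda$ of $[\lambda]$, multiplication by $\delta(\mu)+\delta(\nu)-\delta(\lambda)$ commutes with $\theta$ as well. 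This is the endofunctor analogue of the fact, already used above for associator deformations, that braiding-compatible infinitesimal data are automatically twist-compatible. I do not expect a substantive obstacle: the argument is a short formal deduction, and the symmetry of the $\delta$-formula under the braiding is the only real computation.
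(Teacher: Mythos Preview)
Your proposal is correct and matches the paper's approach exactly: forget the ribbon structure, apply \Cref{theoremocneanufunctor} to obtain the $\delta$-formula, and observe that the definition of triviality is the same in the ribbon and fusion cases. Your explicit verification that the $\delta$-formula commutes with $\beta$ (by symmetry in $\mu,\nu$) and with $\theta$ (since twists are scalars) is precisely the content of the sentence ``any trivial infinitesimal fusion automorphism is also a ribbon automorphism'' preceding the corollary.
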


As an application, we prove that all weight functions on a modular functor are exact.

\begin{proof}[Proof of \Cref{exactweightfunctions}]
  Let $w$ be a weight function on $\Nu$. Then we can define an infinitesimal automorphism $(\id_\Lambda,\id+\epsilon k):\CNeps\ra\CNeps$
  by asking that for all $\lambda,\mu,\nu\in\Lambda$, $k$ is on $\hom([\lambda],[\mu]\otimes [\nu])$
  multiplication by $w_0(\lambda;\mu,\nu)$. Now,
  Ocneanu rigidity (\ref{theoremocneanuribbonfunctor}) implies that $(\id_\Lambda,\id+\epsilon k)$ is trivial
  and thus that there exists $f:\Lambda\ra \Q$ with $f(0)=0$ such that for all $\lambda,\mu,\nu\in\Lambda$,
  $w_0(\lambda;\mu,\nu)=f(\mu)+f(\nu)-f(\lambda)$. By the gluing axiom, for any $g$ and $\ul$, $w_g(\ul)=\frac{1}{2}\sum_if(\lambda_i)$.
\end{proof}

Now, by the same algebro-geometric argument as for deformations of associators (\ref{continuityassociators} and above),
we deduce from \Cref{theoremocneanuribbonfunctor}
that any continuous family of deformations of the identity braided/ribbon automorphism can be trivialized.

\begin{proposition}\label{continuityautomorphisms}
  Let $\Cat=(\VL,\otimes,c,\beta,\theta)$ be a ribbon fusion category, and $(f_t)_{t\in(-\epsilon,\epsilon)}$
  a continuous family of automorphisms of $\Cat$ with $f_0=\id$.
  Then there exists a continuous family of maps $(\delta_t:\Lambda\ra \C^*)_{t\in(-\epsilon,\epsilon)}$
  with $\delta(0,\cdot)\equiv 1$ such that for any $t$ and $\lambda,\mu,\nu\in\Lambda$,
  $f_t$ is on $\hom([\lambda],[\mu]\otimes[\nu])$ multiplication by $\delta_t(\mu)\delta_t(\nu)\delta_t(\lambda)^{-1}$.
  The same holds for braided fusion categories.
\end{proposition}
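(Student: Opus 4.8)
The plan is to run the same algebro-geometric argument used for \Cref{continuityassociators}, now with the group of ribbon automorphisms in place of the group of gauge transformations acting on associators. Concretely, I would let $A\subseteq\Autb(\otimes)$ be the stabilizer $G_c$ of the associator $c$, i.e. the group of those natural isomorphisms $f:\otimes\ra\otimes$ that are strict on the unit and preserve $c$ and $\beta$ (compatibility with $\theta$ is then automatic by naturality of $\theta$, as already observed before \Cref{theoremocneanuribbon}). As in the discussion preceding \Cref{continuityassociators}, $A$ is an affine algebraic group of finite type over $\C$: the conditions defining it are polynomial equations on the finitely many scalars describing $f$ on the multiplicity spaces $\hom([\lambda],[\mu]\otimes[\nu])$, and invertibility is open. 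Its $\C$-points are exactly the ribbon automorphisms of $\Cat$ inducing the identity on $\Lambda$, and its tangent space at the identity is precisely the space of infinitesimal automorphisms of $\Cat$ (the $\Ceps$-points $(\id_\Lambda,\id+\epsilon k)$ of $A$ reducing to $\id$).

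Next I would introduce the closed connected subgroup $T\subseteq A$ of trivial automorphisms, namely the image of the homomorphism of algebraic groups $\tau:(\C^*)^{\Lambda\setminus\{0\}}\ra A$ sending $\delta$ to the automorphism acting on each $\hom([\lambda],[\mu]\otimes[\nu])$ by $\delta(\mu)\delta(\nu)\delta(\lambda)^{-1}$ (convention $\delta(0)=1$). Then $T$ is connected, being the image of a torus, and $\mathrm{Lie}(T)=d\tau(\C^{\Lambda\setminus\{0\}})$ is exactly the space of trivial infinitesimal automorphisms, since $d\tau$ carries $\delta'$ (with $\delta'(0)=0$) to the $k$ acting by $\delta'(\mu)+\delta'(\nu)-\delta'(\lambda)$. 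Ocneanu rigidity for endofunctors (\Cref{theoremocneanuribbonfunctor}) asserts that every infinitesimal automorphism is trivial, i.e. $\mathrm{Lie}(A)=\mathrm{Lie}(T)$. Since we are in characteristic $0$ both groups are smooth, so a closed subgroup whose Lie algebra is all of $\mathrm{Lie}(A)$ has the same identity component as $A$; as $T$ is connected this yields $T=A^0$.

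Finally I would transfer this from the infinitesimal to the continuous setting exactly as in the proof of \Cref{continuityassociators}. The identity component $A^0(\C)=T(\C)$ is open and closed for the analytic topology, so the continuous path $t\mapsto f_t$ with $f_0=\id$ must remain in $T(\C)$ for all $t\in(-\epsilon,\epsilon)$. The surjection $\tau:(\C^*)^{\Lambda\setminus\{0\}}\ra T$ is a surjective homomorphism of complex Lie groups, hence a locally trivial fiber bundle and in particular a Serre fibration, so I can lift the path $f_t$ through $\tau$ to a continuous path $\delta_t$, choosing the lift of $f_0$ to be the constant function $1$. Reading off $f_t=\tau(\delta_t)$ then gives the desired continuous family $(\delta_t)$ with $\delta_t(0)=1$ for all $t$, $\delta_0\equiv 1$, and the stated action on $\hom([\lambda],[\mu]\otimes[\nu])$.

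The genuinely substantive input is Ocneanu rigidity (\Cref{theoremocneanuribbonfunctor}); everything else is bookkeeping. Accordingly the one step requiring care, rather than real difficulty, is the double identification of tangent spaces: that $T_{\id}A$ is the space of infinitesimal automorphisms appearing in \Cref{theoremocneanuribbonfunctor}, and that $\mathrm{Lie}(T)=d\tau(\C^{\Lambda\setminus\{0\}})$ is the space of \emph{trivial} ones. Once these match, the identity-component comparison and the Serre-fibration lift are routine; the only other minor point to verify is that preserving $c$ (and $\beta$) is a closed condition, so that $A=G_c$ really is the closed algebraic subgroup of $\Autb(\otimes)$ already considered in \Cref{continuityassociators}.
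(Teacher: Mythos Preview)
Your proof is correct and follows essentially the same algebro-geometric argument the paper indicates, with one cosmetic difference: where the paper would presumably re-run the open-orbit argument (\Cref{lemmaopen}) with $T$ acting on $A$ by translation, you instead compare Lie algebras directly to conclude $T=A^0$. These are equivalent here, and your formulation is arguably cleaner since the scheme in question is already a group.
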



\subsection{Modular functors are defined over number fields}

In this subsection, we deduce from the well known fact that all complex ribbon fusion categories can be defined over number fields
that any complex modular functor $\Nu$ can be defined over a number field.

The following theorem is a corollary of Ocneanu rigidity for multi-fusion categories.
For a proof and a definition of multi-fusion category, see the reference.

\begin{theorem}[{\cite[9.1.8]{etingofTensorCategories2015}}]\label{theoremocneanunumberfield}
  Any multi-fusion category or functor between multi-fusion categories can be defined over a number field.
\end{theorem}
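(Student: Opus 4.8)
The plan is to reduce the statement to a density argument on a moduli scheme, exactly as in the deduction of \Cref{continuityassociators} from \Cref{theoremocneanu,lemmaopen}, but now working over $\Q$ instead of $\C$. First I would fix the combinatorial data: for a fusion category on $\VL$ the multiplicities $N_{\lambda\mu}^\nu=\dim\myhom{[\nu]}{[\lambda]\otimes[\mu]}$ take integer values and so are defined over $\Z$. With these fixed, an associator is a tuple of linear isomorphisms between the finite-dimensional multiplicity spaces, subject to the pentagon equations together with invertibility; these are polynomial equations with $\Z$-coefficients. Hence the associators making $\VL$ into a fusion category with the prescribed fusion ring are the $R$-points, functorially in $R$, of an affine scheme $\Ass$ of finite type over $\Q$ (indeed over $\Z$). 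The gauge group $G$ of automorphisms of the bilinear functor $\otimes$ fixing the unit is likewise an affine algebraic group over $\Q$ acting algebraically on $\Ass$, with $G$-orbits corresponding to equivalences of fusion categories that are the identity on objects.

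Next I would invoke rigidity. At any $\C$-point $c\in\Ass(\C)$, \Cref{theoremocneanu} says every first-order deformation of $c$ is gauge-trivial; equivalently the differential at $\id$ of the orbit map $G\to\Ass$, $f\mapsto f\cdot c$, is surjective onto the Zariski tangent space $T_c\Ass$. By \Cref{lemmaopen} the $G(\C)$-orbit $O$ of $c$ is therefore Zariski open in $\Ass_\C$. The point of working over $\Q$ is that the closed points of $\Ass$ defined over $\overline{\Q}$ are Zariski dense: in an affine chart, any $f\in\mathcal{O}(\Ass_\C)$ vanishing at every $\overline{\Q}$-point expands as $f=\sum_i c_if_i$ with the $c_i\in\C$ linearly independent over $\overline{\Q}$ and the $f_i$ defined over $\overline{\Q}$, and each $f_i$ then vanishes at all $\overline{\Q}$-points, forcing $f_i=0$ by density of $\overline{\Q}$-points in $\Ass_{\overline{\Q}}$, hence $f=0$.

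Combining the two, the nonempty open $O\subseteq\Ass_\C$ must contain a closed point $c'$ defined over $\overline{\Q}$. Since $c'$ lies in the $G(\C)$-orbit of $c$, the fusion categories attached to $c'$ and $c$ are equivalent over $\C$; and $c'$, having finitely many coordinates all in $\overline{\Q}$, is defined over the number field $K$ they generate. This proves the statement for a single fusion (or, verbatim, multi-fusion, the only change being that the unit object need not be simple) category. For a tensor functor $F\colon\Cat_1\to\Cat_2$ between such categories I would run the same argument on the affine $\Q$-scheme parametrizing monoidal (tensorator) structures on the fixed underlying linear functor, cut out by polynomial equations involving the now $\overline{\Q}$-defined associators of $\Cat_1,\Cat_2$; here the role of \Cref{theoremocneanu} is played by \Cref{theoremocneanufunctor}, which gives that the gauge action of $\mathrm{Aut}(F)$ has surjective differential, so its orbits are open and the same density argument produces a $\overline{\Q}$-defined tensor structure.

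The main obstacle I anticipate is bookkeeping rather than conceptual: one must check that the pentagon (and, for functors, the monoidal-compatibility) equations genuinely descend to $\Z$ once the fusion ring is fixed, and that "definable over $K$" for the associator really yields a fusion category defined over $K$ in the sense of \Cref{definitionsemisimplecategory,definitionweaklyribbon}, i.e. that no hidden $\C$-dependence survives in the unitors or in the identification of duals. Relatedly, in the functor case one must first define the source and target over a common number field before parametrizing functor structures, so that the scheme of tensorators is itself defined over $\Q$; this is arranged by applying the category case to $\Cat_1$ and $\Cat_2$ beforehand.
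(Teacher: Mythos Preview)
Your proposal is correct and follows essentially the same approach as the paper's sketch: fix the fusion data so that the scheme $\Ass$ of associators is defined over $\Q$, use Ocneanu rigidity and \Cref{lemmaopen} to see that the gauge orbits are Zariski open, and then pick a $\overline{\Q}$-point in the orbit by density. Your treatment is more detailed (spelling out the density argument and the functor case via \Cref{theoremocneanufunctor}), but the strategy and the key inputs are the same.
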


Let us sketch the proof for fusion categories. If $\Cat=(\VL,\otimes,c)$ is a fusion category, notice that the affine algebraic manifold
$\Ass(\Cat)$ which parametrizes all associators $c'$ for $(\VL,\otimes)$ is defined over $\Q$ (or even $\Z$).
As in the discussion above \Cref{continuityassociators}, by Ocneanu rigidity (\ref{theoremocneanu}), $\Ass(\Cat)$ is over $\C$
a finite union of open orbits for the action of an algebraic group $\Aut{\otimes}$ preserving isomorphism classes.
The orbit of $c$ admits a $K$-point $c'$ for some number field $K$, and then $\Cat'=(\VL,\otimes,c')$ is isomorphic to $\Cat$ and defined over $K$.
The proof of \Cref{theoremocneanunumberfield} is similarly deduced from Ocneanu rigidity results for multi-fusion categories.

\begin{corollary}\label{ribbonnumberfield}
  Any braided or ribbon fusion category over $\C$ is defined on a number field.
\end{corollary}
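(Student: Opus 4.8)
The plan is to mirror the algebro-geometric argument given just above for fusion categories, now carrying the braiding and twist along with the associator. First I would fix the combinatorial skeleton of $\Cat$: the colour set $\Lambda$ and the fusion coefficients $N_{\lambda\mu}^\nu=\dim_\C\hom([\nu],[\lambda]\otimes[\mu])$, which are non‑negative integers and hence data defined over $\Z$. Take the underlying $\C$-linear additive category to be the split semisimple one on $\Lambda$, with the spaces $\hom([\nu],[\lambda]\otimes[\mu])$ set equal to the standard spaces $\C^{N_{\lambda\mu}^\nu}$, so that this category together with the bifunctor $\otimes$ (with these fixed hom-spaces) is base-changed from $\Q$. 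A ribbon structure on this skeleton — an associator $c$, a braiding $\beta$ and a twist $\theta$ subject to the pentagon, the two hexagons and the balancing axiom — is then a solution of a system of polynomial equations with $\Z$-coefficients in the matrix entries of $c$, $\beta$, $\theta$. Hence such structures form the $\C$-points of an affine scheme $\mathcal R$ of finite type over $\Q$, and $\Cat$ determines a point $x\in\mathcal R(\C)$.

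For a fixed colour set the relevant notion of isomorphism in $\Rib$ (\Cref{definitionweaklyribbon}) is a ribbon equivalence that is the identity on $\Lambda$, i.e. a change of bases of the hom-spaces; these gauge transformations form an affine algebraic group $G$ over $\Q$ acting algebraically on $\mathcal R$, whose set-theoretic $G(\C)$-orbits are exactly the $\C$-isomorphism classes. I would then show that the orbit $G(\C)\cdot x$ is Zariski open. By \Cref{lemmaopen} it suffices to check that the differential at the identity of the orbit map $G\to\mathcal R$, $g\mapsto g\cdot x$, is surjective onto $T_x\mathcal R$, i.e. that every first-order deformation of the ribbon structure $x$ comes from an infinitesimal gauge transformation. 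This is precisely Ocneanu rigidity: deformations of the associator with $\beta,\theta$ held fixed are gauge by \Cref{theoremocneanuribbon}, while the twist $\theta_\lambda$ is multiplication by a root of unity (\Cref{propositionlevel}) and so admits no nontrivial deformation; only the deformations of the braiding go beyond what is stated, and it is here that the argument needs a little more than \Cref{theoremocneanuribbon}.

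Granting openness, the descent is formal: $\mathcal R$ is of finite type over $\Q$, so its closed points have number-field residue fields and are Zariski dense, whence the nonempty open subset $G(\C)\cdot x$ contains a closed point $x'$ defined over some number field $K\subset\C$. Since $G(\C)\cdot x$ is exactly the set-theoretic orbit (\Cref{lemmaopen}), the relation $x'\in G(\C)\cdot x$ means $x'=g\cdot x$ for some $g\in G(\C)$, so the ribbon category $\Cat'$ over $K$ determined by $x'$ satisfies $\Cat'\otimes_K\C\simeq\Cat$ as ribbon categories; thus $\Cat$ is defined over $K$. The main obstacle is the braiding-rigidity input flagged above (together with the harmless observation that twists are roots of unity). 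I expect the cleanest way around it is a two-step descent: first apply \Cref{theoremocneanunumberfield} to the underlying fusion category $(\otimes,c)$ to put it over a number field, then fix $(\otimes,c)$ over that field and rerun the open-orbit argument with $\mathcal R$ replaced by the scheme of compatible braidings and twists, where the gauge group now also fixes $c$.
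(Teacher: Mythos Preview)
Your overall strategy is sensible, and you correctly identify where the real difficulty lies: deformations of the braiding. However, your proposed two-step fix does not close this gap. In the second step you fix $(\otimes,c)$ and parametrize compatible braidings and twists, but to conclude that the gauge orbit through the given braiding is open you still need that every infinitesimal deformation of $\beta$ (with $c$ held fixed) is gauge. Nothing you cite gives this: \Cref{theoremocneanuribbon} only trivializes deformations of $c$ with $\beta,\theta$ fixed, and \Cref{theoremocneanufunctor} concerns infinitesimal automorphisms, not deformations of the braiding itself. So the argument, as written, is incomplete at exactly the point you flag.

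The paper avoids this by a different trick. Rather than proving a separate rigidity statement for braidings, it uses Yetter's observation that the datum of a braiding $\beta$ on a fusion category $\Cat$ is equivalent to the datum of a certain \emph{multiplication} functor $\Phi:\Cat\times\Cat\to\Cat$ of multi-fusion categories. Now \Cref{theoremocneanunumberfield} is stated not just for categories but for \emph{functors} between multi-fusion categories, so it applies directly to $\Phi$ and yields $\beta$ over a number field. The twist is then handled by the Anderson--Moore--Vafa theorem (\cite[8.18.2]{etingofTensorCategories2015}); your invocation of \Cref{propositionlevel} here is not quite right, since that proposition is about modular functors rather than arbitrary ribbon categories. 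In short: the key idea you are missing is to package the braiding as a functor so that the already-available rigidity result for multi-fusion functors does the work.
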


\begin{proof}[Sketch of proof]
  Let $\Cat=(\VL,\otimes,c,\beta)$ be a braided fusion category over $\C$. By the theorem, the underlying fusion category is defined over a number field $K$.
  So $c$ is defined over $K$. Now, the data of the braiding $\beta$ is equivalent to the data of a certain functor of multi-fusion categories
  $\Phi:\Cat\times\Cat\ra\Cat$ called a multiplication (see Yetter's article \cite[2.18,2.19]{yetterBraidedDeformationsMonoidal1997} for a definition and details).
  By the theorem, $\Phi$ is defined over a finite extension $K'$ of $K$. Hence so is $\beta$.

  If $\Cat$ is ribbon for a twist $\theta$, the data of such a twist
  is the same as that of the numbers $t_\lambda\in\C^*$.
  By the Anderson-Moore-Vafa theorem, they are always roots of unity, and thus defined on a finite extension $K''$ of $K'$
  (see, for example \cite[8.18.2]{etingofTensorCategories2015}). In conclusion, $\Cat$ is defined over the number field $K''$.
\end{proof}

\begin{corollary}\label{functornumberfield}
  Any modular, genus $0$ modular, ribbon or braided functor $\Nu$ over $\C$ is defined over a number field.
\end{corollary}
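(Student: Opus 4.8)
The plan is to transport the number-field structure from the ribbon category $\CN$ back to the functor $\Nu$ itself, using the reconstruction of $\Nu$ from $\CN$ that underlies the full faithfulness results of \Cref{sectioncn}. First I would invoke \Cref{ribbonnumberfield} to fix a number field $K\subset\C$ and a ribbon category $\Cat$ over $K$, together with an isomorphism $\Cat\otimes_K\C\simeq\CN$. By \Cref{propositionlevel} the central charge $c$ of $\Nu$ is a root of unity, so after enlarging $K$ I may assume $c\in K$. The goal is then to build a functor $\Nu_K$ over $K$ with $\Nu_K\otimes_K\C\simeq\Nu$; since the passage $\Nu\mapsto\CN$ is compatible with base change and fully faithful, it suffices to realize $\Nu_K$ as the functor reconstructed from $\Cat$.

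In the rooted case this is essentially immediate from \Cref{propositionhomsinCN}. I would set $\Nu_K(D^n,\mu;\lambda_1,\dotsc,\lambda_n):=\hom_{\Cat}([\mu],[\lambda_1]\otimes\dotsb\otimes[\lambda_n])$, with gluing isomorphisms given by partial compositions and braid-group actions as in $\Cat$. All the structure maps (associator, braiding, twist, weak duality) are defined over $K$, and the axioms \textbf{(N)}, \textbf{(Dual)}, \textbf{(G)} of \Cref{definitionrootedfunctor} translate into the defining properties of the weakly ribbon category $\Cat$, which hold by construction; this is the reconstruction inverse to \Cref{propositionribbonstructure}. Base-changing to $\C$ and applying \Cref{propositionhomsinCN} to $\CN\simeq\Cat\otimes_K\C$ then yields $\Nu_K\otimes_K\C\simeq\Nu$. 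The genus-$0$ case proceeds identically, reconstructing each $\Nu(S_0^n,\ul)$ from $\Cat$ via the canonical identifications of \Cref{canonicaliso}.

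For the modular case the reconstruction must be carried out on all surfaces through the decomposition complex. By \Cref{theoremmovesconnected} every surface $\Sigma$ admits a genus-$0$ decomposition $D$, and \Cref{definitionGD} expresses $\Nu(\Sigma)$ as an explicit direct sum of tensor products of Hom-spaces of $\CN$; I would take the same formula with $\hom_{\Cat}$ in place of $\hom_{\CN}$ to define $\Nu_K(\Sigma)$ over $K$. The mapping class group acts through the $Z$-, $B$-, $F$- and $S$-moves, each of which is expressed purely in terms of the associator, braiding and twist of $\Cat$ (this is the content of the end of \Cref{subsectionlego} and of the $S$-move argument in the proof of fullness of \Cref{theoremfullfaithfulness}); hence the action, the gluing isomorphisms, and the anomaly (governed by $c\in K$) are all defined over $K$. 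Independence of the chosen decomposition, established over $\C$ in that proof, descends to $K$ because the relevant identities are $K$-linear, so $\Nu_K$ is a genuine modular functor over $K$ with $\Nu_K\otimes_K\C\simeq\Nu$. The main obstacle is precisely this last bookkeeping: verifying that reconstructing every $\Nu(\Sigma)$, with its full functorial and gluing structure, from a single $K$-form of $\CN$ is internally consistent over $K$ and compatible with base change — that is, that the fully faithful functor $\TMod_c\to\Wrib$ admits a base-change-compatible inverse on its essential image, so that a $K$-form of $\CN$ produces an honest $K$-form of $\Nu$ rather than merely an abstract isomorphism $\Nu^\sigma\simeq\Nu$ for each $\sigma\in\Aut{\C/K}$.
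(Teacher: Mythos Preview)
Your approach differs from the paper's, and the difference is instructive. The paper does \emph{not} attempt to reconstruct a $K$-form $\Nu_K$ directly from a $K$-form of $\CN$. Instead it runs a descent argument: a $K$-form of $\CN$ is the same as an isomorphism $\varphi:i_1^*\CN\simeq i_2^*\CN$ of ribbon categories over $\C\otimes_K\C$ satisfying the cocycle condition over $\C\otimes_K\C\otimes_K\C$. Since $\TMod_c\to\Wrib$ is fully faithful over \emph{any} base ring (the arguments of \Cref{sectioncn} work over $\C\otimes_K\C$), $\varphi$ lifts uniquely to $\widetilde\varphi:i_1^*\Nu\simeq i_2^*\Nu$, and uniqueness forces the cocycle condition for $\widetilde\varphi$. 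Descent then produces the $K$-lattice in $\Nu$. This uses only full faithfulness, which is exactly what the paper has established.

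Your route instead asks for an explicit section of $\TMod_c\to\Wrib$ over $K$, i.e.\ essential surjectivity onto its image, and you correctly identify this as the obstacle. The issue is real: defining $\Nu_K(\Sigma)$ via a decomposition $D$ and then checking that the mapping class group action preserves it requires more than the independence of $G_D$ proved in the fullness argument---that argument compares two \emph{given} modular functors, whereas you need the full Lego--Teichm\"uller relations (moves \emph{and} relations among moves) to hold over $K$ in order to get a well-defined functor from scratch. In particular, the $S$-move is handled in the paper only through curve operators acting on an already-existing $\Nu$, not as an intrinsic operation in $\Cat$. So your reconstruction is not obviously complete in the modular case, and the paper's descent argument is precisely designed to bypass this: full faithfulness transports descent data automatically, with no need to build the inverse functor.
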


\begin{proof}
  By \Cref{ribbonnumberfield}, $\CN$ can be defined over a number field $K$.
  Let us translate this fact in the language of descent data (for vector spaces).
  Consider the $2$ inclusions $i_1,i_2:\C\ra\C\otimes_K\C$. For a vector space $W$ over $\C$, the data of a $K$-lattice
  is equivalent to the one of an isomorphism of $\C\otimes_K\C$-modules $f:i_1^*W\simeq i_2^*W$ satisfying the cocycle conditions over $\C\otimes_K\C\otimes_K\C$
  (the $K$-lattice is then simply the subset $\{w\in W\mid i_1^*(w)=f(i_2^*(w))\}$).
  
  In our context, this means that there exists an isomorphism $\varphi:i_1^*\CN\simeq i_2^*\CN$ of ribbon categories over $\C\otimes_K\C$
  satisfying the cocycle conditions. If $\Nu$ is modular, its central charge $c\in\C^*$ is a root of unity (\ref{propositionlevel}).
  Hence we may assume that $c$ is in $K$.
  By \Cref{theoremfullfaithfulness}, $\varphi$ extends to an isomorphism $\widetilde{\varphi}:i_1^*\Nu\simeq i_2^*\Nu$
  of modular, genus $0$ modular, ribbon or braided functors over $\C\otimes_K\C$
  satisfying the cocycle conditions. By descent, this means that $\Nu$ has a $K$-lattice.
\end{proof}


\section{Proof of existence and uniqueness of Hodge structures on modular functors}\label{sectionproof}
\subsection{Proof of \texorpdfstring{\Cref{maintheoremcomplex}}{existence and uniqueness of CVHS}}
Let $\Nu$ be a complex modular, genus $0$ modular, ribbon or braided functor.
Then by the non-Abelian Hodge correspondence for complex modular functors (\ref{correspondencemodular}),
we have a functor of Higgs bundles that we denote $(\Nu,\theta)$.

For $t\in\R_+^*$, $(\Nu,t\theta)$ is also a functor of Higgs bundles and by the correspondence we have a family
of functors $(\Nu_t)_{t\in \R_+^*}$. By \Cref{propositioncontinuity}, this family, seen as a family of gluing and vacuum
isomorphisms, is continuous (for the product topology).

Notice that $\Mgrb{0}{2}{r}\simeq \Br{r}$ (see our convention \ref{conventionzerotwo}) and $\Mgrb{0}{3}{r}\simeq \mathrm{B}\mu_r^3$,
so that $\theta=0$ on these spaces. Hence for any $t$, since $(\Nu,\theta)$ and $(\Nu,t\theta)$ coincide on these spaces
so do $\Nu$ and $\Nu_t$. In particular, the associated ribbon or braided fusion categories $\CN$ and $\CNt$ have the same braiding (and twist),
and may differ only in their associator, that we denote $c$ and $c_t$ respectively.

By \Cref{continuityassociators}, $(c_t)_{t\in(1-\epsilon,1+\epsilon)}$ is a trivial deformation of $c=c_1$ and there exists a continuous family of isomorphisms
$f_t:\CN\ra\CNt$ with $f_1=\id$.
By fullness in \Cref{theoremfullfaithfulness}, these isomorphisms lift to isomorphisms $\Nu\ra\Nu_t$.
Then, by the non-Abelian Hodge correspondence for complex modular/ribbon/braided functors (\ref{correspondencemodular}),
these isomorphisms induce isomorphisms $f_t:(\Nu,\theta)\ra (\Nu,t\theta)$.

Applying \Cref{actiontoCVHSmodular}, we have existence of a CVHS with real bidegrees on $\Nu$ and the weight function $w$ of our choice.
To obtain rational bidegrees, choose a map $\pi:\R\ra\Q$ of $\Q$-vector spaces that sends $1$ to $1$,
and replace every bidegree $(p,q)\in\R^2$ by $(\pi(p),\pi(q))\in\Q^2$.

Let us now turn to uniqueness up to shifts. Let $\Nu^1$ and $\Nu^2$ be $2$ Hodge structures on $\Nu$.
Using the map $i$ of \Cref{actiontoCVHSmodular}, for each $t\in \R_+^*$ they induce isomorphisms $f^1_t$ and $f^2_t$
from $(\Nu,\theta)$ to $(\Nu,t\theta)$ (for $t=1$, $f^1_1=f^2_1=\id$). Note that the map $i$ of \Cref{actiontoCVHSmodular} is continuous in $t$,
and hence so are the families $(f^1_t)$ and $(f^2_t)$.
In particular, the maps $(f^2_t)^{-1}\circ (f^1_t)$ form a continuous family of automorphisms of $(\Nu,\theta)$.

By the correspondence (\ref{correspondencemodular}) they induce automorphisms
$k_t$ of $\Nu$ and thus also of the ribbon or braided fusion category $\CN$.
By \Cref{theoremocneanuribbonfunctor}, there exists a family $\delta_t:\Lambda\ra \C^*$ with $\delta_t(0)=0$
such that $k_t$ is on $\hom([\lambda],[\mu]\otimes[\nu])$ multiplication by $\delta_t(\mu)\delta_t(\nu)\delta_t(\lambda)^{-1}$.
Thus $k_t$ is on $\Nu_g(\ul)$ multiplication by $\prod_i\delta_t(\lambda_i)$.

Now fix $t\in\R_+^*\setminus\{1\}$. As $r\circ i=\id$ in \Cref{actiontoCVHSmodular}, we see that $\Nu^1=\log_t|\delta_t|\cdot \Nu^2$.
Now, notice that since $\Nu^1$ and $\Nu^2$ have rational bidegrees, we also have $\Nu^1=(\pi\circ\log_t|\delta_t|)\cdot \Nu^2$.
As $\pi\circ\log_t|\delta_t|$ is in $\Shift$, this concludes.
\mbox{}\hfill $\square$

\subsection{Proof of \texorpdfstring{\Cref{rationalqreps}}{existence of RVHS}}

The proof essentially follows from that of \cite[Theorem 5]{simpsonHiggsBundlesLocal1992}. See there for details.
Let $\Nu$ be a complex modular, genus $0$ modular, ribbon or braided functor. Denote by $\Lambda$ its set of colors.
Fix $g,n\geq 0$ tangent stable and $\ul\in\Lambda^n$. Denote by $\rho_g(\ul)$ the monodromy representation of the local system $\Nu_g(\ul)$,
and by $\Gamma$ the fundamental group of $\Mgrbt{g}{n}{r}{s}$, $\Mgrb{0}{n}{r}$ or $\Mrpo{n}{r}$, depending on the type of functor considered.

As $\Nu$ can be defined over a number field $K$, so can $\rho_g(\ul)$. Replacing $K$ by some finite extension, we may assume that
all irreducible factors of $\rho_g(\ul)$ are defined over $K$ (note that $\rho_g(\ul)$ is semisimple by assumption).
Denote by $L\subset K$ the subfield generated by the traces $\tr{\rho(\gamma)}$ for $\gamma\in\Gamma$ and $\rho$ an irreducible factor of $\rho_g(\ul)$.
Let us first show that $L$ is a totally real or CM field.

Choose $\sigma:L\ra\C$ an embedding. As $\sigma(\Nu_g(\ul))$ supports a CVHS, so does each of its irreducible factors.
Hence such a factor $\sigma(\rho)$ preserves a Hermitian form, and in particular $\overline{\sigma}(\rho)$ is isomorphic to $\sigma(\rho^*)$.
So that for all $\rho$ and $\gamma$, $\sigma^{-1}\overline{\sigma}\tr{\rho(\gamma)}=\tr{\rho^*{\gamma}}$ is independent of $\sigma$.
Denote by $C=\sigma^{-1}\overline{\sigma}$ this involution on $L$. If $C=\id$, $L$ is totally real, otherwise it is CM.

We now apply the following lemma, which Simpson attributes to M. Larsen.

\begin{lemma}[{\cite[4.8]{simpsonHiggsBundlesLocal1992}}]
  Let $\rho:\Gamma\ra\GLn{n}{K}$ be an irreducible representation for $K$ a number field such that its trace field $L\subset K$ is CM (or totally real).
  Define $F$ to be $L\cap \R$.
  Then there exists a totally real extension $F'$ of $F$ such that $\rho$ can be defined over $F'\cdot L$
  (or \textbf{any} purely imaginary extension thereof if $L$ is totally real).
\end{lemma}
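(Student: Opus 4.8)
The plan is to reduce the statement to a splitting problem in the Brauer group of $L$ and then solve it by class field theory. After the finite extension of $K$ made above, the irreducible factor $\rho$ is absolutely irreducible over $K$ with character field exactly $L$, so the theory of Schur indices attaches to it a Brauer class $\delta(\rho)\in\mathrm{Br}(L)$ — the class of the central simple $L$-algebra generated by $\rho(\Gamma)$ — with the property that for any field extension $E\supseteq L$, the representation $\rho$ is definable over $E$ if and only if $E$ splits $\delta(\rho)$. In particular $K$ splits $\delta(\rho)$, so $\delta(\rho)$ lies in the relative Brauer group $\mathrm{Br}(K/L)$ and has finite order. Thus the lemma is equivalent to the assertion that $\delta(\rho)$ can be split by a field of the prescribed shape $F'\cdot L$ (respectively a purely imaginary extension thereof).

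Next I would analyze $\delta(\rho)$ through its local invariants. By the Albert--Brauer--Hasse--Noether theorem the map $\mathrm{Br}(L)\hookrightarrow\bigoplus_v\mathrm{Br}(L_v)$, $\alpha\mapsto(\mathrm{inv}_v\alpha)_v$, is injective, its invariants vanish for all but finitely many places $v$ and sum to $0$; moreover $\mathrm{inv}_v\delta(\rho)=0$ at every complex place, lies in $\tfrac12\Z/\Z$ at every real place, and has some finite denominator $d_v$ at the finite places. The basic splitting criterion is that $E\supseteq L$ splits $\delta(\rho)$ precisely when, for every place $w$ of $E$ lying over a place $v$ of $L$, the local degree $[E_w:L_v]$ is divisible by $d_v$. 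Since controlling the local degrees of $F'\cdot L$ over $L$ reduces to prescribing the local behavior of $F'/F$, the problem becomes one of constructing $F'$ with suitable decomposition data.

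In the CM case $L$ has no real place, so $\delta(\rho)$ is unramified at every archimedean place and the obstruction is concentrated at the finitely many finite places $v$ with $d_v>1$. Here I would invoke Grunwald--Wang to produce a totally real (cyclic) extension $F'/F$, linearly disjoint from $L$ over $F$ because $F'$ is real and $L$ is CM (so $F'\cap L=F$), whose local degree at each place of $F$ below such a $v$ is divisible by $d_v$; the archimedean requirement that $F'$ be totally real concerns only the infinite places and is therefore compatible with the prescribed finite local degrees. Then $F'\cdot L$ splits $\delta(\rho)$ and $\rho$ is defined over it. In the totally real case $L=F$ I would first kill the finite part exactly as above with a totally real $F'/F$; the remaining obstruction sits at the real places with invariant $\tfrac12$, which no totally real extension can remove, but any purely imaginary extension $M$ of $F'$ turns every real place into a complex one (local degree $2$) and hence splits those invariants, while leaving the already-split finite places split. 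This gives the ``or any purely imaginary extension thereof'' clause.

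The main obstacle will be the construction of the totally real auxiliary field $F'$ with the prescribed finite local degrees: one must realize the chosen $\mathfrak{p}$-adic local conditions by a single global cyclic extension that is simultaneously totally real, which is exactly the content of Grunwald--Wang and requires care around its well-known special case (the $2$-primary exception). The compatibility of the archimedean constraint with the non-archimedean ones, and the linear disjointness $F'\cap L=F$, are the points to check in detail; everything else is the formal dictionary between Schur indices, Brauer classes, and local degrees.
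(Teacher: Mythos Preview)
The paper does not give its own proof of this lemma: it is quoted from Simpson's 1992 paper (where Simpson attributes it to M.~Larsen) and applied as a black box in the proof of Corollary~\ref{rationalqreps}. There is therefore nothing in the present paper to compare your argument against. Your sketch---reducing to the Brauer class attached to the Schur index, analyzing it through local invariants via Albert--Brauer--Hasse--Noether, and constructing the totally real auxiliary field $F'$ by a Grunwald--Wang argument---is the standard route to such descent-of-scalars statements and is essentially the argument one finds upon tracing back to the source.
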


We can choose for each $\rho$ irreducible factor of $\rho_g(\ul)$ an extension $F'_\rho$ as in the Lemma. Denote by $F'$ there amalgamation.
Then all irreducible factors of $\rho_g(\ul)$ can be defined over $F'\cdot L$ (or any purely imaginary extension thereof if $L$ is totally real).
In conclusion, $\rho_g(\ul)$ and all its irreducible factors are defined over a CM field $L'$.

Let $C$ be the conjugation in $L'$. Choose an irreducible factor $\rho$ of $\rho_g(\ul)$. Then $C\rho\simeq\rho^*$,
so that $\rho$ preserves a Hermitian form $h_\rho$ defined over $L'$. For each pair of complex embeddings $\{\sigma,C\sigma\}$ of $L'$,
choose a CVHS on the local system $\uE_{\sigma(\rho)}$ associated to $\sigma\rho$ and choose the conjugate CVHS on $\uE_{C\sigma(\rho)}$.
We may choose these CVHS to have a fixed weight $m\in 2\Z$ independent of $\rho$ and $\sigma$, and have all Hodge bidegrees in $\Z^2$.
Then these CVHS together with $h_\rho$ form a RVHS on the local system $\uE_\rho$ associated to $\rho$.
By irreducibility of $\rho$, for each $\sigma$, $h_\rho$ has either sign $(-1)^p$ on each $\uE_{\sigma(\rho)}^{p,q}$
or sign $-(-1)^p$ on each $\uE_{\sigma(\rho)}^{p,q}$. As $F'\setminus\{0\}\ra (\R^*)^{\{\tau:F'\hookrightarrow \R\}}$ attains every connected component of the target,
we may multiply $h_\rho$ by some element in $F'\setminus\{0\}$ to ensure that for each $\sigma$ and $p+q=m$, $h_\rho$ has sign $(-1)^p$ on $\uE_{\sigma(\rho)}^{p,q}$.
After this modification, $h_\rho$ provides a strong polarization on $\uE_{\rho}$.
We may sum these RVHS over $\rho$ to obtain a strongly polarized RVHS on $\Nu_g(\ul)$ over $L'$.
\mbox{}\hfill $\square$

\bibliographystyle{plain}
\bibliography{biblio}

\end{document}